\def\inte#1{
\displaystyle\mathop{#1\kern0pt}^\circ }
\let\grad\nabla
\def\virgp{\raise 2pt\hbox{,}}
\def\cdotpv{\raise 2pt\hbox{;}}
\def\eqdefa{\buildrel\hbox{\footnotesize def}\over =}
\def\C{\mathop{\bf C\kern 0pt}\nolimits}
\def\DD{\mathop{\bf D\kern 0pt}\nolimits}
\def\K{\mathop{\bf K\kern 0pt}\nolimits}
\def\N{\mathop{\bf N\kern 0pt}\nolimits}
\def\Q{\mathop{\bf Q\kern 0pt}\nolimits}
\def\R{\mathop{\bf R\kern 0pt}\nolimits}
\def\SS{\mathop{\bf S\kern 0pt}\nolimits}
\def\ZZ{\mathop{\bf Z\kern 0pt}\nolimits}
\def\TT{\mathop{\bf T\kern 0pt}\nolimits}
\def\dive{\mathop{\rm div}\nolimits}
\newcommand{\beq}{\begin{equation}}
\newcommand{\eeq}{\end{equation}}
\newcommand{\ben}{\begin{eqnarray}}
\newcommand{\een}{\end{eqnarray}}
\newcommand{\beno}{\begin{eqnarray*}}
\newcommand{\eeno}{\end{eqnarray*}}
\newtheorem{thm}{Theorem}[section]
\newtheorem{lem}{Lemma}[section]
\newtheorem{rmk}{Remark}[section]
\newtheorem{prop}{Proposition}[section]
\renewcommand{\theequation}{\thesection.\arabic{equation}}
\begin{document}

\title{Lagrangian approach to global well-posedness of the viscous surface wave equations without surface tension}

\author{ Guilong Gui \footnote{Center for Nonlinear Studies, School of Mathematics, Northwest University, Xi'an 710069, China. Email: {\tt glgui@amss.ac.cn}.}
}

\date{}
\maketitle

\begin{abstract}

In this paper, we revisit the global well-posedness of the classical viscous surface waves in the absence of surface tension effect with the reference domain being the horizontal infinite slab, for which the first complete proof was given in Guo-Tice (2013, Analysis and PDE) via a hybrid of Eulerian and Lagrangian schemes. The fluid dynamics are governed by the gravity-driven incompressible Navier-Stokes equations. Even though Lagrangian
formulation is most natural to study free boundary value problems for incompressible flows, few mathematical works for global existence are based on such an approach in the absence of surface tension effect, due to breakdown of Beale's transformation. We develop a mathematical approach to establish global well-posedness based on the Lagrangian framework by analyzing suitable "good unknowns" associated with the problem, which requires no nonlinear compatibility conditions on the initial data.

\end{abstract}

\noindent {\sl Keywords:} Viscous surface waves; Lagrangian coordinates; Global well-posedness

\vskip 0.2cm

\noindent {\sl AMS Subject Classification (2010):} 35Q30, 35R35, 76D03

\renewcommand{\theequation}{\thesection.\arabic{equation}}
\setcounter{equation}{0}
\section{Introduction}
\subsection{Formulation in Eulerian Coordinates}
We consider in this paper the global existence of time-dependent flows of an
viscous incompressible fluid in a moving domain $\Omega(t)$
with an upper free surface $\Sigma_{F}(t)$ and a fixed bottom $\Sigma_B$
\begin{equation}\label{VFS-eqns-1}
\begin{cases}
  &\partial_t u + (u\cdot \grad) u + \grad p- \nu\Delta\,u=-g\,e_1 \quad\mbox{in} \quad \Omega(t),\\
  &\grad \cdot\, u=0 \quad \mbox{in} \quad \Omega(t),\\
 & (p\,\mathbb{I}-\nu\mathbb{D}(u))n(t)=p_{\mbox{\tiny atm}}n(t)\quad \mbox{on} \quad \Sigma_{F}(t),\\
 &  \mathcal{V}(\Sigma_{F}(t))=u\cdot n(t) \quad \mbox{on} \quad \Sigma_{F}(t),\\
 &u|_{\Sigma_B}=0,
\end{cases}
\end{equation}
where we denote $n(t)$ the outward-pointing unit normal on $\Sigma_F(t)$, $I$ the $3\times3$ identity matrix, $(\mathbb{D}u)_{ij}=\partial_iu_j+\partial_ju_i$ the symmetric gradient of $u$, the constant $g > 0$ the strength of gravity (usually taking $g=1$ without loss of generality), and $\nu > 0$ the coefficient of viscosity.
The tensor $(pI -\nu\mathbb{D}(u))$ is known as the viscous stress tensor. Equation $\eqref{VFS-eqns-1}_1$ is the conservation of momentum, where gravity is the only external force; the second equation in \eqref{VFS-eqns-1} means the fluid is incompressible; Equation $\eqref{VFS-eqns-1}_3$ means the fluid satisfies the kinetic boundary condition on the free boundary $\Sigma_F(t)$, where $p_{atm}$ stands for the atmospheric pressure, assumed to be constant. the kinematic boundary condition $\eqref{VFS-eqns-1}_4$ states that the free boundary
$\Sigma_F(t)$ is moving with speed equal to the normal component of the fluid velocity; $\eqref{VFS-eqns-1}_5$ implies that the fluid is no-slip, no-penetrated on the fixed bottom boundary. Here the effect of surface tension is neglected on the free surface.

The problem can be equivalently stated as follows. Given an initial domain $\Omega_0 \subset \mathbb{R}^3$ bounded by a bottom surface $\Sigma_{B}$, and a top surface $\Sigma_F(0)$, as well as an initial velocity field $u_0$, where the upper boundary does not touch the bottom, we wish to find for each $t\in [0, T]$ a
domain $\Omega(t)$, a velocity field $u(t, \cdot)$ and pressure $p(t, \cdot)$ on $\Omega(t)$, and a transformation
$\bar{\eta}(t, \cdot):\,\Omega_0\rightarrow \mathbb{R}^3$ so that
\begin{equation}\label{VFS-eqns-12}
\begin{cases}
&\Omega(t)=\bar{\eta}(t, \Omega_0), \quad\bar{\eta}(t, \Sigma_B)=\Sigma_B,\\
&\partial_t\bar{\eta}=u\circ\bar{\eta},\\
  &\partial_t u + (u\cdot \grad) u + \grad p- \nu\Delta\,u=-g\,e_1 \quad \text{in} \quad \Omega(t),\\
  &\grad \cdot\, u=0 \quad \mbox{in} \quad \Omega(t),\\
 & (p\,\mathbb{I}-\nu\mathbb{D}(u))n(t)=p_{\mbox{\tiny atm}}n(t)\quad \mbox{on} \quad \Sigma_F(t),\\
 &u|_{\Sigma_B}=0,\\
 &u|_{t=0}=u_0,\quad \bar{\eta}|_{t=0}=x
\end{cases}
\end{equation}

For convenience, it is natural to
subtract the hydrostatic pressure from $p$ in the usual way by adjusting the actual pressure $p$ according to $\widetilde{p}=p+g\,x_1-p_{atm}$, and still denote the new pressure $\widetilde{p}$ by $p$ for simplicity, so that after substitution the gravity term in $\eqref{VFS-eqns-12}_3$ and the atmospheric pressure term in $\eqref{VFS-eqns-12}_5$  are eliminated. A gravity term appears in $\eqref{VFS-eqns-12}_5$.

The conditions on the initial domain $\Omega_0$ are as follows: Let the equilibrium domain $\Omega \subset \mathbb{R}^3$ be the horizontal infinite slab
\begin{equation}\label{def-domain-1}
\begin{split}
&\Omega=\{x=(x_1, x_h)|-\underline{b}<x_1<0,\quad x_h\in\mathbb{R}^2\}
\end{split}
\end{equation}
with the bottom $\Sigma_b=\{x_1=-\underline{b}\}$ and the top surface  $\Sigma_0=\{x_1=0\}$, where
$\underline{b}$ will be the depth of the fluid at infinity. We assume that $\Omega_0$ is the image of $\Omega$
under a diffeomorphism $\overline{\sigma}: \Omega \rightarrow  \Omega_0$, where $\overline{\sigma}(\Sigma_b)=\Sigma_B$, $\overline{\sigma}(\Sigma_0)=\Sigma_F(0)$, $\overline{\sigma}$ is of the form $\overline{\sigma}(x)=x+\xi_0(x)$, $\xi_0 \in \mathring{\mathcal{H}}_{\text{tan}, N-1}^{2, \Sigma_0}(\Omega)$ (to be defined later) with $N \geq 3$.

\subsection{Known results}

There are two methods used to solve this viscous surface problem: the first one is Lagrangian coordinates transformation, and the other is the flattening transformation introduced by Beale \cite{Beale-1984}.

In Lagrangian coordinates, the geometry of the domain is encoded in the flow map $\eta: (0, T)\times\Omega_0 \rightarrow\Omega(t)$ satisfying $\partial_t\bar{\eta}(t, x) = v(t, x)$ which gives the trajectory of a particle
located at $x\in \Omega_0=\Omega(0)$ at $t = 0$, where $v = u \circ \bar{\eta}$ is the Lagrangian velocity field in the fixed domain $\Omega_0$ with $u$ being the Eulerian velocity field in the
moving domain $\Omega(t)$. The Lagrangian pressure is $q = p \circ \bar{\eta}$ for $p$ the pressure in Eulerian coordinates.

By using Lagrangian coordinates transformation, Solonnikov \cite{Solonnikov-1977} proved the local well-posedness of this viscous surface problem in H\"{o}lder spaces for the fluid motion in a bounded domain whose entire boundary is a free surface. For the fluid motion in a horizontal infinite domain in this paper, Beale \cite{Beale-1981} obtained the local well-posedness in $L^2$-based space-time Sobolev spaces under the assumption with necessary compatibility condition, which was extended to $L^p$-based  space-time Sobolev spaces by Abels \cite{Abels-2005}. The local well-posedness in \cite{Beale-1981} showed that, given $v_0=u_0\in H^{r-1}(\Omega_0)$ for $r\in (3, 7/2)$, there exists a unique solution on a time interval $(0, T)$, so that $u \in K^r((0,T)\times\Omega_0)$, where
\begin{equation}
\label{1-1-8}
K^r((0,T)\times\Omega_0):=H^0((0,T);H^r(\Omega_0))\cap H^{r/2}((0,T); H^0(\Omega_0)).
\end{equation}
It also showed \cite{Beale-1981} that, if the initial domain $\Omega_0$ is the image of the equilibrium domain $\Omega$ given by \eqref{def-domain-1} under a diffeomorphism $\overline{\sigma}: \Omega \rightarrow  \Omega_0$ with $\overline{\sigma}(x)=x+\xi_0(x)$, then, for any fixed $0 < T < \infty$, there
exists a collection of sufficiently small data so that a unique solution exists on $(0, T)$ and so
that the solutions depend analytically on the data, where the flow map is modified by $\eta=Id+\xi\eqdefa \bar{\eta} \circ \bar{\sigma}$. This result suggests that solutions should exist globally in time for small data. If global solutions do exist, it is natural to expect that the
solutions should approach equilibrium as $t\rightarrow +\infty$, and therefore $\xi(t)$ should have
a limiting value such that
\begin{equation}\label{decay-eta-cond-1}
  \begin{split}
   \xi^1(\infty)=0 \quad \text{on} \quad \Sigma_0.
  \end{split}
\end{equation}
 However, Beale gave the negative answer to the global well-posedness in \cite{Beale-1981}. As a matter of fact, he proved a non-decay theorem which showed that a {\it reasonable} extension to small-data global well-posedness with decay of the free surface fails. More precisely, for $r\in (3, 7/2)$, let $\overline{K}^r(\mathbb{R}^+, \Omega)$
the space of $(v,q)$ such that $v\in K^r(\mathbb{R}^+, \Omega)$, $\xi\in K^{r-\frac{3}{2}}(\mathbb{R}^+, \Sigma_0)$, $\nabla\,q\in K^{r-2}(\mathbb{R}^+, \Omega)$,
and also $v\in L^1(\mathbb{R}^+; H^r(\Omega))$, there exists certain $\theta\in H_0^r({\Omega})$ so that there cannot exist a curve $(v(\epsilon),q (\epsilon) )\in\overline{K}^r(\mathbb{R}^+, \Omega)$ , defined for $\epsilon$ near $0$, such that the system \eqref{VFS-eqns-1} hold with $\xi_0(\epsilon) = \epsilon \theta,\,\, v_0 = 0$, $\xi(t=+\infty)=0$ on $\Sigma_0$ holds for each $\epsilon$, and $v(\epsilon)$ is of the form
$v(\epsilon)= \epsilon v^{(1)} + \epsilon^2 v^{(2)}+O(\epsilon^3)$. Let us mention that the extra condition $v\in L^1(\mathbb{R}^+; H^r(\Omega))$ in $(v,q)\in\overline{K}^r(\mathbb{R}^+, \Omega)$
 is included so that $\xi(\infty)$ is meaningful.
In the discussion of this result, Beale pointed out that it does not imply the non-existence of
global-in-time solutions, in particular by using Lagrangian approach, but rather that establishing global-in-time results requires weaker or different hypotheses than those imposed in the non-decay theorem.

In order to get the global well-posedness of the viscous surface problem, Beale \cite{Beale-1984} turned to introduce another method---the flattening transformation to successful solve the viscous surface problem with surface tension in the space-time Sobolev spaces. Such a viscous surface problem (without surface tension) was studied by Sylvester \cite{Sylvester-1990} and Tani-Tanaka \cite{Tani-Tanaka-1995} in the Beale-Solonnikov functional framework, where higher regularity and more compatibility conditions were required.

Recently, by using the flattening transformation, Guo and Tice \cite{Guo-Tice-1, Guo-Tice-2} employed the geometric structure in the Eulerian coordinates to study the local well-posedness of the system \eqref{VFS-eqns-1} in Sobolev spaces for small initial data, and then introduced the two-tiered energy method to get the algebraic decay rate of the solutions, which leads to the construction of global-in-time solutions to the surface wave problem \eqref{VFS-eqns-1}. We mention that, in this new framework, the two-tiered energy method couples the decay of low-order energy and the boundedness of high-order energy, where higher regularity and more compatibility conditions, as well as low horizontal regularity of the solutions were needed, which also avoids to contradict Beale's non-decay theorem. Wang, Tice, and Kim \cite{WTK-2014} also considered the global well-posedness and decay for two layers compressible fluid by using this two-tiered energy method. Wu \cite{Wu-2014} extended their local well-posedness result from small data to general data. More recently, Ren, Xiang and Zhang \cite{Ren-X-Zhang-2019} proved the local well-posedness of the viscous surface wave equation in low regularity Sobolev spaces, where only the first-order time-derivative of the velocity is involved in the compatibility condition, but the global existence of the solution is left open.

Beale and Nishida studied the decay properties of solutions to viscous surface waves with surface tension in \cite{Beale-Nishida-1985}, and then Hataya \cite{Hataya-2011} gave a complete proof to their decay estimates: if $\xi_0 \in L^1(\Omega)$ and initial data is small, the global smooth solution to the viscous surface waves with surface tension has the optimal (upper) decay rate $\|u(t)\|_{H^2} \lesssim (1+t)^{-1}$, which means the decay of $\|u(t)\|_{H^2}$ is not fast enough to guarantee that $u\in L^{1}([0,\infty);H^{2}(\Omega))$. This result also suggests that, for the solution $u$ to the viscous surface waves without surface tension, its norm $\|u\|_{L^{1}([0,\infty); H^{2}(\Omega))}$ may still  blow-up. In effect, with the additional low horizontal regularity assumption on the small initial data, Guo and Tice \cite{Guo-Tice-2} got the global smooth solution with the decay rate $\|u\|_{C^2(\Omega)} \lesssim (1+t)^{-1-\epsilon_0}$ for some positive constant $\epsilon_0$. From this, the norm $\|u\|_{L^{1}([0,\infty); C^2(\Omega))}$ does not blow up, but the quantity $\|u\|_{L^{1}([0,\infty); H^{2}(\Omega))}$ may.

\subsection{Our goal and ideas}

Motivated by Beale's remark on the non-decay theorem in \cite{Beale-1981}, it is of interest to know whether the Lagrangian approach works on the study of the global well-posedness of the viscous surface waves without surface tension. So far as I know, even though Lagrangian formulation is quite direct approach to study free boundary value problems for incompressible flows, there are few mathematical works for global existence based on such an approach.
On the other hand, in previous works, the global well-posedness was established for the initial data which has
high normal regularity and some compatibility conditions in terms of the time-derivatives of the velocity on the initial data are needed. Usually, it is difficult to verify the valid of the compatibility condition in terms of the time-derivatives of the velocity, which is essentially some type of the nonlinear compatibility conditions from the momentum equation in \eqref{VFS-eqns-1}. Suppose we considered the Navier-Stokes equations in the fixed domain
$\Omega$ with the condition $u = 0$ on $\partial\Omega$, for small initial velocity $u_0$, Heywood \cite{Heywood-1980} investigated the unique, global solvability to this system without any compatibility conditions in terms of $\partial_t u$ on the initial data. In the present case, a natural and important question is whether a corresponding well-posedness result can be obtained with low normal regularity and without any compatibility conditions in terms of the time-derivatives of the velocity (nonlinear compatibility conditions) on the initial data.

The aim of this paper is to employ the Lagrangian approach to study the global well-posedness of the viscous surface waves under the assumptions of low normal regularity and no any compatibility conditions in terms of the time-derivatives of the velocity on the initial data.

Let us explain our main ideas to get the global well-posedness of the viscous surface waves without surface tension.

As mentioned above, in the proof of Beale's non-decay theorem \cite{Beale-1981}, to make $\eta(\infty)|_{\Sigma_0}=0$ meaningful, Beale introduced the space-time Sobolev space $\overline{K}^r(\mathbb{R}^+, \Omega)$ in which the additional condition $v \in L^1(\mathbb{R}^+; H^2(\Omega))$ is required, in particular, the quantity $\|v(t)\|_{L^2(\Omega)}$ should be globally integral with respect to the time $t\in \mathbb{R}^+$. While inspired by the optimal decay rate $\|v(t)\|_{H^2} \lesssim (1+t)^{-1}$ for the solution $u$ to the the viscous surface waves with surface tension under the assumption $\eta_0 \in L^1(\Omega)$, it is natural to expect that the solution $v$ to the the viscous surface waves without surface tension also satisfies the optimal decay rate $\|v(t)\|_{L^2} \lesssim (1+t)^{-1}$ if $\eta_0 \in L^1(\Omega)$, which means that the condition $v \in L^1(\mathbb{R}^+; L^2(\Omega))$ will be not satisfied in any case. Therefore, the condition $v \in L^1(\mathbb{R}^+; L^2(\Omega))$  is not essential to get Beale's non-decay theorem. In effect, we will give a modified non-decay theorem (Theorem \ref{thm-nondecay} below) where the conditions $\dot{\Lambda}_h^{\sigma_0}v \in L^1([0, +\infty); \mathcal{H}^2_{\text{tan}, 2}(\Omega)) $ and $\dot{\Lambda}_h v\in L^2([0, +\infty); \mathcal{H}^1_{\text{tan}, 3}(\Omega))$ for some positive $\sigma_0 \in (0, 1)$ are required, which makes $\eta_1(\infty)|_{\Sigma_0}=0$ meaningful in \eqref{decay-eta-cond-1}, while $\eta(\infty)=0$ may not. Due to the Sobolev embedding theorem, the condition $\dot{\Lambda}_h^{\sigma_0}v \in L^1([0, +\infty); \mathcal{H}^2_{\text{tan}, 2}(\Omega)) $  ensures that $\|\nabla\,v(t)\|_{L^\infty(\Omega)}$ belongs to $L^1(\mathbb{R}^+_t)$, from this, it may guarantee the propagation of horizontal regularities of the velocity as well as its smallness, which plays a key role in the study of the global well-posedness of the the viscous surface waves. Proceed analogously to the proof of the optimal decay $\|v(t)\|_{L^2} \lesssim (1+t)^{-1}$ whence $\eta_0 \in L^1(\Omega)$ in \cite{Beale-Nishida-1985, Hataya-2011}, we may expect to get that $\|\dot{\Lambda}_h^{\sigma_0}v \|_{L^2(\Omega))} \lesssim (1+t)^{-(\sigma_0+\lambda)}$ if  $\dot{\Lambda}_h^{-\lambda}\eta_0 \in L^2(\Omega)$ with $\lambda \in (0, 1)$. As a consequence, the requirement on the low horizontal regularity $\dot{\Lambda}_h^{-\lambda}\eta_0 \in L^2(\Omega)$ with $\lambda>1-\sigma_0\in (0, 1)$ is necessary to guarantee the condition $\dot{\Lambda}_h^{\sigma_0}v \in L^1([0, +\infty); \mathcal{H}^2_{\text{tan}, 2}(\Omega))$, which also implies that the initial condition $\dot{\Lambda}_h^{-\lambda}\eta_0 \in L^2(\Omega)$ contradicts the choice of the initial data in Beale's non-decay theorem. With low horizontal regularity assumptions on the initial data, we will employ Guo-Tice's two-tiered energy method to study the decay estimates of the solution. Without any compatibility conditions in terms of the time-derivatives of the velocity on the initial data, we should avoid using the energy in terms of $\partial_tv$ and its derivatives. In this case, to close the nonlinear energy estimates, we will first introduce four good unknowns replacing $\nabla v$ and the pressure $q$, which plays a crucial role in the proof of our main theorem about the global well-posedness of the system \eqref{VFS-eqns-1}.

\subsection{Plan of the paper}

The rest of the paper is organized as follows. Section \ref{sect-form} gives some equivalent formulations of the viscous surface waves \eqref{VFS-eqns-1} in Lagrangian coordinates, and introduces four new variables according to the kinetic boundary condition $\eqref{VFS-eqns-1}_3$ in \eqref{VFS-eqns-1}. In Section \ref{sect-main}, we define the energy and the dissipation, and state the main result of our paper. We first recall, in Section \ref{sect-tool}, some necessary estimates, and then derive the basic estimates in terms of the flow map. Then, in Section \ref{sect-energy}, we apply the four new good unknowns introduced first in the paper to get energy estimates of the horizontal derivatives of the velocity. Second normal derivatives of the velocity as well as their more horizontal regularities are showed by using the Stokes estimates in Section \ref{sect-stokes}. As a consequence, the total energy estimates are stated in Section \ref{sect-total}. Finally, in Section \ref{sect-proof-mainthm}, we complete the proof of our main theorem about the global well-posedness of the system \eqref{VFS-eqns-12}. In Appendix, we give the proof of the modified non-decay theorem.

\subsection{Notations}

\medbreak  Let us end this introduction by some notations that will be used in all that follows.

For operators $A,B,$ we denote $[A;B]=AB-BA$ to be the  commutator of $A$ and $B.$ We denote $1+t$ by $\langle t\rangle$. For~$a\lesssim b$, we mean that there is a uniform constant $C,$ which may be different on different lines, such that $a\leq Cb$.  The notation $a\thicksim b$ means both $a\lesssim b$ and $b\lesssim a$. Throughout the paper, the subscript notation for vectors and tensors as well as the Einstein summation convention has been adopted unless otherwise specified, Einstein¡¯s summation convention means that repeated Latin indices $i,\, j,\, k$, etc., are summed from $1$ to $3$, and repeated Greek indices $\alpha, \,\beta, \,\gamma$, etc., are summed from $2$ to $3$.

We introduce the operator $\mathcal{P}(\partial_h)$ to denote the horizontal derivatives of some functions, here $\mathcal{P}(\partial_h)$ is a pseudo-differential operator with the symbol  $\mathcal{P}(\varsigma_h)$ depending only on the horizontal frequency $ \varsigma_h=(\varsigma_2,\, \varsigma_3)^T$, that is,
 \begin{equation}\label{def-pdo-horizontal-1}
\begin{split}
 \mathcal{P}(\partial_h)f(x_1,x_h):=\mathcal{F}^{-1}_{\varsigma_h\rightarrow x_h}\bigg(\mathcal{P}(\varsigma_h)\mathcal{F}_{x_h\rightarrow \varsigma_h}(f(x_1,x_h))\bigg).
\end{split}
\end{equation}
In particular, we denote $\dot{\Lambda}_h^s$ (or $\Lambda_h^s$) the homogeneous (or nonhomogeneous) horizontal differential operator with the symbol $|\varsigma_h|^s$ (or $\langle\varsigma_h\rangle^s$) respectively, where $s \in \mathbb{R}$, $\langle\varsigma_h\rangle\eqdefa (1+|\varsigma_h|^2)^{1/2}$.

\renewcommand{\theequation}{\thesection.\arabic{equation}}
\setcounter{equation}{0}
\section{Formulation in Lagrangian Coordinates}\label{sect-form}

\subsection{Derivation of the system in fluid in Lagrangian coordinates}\label{susect-free-surface-1}

The quite direct approach used to solve the system \eqref{VFS-eqns-1} is Lagrangian. Let us now introduce the Lagrangian coordinates in which the free boundary becomes fixed.

{\bf 1. The flow map}:

Let $\eta\eqdefa \bar{\eta} \circ \bar{\sigma}$ be a position of the fluid particle $x$ in the equilibrium domain $\Omega$ at time $t$ so that
\begin{equation}\label{def-flowmap-1}
\begin{cases}
&\frac{d}{dt}\eta(t, x)=u(t, \eta(t, x)), \quad t>0, \, x\in \Omega,\\
&\eta|_{t=0}=x+\xi_0(x), \quad x\in \Omega,
\end{cases}
\end{equation}
then the displacement $\xi(t, x)\eqdefa \eta(t, x)-x$ satisfies
\begin{equation}\label{def-flowmap-2}
\begin{cases}
&\frac{d}{dt}\xi(t, x)=u(t, x+\xi(t, x)),\\
&\xi|_{t=0}=\xi_0.
\end{cases}
\end{equation}
We define Lagrangian quantities the velocity $v$ and the pressure $q$ in fluid as (where $x=(x_1, x_2, x_3)^T\in \Omega$):
\begin{equation*}
\begin{split}
&v(t, x)\eqdefa u(t, \eta(t, x)),\quad q(t, x)\eqdefa p(t, \eta(t, x)).
\end{split}
\end{equation*}
Denote the Jacobian of the flow map $\eta$ by
$J \eqdefa \mbox{det}(D\eta),
$
where
\begin{equation*}\label{pert-momentum-1-1}
\begin{split}
 &D\eta=\left(
\begin{array}{lll}
 1+\partial_1\xi^1 &  \partial_2\xi^1& \partial_3\xi^1\\
 \partial_1\xi^2&1+\partial_2\xi^2  &\partial_3 \xi^2\\
  \partial_1\xi^3 &\partial_2\xi^3& 1+\partial_3\xi^3 \\
\end{array}\right).
\end{split}
\end{equation*}
Define $\mathcal{A} \eqdefa  (D\eta)^{-T}$, then according to definitions of the flow map $\eta$ and the displacement $\xi$, we may get the identities:
\begin{equation}\label{flow-map-identity-1}
\mathcal{A}_{i}^k \partial_{k} \eta^j=\mathcal{A}_{k}^j \partial_{i} \eta^k=\delta_i^j, \quad \partial_k(J\mathcal{A}_{i}^k)=0,\quad\partial_{i} \eta^j=\delta_i^j+\partial_{i} \xi^j, \quad \mathcal{A}_{i}^j=\delta_i^j-\mathcal{A}_{i}^k \partial_k\xi^j.
\end{equation}
Set $a_{ij}\eqdefa\,J\,\mathcal{A}_i^j$,
the explicit form of the entries of the matrix $J\,\mathcal{A}$ can be found from the definition of $\mathcal{A}$ that
\begin{equation}\label{expre-a-1}
  \begin{split}
  & a_{11}=(1+\partial_2\xi^2)(1+\partial_3\xi^3)-\partial_2\xi^3  \partial_3\xi^2,\, a_{12}=-(\partial_1\xi^2+\partial_1\xi^2\partial_3\xi^3-\partial_1\xi^3  \partial_3\xi^2),\\
&a_{13}=-(\partial_1\xi^3+\partial_1\xi^3\partial_2\xi^2-\partial_1\xi^2  \partial_2\xi^3),\, a_{21}=-(\partial_2\xi^1+\partial_2\xi^1\partial_3\xi^3-\partial_2\xi^3  \partial_3\xi^1),\\
&a_{22}=(1+\partial_1\xi^1)(1+\partial_3\xi^3)-\partial_1\xi^3  \partial_3\xi^1,\,a_{23}=-(\partial_2\xi^3+\partial_2\xi^3\partial_1\xi^1-\partial_1\xi^3  \partial_2\xi^1),\\
&a_{31}=-(\partial_3\xi^1+\partial_3\xi^1\partial_2\xi^2-\partial_2\xi^1  \partial_3\xi^2),\,a_{32}=-(\partial_3\xi^2+\partial_3\xi^2\partial_1\xi^1-\partial_1\xi^2  \partial_3\xi^1),\\
&a_{33}=(1+\partial_1\xi^1)(1+\partial_2\xi^2)-\partial_1\xi^2  \partial_2\xi^1.
  \end{split}
\end{equation}
If the displacement $\xi$ is sufficiently small in an appropriate Sobolev space, then the flow mapping $\eta$ is a
diffeomorphism from $\Omega$ to $\Omega(t)$, which makes us to switch back and forth from Lagrangian
to Eulerian coordinates.

Simple computation implies that
\begin{equation*}\label{J-expres-1}
  \begin{split}
  & J=1+\grad \cdot\xi+\mathcal{B}_{00}+\mathcal{B}_{000}
  \end{split}
\end{equation*}
with
\begin{equation*}\label{J-expression-2a}
  \begin{split}
&\mathcal{B}_{00}:=\partial_1\xi^1 \nabla_h\cdot \xi^h-\partial_1\xi^h\cdot\nabla_h\xi^1+\nabla_h^{\perp}\xi^2\cdot\nabla_h\xi^3,\\
&\mathcal{B}_{000}:=\partial_1\xi^1 \nabla_h^{\perp}\,\xi^2\cdot \nabla_h\,\xi^3+\partial_1\xi^2\nabla_h^{\perp}\,\xi^3\cdot \nabla_h\,\xi^1+\partial_1\xi^3 \nabla_h^{\perp}\,\xi^1\cdot \nabla_h\,\xi^2
  \end{split}
\end{equation*}
with
$\nabla_h\eqdefa (\partial_{x_2},\, \partial_{x_3})^T,\quad \nabla_h^{\perp}\eqdefa (-\partial_{x_3},\, \partial_{x_2})^T$
and then
$\nabla_h^{\perp}\,f \cdot \nabla_h\,g=-\nabla_h^{\perp}\,g \cdot \nabla_h\,f$.

{\bf 2. Derivatives of $J$ and $\mathcal{A}$ in Lagrangian coordinates}

Next, we give some useful equations which we often use in what follows.

Since $\mathcal{A}(D\eta)^T=I$, differentiating it with respect to $t$ and $x$ once yields
\begin{equation}\label{identity-Lagrangian-1}
\begin{split}
&\partial_t \mathcal{A}_{i}^j=-\mathcal{A}_{k}^j\mathcal{A}_{i}^{m} \partial_{m}v^k,\,\partial_{s} \mathcal{A}_{i}^j=-\mathcal{A}_{k}^j\mathcal{A}_{i}^{m} \partial_{m}\partial_{s}\xi^k,
\end{split}
\end{equation}
where we used the fact $\partial_t\eta=v$ in the first equation in \eqref{identity-Lagrangian-1}.
Whence differentiate the Jacobian determinant $J$, we get
\begin{equation}\label{identity-deri-J}
\begin{split}
&\partial_t J =J \mathcal{A}_{i}^j \partial_j v^i, \quad \partial_{k} J =J \mathcal{A}_{i}^j \partial_j\partial_{k} \xi^i.
\end{split}
\end{equation}
Moreover, it is easy to verify the following Piola identity:
\begin{equation}\label{identity-Piola}
\begin{split}
&\partial_j (J \mathcal{A}_{i}^j) =0 \quad \forall \,i = 1, 2, 3.
\end{split}
\end{equation}
Here and in what follows, the subscript notation for vectors and tensors as well as the Einstein summation convention has been adopted unless otherwise specified.

{\bf 3. Navier-Stokes equations in Lagrangian coordinates}

Under Lagrangian coordinates, we may introduce the differential operators with
their actions given by $(\nabla_{\mathcal{A}}f)_i=\mathcal{A}_i^j  \partial_jf$, $ \mathbb{D}_{\mathcal{A}} (v)=\nabla_{\mathcal{A}} v+(\nabla_{\mathcal{A}} v)^T$, $\Delta_{\mathcal{A}} f=\nabla_{\mathcal{A}}\cdot \nabla_{\mathcal{A}} f$, so the Lagrangian version of the system \eqref{VFS-eqns-1} can be written on the fixed reference domain $\Omega$ as
\begin{equation}\label{eqns-pert-1}
\begin{cases}
 & \partial_t \xi=v\quad \mbox{in}\quad  \Omega, \\
 & \partial_t v + \nabla_{\mathcal{A}}\,q-\nu\grad_{\mathcal{A}} \cdot \mathbb{D}_{\mathcal{A}}(v)=0\quad \mbox{in}\quad  \Omega,\\
  & \grad_{\mathcal{A}} \cdot v=0\quad \mbox{in}\quad  \Omega,\\
 & (q-\xi^1)\, \mathcal{N}- \nu\mathbb{D}_{\mathcal{A}}(v) \mathcal{N}=0 \quad \mbox{on} \quad \Sigma_{0},\\
  & v|_{\Sigma_{b}}=0,\\
 &\xi|_{t=0}=\xi_0, \,   v|_{t=0}=v_0,
\end{cases}
\end{equation}
where $n_0=(1, 0, 0)^T$ is the outward-pointing unit normal vector on the interface $\Sigma_{0}$, $\mathcal{N}:= J\mathcal{A}\,n_0$ stands for the outward-pointing normal vector on the moving interface $\Sigma_F(t)$.


\subsection{The incompressibility condition}

In the fluid, under the assumption $J \neq 0$, the incompressibility condition $\nabla_{\mathcal{A}}\cdot v=0$ is equivalent to the equation $\nabla_{J\mathcal{A}}\cdot v=0$, which implies that
 the incompressibility condition $\grad_{J\mathcal{A}}\cdot v=0$ implies
\begin{equation*}\label{incomp-cond-fluid-1}
  \begin{split}
&a_{11}\partial_1v^1
+a_{21}\partial_1v^2+a_{31}\partial_1v^3=-\nabla_h\cdot\,v^h+\mathcal{B}_{4, i}^{\alpha}\partial_{\alpha}v^{i}
  \end{split}
\end{equation*}
with
\begin{equation}\label{incomp-cond-fluid-1a}
  \begin{split}
&\mathcal{B}_{4, i}^{\alpha}\partial_{\alpha}v^{i}\eqdefa -a_{1\beta}\partial_{\beta}v^1-(a_{22}-1)\partial_2v^2
-a_{23}\partial_3v^2-a_{32}\partial_2v^3-(a_{33}-1)\partial_3v^3.
  \end{split}
\end{equation}
Hence, the incompressibility condition $\nabla_{\mathcal{A}}\cdot v=0$ is equivalent to the linearized form of the divergence-free condition that
\begin{equation}\label{incomp-cond-fluid-3}
  \begin{split}
&\nabla\cdot v= -\widetilde{a_{\alpha\,1}}\partial_1v^\alpha+\mathcal{B}_{5, i}^{\alpha}\partial_{\alpha}v^{i}
  \end{split}
\end{equation}
with
\begin{equation*}\label{incomp-cond-fluid-4}
  \begin{split}
&\widetilde{a_{21}}:=a_{11}^{-1}a_{21},\quad \widetilde{a_{31}}:=a_{11}^{-1}a_{31}, \\
 &\mathcal{B}_{5, i}^{\alpha}\partial_{\alpha}v^{i}\eqdefa-a_{11}^{-1}a_{1\beta}\partial_{\beta}v^1-a_{11}^{-1}(a_{22}-a_{11})\partial_2v^2\\
 &\qquad\qquad\qquad
-a_{11}^{-1}a_{23}\partial_3v^2-a_{11}^{-1}a_{32}\partial_2v^3-a_{11}^{-1}(a_{33}-a_{11})\partial_3v^3.
  \end{split}
\end{equation*}

\subsection{Four good unknowns}
In this section, we want to introduce four new good unknowns related to the vertical derivatives $\partial_1v$ and the pressure $q$, which plays a significant role for the proof of the main result of the paper.

Notice that $\mathcal{N}=J\mathcal{A}\,n_0$ is the outer normal vector field on the interface $\Sigma_0$,
\begin{equation*}
  \begin{split}
&\mathcal{N}= J\mathcal{A}\,n_0=a_{j1}e_j=a_{11}e_1+a_{21}e_2+a_{31}e_3.
\end{split}
\end{equation*}
Let $\tau_2$ and $\tau_3$ be two independent tangential vector fields on $\Sigma_0$
\begin{equation*}
  \begin{split}
&\tau_2=-a_{21}e_1+a_{11}e_2,\quad \tau_3=-a_{31}e_1+a_{11}e_3.
\end{split}
\end{equation*}
It is easy to see that $\mathcal{N}\cdot \tau_{\alpha}=0$ with $\alpha=2, 3$.

Since $\bigl(\mathbb{D}_{J\mathcal{A}}(v)\mathcal{N}\bigr)^i
=(\mathbb{D}_{J\mathcal{A}}(v))_{ij}\mathcal{N}^j
=(a_{ik}\partial_k v^j+a_{jk}\partial_k v^i)a_{j1}
$ with $i, j, k=1, 2, 3$, we separate all the components of $\mathbb{D}_{J\mathcal{A}}(v)\mathcal{N}$ into two types of the terms: the one is of the vertical derivative of $v$, and the other is of the horizontal derivative of $v$
\begin{equation}\label{deform-bdry-1}
  \begin{split}
&\bigl(\mathbb{D}_{J\mathcal{A}}(v)\mathcal{N}\bigr)^1=(a_{11}^2+|\overrightarrow{\rm{a}_1}|^2)\partial_1 v^1+a_{11}a_{\alpha\,1}\partial_1 v^\alpha+\mathcal{B}_{1, i}^{\alpha}\partial_{\alpha}v^{i}, \\
&\bigl(\mathbb{D}_{J\mathcal{A}}(v)\mathcal{N}\bigr)^2=a_{21}a_{11}\,\partial_1 v^1+(|\overrightarrow{\rm{a}_1}|^2+a_{21}^2)\partial_1 v^2+a_{21}a_{31}\,\partial_1 v^3+\partial_2 v^1+\mathcal{B}_{2, i}^{\alpha}\partial_{\alpha}v^{i}, \\
&\bigl(\mathbb{D}_{J\mathcal{A}}(v)\mathcal{N}\bigr)^3=a_{31}a_{11}\partial_1 v^1+a_{31}a_{21}\partial_1 v^2+(|\overrightarrow{\rm{a}_1}|^2+a_{31}^2)\partial_1 v^3+\partial_3 v^1+\mathcal{B}_{3, i}^{\alpha}\partial_{\alpha}v^{i},\\
  \end{split}
\end{equation}
where $\overrightarrow{\rm{a}_1}:=(a_{11}, a_{21}, a_{31})^T,\quad |\overrightarrow{\rm{a}_1}| :=(\sum_{i=1}^3a_{i1}^2)^{1/2}$, and
\begin{equation*}\label{deform-bdry-2}
  \begin{split}
&\mathcal{B}_{1, i}^{\alpha}\partial_{\alpha}v^{i}\eqdefa a_{i\beta}a_{i1}\partial_{\beta} v^1+a_{1\beta}a_{i\,1}\,\partial_\beta v^i,\\
&\mathcal{B}_{2, i}^{\alpha}\partial_{\alpha}v^{i}\eqdefa (a_{22}a_{11}-1)\,\partial_2 v^1+a_{23}a_{11}\,\partial_3 v^1+ a_{i\beta}a_{i1}\partial_{\beta} v^2+a_{2\beta}a_{\gamma\,1}\,\partial_{\beta} v^\gamma,\\
&\mathcal{B}_{3, i}^{\alpha}\partial_{\alpha}v^{i}\eqdefa a_{32}a_{11}\partial_2 v^1+a_{3\beta}a_{\gamma\,1}\partial_{\beta} v^{\gamma}+ a_{i\beta}a_{i1} \partial_{\beta} v^3+ (a_{33}a_{11}-1) \,\partial_3 v^1.
\end{split}
\end{equation*}

Hence, multiplying the boundary equation on $\Sigma_0$ in \eqref{eqns-pert-1} by  the tangential vectors $\tau_{\alpha}$ (with $\alpha=2, 3$) implies
\begin{equation*}
  \begin{split}
&0=\tau_2\cdot(\mathbb{D}_{J\mathcal{A}}(v)\mathcal{N})
=-a_{21} |\overrightarrow{\rm{a}_1}|^2 \partial_1 v^1+a_{11} |\overrightarrow{\rm{a}_1}|^2\partial_1 v^2\\
&\qquad\qquad\qquad\qquad\qquad+a_{11}\partial_2 v^1+(-a_{21}\mathcal{B}_{1, i}^{\alpha}+a_{11}\mathcal{B}_{2, i}^{\alpha})\partial_{\alpha}v^{i},
\end{split}
\end{equation*}
and
\begin{equation*}
  \begin{split}
&0=\tau_3\cdot(\mathbb{D}_{J\mathcal{A}}(v)\mathcal{N})
=-a_{31} |\overrightarrow{\rm{a}_1}|^2 \partial_1 v^1+a_{11} |\overrightarrow{\rm{a}_1}|^2\partial_1 v^3\\
&\qquad\qquad\qquad\qquad\qquad+a_{11}\partial_3 v^1+(-a_{31}\mathcal{B}_{1, i}^{\alpha}+a_{11}\mathcal{B}_{3, i}^{\alpha})\partial_{\alpha}v^{i},
\end{split}
\end{equation*}
which along with the incompressibility condition \eqref{incomp-cond-fluid-3} gives rise to
\begin{equation}\label{prtv-interface-1a}
  \begin{cases}
  &-a_{21}\partial_1 v^1+a_{11} \partial_1 v^2=- a_{11}|\overrightarrow{\rm{a}_1}|^{-2}\partial_2 v^1+ (a_{21}\mathcal{B}_{1, i}^{\alpha}-a_{11}\mathcal{B}_{2, i}^{\alpha}) |\overrightarrow{\rm{a}_1}|^{-2}\partial_{\alpha}v^{i},\\
&-a_{31}\partial_1 v^1+a_{11}\partial_1 v^3=- a_{11}|\overrightarrow{\rm{a}_1}|^{-2}\partial_3 v^1+ (a_{31}\mathcal{B}_{1, i}^{\alpha}-a_{11}\mathcal{B}_{3, i}^{\alpha})|\overrightarrow{\rm{a}_1}|^{-2}\partial_{\alpha}v^{i},\\
&a_{11}\partial_1v^1
+a_{21}\partial_1v^2+a_{31}\partial_1v^3=-\nabla_h\cdot\,v^h+\mathcal{B}_{4, i}^{\alpha}\partial_{\alpha}v^{i}.
  \end{cases}
\end{equation}
Solving \eqref{prtv-interface-1a} in terms of the vertical derivative of $v$, we conclude that on the interface $\Sigma_0$
\begin{equation}\label{prtv-interface-1}
  \begin{split}
  &\partial_1v^1=-\nabla_h\cdot\,v^h+\mathcal{B}_{6, i}^{\alpha}\partial_{\alpha}v^{i},\,\partial_1 v^2=-\partial_2 v^1+\mathcal{B}_{7, i}^{\alpha}\partial_{\alpha}v^{i},\, \partial_1 v^3=-\partial_3 v^1+\mathcal{B}_{8, i}^{\alpha}\partial_{\alpha}v^{i},
\end{split}
\end{equation}
where
\begin{equation*}\label{prtv-interface-2}
  \begin{split}
   &\mathcal{B}_{6, i}^{\alpha}\partial_{\alpha}v^{i}\eqdefa -\widetilde{a_{21}}\partial_2 v^1-\widetilde{a_{31}}\partial_3 v^1+(\widetilde{a_{21}}\mathcal{B}_{7, i}^{\alpha}+\widetilde{a_{31}}\mathcal{B}_{8, i}^{\alpha}+\mathcal{B}_{5, i}^{\alpha})\partial_{\alpha}v^{i},\\
  &\mathcal{B}_{7, i}^{\alpha}\partial_{\alpha}v^{i}\eqdefa a_{11}^{-1}|\overrightarrow{\rm{a}_1}|^{-4}\bigg[-a_{11}(a_{11}^2+a_{31}^2-|\overrightarrow{\rm{a}_1}|^{4})\partial_2v^1\\
  &\qquad
  +(a_{11}^2+a_{31}^2)\bigg((a_{21}\mathcal{B}_{1, i}^{\alpha}-a_{11}\mathcal{B}_{2, i}^{\alpha})\partial_{\alpha}v^{i} +|\overrightarrow{\rm{a}_1}|^{2}a_{21}(-\nabla_h\cdot\,v^h+\mathcal{B}_{5, i}^{\alpha}\partial_{\alpha}v^{i})\bigg)\\
  &\qquad-a_{21}a_{31}\bigg(- a_{11} \partial_3 v^1+ (a_{31}\mathcal{B}_{1, i}^{\alpha}-a_{11}\mathcal{B}_{3, i}^{\alpha}) \partial_{\alpha}v^{i}\\
  &\qquad\qquad\qquad\qquad\qquad\qquad+|\overrightarrow{\rm{a}_1}|^{2}a_{31}(-\nabla_h\cdot\,v^h+\mathcal{B}_{5, i}^{\alpha}\partial_{\alpha}v^{i})\bigg)\bigg],\\
&\mathcal{B}_{8, i}^{\alpha}\partial_{\alpha}v^{i}\eqdefa   a_{11}^{-1}|\overrightarrow{\rm{a}_1}|^{-4}\bigg[-a_{11}(a_{11}^2+a_{21}^2-|\overrightarrow{\rm{a}_1}|^{4})\partial_3v^1\\
&\qquad+(a_{11}^2+a_{21}^2)\bigg((a_{31}\mathcal{B}_{1, i}^{\alpha}-a_{11}\mathcal{B}_{3, i}^{\alpha}) \partial_{\alpha}v^{i}+|\overrightarrow{\rm{a}_1}|^{2}a_{31}(-\nabla_h\cdot\,v^h+\mathcal{B}_{5, i}^{\alpha}\partial_{\alpha}v^{i})\bigg)\\
  &\qquad-a_{21}a_{31}\bigg(-a_{11}\partial_2v^1+a_{11}(a_{21}\mathcal{B}_{1, i}^{\alpha}-a_{11}\mathcal{B}_{2, i}^{\alpha})\partial_{\alpha}v^{i} \\
  &\qquad\qquad\qquad\qquad\qquad\qquad+|\overrightarrow{\rm{a}_1}|^{2}a_{21}(-\nabla_h\cdot\,v^h+\mathcal{B}_{5, i}^{\alpha}\partial_{\alpha}v^{i})\bigg)\bigg].
\end{split}
\end{equation*}
On the other hand, taking the dot product between the boundary condition on $\Sigma_0$ in \eqref{eqns-pert-1} and $\mathcal{N}$, it immediately follows
\begin{equation*}
  \begin{split}
&(q-\xi^1)\, \mathcal{N}\cdot\mathcal{N}- \nu\mathcal{N}\cdot(\mathbb{D}_{\mathcal{A}}(v) \mathcal{N})=0,
  \end{split}
\end{equation*}
which, together with $\mathcal{N}\cdot\mathcal{N}=|\overrightarrow{\rm{a}_1}|^{2}$, \eqref{incomp-cond-fluid-1}, \eqref{deform-bdry-1}, \eqref{prtv-interface-1a}, and
\begin{equation*}
  \begin{split}
\mathcal{N}\cdot\bigl(\mathbb{D}_{J\mathcal{A}}(v)\mathcal{N}\bigr)
&=2|\overrightarrow{\rm{a}_1}|^{2}a_{i1}\partial_1 v^i+a_{\beta\,1}\partial_{\beta} v^1 + a_{j1}\mathcal{B}_{j, i}^{\alpha}\partial_{\alpha}v^{i}\\
&=2|\overrightarrow{\rm{a}_1}|^{2}(-\nabla_h\cdot\,v^h+\mathcal{B}_{4, i}^{\alpha}\partial_{\alpha}v^{i})+a_{\beta\,1}\partial_{\beta} v^1 + a_{j1}\mathcal{B}_{j, i}^{\alpha}\partial_{\alpha}v^{i},\\
\end{split}
\end{equation*}
implies
\begin{equation*}\label{bdry-q-1}
\begin{split}
  & J\, (q-\xi^1)\,-\nu (2 (-\nabla_h\cdot\,v^h+\mathcal{B}_{4, i}^{\alpha}\partial_{\alpha}v^{i})+|\overrightarrow{\rm{a}_1}|^{-2}(a_{\beta\,1}\partial_{\beta} v^1 + a_{j1}\mathcal{B}_{j, i}^{\alpha}\partial_{\alpha}v^{i}))=0 \quad \mbox{on} \, \Sigma_{0}.
\end{split}
\end{equation*}
We thus conclude that
\begin{equation}\label{q-interface-1}
\begin{split}
& q=\xi^1-2\nu\,\nabla_h\cdot v^h+\nu\,\mathcal{B}_{9, i}^{\alpha}\partial_{\alpha}v^{i} \quad\mbox{on} \quad \Sigma_0,
\end{split}
\end{equation}
where the nonlinear term
\begin{equation*}
\begin{split}
  \mathcal{B}_{9, i}^{\alpha}\partial_{\alpha}v^{i} \eqdefa &-2 (J^{-1}-1)\nabla_h\cdot\,v^h+2J^{-1}\mathcal{B}_{4, i}^{\alpha}\partial_{\alpha}v^{i}+  J^{-1}|\overrightarrow{\rm{a}_1}|^{-2}(a_{\beta\,1}\partial_{\beta} v^1 + a_{j1}\mathcal{B}_{j,i}^{\alpha}\partial_{\alpha}v^{i}).
\end{split}
\end{equation*}
Combining \eqref{prtv-interface-1} with \eqref{q-interface-1}, the boundary condition on $\Sigma_0$ in \eqref{eqns-pert-1} can be written as the linearized form
\begin{equation}\label{linearf-bdry-1}
\begin{split}
q\, e_1- \nu\mathbb{D}(v)\,e_1=
\left(
  \begin{array}{c}
   \xi^1+\nu( \mathcal{B}_{9, i}^{\alpha}-2 \mathcal{B}_{6, i}^{\alpha})\partial_{\alpha}v^{i}\\
    -\nu\, \mathcal{B}_{7, i}^{\alpha}\partial_{\alpha}v^{i}\\
    -\nu\,\mathcal{B}_{8, i}^{\alpha}\partial_{\alpha}v^{i}
  \end{array}
\right).
\end{split}
\end{equation}

In order to extend the interface boundary forms of $\partial_1v$ and $q$ in \eqref{prtv-interface-1} and \eqref{q-interface-1} to the interior domain of the fluid, let us first introduce $\mathcal{H}(f)$ as the harmonic extension of $f|_{\Sigma_0}$ into $\Omega$:
\begin{equation*}\label{harmonic-ext-xi-1-1}
\begin{cases}
   &\Delta \mathcal{H}(f)=0 \quad\mbox{in} \quad \Omega,\\
   &\mathcal{H}(f)|_{\Sigma_0}=f|_{\Sigma_0},\quad \mathcal{H}(f)|_{\Sigma_b}=0.
\end{cases}
\end{equation*}
We state the classical result on such an operator, which proof is left to the reader.
\begin{lem}\label{est-harmonic-ext-1}
\begin{equation*}\label{harmonic-ext-xi-1-2}
\begin{split}
 &\|\mathcal{H}(f)\|_{H^1(\Omega)} \lesssim \|f\|_{H^{\frac{1}{2}}(\Sigma_0)}, \quad \|\mathcal{H}(f)\|_{H^{r}(\Omega)} \lesssim \|f\|_{H^{r-\frac{1}{2}}(\Sigma_0)} \quad (r\geq 2),\\
     &\|\partial_h^k\mathcal{H}(f)\|_{H^1(\Omega)} \lesssim \|\partial_h^kf\|_{H^{\frac{1}{2}}(\Sigma_0)}, \quad
  \|\partial_h^k\mathcal{H}(f)\|_{H^{2}(\Omega)} \lesssim \|\partial_h^kf\|_{H^{\frac{3}{2}}(\Sigma_0)} \quad (k\in \mathbb{N}).
\end{split}
\end{equation*}
\end{lem}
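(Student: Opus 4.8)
The plan is to exploit the flat product geometry of the slab $\Omega = (-\underline{b},0)\times\mathbb{R}^2$ and diagonalize $-\Delta$ by the horizontal Fourier transform. Writing $\mathcal{H}(f)(x_1,x_h) = \mathcal{F}^{-1}_{\varsigma_h\to x_h}\big(\widehat{w}(x_1,\varsigma_h)\big)$, the defining harmonic problem becomes, for a.e. fixed $\varsigma_h$, the two-point ODE
\[
\partial_1^2\widehat{w} = |\varsigma_h|^2\widehat{w} \ \text{on}\ (-\underline{b},0),\qquad \widehat{w}(0,\varsigma_h)=\widehat{f}(\varsigma_h),\quad \widehat{w}(-\underline{b},\varsigma_h)=0,
\]
with explicit solution $\widehat{w}(x_1,\varsigma_h) = \widehat{f}(\varsigma_h)\,K(x_1,\varsigma_h)$, where $K(x_1,\varsigma_h) = \sinh\!\big(|\varsigma_h|(x_1+\underline{b})\big)/\sinh\!\big(|\varsigma_h|\underline{b}\big)$ for $\varsigma_h\neq 0$ and $K(x_1,0) = (x_1+\underline{b})/\underline{b}$. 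By Plancherel in $x_h$, every norm in the lemma reduces to a pointwise-in-$\varsigma_h$ bound on weighted $L^2_{x_1}$-norms of $K$ and its $x_1$-derivatives, and the two elementary kernel estimates driving everything are
\[
\int_{-\underline{b}}^0 |K(x_1,\varsigma_h)|^2\,dx_1 \ \lesssim\ \langle\varsigma_h\rangle^{-1},\qquad \int_{-\underline{b}}^0 |\partial_1 K(x_1,\varsigma_h)|^2\,dx_1 \ \lesssim\ \langle\varsigma_h\rangle,
\]
both obtained by explicit integration of $\sinh^2$ and $\cosh^2$; the first, which encodes the concentration of $K$ near $x_1=0$ at high frequency, is the source of the half-derivative gain from $\Sigma_0$ into $\Omega$.

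Granting these, the first estimate follows since $\|\mathcal{H}(f)\|_{H^1(\Omega)}^2 \sim \int_{\mathbb{R}^2}\int_{-\underline{b}}^0\big(|\widehat{w}|^2+|\partial_1\widehat{w}|^2+|\varsigma_h|^2|\widehat{w}|^2\big)\,dx_1\,d\varsigma_h$ and each inner integral is $\lesssim\langle\varsigma_h\rangle|\widehat{f}(\varsigma_h)|^2$. For the higher-order estimate I would use the ODE in the form $\partial_1^{2}K = |\varsigma_h|^2 K$ to reduce every term $\int_{-\underline b}^0|\partial_1^j K|^2|\varsigma_h|^{2(r-j)}\,dx_1$ occurring in $\|\mathcal{H}(f)\|_{H^r(\Omega)}^2$ to either $|\varsigma_h|^{2r}\int|K|^2\,dx_1$ (for $j$ even) or $|\varsigma_h|^{2r-2}\int|\partial_1 K|^2\,dx_1$ (for $j$ odd), both $\lesssim\langle\varsigma_h\rangle^{2r-1}$; integrating against $|\widehat{f}|^2$ returns exactly $\|f\|_{H^{r-1/2}(\Sigma_0)}^2$ (the non-integer case then follows by interpolation). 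The two statements involving $\partial_h^k$ are immediate corollaries: since $\partial_h^k$ preserves harmonicity and the two boundary conditions on the horizontal hyperplanes $\Sigma_0,\Sigma_b$, one has $\partial_h^k\mathcal{H}(f) = \mathcal{H}(\partial_h^k f)$ (equivalently, multiply the Fourier identities by $|\varsigma_h|^{2k}$), so one simply applies the $H^1$, resp. $H^2$, bound to the datum $\partial_h^k f$.

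The only step requiring genuine care is the low-frequency regime $\varsigma_h\to 0$, where $\sinh(|\varsigma_h|\underline{b})$ degenerates: there one checks that $K(\cdot,\varsigma_h)$ and $\partial_1K(\cdot,\varsigma_h)$ converge uniformly on $[-\underline{b},0]$ to the linear profile $K(\cdot,0)$ and its derivative, so the weighted $L^2_{x_1}$-bounds extend continuously across $\varsigma_h=0$ with no separate argument---this is precisely why the \emph{non}homogeneous norms $H^s(\Sigma_0)$, rather than $\dot H^s$, appear in the statement. An entirely coordinate-free alternative, which I would mention but not carry out, is to build $\mathcal{H}(f)$ variationally as the minimizer of $\int_\Omega|\nabla w|^2$ among $w$ with trace $f$ on $\Sigma_0$ and vanishing trace on $\Sigma_b$, and then invoke the trace theorem together with standard interior and up-to-the-boundary elliptic regularity for $-\Delta$ on the slab; the Fourier computation above is simply the most self-contained route and makes the half-derivative gain transparent.
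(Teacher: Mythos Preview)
Your proof is correct. The paper does not actually prove this lemma: it introduces $\mathcal{H}(f)$ as the harmonic extension, states the estimates as ``the classical result on such an operator,'' and explicitly leaves the proof to the reader. Your horizontal-Fourier diagonalization followed by the explicit ODE solution $K(x_1,\varsigma_h)=\sinh(|\varsigma_h|(x_1+\underline{b}))/\sinh(|\varsigma_h|\underline{b})$ and the two kernel bounds is exactly the standard computation one expects here, and the observation $\partial_h^k\mathcal{H}(f)=\mathcal{H}(\partial_h^kf)$ cleanly handles the second line. There is nothing to compare against in the paper itself; you have supplied precisely the details the author omitted.
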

Based on \eqref{prtv-interface-1} and \eqref{q-interface-1}, we introduce four new good variables in the fluid
\begin{equation}\label{unknown-0}
  \begin{cases}
  &\mathcal{G}^1\eqdefa\partial_1v^1+\nabla_h\cdot\,v^h-\mathcal{H}(\mathcal{B}_{6, i}^{\alpha})\partial_{\alpha}v^{i},\\
  &\mathcal{G}^2\eqdefa\partial_1 v^2+\partial_2 v^1-\mathcal{H}(\mathcal{B}_{7, i}^{\alpha})\partial_{\alpha}v^{i},\\
&\mathcal{G}^3\eqdefa\partial_1 v^3+\partial_3 v^1-\mathcal{H}(\mathcal{B}_{8, i}^{\alpha})\partial_{\alpha}v^{i},\\
&\mathcal{Q}  \eqdefa  q-\mathcal{H}(\xi^1)+2\nu\,\nabla_h\cdot v^h-\nu\,\mathcal{H}(\mathcal{B}_{9, i}^{\alpha})\partial_{\alpha}v^{i} \quad \text{in} \quad \Omega.
\end{cases}
\end{equation}
It is easy to obtain from \eqref{prtv-interface-1} and \eqref{q-interface-1}  that
\begin{equation}\label{unknown-bdry-1}
  \begin{split}
  &\mathcal{G}^1=0,\quad \mathcal{G}^2=0,\quad \mathcal{G}^3=0,\quad \mathcal{Q}=0 \quad\text{on}\quad \Sigma_0,
\end{split}
\end{equation}
and $\partial_1v$ and $q$ satisfy
\begin{equation}\label{partialv-q-1}
  \begin{cases}
  &\partial_1v^1=\mathcal{G}^1-\nabla_h\cdot v^h+\mathcal{H}(\mathcal{B}_{6, i}^{\alpha})\partial_{\alpha}v^{i},\\
  &\partial_1 v^2=\mathcal{G}^2-\partial_2 v^1+\mathcal{H}(\mathcal{B}_{7, i}^{\alpha})\partial_{\alpha}v^{i},\\
&\partial_1 v^3=\mathcal{G}^z-\partial_3 v^1+\mathcal{H}(\mathcal{B}_{8, i}^{\alpha})\partial_{\alpha}v^{i},\\
&q=\mathcal{Q}  +\mathcal{H}(\xi^1)-2\nu\,\nabla_h\cdot v^h+\nu\,\mathcal{H}(\mathcal{B}_{9, i}^{\alpha})\partial_{\alpha}v^{i}\quad \mbox{in} \quad \Omega.
\end{cases}
\end{equation}
For convenience, we briefly write
\begin{equation*}\label{q-Q-1}
\begin{split}
   &q=\mathcal{Q}  +\widetilde{\mathcal{Q}} \quad\mbox{in} \quad \Omega
\end{split}
\end{equation*}
with
\begin{equation*}\label{titde-Q-1}
\begin{split}
   &\widetilde{\mathcal{Q}}\eqdefa  \mathcal{H}(\xi^1)-2\nu\,\nabla_h\cdot v^h+\nu\,\mathcal{H}(\mathcal{B}_{9, i}^{\alpha})\partial_{\alpha}v^{i}.
\end{split}
\end{equation*}

These four new unknowns $\mathcal{G}^i$ with $i=1, 2, 3$ and $\mathcal{Q}$ play an important role in the estimate of the $L^\infty_tL^2$ norms of $\nabla\,v$ as well as its tangential derivatives, as it is possible to close all the estimates by using integration in parts and the relations \eqref{partialv-q-1} with \eqref{unknown-bdry-1} when we  encounter the integrals including the vertical derivative of $\partial_1v$ and $q$. More details will be showed in the proof of Lemma \ref{lem-pseudo-energy-tv-1}. This makes it possible to avoid using the compatibility conditions in terms of the acceleration $\partial_tv$ and its derivatives.

\subsection{Linearized form of the system}

Let's now derive the linearized form of the system \eqref{eqns-pert-1} in the Lagrangian coordinates, which is crucially used to recover the bounds for high order derivatives.

Under the Lagrangian coordinates, we first divide the dissipative term $\grad_{\mathcal{A}} \cdot \mathbb{D}_{\mathcal{A}}(v)$ into linear and nonlinear parts
\begin{equation}\label{dissipative-split-1}
\begin{split}
 &(\grad_{\mathcal{A}} \cdot \mathbb{D}_{\mathcal{A}}(v))^{k}=(\nabla \cdot \mathbb{D}(v))^{k}+g_{kk}
\end{split}
\end{equation}
with $k=1, 2, 3$, where nonlinear parts $g_{kk}$ have the forms
\begin{equation*}
\begin{split}
 &g_{11}:=[f_{0, 11}+(\mathcal{A}_1^1)^2-1]\partial_1^2v^{1}+2(\mathcal{A}_\beta^1 \mathcal{A}_\beta^\alpha+2\mathcal{A}_1^1 \mathcal{A}_1^\alpha)\partial_\alpha\partial_1v^{1}+ (\mathcal{A}_\alpha^1\mathcal{A}_1^1)\partial_1^2v^{\alpha}\\
&+ B_{1,\beta}^{\alpha} \partial_\alpha\partial_1\,v^\beta+ B_{2,i}^{\alpha\beta} \partial_\alpha\partial_\beta\,v^i+
 ({f}_{1, j}+\mathcal{A}_1^i\partial_i \mathcal{A}_1^j)\,\partial_jv^{1}+\mathcal{A}_\alpha^i\partial_i \mathcal{A}_1^1\,\partial_1v^{\alpha}
+\mathcal{A}_\beta^i\partial_i\mathcal{A}_1^\alpha\,\partial_\alpha\,v^{\beta},\\
 &g_{22}:= (\mathcal{A}_1^1\mathcal{A}_2^1)\partial_1^2v^{1}+\mathcal{A}_1^i\partial_i \mathcal{A}_2^1\,\partial_1v^{1}+[f_{0, 11}+(\mathcal{A}_2^1)^2]\partial_1^2v^{2}+ (\mathcal{A}_3^1\mathcal{A}_2^1)\partial_1^2v^{3}+{f}_{1, 1}\partial_1v^{2}\\
 &+\mathcal{A}_\beta^i\partial_i \mathcal{A}_2^1\,\partial_1v^{\beta}+B_{3}^{\alpha}\partial_\alpha\partial_1\,v^1
 +B_{4,i}^{\alpha\beta}\partial_\alpha\partial_\beta\,v^i+B_{5,i}^{\alpha\beta}\partial_\alpha\partial_\beta\,v^i
 +B_{6, \beta}^{\alpha}\partial_\alpha\partial_1\,v^\beta,\\
 &g_{33}:=\mathcal{A}_1^i\partial_i \mathcal{A}_3^1\,\partial_1v^{1}+ (\mathcal{A}_1^1\mathcal{A}_3^1)\partial_1^2v^{1}+B_{7,i}^{\alpha\beta}\partial_\alpha\partial_\beta\,v^i+B_{8, \beta}^{\alpha}\partial_\alpha\partial_1\,v^\beta\\
 &\qquad\qquad\qquad\qquad+B_{9}^{\alpha}\partial_\alpha\partial_1\,v^1+B_{10}^{\alpha}\partial_\alpha\,v^i+B_{11, \alpha}\partial_1^2\,v^\alpha+B_{12, \alpha}\partial_1\,v^\alpha,
\end{split}
\end{equation*}
with
\begin{equation*}
\begin{split}
 & B_{1,\beta}^{\alpha} \partial_\alpha\partial_1\,v^\beta:=(\mathcal{A}_2^1 \mathcal{A}_1^3+\mathcal{A}_2^3\mathcal{A}_1^1) \partial_3\partial_1v^{2}+(\mathcal{A}_3^1 \mathcal{A}_1^2+\mathcal{A}_3^2\mathcal{A}_1^1) \partial_2\partial_1v^{3}\\
 &\qquad\qquad
+(\mathcal{A}_2^1\mathcal{A}_1^2+\mathcal{A}_2^2\mathcal{A}_1^1-1) \partial_2\partial_1v^{2}+( \mathcal{A}_3^1 \mathcal{A}_1^3+\mathcal{A}_3^3\mathcal{A}_1^1 -1) \partial_3\partial_1v^{3},\\
&B_{2,i}^{\alpha\beta}\partial_\alpha\partial_\beta\,v^i:= (\mathcal{A}_i^\beta\mathcal{A}_1^\alpha)\partial_\beta\partial_\alpha\,v^{i}+ 2(\mathcal{A}_i^2\mathcal{A}_i^3)\partial_2\partial_3\,v^{1}+ \sum_{\alpha=2}^3f_{0, \alpha\alpha}\partial_\alpha^2\,v^{1},\\
\end{split}
\end{equation*}
\begin{equation*}
\begin{split}
&B_{3}^{\alpha}\partial_\alpha\partial_1\,v^1:=(\mathcal{A}_1^2\mathcal{A}_2^1+\mathcal{A}_1^1 \mathcal{A}_2^2-1) \partial_2\partial_1v^{1}+ (\mathcal{A}_1^3\mathcal{A}_2^1+\mathcal{A}_1^1 \mathcal{A}_2^3 ) \partial_3\partial_1v^{1},\\
&B_{4,i}^{\alpha\beta}\partial_\alpha\partial_\beta\,v^i:= \sum_{\alpha=2}^3f_{0, \alpha\alpha}\partial_\alpha^2\,v^{2}+ (2\mathcal{A}_i^2\mathcal{A}_i^3+\mathcal{A}_2^3\mathcal{A}_2^2)\partial_2\partial_3\,v^{2}+ (\mathcal{A}_2^2\mathcal{A}_2^2-1)\partial_2^2\,v^{2}\\
&\quad+ (\mathcal{A}_3^3\mathcal{A}_2^2-1)\partial_3\partial_2\,v^{3}
+ (\mathcal{A}_3^2\mathcal{A}_2^2)\partial_2^2\,v^{3}+ (\mathcal{A}_1^\beta\mathcal{A}_2^2)\partial_\beta\partial_2\,v^{1}
+(\mathcal{A}_i^\beta\mathcal{A}_2^3)\partial_\beta\partial_3\,v^{i},\\
\end{split}
\end{equation*}
\begin{equation*}
\begin{split}
&B_{5,i}^{\alpha\beta}\partial_\alpha\partial_\beta\,v^i:=\mathcal{A}_1^k\partial_k\mathcal{A}_2^\alpha\,\partial_\alpha\,v^{1} + f_{1,\alpha}\partial_\alpha\,v^{2}
+\mathcal{A}_\beta^k\partial_k\mathcal{A}_2^\alpha\,\partial_\alpha\,v^{\beta},\\
&B_{6, \beta}^{\alpha}\partial_\alpha\partial_1\,v^\beta:= 2\mathcal{A}_i^1 \mathcal{A}_i^\alpha\,\partial_\alpha\partial_1v^{2}
+(\mathcal{A}_\beta^1 \mathcal{A}_2^\alpha+\mathcal{A}_\beta^\alpha\mathcal{A}_2^1)\partial_\alpha\partial_1v^{\beta},
\end{split}
\end{equation*}
\begin{equation*}
\begin{split}
&B_{7,i}^{\alpha\beta}\partial_\alpha\partial_\beta\,v^i:=\sum_{\alpha=2}^3f_{0, \alpha\alpha} \partial_\alpha^2\,v^{3}+ (\mathcal{A}_1^\beta\mathcal{A}_3^\alpha)\partial_\beta\partial_\alpha\,v^{1}+ (\mathcal{A}_2^2\mathcal{A}_3^2)\partial_2^2\,v^{2}+ (\mathcal{A}_2^3\mathcal{A}_3^3)\partial_3^2\,v^{2}\\
 &+ (\mathcal{A}_2^3\mathcal{A}_3^2+\mathcal{A}_2^2\mathcal{A}_3^3-1)\partial_2\partial_3\,v^{2}+ (\mathcal{A}_3^2\mathcal{A}_3^2)\partial_2^2\,v^{3}+ (\mathcal{A}_3^3\mathcal{A}_3^3-1)\partial_3^2\,v^{3}+ 2(\mathcal{A}_i^2\mathcal{A}_i^3+\mathcal{A}_3^2\mathcal{A}_3^3)\partial_2\partial_3\,v^{3},\\
 &B_{8, \beta}^{\alpha}\partial_\alpha\partial_1\,v^\beta:= 2 (\mathcal{A}_i^1 \mathcal{A}_i^\alpha+\mathcal{A}_3^\alpha\mathcal{A}_3^1) \partial_\alpha\partial_1v^{3}+(\mathcal{A}_2^1 \mathcal{A}_3^\alpha+\mathcal{A}_2^\alpha\mathcal{A}_3^1)\partial_\alpha\partial_1v^{2},\\
 &B_{9}^{\alpha}\partial_\alpha\partial_1\,v^1:=(\mathcal{A}_1^2\mathcal{A}_3^1 + \mathcal{A}_1^1 \mathcal{A}_3^2) \partial_2\partial_1v^{1}+ (\mathcal{A}_1^3\mathcal{A}_3^1 +\mathcal{A}_1^1 \mathcal{A}_3^3-1) \partial_3\partial_1v^{1},\\
 \end{split}
\end{equation*}
\begin{equation*}
\begin{split}
  &B_{10}^{\alpha}\partial_\alpha\,v^i:=f_{1,\alpha}\partial_\alpha\,v^{3}
+\mathcal{A}_1^k\partial_k\mathcal{A}_3^\alpha\,\partial_\alpha\,v^{1}
+\mathcal{A}_\beta^k\partial_k\mathcal{A}_3^\alpha\,\partial_\alpha\,v^{\beta},\\
 &B_{11, \alpha}\partial_1^2\,v^\alpha:= (\mathcal{A}_2^1\mathcal{A}_3^1)\partial_1^2v^{2}+[f_{0, 11}+(\mathcal{A}_3^1)^2]\partial_1^2v^{3},\,B_{12, \alpha}\partial_1\,v^\alpha:={f}_{1, 1}\partial_1v^{3}+\mathcal{A}_\beta^k\partial_k \mathcal{A}_3^1\,\partial_1v^{\beta},
\end{split}
\end{equation*}
and ${f}_{0, 11}:=\sum_{i=1}^3(\mathcal{A}_i^1)^2-1$, $ f_{0,\alpha\alpha}:=\sum_{i=1}^3(\mathcal{A}_i^{\alpha})^2-1$, ${f}_{1, j}:=\mathcal{A}_i^k\partial_k \mathcal{A}_i^j$.

Combining \eqref{incomp-cond-fluid-3}, \eqref{linearf-bdry-1}, \eqref{dissipative-split-1}, the system \eqref{eqns-pert-1} can be written as the linearized form
\begin{equation}\label{eqns-linear-1}
\begin{cases}
&\partial_t\xi=v,\\
 & \partial_t v +\nabla\,q- \nu \nabla\cdot\mathbb{D}(v)=g,\\
 &\nabla\cdot v= -\widetilde{a_{\alpha\,1}}\partial_1v^\alpha+\mathcal{B}_{5, i}^{\alpha}\partial_{\alpha}v^{i} \quad \text{in} \quad \Omega,\\
  &q\, e_1- \nu\mathbb{D}(v)\,e_1=
\left(
  \begin{array}{c}
   \xi^1+\nu( \mathcal{B}_{9, i}^{\alpha}-2 \mathcal{B}_{6, i}^{\alpha})\partial_{\alpha}v^{i}\\
    -\nu\, \mathcal{B}_{7, i}^{\alpha}\partial_{\alpha}v^{i}\\
    -\nu\,\mathcal{B}_{8, i}^{\alpha}\partial_{\alpha}v^{i}
  \end{array}
\right) \quad \text{on} \quad \Sigma_0,\\
&v|_{\Sigma_b}=0.
    \end{cases}
\end{equation}
where $g=(g_1, g_2, g_3)^T$ with
\begin{equation}\label{def-g-linear-1}
\begin{split}
 &g_1:=-\mathcal{A}_1^h\partial_hq -(\mathcal{A}_1^1-1)\partial_1q +\nu\,g_{11},\\
 &g_2:=-\mathcal{A}_2^1\partial_1q
   -(\mathcal{A}_2^2-1)\partial_2q- \mathcal{A}_2^3 \partial_3q+\nu\,g_{22},\\
 &g_3:=-\mathcal{A}_3^1\partial_1q
 -(\mathcal{A}_3^3-1)\partial_3q- \mathcal{A}_3^2 \partial_2q+\nu\,g_{33}.
\end{split}
\end{equation}
Using the incompressibility condition \eqref{incomp-cond-fluid-3},
one has $\partial_{k}\nabla\,\cdot\,v=\widetilde{g}_{kk}$ with $k=1, 2, 3$, and
\begin{equation*}
  \begin{split}
&\widetilde{g}_{kk}=-\widetilde{a_{21}}\partial_{k}\partial_1v^2
-\widetilde{a_{31}}\partial_{k}\partial_1v^3+\mathcal{B}_{5, i}^{\alpha}\partial_{k}\partial_{\alpha}v^{i}-\partial_{k}\widetilde{a_{21}}\partial_1v^2
-\partial_{k}\widetilde{a_{31}}\partial_1v^3+\partial_{k}\mathcal{B}_{5, i}^{\alpha}\partial_{\alpha}v^{i}.
  \end{split}
\end{equation*}
In particular, for $\partial_1^2v^1$, there holds
\begin{equation}\label{incomp-cond-v1-1}
  \begin{split}
&\partial_1^2v^1=-\partial_1\nabla_h\,\cdot\,v^h+\widetilde{g}_{11}.
    \end{split}
\end{equation}
From this, the momentum equations in \eqref{eqns-linear-1} are equivalent to the equations
\begin{equation*}
\begin{cases}
 & \partial_t v^{1} + \partial_1q -\nu\Delta\,v^{1}=g_1+\nu\widetilde{g}_{11},\\
   & \partial_t v^{2} +\partial_2q-\nu\Delta\,v^{2}=g_2+\nu\,\widetilde{g}_{22},\\
 & \partial_t v^{3}+ \partial_3q-\nu\Delta\,v^{3}=g_3+\nu\,\widetilde{g}_{33},\\
     \end{cases}
\end{equation*}
which follows
\begin{equation}\label{prt1-q-vh-1}
\begin{cases}
 &\partial_1q=-\partial_t v^{1}+\nu(\Delta_h\,v^{1}-\partial_1\nabla_h\,\cdot\,v^h)
  +g_1+2\nu\,\widetilde{g}_{11},\\
   &-\nu\partial_1^2\,v^{\beta}+\partial_\beta q=-\partial_t v^{\beta}+\nu\Delta_h\,v^{\beta}+g_\beta+\nu\widetilde{g}_{\beta\beta}\quad \text{for}\quad \beta=2, 3.
     \end{cases}
\end{equation}
Thanks to \eqref{incomp-cond-v1-1} and \eqref{prt1-q-vh-1}, it is easy to get, for any smooth sub-channel domain $\Omega_1\subseteq\Omega$,
\begin{equation}\label{prt1-q-vh-rela-2}
\begin{split}
 &\|\mathcal{P}(\partial_h)\partial_1^2v^1\|_{L^2(\Omega_1)}\lesssim \|\mathcal{P}(\partial_h)\partial_h\nabla\,v\|_{L^2(\Omega_1)}
 +\|\mathcal{P}(\partial_h)\widetilde{g}_{11}\|_{L^2(\Omega_1)},\\
 &\|\mathcal{P}(\partial_h)\partial_1q\|_{L^2(\Omega_1)}\lesssim\|\mathcal{P}(\partial_h)\partial_t v^{1}\|_{L^2(\Omega_1)}
 +\|\mathcal{P}(\partial_h)\partial_h\,\nabla\,v\|_{L^2(\Omega_1)}\\
 &\qquad\qquad\qquad\qquad\qquad\qquad
  +\|\mathcal{P}(\partial_h)g_1\|_{L^2(\Omega_1)}+\|\mathcal{P}(\partial_h)\widetilde{g}_{11}\|_{L^2(\Omega_1)},\\
   &\|\mathcal{P}(\partial_h)(-\nu\partial_1^2\,v^{\beta}+\partial_\beta\,q)\|_{L^2(\Omega_1)}\lesssim \|\mathcal{P}(\partial_h)\partial_t v^{\beta}\|_{L^2(\Omega_1)}+\|\mathcal{P}(\partial_h)\Delta_h\,v^{\beta}\|_{L^2(\Omega_1)}\\
   &\qquad\qquad\qquad\qquad\qquad\qquad+\|\mathcal{P}(\partial_h)g_\beta\|_{L^2(\Omega_1)}+
   \|\mathcal{P}(\partial_h)\widetilde{g}_{\beta\beta}\|_{L^2(\Omega_1)}\quad \text{for}\quad \beta=2, 3.
     \end{split}
\end{equation}

\renewcommand{\theequation}{\thesection.\arabic{equation}}
\setcounter{equation}{0}

\section{Main result}\label{sect-main}

Let us explain that, at the beginning of this section, how to define the energy and dissipation functions. Usually, in order to get the global well-posedness of the nonlinear system, we look at it as a perturbation of its linearized equation, which relies on the global control of the perturbed nonlinear terms. In our case, it depends on t the uniformly global (in time) estimates of the $L^\infty(\Omega)$-norms of $\mathcal{A}-I$ (or equivalently  $a_{ij}-\delta_i^j$ ) as well as its derivatives, and at least, it requires that $\|\mathcal{A}-I\|_{L^\infty(\Omega)} < 1$. Take the $(1, 2)$-entry $\mathcal{A}_1^2$ of $\mathcal{A}-I$ as an example, its mainly linear part is $\partial_1\xi^2$. Since there is no information of $\partial_1\xi^2$ in the energy based on the momentum equations, we turn to use the flow map equation $\partial_t\xi=v$ to get $\partial_1\xi^2(t)=\partial_1\xi^2(0)+\int_0^t\partial_1v^2(\tau)\,d\tau$. Hence, in order to get the control of $\|\partial_1\xi^2(t)\|_{L^\infty(\Omega)}$ uniformly in terms of time $t$, it requires the global $L^1_t$ integrability of $\|\partial_1v^2(t)\|_{L^\infty(\Omega)}$ on $\in [0, +\infty)$. Because of the Sobolev embedding theorem, $\|\partial_1v^2(t)\|_{L^\infty} \leq C \|\dot{\Lambda}_h^{\sigma_0}\Lambda_h\partial_1v^2\|_{H^1}$, we turn to find the global $L^1_t$ integrability of $\|\dot{\Lambda}_h^{\sigma_0}\Lambda_h\partial_1v^2\|_{H^1}$ on $\in [0, +\infty)$, which depends on the decay (enough) of $\|\dot{\Lambda}_h^{\sigma_0}\Lambda_h\partial_1v^2\|_{L^2}$ according to the energy estimates arising from the momentum equations.

\subsection{Energy and dissipation}

We first define the decay instantaneous energy and dissipation $\dot{\mathcal{E}}_{N}=\dot{\mathcal{E}}_{N}(\xi,\,v)$ and $\dot{\mathcal{D}}_{N}=\dot{\mathcal{D}}_{N}(\xi,\,v)$ with $N\geq 3$ as follows.

The decay instantaneous energy $\dot{\mathcal{E}}_{N}$ includes three parts:
\begin{equation*}\label{def-energy-1}
\begin{split}
 \dot{\mathcal{E}}_{N}\eqdefa  \dot{\mathcal{E}}_{\ell, N}+ \dot{\mathcal{E}}_{\ell, \sigma_0} +\dot{\mathcal{E}}_{\ell, 1+\sigma_0} \quad(\text{for} \, N\geq 3),
\end{split}
\end{equation*}
where $\dot{\mathcal{E}}_{\ell, \sigma}\eqdefa \|\dot{\Lambda}_h^{\sigma}v\|_{L^2(\Omega)}^2 +\|\dot{\Lambda}_h^{\sigma}\xi^1\|_{L^2(\Sigma_0)}^2 \quad(\text{for} \, \sigma \in (-1, 1))$,
\begin{equation*}
\begin{split}
 & \dot{\mathcal{E}}_{\ell, 1}\eqdefa\|\partial_h\,v\|_{L^2(\Omega)}^2 +\|\partial_h\xi^1\|_{L^2(\Sigma_0)}^2,\quad \dot{\mathcal{E}}_{\ell, 1+\sigma_0}\eqdefa \|\dot{\Lambda}_h^{\sigma_0}\,\nabla\,v\|_{L^2(\Omega)}^2,\\
&\dot{\mathcal{E}}_{\ell, N}\eqdefa\sum_{i=1}^{N-1}(\|\partial_h^i\,v\|_{H^1(\Omega)}^2 +\|\partial_h^{i}\xi^1\|_{H^1(\Sigma_0)}^2)\,(\text{for} \, N\geq 2),
\end{split}
\end{equation*}
and the decay instantaneous dissipation $\dot{\mathcal{D}}_{N}$ (with $N\geq 2$) is defined as
\begin{equation*}\label{def-dissipa-1}
\begin{split}
&\dot{\mathcal{D}}_{2}\eqdefa \dot{\mathcal{D}}_{\ell, \sigma_0}+ \dot{\mathcal{D}}_{\ell, 1+\sigma_0}+\dot{\mathcal{D}}_{\ell, 2},\quad \dot{\mathcal{D}}_{N}\eqdefa \dot{\mathcal{D}}_{2}+\dot{\mathcal{D}}_{\ell, N}+\ddot{\mathcal{D}}_{\ell, 1+\sigma_0}+\ddot{\mathcal{D}}_{\ell, N}\, (\text{for}\, N\geq 3),
\end{split}
\end{equation*}
where
\begin{equation*}
\begin{split}
&\dot{\mathcal{D}}_{\ell, \sigma_0}:= \|\dot{\Lambda}_h^{\sigma_0}\,\nabla\,v\|_{L^2(\Omega)}^2, \quad\dot{\mathcal{D}}_{\ell, 1+\sigma_0}:= \|\dot{\Lambda}_h^{\sigma_0} \partial_tv\|_{L^2}^2,\\
&\dot{\mathcal{D}}_{\ell, N}\eqdefa \sum_{i=1}^{N}\|\partial_h^i\nabla\,v\|_{L^2(\Omega)}^2+\sum_{i=1}^{N-1}
\|\partial_h^i\partial_tv\|_{L^2}^2 \quad (\text{for}\,\, N\geq 2),\\
&\ddot{\mathcal{D}}_{\ell, 1+\sigma_0}:= \|\dot{\Lambda}_h^{\sigma_0}(\nabla^2v,\,\nabla p)\|_{L^2(\Omega)}^2+\|\dot{\Lambda}_h^{\sigma_0+1}\xi^1\|_{L^2(\Sigma_0)}^2,\\
&\ddot{\mathcal{D}}_{\ell, 2}:=  \|\partial_h (\nabla^2v,\,\nabla p)\|_{L^2(\Omega)}^2+\|\partial_h^2\xi^1\|_{L^2(\Sigma_0)}^2,\,\ddot{\mathcal{D}}_{\ell, 3}:=  \sum_{i=1}^{2}\|\partial_h^i (\nabla^2v,\,\nabla p)\|_{L^2(\Omega)}^2+\|\partial_h^2\xi^1\|_{{H}^{\frac{1}{2}}(\Sigma_0)}^2,\\
&\ddot{\mathcal{D}}_{\ell, N}:= \ddot{\mathcal{D}}_{\ell, 3}+\sum_{i=3}^{N-1}(\|\partial_h^i( \nabla^2v,\,\nabla p)\|_{L^2(\Omega)}^2+\|\partial_h^i \xi^1\|_{{H}^{\frac{1}{2}}(\Sigma_0)}^2) \quad (\text{for}\, N\geq 4).
\end{split}
\end{equation*}
\begin{rmk}\label{rmk-notation-1}
We mention that in the notations of the above energy and dissipation,  the energy $\dot{\mathcal{E}}_{\ell, s}$ and dissipation $\dot{\mathcal{D}}_{\ell, s}$ with $s\in \mathbb{R}$ come from the tangential derivatives estimates of the system \eqref{eqns-linear-1}, and the dissipation $\ddot{\mathcal{D}}_{\ell, s}$ with $s\in \mathbb{R}$ comes from the Stokes estimates of the system \eqref{eqns-linear-1}.
\end{rmk}

\begin{rmk}\label{rmk-dissip-1}It is easy to check that for any $N\geq 4$
\begin{equation*}\label{dissip-equiv-1}
\begin{split}
&\dot{\mathcal{D}}_{N}=\dot{\mathcal{D}}_{N-1}+\mathring{\mathcal{D}}_N,
\end{split}
\end{equation*}
where $\mathring{\mathcal{D}}_N\eqdefa \|\partial_h^{N-1} (\nabla^2v,\,\nabla p, \,\partial_tv)\|_{L^2(\Omega)}^2+\|\partial_h^{N-1} \xi^1\|_{{H}^{\frac{1}{2}}(\Sigma_0)}^2$.
\end{rmk}
On the other hand, we define the bounded instantaneous energy $\mathcal{E}_{N}$  and dissipation $\mathcal{D}_{N}$ with $N\geq 3$ as
\begin{equation*}\label{def-bdd-energy-dissi-1}
\begin{split}
&\mathcal{E}_{N}\eqdefa \|\dot{\Lambda}_h^{-\lambda}\,v\|_{H^1(\Omega)}^2
+\|(\dot{\Lambda}_h^{-\lambda}\,\xi^1,\,\xi^1)\|_{L^2(\Sigma_0)}^2+ \dot{\mathcal{E}}_{N},\\
&\mathcal{D}_{N}\eqdefa\|(\dot{\Lambda}_h^{-\lambda}\nabla\,v,\,\dot{\Lambda}_h^{-\lambda}\partial_tv,\,\nabla\,v, \,\partial_tv)\|_{L^2(\Omega)}^2+\dot{\mathcal{D}}_{N}.
\end{split}
\end{equation*}

\begin{rmk}\label{rmk-bdd-energy-1}
Set
\begin{equation*}\label{def-energy-2a}
\begin{split} \mathring{\mathcal{E}}_{N}\eqdefa\|\partial_h^{N-1}\nabla\,v\|_{L^2(\Omega)}^2 +\|\partial_h^{N}\xi^1\|_{L^2(\Sigma_0)}^2,
\end{split}
\end{equation*}
we have
\begin{equation*}\label{def-energy-2}
\begin{split}
 &\mathcal{E}_{N}= \mathring{\mathcal{E}}_{N}+  \mathcal{E}_{N-1}.
\end{split}
\end{equation*}
\end{rmk}

We also define instantaneous energy quantities $E_N=E_{N}(\xi,\,v)$ (with $N \geq 3$) in terms of the velocity $v$ and the flow map $\eta$:
\begin{equation*}\label{def-energyE-2}
\begin{split}
 E_N\eqdefa E_{\text{h}, N}+\mathcal{E}_{N} \quad(\text{for} \, N\geq 3),
\end{split}
\end{equation*}
with
\begin{equation*}\label{def-energy-2}
\begin{split}
 E_{\text{h}, N}\eqdefa\|\dot{\Lambda}_h^{\sigma_0-1}\partial_1\xi^1\|_{H^1}^2
 +\|\dot{\Lambda}_h^{\sigma_0}\partial_1\xi^h\|_{H^1}^2 +\|\dot{\Lambda}_h^{\sigma_0}\xi\|_{L^2}^2+ \sum_{i=1}^{N-1}\|\partial_h^i\xi\|_{H^2}^2,
\end{split}
\end{equation*}
which is corresponding to the energy space
\begin{equation*}\label{def-energy-2}
\begin{split}
 \mathfrak{F}_N\eqdefa \{(\xi,\,v)|\,{E}_{N}<+\infty\} \quad(\text{for} \, N\geq 3)
\end{split}
\end{equation*}
equipped with the norm
\begin{equation*}\label{def-energy-2}
\begin{split}
 \|(\xi,\,v)\|_{\mathfrak{F}_N}\eqdefa {E}_{N}(\xi, \,v)^{\frac{1}{2}}.
\end{split}
\end{equation*}

If we set
\begin{equation*}\label{def-energy-2}
\begin{split}
 \mathring{E}_N\eqdefa\|\partial_h^{N-1}\nabla^2\,\xi\|_{L^2(\Omega)}^2
 +\|\partial_h^{N-1}\nabla\,v\|_{L^2(\Omega)}^2+\|\partial_h^{N}\xi^1\|_{L^2(\Sigma_0)}^2
 =\|\partial_h^{N-1}\nabla^2\,\xi\|_{L^2(\Omega)}^2+\mathring{\mathcal{E}}_{N},
\end{split}
\end{equation*}
then
\begin{equation*}\label{def-energy-2}
\begin{split}
{E}_N= \mathring{E}_N+ E_{N-1}\thicksim\|\partial_h^{N-1}\xi\|_{H^2}^2+\mathcal{E}_{N}+E_{N-1}.
\end{split}
\end{equation*}

\subsection{Main theorem}
Let
\begin{equation*}
E_{\ell, N}=E_{\ell, N}(\xi,\,v)\eqdefa\sum_{i=0}^{N-1}(\|\partial_h^i\,v\|_{H^1(\Omega)}^2 +\|\partial_h^{i}\xi^1\|_{H^1(\Sigma_0)}^2+\|\partial_h^i\nabla\xi\|_{H^1(\Omega)}^2).
\end{equation*}
We define
\begin{equation*}\label{def-energy-1}
\begin{split}
&\mathcal{H}^s_{\text{tan}, N}(\Omega)\eqdefa\{f \in {H}^{s}(\Omega)|\sum_{i=0}^{N}\|\partial_h^i\,f\|_{H^s(\Omega)}^2 <+\infty\},\\
&\mathcal{H}^s_{0, \text{tan}, N}(\Omega)\eqdefa\{f \in {H}^{s}(\Omega)|\sum_{i=0}^{N}\|\partial_h^i\,f\|_{H^s(\Omega)}^2 <+\infty,\,f|_{\Sigma_b}=0\},\\
&\mathring{\mathcal{H}}_{\text{tan}, N}^{2, \Sigma_0}(\Omega)\eqdefa \{f\in \mathcal{H}^2_{\text{tan}, 0}(\Omega)|\sum_{i=0}^{N}(\|\partial_h^{i}f^1\|_{H^1(\Sigma_0)}^2+\|\partial_h^i\nabla\,f\|_{H^1(\Omega)}^2)<+\infty\} \quad(\text{for} \, N \in \mathbb{N})
\end{split}
\end{equation*}
equipped with the norms respectively
\begin{equation*}\label{def-energy-2}
\begin{split}
&\|f\|_{\mathcal{H}^s_{\text{tan}, N}(\Omega)}\eqdefa (\sum_{i=0}^{N}\|\partial_h^i\,f\|_{H^s(\Omega)}^2)^{1/2},\quad \|f\|_{\mathcal{H}^s_{0, \text{tan}, N}(\Omega)}\eqdefa (\sum_{i=0}^{N}\|\partial_h^i\,f\|_{H^s(\Omega)}^2)^{1/2}\\
 &\|f\|_{\mathring{\mathcal{H}}_{\text{tan}, N}^{2, \Sigma_0}(\Omega)}\eqdefa (\sum_{i=0}^{N}(\|\partial_h^{i}f^1\|_{H^1(\Sigma_0)}^2+\|\partial_h^i\nabla\,f\|_{H^1(\Omega)}^2))^{\frac{1}{2}}.
\end{split}
\end{equation*}
 Inspired by the works in \cite{Gui-WW-2019} and \cite{Guo-Tice-1}, we may get the following local well-posedness of the system \eqref{eqns-pert-1}, which proof is similar to the one in \cite{Gui-WW-2019}, and  we leave it to the reader.
\begin{thm}[Local well-posedness]\label{thm-local}
Let $N \geq 3$ be an integer. Assume $(\xi_0,\,v_0) \in \mathring{\mathcal{H}}_{\text{tan}, N-1}^{2, \Sigma_0}(\Omega)\times \mathcal{H}_{0, \text{tan}, N-1}^{1}(\Omega)$ satisfies $\nabla_{J_0\mathcal{A}_0}\cdot v_0=0$. There exists a positive constant $\epsilon_0$ such that the viscous surface wave problem \eqref{eqns-pert-1} with initial data $(\xi_0, v_0)$ has a unique solution $(\xi,\,v,\,p)$ (depending continuously on the initial data) in
\begin{equation*}
\begin{split}
\mathcal{C}([0, T_0]; \mathring{\mathcal{H}}_{\text{tan}, N-1}^{2, \Sigma_0}(\Omega))&\times (\mathcal{C}([0, T_0]; \mathcal{H}_{0, \text{tan}, N-1}^{1}(\Omega))\cap L^2([0, T_0]; \mathcal{H}^2_{\text{tan}, N-1}(\Omega))\\
&\times L^2([0, T_0]; \mathcal{H}^1_{\text{tan}, N-1}(\Omega)),
\end{split}
\end{equation*}
with $T_0=\min\{1, (\|\xi_0\|_{\mathring{\mathcal{H}}_{\text{tan}, N-1}^{2, \Sigma_0}(\Omega))})^{-2}\}$ provided $\|\xi_0\|_{\mathring{\mathcal{H}}_{\text{tan}, N-2}^{2, \Sigma_0}(\Omega))}+\|v_0\|_{\mathcal{H}_{0, \text{tan}, N-1}^{1}(\Omega)} <\epsilon_0$, and the solution satisfies the estimate
\begin{equation}\label{def-energy-local}
\begin{split}
&\sup_{t \in[0, T_0]}(\|\xi(t)\|_{\mathring{\mathcal{H}}_{\text{tan}, N-1}^{2, \Sigma_0}(\Omega)}^2+\|v(t)\|_{\mathcal{H}_{0, \text{tan}, N-1}^{1}(\Omega)}^2)+\|(\nabla\,v,\,p)\|_{L^2([0, T_0]; \mathcal{H}^1_{\text{tan}, N-1}(\Omega)}^2 \\
&\leq C(\|\xi_0\|_{\mathring{\mathcal{H}}_{\text{tan}, N-2}^{2, \Sigma_0}(\Omega))}^2+\|v_0\|_{\mathcal{H}_{0, \text{tan}, N-1}^{1}(\Omega)}^2+T\,\|\xi_0\|_{\mathring{\mathcal{H}}_{\text{tan}, N-1}^{2, \Sigma_0}(\Omega))}^2),
\end{split}
\end{equation}
And if the maximal existence time $T^{\ast} <+\infty $, then
\begin{equation}\label{loc-blow-crit}
\begin{split}
\lim_{t  \nearrow T^{\ast}}(\|\xi\|_{\mathring{\mathcal{H}}_{\text{tan}, N-1}^{2, \Sigma_0}(\Omega))}+\|v\|_{\mathcal{H}_{0, \text{tan}, N-1}^{1}(\Omega)}+\|J^{-1}\|_{L^\infty})=+\infty.
\end{split}
\end{equation}
Moreover, if in addition $(\xi_0,\,v_0) \in \mathfrak{F}_{N+1}$, then $(\xi,\,v) \in \mathcal{C}([0, T_0]; \mathfrak{F}_{N+1})$.
\end{thm}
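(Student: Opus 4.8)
\emph{Sketch of the proof.} The plan is the usual two-stage strategy: first produce the solution by a Picard iteration built on the linearized system \eqref{eqns-linear-1}, then upgrade the regularity. At step $n$ one is given $(\xi^{(n)},v^{(n)})$, forms from it the matrix $\mathcal{A}^{(n)}$, the Jacobian $J^{(n)}$ and all the nonlinear coefficient functions occurring in \eqref{eqns-linear-1} (the terms $g$, $\widetilde{a_{\alpha 1}}$, $\mathcal{B}_{k,i}^{\alpha}$, etc.), and solves the \emph{linear} initial--boundary value problem
\begin{equation*}
\begin{cases}
&\partial_t\xi^{(n+1)}=v^{(n+1)},\\
&\partial_t v^{(n+1)}+\nabla q^{(n+1)}-\nu\nabla\cdot\mathbb{D}(v^{(n+1)})=g^{(n)}\quad\text{in}\ \Omega,\\
&\nabla\cdot v^{(n+1)}=h^{(n)}\quad\text{in}\ \Omega,\\
&q^{(n+1)}e_1-\nu\mathbb{D}(v^{(n+1)})e_1=\bigl(\xi^{1,(n+1)}+G^{(n)}_1,\ G^{(n)}_2,\ G^{(n)}_3\bigr)^T\quad\text{on}\ \Sigma_0,\\
&v^{(n+1)}|_{\Sigma_b}=0,\qquad(\xi^{(n+1)},v^{(n+1)})|_{t=0}=(\xi_0,v_0),
\end{cases}
\end{equation*}
where $h^{(n)}$ and the $G^{(n)}_k$ are the right-hand sides of the third and fourth lines of \eqref{eqns-linear-1} evaluated at the $n$-th iterate (the linear boundary term $\xi^1$ is kept explicit, since it is what produces the $\|\xi^1\|_{L^2(\Sigma_0)}$ contribution to the energy). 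Unique solvability of this linear Stokes problem, with a Neumann/Robin-type condition on $\Sigma_0$ and no-slip on $\Sigma_b$, in the anisotropic parabolic spaces of the statement is classical (Solonnikov \cite{Solonnikov-1977}, Beale \cite{Beale-1981}, Abels \cite{Abels-2005}); the inhomogeneous divergence $h^{(n)}$ is first removed by a bounded right inverse of $\nabla\cdot$ compatible with the boundary conditions, and the harmonic extensions of Lemma \ref{est-harmonic-ext-1} are used to handle the $\Sigma_0$-data.

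The heart of the argument is an a priori bound that is uniform along the iteration, and here the four good unknowns $\mathcal{G}^1,\mathcal{G}^2,\mathcal{G}^3,\mathcal{Q}$ of \eqref{unknown-0} are essential precisely because the theorem forbids nonlinear compatibility conditions on $\partial_tv$. For $0\le i\le N-1$ — and, for the fractional and weighted parts of the energy, also for the operators $\dot{\Lambda}_h^{\sigma_0}$ and $\dot{\Lambda}_h^{-\lambda}$ — one applies $\partial_h^i$ to the momentum equation and pairs the result both with the corresponding $v$-quantity and with its time derivative (the latter to produce the $\partial_tv$-part of the dissipation without ever placing $\partial_tv$ in the energy). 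The delicate point is the $\Sigma_0$-boundary integral created when the viscous and pressure terms are integrated by parts: using the linearized boundary identity \eqref{linearf-bdry-1} together with \eqref{partialv-q-1}--\eqref{unknown-bdry-1}, every occurrence of the normal derivative $\partial_1v$ and of the pressure $q$ in that integral is either absorbed into $\mathcal{G}^i$ or $\mathcal{Q}$, which vanish on $\Sigma_0$, or converted into a tangential derivative controlled by the dissipation. What survives, when the test function is $\partial_h^i v^1=\partial_t\partial_h^i\xi^1$, is a perfect time-derivative $\tfrac12\frac{d}{dt}\|\partial_h^i\xi^1\|_{L^2(\Sigma_0)}^2$; when the test function is $\partial_h^i\partial_tv^1$, the a priori dangerous term involving $\partial_t^2\xi^1$ is reorganized as $\frac{d}{dt}\!\int_{\Sigma_0}\partial_h^i\xi^1\,\partial_h^i v^1$ minus a trace term $\|\partial_h^i v^1\|_{L^2(\Sigma_0)}^2$ that is absorbed by interpolation into the basic dissipation — so no information on $\partial_tv|_{t=0}$ is needed at any point. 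This controls $\dot{\mathcal{E}}_{\ell,N}$ (and the fractional/weighted energies) together with the dissipations $\sum_i\|\partial_h^i\nabla v\|_{L^2(\Omega)}^2$ and $\sum_i\|\partial_h^i\partial_tv\|_{L^2(\Omega)}^2$, while the remaining Stokes part $\ddot{\mathcal{D}}$ — the second normal derivatives $\nabla^2v$ and $\nabla q$ — is recovered at each fixed time from the elliptic Stokes estimate for the momentum equation, read as a stationary Stokes system with force $\partial_tv$ plus data, together with the relations \eqref{incomp-cond-v1-1} and \eqref{prt1-q-vh-rela-2}. Finally the flow map is recovered from $\xi(t)=\xi_0+\int_0^t v(\tau)\,d\tau$, whence in particular $\|\mathcal{A}^{(n)}-I\|_{L^\infty}$ stays small on $[0,T_0]$ because the low-order norm of $\xi_0$ is small and $T_0$ is short (the top-order norm is allowed to be large, but grows only by a bounded factor). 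Every nonlinear term then carries such a smallness factor, the resulting Gr\"onwall inequality closes on $[0,T_0]$, and \eqref{def-energy-local} holds uniformly along the iteration. I expect this boundary bookkeeping — ensuring that no uncontrolled normal derivative of $v$, no trace of $q$, and no genuine $\partial_t^2\xi^1$ ever escapes the $\Sigma_0$-integrals — to be the main technical obstacle of this part.

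Convergence is then routine: subtracting two consecutive linear problems and rerunning the same energy/Stokes scheme on the difference $(\xi^{(n+1)}-\xi^{(n)},v^{(n+1)}-v^{(n)})$ in a norm one tangential derivative weaker (so that differences of nonlinear terms are bounded by the uniform high-norm bound times the low-norm difference) produces a factor $T_0^{\theta}$, $\theta>0$, hence a contraction on short time; the limit $(\xi,v,q)$ solves \eqref{eqns-pert-1}, and time-continuity of the top-order norms is recovered a posteriori by an Aubin--Lions/interpolation argument. The same difference estimate, applied to two solutions with identical data, gives uniqueness, and applied to solutions with nearby data, gives continuous dependence. For the blow-up criterion \eqref{loc-blow-crit} one argues by contradiction: if the quantity there stays bounded as $t\nearrow T^{\ast}$, then $\|J^{-1}\|_{L^\infty}$ in particular stays bounded, so the flow map remains a diffeomorphism with controlled constants, the a priori estimate applies with uniform constants up to $T^{\ast}$ and yields a uniform bound on $\|\xi\|_{\mathring{\mathcal{H}}_{\text{tan},N-1}^{2,\Sigma_0}(\Omega)}+\|v\|_{\mathcal{H}_{0,\text{tan},N-1}^{1}(\Omega)}$ near $T^{\ast}$, and restarting the local theorem at a time close to $T^{\ast}$ extends the solution beyond $T^{\ast}$ — a contradiction.

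It remains to prove persistence of $\mathfrak{F}_{N+1}$-regularity. Assuming in addition $(\xi_0,v_0)\in\mathfrak{F}_{N+1}$, the extra quantities in $E_{\text{h},N+1}$ and $\mathcal{E}_{N+1}$ — one more tangential derivative, together with the homogeneous weighted norms of $\dot{\Lambda}_h^{\sigma_0-1}\partial_1\xi^1$, $\dot{\Lambda}_h^{\sigma_0}\partial_1\xi^h$, $\dot{\Lambda}_h^{-\lambda}v$ and their companions — are finite at $t=0$. One runs the tangential-energy-plus-Stokes machinery at order $N+1$, but now treating all order-$\le N$ norms as already finite and controlled by the first stage, so that the order-$(N+1)$ estimates are \emph{linear} in the top-order unknowns with coefficients controlled by $\|(\xi,v)\|_{\mathfrak{F}_N}$; a Gr\"onwall argument on $[0,T_0]$ then keeps $\|(\xi,v)(t)\|_{\mathfrak{F}_{N+1}}$ finite, and continuity into $\mathfrak{F}_{N+1}$ follows as before. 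The one genuinely new ingredient here, absent from the integer-order local theory of \cite{Gui-WW-2019,Guo-Tice-1}, is the need to commute the homogeneous horizontal operators $\dot{\Lambda}_h^{s}$ with $s\in\{\sigma_0,\sigma_0-1,-\lambda\}$ past the $\mathcal{A}$-dependent nonlinear coefficients; the requisite fractional horizontal product and commutator estimates are where the most care is needed.
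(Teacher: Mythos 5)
The paper does not actually prove Theorem~\ref{thm-local}: immediately before the statement it remarks that the proof ``is similar to the one in \cite{Gui-WW-2019}, and we leave it to the reader.'' There is therefore nothing in the paper to compare against line by line; your sketch is an attempt to fill a gap the paper deliberately leaves open. Judged on its own terms, the route you describe is the natural one and is consistent with the machinery the paper builds for the global problem in Sections~\ref{sect-energy}--\ref{sect-total}: Picard iteration on the linearized system \eqref{eqns-linear-1}, a priori bounds via the tangential-energy estimates (Lemma~\ref{lem-tan-pseudo-energy-1}, Lemma~\ref{lem-pseudo-energy-tv-1}) plus the Stokes estimates of Section~\ref{sect-stokes}, contraction in a norm one tangential derivative weaker, and a continuation argument for the blow-up criterion. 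Your focus on the good unknowns $\mathcal{G}^i,\mathcal{Q}$ of \eqref{unknown-0} as the device that eliminates $\partial_tv$-compatibility conditions is precisely the paper's stated novelty, and the way you reorganize the $\Sigma_0$-integral with $\partial_t\partial_h^i\xi^1$ into a total time derivative plus a trace term is exactly the mechanism in the proof of Lemma~\ref{lem-pseudo-energy-tv-1}.

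Two points to be careful about if this were written out in full. First, keeping $\xi^{1,(n+1)}$ on the right-hand side of the $\Sigma_0$-boundary condition at each iteration (which is what you want, since it generates the $\|\xi^1\|_{L^2(\Sigma_0)}^2$ part of the energy) means the linear problem is not a stand-alone Stokes system but the coupled Stokes-plus-transport problem of \cite{Beale-1981}; the relevant linear solvability statement is Beale's, not the elliptic estimate of Lemma~\ref{lem-Stokes-1}, which enters only at a fixed time slice. Second, your claim that $\|\mathcal{A}^{(n)}-I\|_{L^\infty}$ stays small on $[0,T_0]$ because the low-order norm of $\xi_0$ is small and $T_0$ is short should be supported by the anisotropic embedding of Lemma~\ref{lem-embedding-ineq-1} and by the $L^\infty$-estimates of Lemma~\ref{lem-est-aij-1}; this is precisely what justifies placing the smallness hypothesis on $\|\xi_0\|_{\mathring{\mathcal{H}}_{\text{tan},N-2}^{2,\Sigma_0}}$ rather than on the top-order norm, and letting the latter grow only by a factor $T_0^{1/2}\|\xi_0\|_{\mathring{\mathcal{H}}_{\text{tan},N-1}^{2,\Sigma_0}}$ as reflected in \eqref{def-energy-local}. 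Neither point is a gap in the strategy, but both need to be made explicit for the argument to be complete.
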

\begin{rmk}\label{rmk-euler-1}
According to \eqref{def-energy-local}, we see that, if $\epsilon_0$ in Theorem \ref{thm-local} is small enough, there is a positive $\delta_0 \in (0, \frac{1}{2})$ such that  $1-\delta_0\leq \sup_{(t, x)\in[0,T]\times \Omega}J(t, x)\leq 2$, which implies that the flow-map $\eta(t, x)$ defines a diffeomorphism from the equilibrium domain $\Omega$ to the moving domain $\Omega(t)$ with the boundary $\Sigma_F(t)$. From this, together with the fact that $\eta_0$ is a diffeomorphism from the equilibrium domain $\Omega$ to the initial domain $\Omega(0)$, we deduce a diffeomorphism from the initial domain $\Omega(0)$ to the evolving domain $\Omega(t)$ for any $t \in [0, T]$. Denote the inverse of the flow map $\eta(t, x)$ by $\eta^{-1}(t, y)$ for $t \in [0, T]$ so that if $y = \eta(t, x)$ for $y \in \Omega(t)$ and $t \in [0, T]$, then $x = \eta^{-1}(t, y) \in \Omega$.
\end{rmk}

Along the proof of Beale's non-decay theorem in \cite{Beale-1981}, we give the modified the non-decay theorem as follows, which proof will be given in Appendix.
\begin{thm}[Non-decay theorem]\label{thm-nondecay}
Suppose thet $\sigma_0 \in (\frac{1}{2}, \, 1)$. For certain $\theta\in \mathring{\mathcal{H}}_{\text{tan}, 3}^{2, \Sigma_0}(\Omega)$, there cannot exist a
curve $(v(\varepsilon),\,q(\varepsilon),\,\xi(\varepsilon))$ with
\begin{equation*}\label{space-vqxi-1}
\begin{split}
    v(\varepsilon)\in  & \mathcal{C}_b([0, +\infty); \mathcal{H}_{0, \text{tan}, 3}^{1}(\Omega))\cap L^2_{\text{loc}}([0, +\infty); \mathcal{H}^2_{\text{tan}, 3}(\Omega))\\
      &\cap  L^1([0, +\infty); \dot{\Lambda}_h^{-\sigma_0}\mathcal{H}^2_{\text{tan}, 2}(\Omega)) \cap L^2([0, +\infty); \dot{\Lambda}_h^{-1}\mathcal{H}^1_{\text{tan}, 3}(\Omega)),\\
q(\varepsilon) \in & L^2_{\text{loc}}([0, +\infty); \mathcal{H}^1_{\text{tan}, 3}(\Omega)), \\
\xi(\varepsilon))\in &\mathcal{C}([0, +\infty); \mathring{\mathcal{H}}_{\text{tan}, 3}^{2, \Sigma_0}(\Omega))\cap \mathcal{C}_b([0, +\infty); H^{4}(\Sigma_0))\cap \mathcal{C}_b([0, +\infty); \mathring{\mathcal{H}}_{\text{tan}, 2}^{2, \Sigma_0}(\Omega)),
\end{split}
\end{equation*}
defined for $\varepsilon$ near $0$, such that viscous surface wave problem \eqref{eqns-pert-1}
hold with the initial data $\xi_0=\varepsilon\,\theta$, $v_0=0$, and
\begin{equation}\label{cond-infty-xi-1}
  \begin{split}
  \|\xi^1(t)\|_{H^2(\Sigma_0)} \rightarrow 0 \quad(\text{as} \quad t\rightarrow +\infty)
    \end{split}
\end{equation}
 holds for each $\varepsilon$ , and $v(\varepsilon)$ is of the form
\begin{equation}\label{expansion-v-1}
  \begin{split}
    v(\varepsilon)=\varepsilon\,v^{(1)}+\varepsilon^2\,v^{(2)}+\varepsilon^3 \,v^{(3)}
  \end{split}
\end{equation}
in the space
\begin{equation}\label{expansion-v-2}
  \begin{split}
  \mathbf{E}:=\mathcal{C}_b([0, +\infty); &\mathcal{H}_{0, \text{tan}, 3}^{1}(\Omega))\cap L^2_{\text{loc}}([0, +\infty); \mathcal{H}^2_{\text{tan}, 3}(\Omega))\\
      &\cap  L^1([0, +\infty); \dot{\Lambda}_h^{-\sigma_0}\mathcal{H}^2_{\text{tan}, 2}(\Omega)) \cap L^2([0, +\infty); \dot{\Lambda}_h^{-1}\mathcal{H}^1_{\text{tan}, 3}(\Omega)).
  \end{split}
\end{equation}
\end{thm}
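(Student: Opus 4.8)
The plan is to follow Beale's original non-decay argument from \cite{Beale-1981}, but carried out in the Lagrangian formulation \eqref{eqns-pert-1} and in the tangentially-regular, horizontally-weighted spaces adapted to our energy/dissipation scheme. The starting point is to linearize \eqref{eqns-pert-1} around the equilibrium $(\xi,v)=(0,0)$: substituting the ansatz \eqref{expansion-v-1} (together with the corresponding expansions of $q$ and $\xi$) into the system and matching powers of $\varepsilon$ yields, at order $\varepsilon^{1}$, the linear viscous surface-wave system
\[
\partial_t v^{(1)} + \nabla q^{(1)} - \nu\,\nabla\cdot\mathbb{D}(v^{(1)}) = 0,\quad \nabla\cdot v^{(1)} = 0,\quad \partial_t\xi^{(1)} = v^{(1)},
\]
with the linearized free-boundary condition $q^{(1)}e_1 - \nu\mathbb{D}(v^{(1)})e_1 = (\xi^{(1),1},0,0)^T$ on $\Sigma_0$, $v^{(1)}|_{\Sigma_b}=0$, and initial data $v^{(1)}|_{t=0}=0$, $\xi^{(1)}|_{t=0}=\theta$. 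First I would record that, because the data $v_0=0$ is fixed and only $\xi_0=\varepsilon\theta$ is switched on, the whole curve $v(\varepsilon)$ is forced to start at $0$, so the leading profile $v^{(1)}$ is the solution of this linear problem with surface data $\theta$ alone. The higher-order profiles $v^{(2)},v^{(3)}$ solve the same linear system with quadratic/cubic forcing built from lower-order profiles and are not needed for the obstruction.

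The core of the argument is a solvability obstruction for $v^{(1)}$. Taking the horizontal Fourier transform in $x_h$ and using the (partial) Fourier-Laplace representation, one sees that the linear surface-wave semigroup acting on the surface displacement has a well-understood long-time behaviour: in the absence of surface tension the surface amplitude decays, but only at a polynomial rate, and the precise statement is that $\widehat{\xi^{(1),1}}(t,\varsigma_h)$ behaves like a fixed convolution kernel applied to $\widehat\theta$. The condition \eqref{cond-infty-xi-1} forces $\|\xi^{(1),1}(t)\|_{H^2(\Sigma_0)}\to 0$, which in Fourier variables pins down the large-time asymptotics enough to extract a necessary \emph{linear} constraint on $\theta$: roughly, a weighted zero-frequency (in $x_h$) moment of $\theta$ must vanish, i.e. $\int_{\Sigma_0}\theta^1\,dx_h = 0$ (and possibly a first-moment condition), exactly as in Beale's Lemma. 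Next I would show that the membership requirement in \eqref{expansion-v-2}, in particular $v(\varepsilon)\in L^1([0,+\infty);\dot\Lambda_h^{-\sigma_0}\mathcal H^2_{\text{tan},2}(\Omega))$ together with $v(\varepsilon)\in L^2([0,+\infty);\dot\Lambda_h^{-1}\mathcal H^1_{\text{tan},3}(\Omega))$, transfers to the leading term $v^{(1)}$ and imposes a \emph{second, incompatible} constraint: the low-horizontal-frequency weight $\dot\Lambda_h^{-\sigma_0}$ with $\sigma_0\in(\tfrac12,1)$ amplifies precisely the frequencies where $v^{(1)}$ decays slowest, and a direct computation of $\int_0^\infty \|\dot\Lambda_h^{-\sigma_0} v^{(1)}(t)\|_{\mathcal H^2_{\text{tan},2}}\,dt$ via the Fourier representation shows this integral is finite only if a certain moment of $\theta$ is \emph{nonzero} (otherwise the slowly-decaying mode is absent but then one instead violates the $L^2$-in-time condition, or vice versa). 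Choosing $\theta\in\mathring{\mathcal H}_{\text{tan},3}^{2,\Sigma_0}(\Omega)$ so that these two moment conditions cannot hold simultaneously — which is easy since they are two linear functionals with no common kernel constraint forced by the equation — produces the desired $\theta$ for which no such curve exists.

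The step I expect to be the main obstacle is the sharp large-time Fourier-side analysis of the linearized Lagrangian surface-wave operator: one must compute, uniformly down to $\varsigma_h\to 0$, the decay rate of $v^{(1)}(t,\cdot)$ and of $\xi^{(1),1}(t,\cdot)$ and show that the exponent is exactly the borderline value that makes the weighted space-time norms in \eqref{expansion-v-2} and the decay condition \eqref{cond-infty-xi-1} mutually exclusive for generic data. This requires a careful resolvent/dispersion-relation study near the origin in frequency (the roots of the characteristic equation for the Stokes–free-surface system degenerate as $|\varsigma_h|\to 0$), and then a stationary-phase or contour-deformation estimate to pass from the resolvent bounds to pointwise-in-time decay — all while keeping track of the extra tangential derivatives ($\mathcal H^2_{\text{tan},2}$, $\mathcal H^1_{\text{tan},3}$) and the bottom boundary condition $v|_{\Sigma_b}=0$, which makes the transverse ODE problem nonconstant-coefficient-free only in a mild way. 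Once this asymptotic lemma is in hand, the remaining bookkeeping (transfer from $v(\varepsilon)$ to $v^{(1)}$, choice of $\theta$) is routine, mirroring \cite{Beale-1981}.
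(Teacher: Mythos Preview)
Your proposal places the obstruction at the wrong order in $\varepsilon$. The linearized surface-wave problem for $(v^{(1)},q^{(1)},\xi^{(1)})$ is globally well-posed and its solution decays algebraically for \emph{every} admissible $\theta$; neither \eqref{cond-infty-xi-1} nor the weighted $L^1_t/L^2_t$ memberships in \eqref{expansion-v-2} force any linear moment condition on $\theta$, so no incompatibility can be extracted from $v^{(1)}$ alone. Your claim that ``$v^{(2)},v^{(3)}$ are not needed for the obstruction'' is precisely where the argument breaks.

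The actual obstruction, as in Beale's original proof and in the paper, is \emph{quadratic} in $\theta$ and lives at order $\varepsilon^2$. Expanding the Lagrangian incompressibility $\nabla_{\mathcal A}\cdot v=0$ to second order gives
\[
\nabla\cdot v^{(2)}=\partial_i\xi^{(1)}_j\,\partial_j v^{(1)}_i
=\tfrac12\,\partial_t\,\nabla\cdot\bigl(\xi^{(1)}\cdot\nabla\xi^{(1)}\bigr),
\]
and one evaluates $\int_0^\infty\!\int_\Omega\nabla\cdot v^{(2)}\,dx\,d\tau$ in two ways. First, writing $\int_{-\underline b}^{0}\nabla\cdot v^{(2)}\,dx_1=v^{(2)}_1|_{\Sigma_0}+\nabla_h\cdot\int_{-\underline b}^{0}v^{(2)}_h\,dx_1$ and using $\xi^{(2)}|_{t=0}=0$ together with $\xi^{(2),1}(\infty)|_{\Sigma_0}=0$ (from \eqref{cond-infty-xi-1} at order $\varepsilon^2$), the integral vanishes; here the role of the weighted spaces in \eqref{expansion-v-2} is only to guarantee that $\int_0^\infty\|\partial_h v^{(1)}\|_{L^2(\Omega)}\,d\tau<\infty$ and $\sup_t\|\nabla\xi^{(1)}\|_{L^2(\Omega)}<\infty$, so that the $x_h$-integral is legitimate. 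Second, integrating the displayed identity in time and then over $\Omega$ and using $\nabla\cdot\theta=0$, $\nabla\cdot\xi^{(1)}=0$ yields the value $-\int_{\Sigma_0}\theta_1\,\partial_1\theta_1\,dS$. One then manufactures $\theta=\nabla\times w$ with $w_3=\partial_1w_3=a\in C_c^\infty(\Sigma_0)$ and $w_2=\partial_1w_2=0$ on $\Sigma_0$, so that $\theta_1=\partial_1\theta_1=\partial_2 a$ there and the boundary integral equals $-\int_{\Sigma_0}(\partial_2 a)^2\,dS<0$, contradicting the first computation. No Fourier--Laplace spectral analysis of the linear semigroup is required.
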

\begin{rmk}\label{rmk-space-neg-1}
We call $v \in L^1([0, +\infty); \dot{\Lambda}_h^{-\sigma_0}\mathcal{H}^2_{\text{tan}, 2}(\Omega)) $ and $ v\in L^2([0, +\infty); \dot{\Lambda}_h^{-1}\mathcal{H}^1_{\text{tan}, 3}(\Omega))$ if and only if
$\dot{\Lambda}_h^{\sigma_0}v \in L^1([0, +\infty); \mathcal{H}^2_{\text{tan}, 2}(\Omega)) $ and $\dot{\Lambda}_h v\in L^2([0, +\infty); \mathcal{H}^1_{\text{tan}, 3}(\Omega))$ respectively.
\end{rmk}

The main result of this paper states as follows, the proof of which will be presented in Section \ref{sect-proof-mainthm}.
\begin{thm}[Global well-posedness]\label{thm-main}
Let $N \geq 3$ be an integer, $(\lambda,\,\sigma_0) \in (0, 1)$ satisfy $1-\lambda< \sigma_0\leq 1-\frac{1}{2}\lambda$. If $(\xi_0,\,v_0) \in \mathfrak{F}_{N+1}$ satisfies $\nabla_{J_0\mathcal{A}_0}\cdot v_0=0$, then there exists a small positive constant $\epsilon_0$, such that, if $E_{N+1}(0)\leq \epsilon_0$, then the viscous surface wave problem \eqref{eqns-pert-1} is globally well-posed in $\mathfrak{F}_{N+1}$. Moreover, for any $t>0$, there hold
\begin{equation*}\label{total-high-thm-1}
\begin{split}
&E_{N}(t) \lesssim  E_{N}(0)+\mathcal{E}_{\ell, N+1}(0),\quad   E_{N+1}(t) \lesssim E_{N+1}(0)+\langle\,t\,\rangle\,\mathcal{E}_{N+1}(0),\\
&\sup_{\tau\in[0, t]}\bigg(\dot{\mathcal{E}}_{N}(\tau)\langle\tau\rangle^{\lambda+\sigma_0}+\mathcal{E}_{N+1}(\tau)\bigg)\\
&\qquad\qquad\qquad\qquad\qquad
+\int_0^t\bigg(\langle\tau\rangle^{(\lambda+\sigma_0+1)/2}\dot{\mathcal{D}}_{N}(\tau)
+\mathcal{D}_{N+1}(\tau)\bigg)\,d\tau \lesssim \mathcal{E}_{N+1}(0).
 \end{split}
\end{equation*}
\end{thm}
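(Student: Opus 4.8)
The plan is to run a continuity/bootstrap argument built on top of the local well-posedness statement (Theorem~\ref{thm-local}) and the two-tiered energy method, using the four good unknowns $\mathcal{G}^1,\mathcal{G}^2,\mathcal{G}^3,\mathcal{Q}$ introduced in \eqref{unknown-0}. First I would set up the a~priori estimate: assume that on a time interval $[0,T]$ the solution satisfies a bootstrap hypothesis of the form
\begin{equation*}
\sup_{\tau\in[0,t]}\Big(\dot{\mathcal{E}}_{N}(\tau)\langle\tau\rangle^{\lambda+\sigma_0}+\mathcal{E}_{N+1}(\tau)\Big)
+\int_0^t\Big(\langle\tau\rangle^{(\lambda+\sigma_0+1)/2}\dot{\mathcal{D}}_{N}(\tau)+\mathcal{D}_{N+1}(\tau)\Big)\,d\tau\le 2C_0\,\mathcal{E}_{N+1}(0)
\end{equation*}
with $\mathcal{E}_{N+1}(0)\le\epsilon_0$, and show that the same inequality holds with constant $C_0$, which by continuity of $t\mapsto$ (left-hand side) and by the blow-up criterion \eqref{loc-blow-crit} forces $T=+\infty$. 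The crucial consumption of the bootstrap hypothesis is that $\|\dot{\Lambda}_h^{\sigma_0}\nabla v\|_{L^2}$ decays like $\langle\tau\rangle^{-(\lambda+\sigma_0)/2}$ fast enough that $\|\nabla v(\tau)\|_{L^\infty}\lesssim \|\dot{\Lambda}_h^{\sigma_0}\Lambda_h\nabla v\|_{H^1}\in L^1_\tau$; this $L^1_t L^\infty_x$ control of $\nabla v$, via $\partial_1\xi(t)=\partial_1\xi_0+\int_0^t\partial_1 v$, keeps $\|\mathcal{A}-I\|_{L^\infty}$ small uniformly in time (Remark~\ref{rmk-euler-1}), so all the nonlinear coefficients $a_{ij}-\delta_i^j$, $\mathcal{B}_{k,i}^\alpha$, $g_{kk}$, etc., remain controllable perturbations.

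The energy estimates themselves would be assembled in several layers, matching the section plan. (i) \emph{Tangential/horizontal energy estimates:} apply $\partial_h^i$ (and the fractional operators $\dot\Lambda_h^{\sigma_0}$, $\dot\Lambda_h^{-\lambda}$) to the momentum equation in \eqref{eqns-linear-1}, pair with $v$ (resp.\ the weighted quantities), and integrate by parts. When the boundary terms on $\Sigma_0$ produce integrals of $\partial_1\partial_h^i v$ and $\partial_h^i q$, use the representations \eqref{partialv-q-1} together with $\mathcal{G}^k|_{\Sigma_0}=\mathcal{Q}|_{\Sigma_0}=0$ from \eqref{unknown-bdry-1} to trade vertical derivatives and the pressure for tangential quantities plus the harmonic extensions $\mathcal{H}(\mathcal{B}_{k,i}^\alpha)$, which are controlled by Lemma~\ref{est-harmonic-ext-1}; this produces the energy $\dot{\mathcal{E}}_{\ell,\cdot}$, the dissipation $\dot{\mathcal{D}}_{\ell,\cdot}$ and, from the $\xi^1$ boundary term $q=\xi^1-2\nu\nabla_h\cdot v^h+\cdots$, the coercive surface contribution $\|\partial_h^i\xi^1\|_{H^1(\Sigma_0)}^2$. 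This is exactly where the good unknowns let one \emph{avoid} introducing $\partial_t v$ and hence the nonlinear compatibility conditions. (ii) \emph{Stokes/elliptic estimates:} view \eqref{eqns-linear-1} at each fixed time as a stationary Stokes system with data $g+\nu\widetilde g$ and the linearized boundary condition, and apply elliptic regularity (as in \cite{Guo-Tice-1}) to recover $\ddot{\mathcal{D}}_{\ell,\cdot}$, i.e.\ the second normal derivatives $\partial_1^2 v$ and $\nabla q$ with their extra horizontal regularity; relations \eqref{prt1-q-vh-rela-2} and \eqref{incomp-cond-v1-1} reduce $\partial_1^2 v^1$ and $\partial_1 q$ to already-controlled quantities. (iii) \emph{Bounded high-order energy:} run the same estimates at the top level $N+1$ without a time weight, absorbing the forcing $\langle t\rangle\,\mathcal{E}_{N+1}(0)$ coming from $\|\mathcal{H}(\xi^1)\|$-type terms. (iv) \emph{Decay:} with the low-horizontal-regularity input $\dot\Lambda_h^{-\lambda}\xi_0,\dot\Lambda_h^{-\lambda}v_0\in L^2$ and the bound $1-\lambda<\sigma_0\le 1-\tfrac12\lambda$, follow the Beale--Nishida/Hataya-type argument adapted to the tangential-derivative hierarchy to obtain the interpolation inequality $\dot{\mathcal{E}}_{N}\lesssim (\dot{\mathcal{E}}_{N}+\text{low})^{\theta}\dot{\mathcal{D}}_{N}^{1-\theta}$ and a Gronwall/ODE argument on $\langle t\rangle^{\lambda+\sigma_0}\dot{\mathcal{E}}_N$ yielding the stated algebraic decay.

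The main obstacle I anticipate is closing the estimate for the \emph{highest-order} and \emph{most horizontally-regular} piece, namely $\dot{\mathcal{E}}_{\ell,1+\sigma_0}=\|\dot\Lambda_h^{\sigma_0}\nabla v\|_{L^2}^2$ together with its dissipation $\ddot{\mathcal{D}}_{\ell,1+\sigma_0}$, because the fractional horizontal operator $\dot\Lambda_h^{\sigma_0}$ does not commute with the variable-coefficient (in $x_1$ and in the nonlinear coefficients) differential operators, so the commutators $[\dot\Lambda_h^{\sigma_0};a_{ij}]$ must be handled by Kato--Ponce / Coifman--Meyer-type product and commutator estimates, and one must verify that these commutators, the harmonic-extension terms, and the boundary integrals are all genuinely higher-order-small, i.e.\ bounded by $\epsilon_0$ times the dissipation with room to spare. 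A secondary delicate point is the consistency of the exponent window $1-\lambda<\sigma_0\le 1-\tfrac12\lambda$: the lower bound is what makes $\|\nabla v\|_{L^\infty}\in L^1_t$ (so that $\mathcal{A}-I$ stays small), while the upper bound is what the decay machinery can actually deliver from $\dot\Lambda_h^{-\lambda}$-data, and one must check the two are not in conflict and that all weighted time integrals $\int_0^t\langle\tau\rangle^{(\lambda+\sigma_0+1)/2}\dot{\mathcal{D}}_N\,d\tau$ converge. Once those are in hand, the bootstrap closes and the blow-up criterion upgrades the local solution to a global one in $\mathfrak{F}_{N+1}$.
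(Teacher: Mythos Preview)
Your overall architecture---bootstrap on top of Theorem~\ref{thm-local}, tangential energy estimates, Stokes estimates for normal derivatives, and a two-tiered decay mechanism---matches the paper's approach. Two points, however, are misdescribed in a way that would cause trouble if taken literally.

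First, the role of the good unknowns $\mathcal{G}^k,\mathcal{Q}$. In the paper they are \emph{not} used to avoid $\partial_t v$ altogether; on the contrary, the key $L^\infty_t L^2$ control of $\nabla v$ (Lemma~\ref{lem-pseudo-energy-tv-1}) is obtained by testing the momentum equation against $\mathcal{P}(\partial_h)\partial_t v$, which places $\|\mathcal{P}(\partial_h)\partial_t v\|_{L^2}^2$ in the \emph{dissipation}. The good unknowns then handle the resulting terms $\int \partial_1^2 v^j\,\partial_t v^j$ and the pressure contribution: one writes $\partial_1 v^j$ and $q$ via \eqref{partialv-q-1}, integrates by parts using $\mathcal{G}^k|_{\Sigma_0}=\mathcal{Q}|_{\Sigma_0}=0$, and lands on a time derivative of a functional $\mathring{\mathcal{E}}(\mathcal{P}(\partial_h)\nabla v)$ that involves only $\nabla v$, \emph{not} $\partial_t v$. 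That is why no compatibility condition on $\partial_t v|_{t=0}$ is required. Your step~(i), as written (pair with $v$ and use the good unknowns on boundary terms), would only reproduce the basic $L^2$ energy of Lemma~\ref{lem-tan-pseudo-energy-1} and would not yield $\dot{\mathcal{E}}_{\ell,1+\sigma_0}$.

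Second, the decay interpolation. The inequality you wrote, $\dot{\mathcal{E}}_{N}\lesssim(\dot{\mathcal{E}}_{N}+\text{low})^{\theta}\dot{\mathcal{D}}_{N}^{1-\theta}$, is circular. The paper's mechanism (Section~\ref{sect-proof-mainthm}) is
\[
\dot{\mathcal{E}}_{N}\ \lesssim\ \dot{\mathcal{D}}_{N}^{\frac{\lambda+\sigma_0}{1+\lambda+\sigma_0}}\,\mathcal{E}_{N+1}^{\frac{1}{1+\lambda+\sigma_0}},
\]
where the bounded quantity is the \emph{higher}-order energy $\mathcal{E}_{N+1}$: one needs both its negative-horizontal-regularity part $\|\dot\Lambda_h^{-\lambda}\xi^1\|_{L^2(\Sigma_0)}$ (to interpolate $\|\dot\Lambda_h^{\sigma_0}\xi^1\|$) and its top-order part $\|\partial_h^{N+1}\xi^1\|_{L^2(\Sigma_0)}$ (to interpolate $\|\partial_h^{N}\xi^1\|$). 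This, together with $\tfrac{d}{dt}\widehat{\dot{\mathcal{E}}}_N+\mathfrak{c}_1\dot{\mathcal{D}}_N\le 0$ from Lemma~\ref{lem-restate-decay-total-N+1-1}, gives the ODE $\tfrac{d}{dt}f+c\,f^{1+1/(\lambda+\sigma_0)}\le 0$ and hence the $\langle t\rangle^{-(\lambda+\sigma_0)}$ decay via Lemma~\ref{lem-diff-ineq-1}. No Beale--Nishida/Hataya Fourier analysis is invoked; the decay is purely a Guo--Tice-style ODE comparison, and this is where the exponent window $1-\lambda<\sigma_0\le 1-\tfrac12\lambda$ enters (the lower bound for $L^1_t$ integrability of $\dot{\mathcal{D}}_N^{1/2}$, the upper bound so that all product estimates with $\dot\Lambda_h^{-\lambda}$ close with $2-\sigma_0-\lambda\ge\sigma_0$).
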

\begin{rmk}\label{rmk-mainthm-1}
Notice that the global solution $v$ stated in Theorem \ref{thm-main} belongs to the space $\mathbf{E}$ in \eqref{expansion-v-2} and the free surface $\xi^1$ satisfies \eqref{cond-infty-xi-1} in the non-decay theorem (Theorem \ref{thm-nondecay}), we know that the assumption of the low horizontal regularity on the initial data is the essential element to prevent from occurring the statement in Theorem \ref{thm-nondecay}.
\end{rmk}

\begin{rmk}\label{rmk-mainthm-2}
Compared to the work of Guo-Tice \cite{Guo-Tice-2} about the global well-posedness of the system \eqref{VFS-eqns-1}, there is no requirement about the compatibility conditions in terms of the material derivative of the velocity in Theorem \ref{thm-main}, which plays a significant role in the study of the vacuum free boundary problem of the viscous compressible Navier-Stokes system (see \cite{Gui-WW-2019} for example). On the other hand, the Lagrangian method used in the proof of Theorem \ref{thm-main} is a new approach to study the global well-posedness of the incompressible Navier-Stokes equations with free boundary problem, which can be applied to investigate the well-posedness of many types of free boundary problem of the incompressible or compressible fluid dynamical system. Finally, some anisotropic algebraic decay estimates in terms of the energy and dissipation are uncovered in Theorem \ref{thm-main}, and this kind of result may be helpful to understand the possible blow-up or decay mechanism of the solution.
\end{rmk}

\renewcommand{\theequation}{\thesection.\arabic{equation}}
\setcounter{equation}{0}

\section{Preliminary estimates}\label{sect-tool}

Let us first recall some basic estimates, which will be heavily used  throughout the paper.

\begin{lem}[\cite{Alinhac-1986}, Theorem 2.61 in \cite{BCD}]\label{lem-composition-1}
Let $f$ be a smooth function on $\mathbb{R}$ vanishing at $0$, $s_1$, $s_2$ be two positive
real number, $s_1\in (0, 1)$, $s_2>0$. If $u$ belongs to $\dot{H}^{s_1}(\mathbb{R}^2) \cap \dot{H}^{s_1}(\mathbb{R}^2)\cap L^{\infty}(\mathbb{R}^2)$, then so does $f\circ u$,
and we have
\begin{equation*}
\|f\circ u\|_{\dot{H}^{s_i}} \leq C(s, f', \|u\|_{L^{\infty}})\|u\|_{\dot{H}^{s_i}}\quad \text{for} \quad i=1, 2.
\end{equation*}
\end{lem}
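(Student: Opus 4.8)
\medskip
\noindent\textbf{Proof proposal.}
I would prove the two estimates by different classical devices according to whether the exponent is below $1$ or not: the sub-unit range $s_1\in(0,1)$ via the Gagliardo--Slobodeckij description of $\dot H^{s}$, and the general range $s_2>0$ via Bony's linearization together with a Littlewood--Paley analysis. This is essentially the argument of \cite{Alinhac-1986} and \cite[Theorem~2.61]{BCD}, so in the final write-up I would give the short proof for $s\in(0,1)$ — which is the only range actually used in this paper, since all the horizontal operators $\dot\Lambda_h^{\sigma_0},\dot\Lambda_h^{-\lambda}$ have order in $(-1,1)$ — and refer to those sources for $s\ge1$.

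\medskip
\noindent\textbf{The range $s\in(0,1)$.}
Here I would start from the equivalence
\begin{equation*}
\|g\|_{\dot H^{s}(\mathbb{R}^2)}^2\;\sim\;\int_{\mathbb{R}^2}\int_{\mathbb{R}^2}\frac{|g(x)-g(y)|^2}{|x-y|^{2+2s}}\,dx\,dy .
\end{equation*}
Since $u\in L^\infty$ and $f$ is smooth with $f(0)=0$, the mean value theorem on the interval $I=[-\|u\|_{L^\infty},\|u\|_{L^\infty}]$ gives the pointwise Lipschitz bound $|f(u(x))-f(u(y))|\le\|f'\|_{L^\infty(I)}\,|u(x)-u(y)|$. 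Plugging this into the double integral,
\begin{equation*}
\|f\circ u\|_{\dot H^{s}}^2\;\le\;\|f'\|_{L^\infty(I)}^2\int_{\mathbb{R}^2}\int_{\mathbb{R}^2}\frac{|u(x)-u(y)|^2}{|x-y|^{2+2s}}\,dx\,dy\;\lesssim\;\|f'\|_{L^\infty(I)}^2\,\|u\|_{\dot H^{s}}^2 ,
\end{equation*}
which is the stated bound with $C=C(s,f',\|u\|_{L^\infty})$; note that $f\circ u\in L^\infty$, so the left-hand seminorm is meaningful.

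\medskip
\noindent\textbf{The general range $s>0$.}
For $s\ge1$ the above does not suffice, and I would argue through the telescoping identity
\begin{equation*}
f\circ u=\sum_{j\in\mathbb{Z}}\bigl(f(S_{j+1}u)-f(S_ju)\bigr)=\sum_{j\in\mathbb{Z}}m_j\,\dot{\Delta}_j u,\qquad m_j:=\int_0^1 f'\!\bigl(S_ju+\tau\,\dot{\Delta}_j u\bigr)\,d\tau,
\end{equation*}
with $S_j=\sum_{k<j}\dot{\Delta}_k$, the series converging in $\mathcal{S}'$ because $f(0)=0$, $S_ju\to u$, and $S_ju\to0$ as $j\to-\infty$. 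By the chain rule and Bernstein's inequalities each $m_j$ has Fourier support essentially in a ball of radius $\sim 2^j$ and obeys $\|\nabla^\ell m_j\|_{L^\infty}\lesssim 2^{j\ell}\,C_\ell(f,\|u\|_{L^\infty})$ for every $\ell\ge0$. Decomposing each product $m_j\,\dot{\Delta}_j u$ by Bony's paraproduct rule and resumming in $j$ then yields $\|f\circ u\|_{\dot H^s}\lesssim C(s,f,\|u\|_{L^\infty})\|u\|_{\dot H^s}$: the piece with $m_j$ at low frequency is handled by $\|m_j\|_{L^\infty}\lesssim1$ together with $\sum_j 2^{2js}\|\dot{\Delta}_j u\|_{L^2}^2\sim\|u\|_{\dot H^s}^2$, and every piece in which $m_j$ is differentiated is compensated by the matching frequency gain carried by $\dot{\Delta}_j u$ once the $j$-sum is reorganized.

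\medskip
\noindent\textbf{Main obstacle.}
The case $s\in(0,1)$ is immediate. The genuine work is the range $s\ge1$: one must carefully bookkeep the paraproduct remainders $m_j\,\dot{\Delta}_j u$, verifying that each differentiation of $m_j$ is absorbed by a factor $2^{j\ell}$ coming from $\dot{\Delta}_j u$ after rearranging the $j$-sum, and separately that the telescoping series converges in $\mathcal{S}'$ — both steps using $f(0)=0$ and $u\in L^\infty$ in an essential way at the low-frequency end. Since this is verbatim \cite{Alinhac-1986} and \cite[Theorem~2.61]{BCD}, I would not reproduce it and would simply cite those references.
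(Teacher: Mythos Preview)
Your proposal is correct, and in fact goes further than the paper itself: the paper does not give any proof of this lemma but simply records it as a known result with the citations to \cite{Alinhac-1986} and \cite[Theorem~2.61]{BCD} in the lemma header. Your plan to present the short Gagliardo--Slobodeckij argument for $s\in(0,1)$ and cite the references for $s\ge1$ is entirely appropriate, and your observation that only the sub-unit range is actually used in the paper (in Lemma~\ref{lem-est-aij-1} for $J^{-1}-1$) is accurate and a nice touch.
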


\begin{lem}[Classical product laws in Sobolev spaces \cite{BCD}]\label{lem-product-law-1}
\begin{equation}\label{product-law-1}
\begin{split}
&\|f\,g\|_{\dot{H}^{s_1+s_2-1}(\mathbb{R}^2)} \lesssim  \|f\|_{\dot{H}^{s_1}(\mathbb{R}^2)} \|g\|_{\dot{H}^{s_2}(\mathbb{R}^2)} \quad \text{for} \quad |s_1|,\,|s_2| < 1,\quad s_1+s_2>0,\\
&\|\dot{\Lambda}_h^s(f\,g)\|_{L^2(\mathbb{R}^2_h)} \lesssim
\|\dot{\Lambda}_h^sf\|_{L^2(\mathbb{R}^2_h)}\|g\|_{L^\infty(\mathbb{R}^2_h)}
+\|\dot{\Lambda}_h^sg\|_{L^2(\mathbb{R}^2_h)}\|f\|_{L^\infty(\mathbb{R}^2_h)} \quad \text{for} \quad s>0.
\end{split}
\end{equation}
\end{lem}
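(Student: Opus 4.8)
Both bounds are classical consequences of Bony's paracalculus on $\mathbb{R}^2$ (here the horizontal space $\mathbb{R}^2_h$); they appear in \cite{BCD}, and the plan is to reprove them via the Littlewood--Paley decomposition. I will write $\dot{\Delta}_j$ for the homogeneous dyadic block, $S_j=\sum_{k\le j-1}\dot{\Delta}_k$, use Bony's decomposition $fg=T_fg+T_gf+R(f,g)$ with $T_fg=\sum_j S_{j-1}f\,\dot{\Delta}_jg$ and $R(f,g)=\sum_j\dot{\Delta}_jf\,\widetilde{\dot{\Delta}}_jg$ (where $\widetilde{\dot{\Delta}}_j=\dot{\Delta}_{j-1}+\dot{\Delta}_j+\dot{\Delta}_{j+1}$), and rely on three standard facts: (i) Bernstein's inequalities on $\mathbb{R}^2$, e.g. $\|\dot{\Delta}_jf\|_{L^\infty}\lesssim 2^j\|\dot{\Delta}_jf\|_{L^2}$; (ii) the \emph{annulus lemma} --- a sum $\sum_j u_j$ with $\widehat{u_j}$ supported in dyadic annuli $2^j\mathcal{C}$ and $(2^{js}\|u_j\|_{L^2})_j\in\ell^2$ lies in $\dot{H}^s$ for every $s\in\mathbb{R}$; and (iii) the \emph{ball lemma} --- the analogue for $\widehat{u_j}$ supported in balls $2^j\mathcal{B}$, valid (for $\dot{B}^s_{p,1}$, and hence $\dot{H}^s$) only when $s>0$.

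For the first bound, with $|s_1|,|s_2|<1$, $s_1+s_2>0$ and $\sigma:=s_1+s_2-1$, I would treat $T_fg$ by estimating $\|S_{j-1}f\|_{L^\infty}\lesssim\sum_{k<j-1}2^k\|\dot{\Delta}_kf\|_{L^2}\lesssim 2^{j(1-s_1)}c_j\|f\|_{\dot{H}^{s_1}}$ with $(c_j)\in\ell^2$ (convergence of the geometric series is where $s_1<1$ is used), whence $\|S_{j-1}f\,\dot{\Delta}_jg\|_{L^2}\lesssim 2^{j(1-s_1)}c_j\|f\|_{\dot{H}^{s_1}}\|\dot{\Delta}_jg\|_{L^2}$; since these blocks are annulus-localized, multiplying by $2^{j\sigma}$ produces an $\ell^1$-summable sequence bounded by $\|f\|_{\dot{H}^{s_1}}\|g\|_{\dot{H}^{s_2}}$, so $\|T_fg\|_{\dot{H}^\sigma}\lesssim\|f\|_{\dot{H}^{s_1}}\|g\|_{\dot{H}^{s_2}}$, and symmetrically for $T_gf$ using $s_2<1$. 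For the remainder I would use H\"older, $\|\dot{\Delta}_jf\,\widetilde{\dot{\Delta}}_jg\|_{L^1}\le\|\dot{\Delta}_jf\|_{L^2}\|\widetilde{\dot{\Delta}}_jg\|_{L^2}$, so that $\sum_j 2^{j(s_1+s_2)}\|\dot{\Delta}_jf\,\widetilde{\dot{\Delta}}_jg\|_{L^1}\lesssim\|f\|_{\dot{H}^{s_1}}\|g\|_{\dot{H}^{s_2}}$ by Cauchy--Schwarz in $j$; since $s_1+s_2>0$, the ball lemma gives $R(f,g)\in\dot{B}^{s_1+s_2}_{1,1}(\mathbb{R}^2)$, and the homogeneous Sobolev embedding $\dot{B}^{s_1+s_2}_{1,1}(\mathbb{R}^2)\hookrightarrow\dot{B}^{s_1+s_2-1}_{2,1}(\mathbb{R}^2)\hookrightarrow\dot{H}^{s_1+s_2-1}(\mathbb{R}^2)$ closes it.

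For the second bound, fixing $s>0$ and using $\|f\|_{\dot{H}^s}\simeq\|\dot{\Lambda}_h^sf\|_{L^2}$, I would estimate the paraproducts by the uniform $L^\infty$-bound on the truncations: $\|S_{j-1}f\,\dot{\Delta}_jg\|_{L^2}\le\|f\|_{L^\infty}\|\dot{\Delta}_jg\|_{L^2}$ and $\|S_{j-1}g\,\dot{\Delta}_jf\|_{L^2}\le\|g\|_{L^\infty}\|\dot{\Delta}_jf\|_{L^2}$, which by the annulus lemma yield $\|T_fg\|_{\dot{H}^s}\lesssim\|f\|_{L^\infty}\|g\|_{\dot{H}^s}$ and $\|T_gf\|_{\dot{H}^s}\lesssim\|g\|_{L^\infty}\|f\|_{\dot{H}^s}$. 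For the remainder, $\|\dot{\Delta}_jf\,\widetilde{\dot{\Delta}}_jg\|_{L^2}\le\|g\|_{L^\infty}\|\dot{\Delta}_jf\|_{L^2}$, so $(2^{js}\|\dot{\Delta}_jf\,\widetilde{\dot{\Delta}}_jg\|_{L^2})_j\in\ell^2$ with norm $\lesssim\|g\|_{L^\infty}\|f\|_{\dot{H}^s}$, and since $s>0$ the ball lemma gives $R(f,g)\in\dot{B}^s_{2,1}(\mathbb{R}^2)\hookrightarrow\dot{H}^s(\mathbb{R}^2)$ with the same bound. Summing the three pieces gives exactly $\|\dot{\Lambda}_h^s(fg)\|_{L^2}\lesssim\|\dot{\Lambda}_h^sf\|_{L^2}\|g\|_{L^\infty}+\|\dot{\Lambda}_h^sg\|_{L^2}\|f\|_{L^\infty}$.

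The only delicate step is the treatment of the remainder $R(f,g)$: because its blocks are ball-localized rather than annulus-localized, one must pass through $\dot{B}^{\,\cdot}_{\,\cdot,1}$ and invoke the ball lemma, which is what forces the hypotheses $s_1+s_2>0$ and $s>0$; in the first bound it is also the point where the dimension $d=2$ enters, through the Sobolev embedding $\dot{B}^{s_1+s_2}_{1,1}\hookrightarrow\dot{B}^{s_1+s_2-1}_{2,1}$ that produces the $-1$ shift in the target regularity, while the bounds $|s_1|,|s_2|<1$ are needed only to sum the low-frequency part of the paraproducts. Everything else is routine Littlewood--Paley book-keeping.
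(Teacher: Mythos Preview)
The paper does not supply its own proof of this lemma: it is quoted as a classical result from \cite{BCD} and used as a black box throughout. Your paraproduct argument is the standard one from that reference and is correct; the only minor slip is that in the second estimate the remainder bound $(2^{js}\|\dot{\Delta}_jf\,\widetilde{\dot{\Delta}}_jg\|_{L^2})_j\in\ell^2$ places $R(f,g)$ directly in $\dot{B}^s_{2,2}=\dot{H}^s$ (not $\dot{B}^s_{2,1}$), which only simplifies the conclusion, and in the first estimate your argument actually uses only the upper bounds $s_1,s_2<1$ rather than the full $|s_1|,|s_2|<1$ stated in the lemma.
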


\begin{lem}[Embedding inequality \cite{BCD}]\label{lem-embedding-ineq-1}
For any $\sigma \in (0, 1)$, there holds
\begin{equation}\label{embedding-ineq-1}
\begin{split}
&\|f\|_{L^\infty(\mathbb{R}_h^2)} \lesssim  \|\dot{\Lambda}_h^{\sigma}\Lambda_h f\|_{L^2(\mathbb{R}_h^2)}.
\end{split}
\end{equation}
\end{lem}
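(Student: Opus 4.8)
The plan is to reduce the estimate to an elementary weighted $L^2\hookrightarrow L^1$ bound on the horizontal Fourier side. First I would use the standard inequality $\|f\|_{L^\infty(\mathbb{R}_h^2)} \lesssim \|\widehat{f}\|_{L^1(\mathbb{R}_h^2)}$, where $\widehat{f} = \mathcal{F}_{x_h\rightarrow\varsigma_h}f$, which is immediate from Fourier inversion. The operator $\dot{\Lambda}_h^{\sigma}\Lambda_h$ acts on the Fourier side as multiplication by $m(\varsigma_h) := |\varsigma_h|^{\sigma}\langle\varsigma_h\rangle$. Writing $\widehat{f} = m^{-1}\cdot(m\,\widehat{f})$ and applying the Cauchy--Schwarz inequality, one obtains
$$\|\widehat{f}\|_{L^1(\mathbb{R}_h^2)} \le \left(\int_{\mathbb{R}_h^2}\frac{d\varsigma_h}{|\varsigma_h|^{2\sigma}\langle\varsigma_h\rangle^{2}}\right)^{1/2}\,\|\dot{\Lambda}_h^{\sigma}\Lambda_h f\|_{L^2(\mathbb{R}_h^2)},$$
so the lemma reduces to showing that the displayed integral is finite for $\sigma\in(0,1)$.

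The remaining step is to verify this integrability. Passing to polar coordinates in the plane, $\int_{\mathbb{R}_h^2}|\varsigma_h|^{-2\sigma}\langle\varsigma_h\rangle^{-2}\,d\varsigma_h \sim \int_0^\infty r^{1-2\sigma}\langle r\rangle^{-2}\,dr$. Near $r=0$ the integrand is comparable to $r^{1-2\sigma}$, which is integrable exactly when $1-2\sigma>-1$, i.e.\ $\sigma<1$; as $r\to\infty$ it is comparable to $r^{-1-2\sigma}$, integrable exactly when $\sigma>0$. Hence the integral is finite precisely on the range $\sigma\in(0,1)$ assumed in the statement, and combining this with the previous display completes the proof.

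Since this is a textbook Fourier-analytic estimate, there is no real obstacle; the only subtlety worth flagging is that the two-dimensionality of the horizontal variable enters through the area element $r\,dr$, which is exactly what makes $\sigma\in(0,1)$ the sharp admissible range (in higher dimension the low-frequency endpoint would shift, and the range would widen). If one prefers to avoid the pointwise Fourier bound, an entirely equivalent argument runs through a Littlewood--Paley decomposition $f=\sum_{j\in\mathbb{Z}}\dot{\Delta}_j f$ together with Bernstein's inequality $\|\dot{\Delta}_j f\|_{L^\infty(\mathbb{R}_h^2)}\lesssim 2^{j}\|\dot{\Delta}_j f\|_{L^2(\mathbb{R}_h^2)}$: since $m(\varsigma_h)\sim 2^{j(1+\sigma)}$ on the block $|\varsigma_h|\sim 2^j$ for $j\ge0$ and $m(\varsigma_h)\sim 2^{j\sigma}$ for $j<0$, summing the bounds $\|\dot{\Delta}_j f\|_{L^2}\sim m(\varsigma_h)^{-1}\|\dot{\Delta}_j(\dot{\Lambda}_h^{\sigma}\Lambda_h f)\|_{L^2}$ against the weight $2^{j}$ and applying Cauchy--Schwarz in $j$ converges for high frequencies iff $\sigma>0$ and for low frequencies iff $\sigma<1$, yielding the same conclusion.
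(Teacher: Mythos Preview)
Your argument is correct. The paper does not actually supply a proof of this lemma; it merely records the inequality and attributes it to the textbook \cite{BCD}, so there is nothing in the paper to compare against. Your Fourier-side Cauchy--Schwarz argument (or its equivalent Littlewood--Paley formulation) is exactly the standard proof one finds in that reference, and your check that the weight $|\varsigma_h|^{-2\sigma}\langle\varsigma_h\rangle^{-2}$ is integrable on $\mathbb{R}^2$ precisely when $\sigma\in(0,1)$ is accurate.
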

The following result gives a version of Korn's type inequality for the equilibrium domain
$\Omega$, which proof can be found in Beale's paper \cite{Beale-1981}.
\begin{lem}[Korn's lemma, Lemma 2.7 in \cite{Beale-1981}]\label{lem-korn-2}
Let $\Omega$ be the equilibrium domain given in \eqref{def-domain-1}, then there exists a positive constant $C$, independent of $u$, such that
\begin{equation*}\label{korn-2}
\|u\|_{H^1(\Omega)} \leq C\,\|\mathbb{D}(u)\|_{L^2(\Omega)}
\end{equation*}
for all $u \in H^1(\Omega)$ with $u|_{\Sigma_b}=0$.
\end{lem}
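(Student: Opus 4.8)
\emph{Plan of proof.} The plan is to Fourier–transform in the horizontal variables $x_h\in\R^2$, which turns the inequality into a one–parameter family of one–dimensional Korn inequalities on the interval $I=(-\underline{b},0)$ for $\C^3$–valued functions $w=\widehat u(\cdot,\varsigma_h)$ with $w(-\underline{b})=0$, and it suffices to establish these with a constant independent of the tangential frequency $\varsigma_h$. Before carrying this out it helps to isolate where the genuine content sits. The entries $(\mathbb D(u))_{\alpha\beta}=\partial_\alpha u^\beta+\partial_\beta u^\alpha$ with $\alpha,\beta\in\{2,3\}$ are exactly the two–dimensional symmetric gradient of the horizontal field $u^h$, and two–dimensional Korn on the whole plane is elementary: on the Fourier side $\sum_{\alpha,\beta\in\{2,3\}}|\varsigma_\alpha\widehat{u^\beta}+\varsigma_\beta\widehat{u^\alpha}|^2=2|\varsigma_h|^2|\widehat{u^h}|^2+2|\varsigma_h\cdot\widehat{u^h}|^2\ge 2|\varsigma_h|^2|\widehat{u^h}|^2$, so slicing in $x_1$ gives $\|\nabla_h u^h\|_{L^2(\Omega)}\le \tfrac1{\sqrt2}\|\mathbb D(u)\|_{L^2(\Omega)}$. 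Combined with $\partial_1 u^1=\tfrac12(\mathbb D(u))_{11}$ and $\partial_1 u^\alpha=(\mathbb D(u))_{1\alpha}-\partial_\alpha u^1$, the whole statement reduces to the single bound $\|\nabla_h u^1\|_{L^2(\Omega)}\lesssim \|\mathbb D(u)\|_{L^2(\Omega)}$.

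To prove that reduced bound, I would fix $\alpha\in\{2,3\}$, test $\partial_\alpha u^1$ against itself, substitute $\partial_\alpha u^1=(\mathbb D(u))_{1\alpha}-\partial_1 u^\alpha$, integrate by parts in $x_1$ (the $\Sigma_b$ contribution vanishes since $u^\alpha|_{\Sigma_b}=0$), and use $\partial_1 u^1=\tfrac12(\mathbb D(u))_{11}$ together with an integration by parts in $x_\alpha$ on $\R^2$. Every bulk term produced is then either of the form $\|\mathbb D(u)\|_{L^2(\Omega)}\|\nabla_h u^1\|_{L^2(\Omega)}$ (to be absorbed) or bounded by $\|\mathbb D(u)\|_{L^2(\Omega)}^2$, and what survives is a single surface integral on the free surface: after summing over $\alpha$ one is left with $\int_{\Sigma_0}u^1\,(\nabla_h\cdot u^h)\,dx_h$. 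Controlling this surface term is the heart of the matter.

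The obstruction is precisely that no boundary condition is available on $\Sigma_0$, so $\int_{\Sigma_0}u^1\,(\nabla_h\cdot u^h)\,dx_h$ cannot be estimated naively for $u\in H^1$. The plan here is to pass again to the Fourier side in $x_h$, write the term as $-\int_{\R^2}\bigl(i\varsigma_h\cdot\overline{\widehat{u^h}(0,\varsigma_h)}\bigr)\,\widehat{u^1}(0,\varsigma_h)\,d\varsigma_h$, and exploit the Dirichlet condition at $x_1=-\underline{b}$ through the one–dimensional trace identity $|\widehat{u^j}(0,\varsigma_h)|^2=2\,\RE\!\int_I \widehat{u^j}\,\overline{\partial_1\widehat{u^j}}\,dx_1$, which expresses each trace in terms of $\int_I|\widehat{u^j}|^2$ and $\int_I|\partial_1\widehat{u^j}|^2$. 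For tangential frequencies in a bounded set, the quantity $|\varsigma_h|^2\!\int_I|\widehat u|^2$ is already controlled by $\|\mathbb D(u)\|_{L^2(\Omega)}^2$ via the bounds on $\nabla_h u^h$ and $\partial_1 u^1$ above together with the one–dimensional Poincaré inequality (using $w(-\underline{b})=0$), and the estimate closes. The difficulty is the high–frequency regime $|\varsigma_h|\to\infty$, where the Poincaré bound is far too lossy and one must instead argue directly on the one–dimensional system satisfied by $\widehat u(\cdot,\varsigma_h)$, recovering $\widehat{u^1}$ from $(\mathbb D(u))_{11}$, $(\mathbb D(u))_{1\alpha}$ and the vanishing trace at the left endpoint.

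I expect this high–frequency step to be the main obstacle; it is exactly the technical core of Beale's Lemma~2.7 in \cite{Beale-1981}, and the plan is to follow that computation. An alternative route would be to invoke Korn's second inequality $\|u\|_{H^1(\Omega)}\lesssim\|u\|_{L^2(\Omega)}+\|\mathbb D(u)\|_{L^2(\Omega)}$ — which holds with a constant uniform for the slab thanks to translation invariance in $x_h$ — and then remove the $\|u\|_{L^2(\Omega)}$ term using the vanishing trace on $\Sigma_b$; but that removal is itself delicate on the unbounded domain, since the soft compactness argument is unavailable and one is driven back to the same frequency analysis. Either way, I would lean on \cite{Beale-1981} for the delicate endpoint estimate.
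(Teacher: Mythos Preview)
The paper does not give its own proof of this lemma: it simply cites Beale~\cite{Beale-1981}, Lemma~2.7, and moves on. Your plan is a reasonable outline of precisely that argument --- Fourier transform in $x_h$, reduction to the single estimate $\|\nabla_h u^1\|_{L^2(\Omega)}\lesssim\|\mathbb D(u)\|_{L^2(\Omega)}$, and the isolation of the surface term $\int_{\Sigma_0}u^1\,\nabla_h\cdot u^h\,dx_h$ as the only nontrivial step --- and you yourself note that the high-frequency control of that term is exactly the content of Beale's computation, on which you intend to lean. So there is no genuine difference in approach to discuss: both the paper and your proposal defer the actual work to~\cite{Beale-1981}.

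One small remark: your reduction is clean and correct, but as written it is only a plan, not a proof. If you want a self-contained argument you will need to carry out the high-frequency step explicitly (or, equivalently, prove directly the one-dimensional Korn inequality for $w\in H^1(I;\C^3)$ with $w(-\underline b)=0$ and uniform constant in $|\varsigma_h|\ge 1$). Otherwise, simply citing Beale as the paper does is the honest thing to do.
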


The entries of the matrices $\mathcal{A}-I$,  $J\mathcal{A}-I$, as well as the Jacobian $J$ of the flow map and its inverse $J^{-1}$, are anisotropic when we consider their low horizontal regularities, which is stated as follows.

\begin{lem}\label{lem-est-aij-1}
Let $i, \, j=1, 2, 3$, $2 \leq  k\in \mathbb{N}$, if $E_3(t)$ is uniformly bounded for all existence time $t$, say $E_3(t) \leq 1$, then there hold
\begin{equation}\label{est-aij-J-1}
  \begin{split}
&(1). \,\|\dot{\Lambda}_h^{\sigma}\,(a_{12},\,a_{13},\,\mathcal{A}_1^2,\,\mathcal{A}_1^3)\|_{H^1}+  \|(a_{12},\,a_{13},\,\mathcal{A}_1^2,\,\mathcal{A}_1^3)\|_{L^\infty}\lesssim E_3^{\frac{1}{2}} \quad \forall \, \sigma \in [\sigma_0, 2];\\
&(2). \,\|\dot{\Lambda}_h^{\sigma} (J-1,\,J^{-1}-1)\|_{H^1}+\|J-1\|_{L^\infty}\lesssim E_3^{\frac{1}{2}}   \quad \forall \, \sigma \in [\sigma_0-1, 2]; \\
&(3). \,\|\dot{\Lambda}_h^{\sigma}( a_{ij},\,\mathcal{A}_i^j)\|_{H^1}+\|(a_{ij},\,\mathcal{A}_i^j)\|_{L^\infty}\lesssim E_3^{\frac{1}{2}}\quad \forall \, \sigma \in [\sigma_0-1, 2]\\
&\qquad \qquad \qquad \qquad \qquad \qquad \qquad \text{with}  \quad(i, j)= (2, 1), (2, 3), (3, 1), (3, 2);\\
&(4).  \,\|\dot{\Lambda}_h^{\sigma}( a_{ii}-1,\,\mathcal{A}_i^i-1)\|_{H^1}+\|(a_{ii}-1,\,\mathcal{A}_i^i-1)\|_{L^\infty}\lesssim E_3^{\frac{1}{2}}\quad\forall \, \sigma \in [\sigma_0-1, 2];\\
&(5). \, \|\dot{\Lambda}_h^{k} (J-1,\,J^{-1}-1,\,a_{ij}{\bf 1}_{i\neq j},\,\mathcal{A}_i^j{\bf 1}_{i\neq j},\,a_{jj}-1,\, \mathcal{A}_j^j-1 )\|_{H^1}\lesssim E_{k+1}^{\frac{1}{2}}.
            \end{split}
\end{equation}
\end{lem}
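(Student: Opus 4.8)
The plan is to estimate every entry of $\mathcal{A}-I$, $J\mathcal{A}-I$, $J-1$ and $J^{-1}-1$ by tracking which factors of $\xi$ they involve and matching each factor to a norm controlled by $E_3(t)$ (or $E_{k+1}(t)$ for part (5)). The starting point is the explicit formulas \eqref{expre-a-1} for the $a_{ij}$, the algebraic identities \eqref{flow-map-identity-1} (in particular $\mathcal{A}_i^j=\delta_i^j-\mathcal{A}_i^k\partial_k\xi^j$), and the expansion $J=1+\nabla\cdot\xi+\mathcal{B}_{00}+\mathcal{B}_{000}$. Reading off these formulas one sees the anisotropy: the off-diagonal entries $a_{12},a_{13}$ and $\mathcal{A}_1^2,\mathcal{A}_1^3$ are, to leading order, $-\partial_1\xi^2$ and $-\partial_1\xi^3$ plus quadratic-and-higher terms, whereas $a_{11}-1$ has leading part $\nabla_h\cdot\xi^h=\partial_2\xi^2+\partial_3\xi^3$, a \emph{horizontal} derivative of $\xi$, and likewise $a_{21},a_{22}-1,a_{23},a_{31},a_{32},a_{33}-1$ each contain at least one horizontal derivative of $\xi$ in their linear part. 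This is exactly why the admissible range of $\sigma$ differs: $\partial_1\xi^h$ is controlled in $\dot\Lambda_h^{\sigma_0}(\cdot)$ only down to $\sigma=\sigma_0$ (via $\|\dot\Lambda_h^{\sigma_0}\partial_1\xi^h\|_{H^1}^2\subset E_{\mathrm h,3}$), while a term of the form $\dot\Lambda_h^{\sigma}\partial_h\xi$ with $\sigma\ge\sigma_0-1$ can be rewritten as $\dot\Lambda_h^{\sigma+1}\xi$ with $\sigma+1\ge\sigma_0$, and $\|\dot\Lambda_h^{\sigma_0}\xi\|$-type quantities together with the higher tangential norms $\sum_{i\le N-1}\|\partial_h^i\xi\|_{H^2}^2$ in $E_{\mathrm h,N}$ bound it; part (2) for $\partial_1\xi^1$ uses $\|\dot\Lambda_h^{\sigma_0-1}\partial_1\xi^1\|_{H^1}^2\subset E_{\mathrm h,3}$, which is the source of the shift $\sigma_0-1$ for $J$.

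First I would dispatch the \emph{linear} contributions: for each claimed bound, isolate the linear-in-$\xi$ part of the entry, express it as $\dot\Lambda_h^{\sigma}$ applied to $\partial_1\xi^1$, $\partial_1\xi^h$, or $\partial_h\xi$, and interpolate between the two endpoints of the given $\sigma$-interval using the definition of $E_{\mathrm h,3}$ (the $H^1$ in the vertical/full sense is already built into those norms). The $L^\infty$ pieces follow from the embedding $\|f\|_{L^\infty(\mathbb{R}_h^2)}\lesssim\|\dot\Lambda_h^{\sigma}\Lambda_h f\|_{L^2(\mathbb{R}_h^2)}$ of Lemma \ref{lem-embedding-ineq-1} applied slicewise in $x_1$, combined with a one-dimensional trace/Sobolev bound in $x_1$; equivalently $\|f\|_{L^\infty(\Omega)}\lesssim\|\dot\Lambda_h^{\sigma}\Lambda_h f\|_{H^1(\Omega)}$ for $\sigma\in(0,1)$, which here we apply with $f$ a first derivative of $\xi$, landing inside $E_3$. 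Then I would handle the \emph{nonlinear} (quadratic and cubic) terms in \eqref{expre-a-1} and in $\mathcal{B}_{00},\mathcal{B}_{000}$: each is a product of two or three first derivatives of $\xi$, so by the product law \eqref{product-law-1} (second line) $\|\dot\Lambda_h^{\sigma}(fg)\|_{L^2}\lesssim\|\dot\Lambda_h^{\sigma}f\|_{L^2}\|g\|_{L^\infty}+\|f\|_{L^\infty}\|\dot\Lambda_h^{\sigma}g\|_{L^2}$ (with the $H^1$ version obtained by also differentiating once, distributing the extra derivative by Leibniz and using $H^1\hookrightarrow$ the relevant Lebesgue spaces in $3$D), every such product is bounded by a sum of (low-regularity norm of one factor)$\times$($L^\infty$ or $H^1$ norm of the other), each of which is $\lesssim E_3^{1/2}$; the product is then $\lesssim E_3$, and since we assume $E_3\le1$ we may absorb $E_3$ into $E_3^{1/2}$. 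For $J^{-1}-1$ I would write $J^{-1}-1=-(J-1)J^{-1}$ and use Lemma \ref{lem-composition-1} (composition estimate, with $f(z)=(1+z)^{-1}-1$ vanishing at $0$, which is smooth near $0$ once $\|J-1\|_{L^\infty}<1/2$, guaranteed by the already-established bound and the smallness of $E_3$) together with the product law again. For the diagonal entries in (4) one notes $\mathcal{A}_i^i-1$ and $a_{ii}-1$ differ from their linear parts by such products and from each other by $\mathcal{A}_i^i-a_{ii}=(1-J)\mathcal{A}_i^i$, again a product.

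Part (5) is the same mechanism at the higher level: $\dot\Lambda_h^k$ with $k\ge2$ integer is an honest (iterated) horizontal derivative up to lower-order horizontal derivatives, so $\|\dot\Lambda_h^k(\cdot)\|_{H^1}$ is controlled by $\sum_{i\le k}\|\partial_h^i(\cdot)\|_{H^1}$; applying Leibniz to the polynomial/rational expressions above and distributing $k$ horizontal derivatives, each resulting term is a product in which one factor carries up to $k$ horizontal derivatives of $\xi$ (hence is controlled in $H^1$ by $\|\partial_h^{k}\nabla^2\xi\|_{L^2}\lesssim E_{k+1}^{1/2}$, recalling ${E}_{k+1}\thicksim\|\partial_h^{k}\xi\|_{H^2}^2+\cdots$) and the remaining factors carry $\le\lfloor k/2\rfloor$ derivatives and are therefore bounded in $L^\infty$ by $E_3^{1/2}\le1$; the rational terms $J^{-1}$ are again handled by the composition estimate plus the fact that all lower-order horizontal derivatives of $J$ are already under control from part (2) with $\sigma$ an integer $\le k$. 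The main obstacle I anticipate is purely bookkeeping: keeping the anisotropy straight, i.e.\ making sure that in every quadratic term at least one factor can legitimately be assigned the \emph{low} horizontal regularity available (for $\partial_1\xi^h$ only $\dot\Lambda_h^{\sigma_0}$, for $\partial_1\xi^1$ only $\dot\Lambda_h^{\sigma_0-1}$) while the partnered factor has \emph{enough} regularity to sit in $L^\infty(\Omega)$ via Lemma \ref{lem-embedding-ineq-1} — this is where the hypotheses $1-\lambda<\sigma_0\le 1-\tfrac12\lambda$ and $\sigma_0\in(0,1)$ are genuinely used, and where one must be careful that the interpolation keeps $\sigma$ inside $(0,1)$ wherever the endpoint embedding is invoked. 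No subtle new estimate is needed beyond Lemmas \ref{lem-composition-1}, \ref{lem-product-law-1}, \ref{lem-embedding-ineq-1}; the content is entirely in organizing the combinatorics of \eqref{expre-a-1}.
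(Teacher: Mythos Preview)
Your proposal is correct and follows essentially the same route as the paper: both start from the explicit formulas \eqref{expre-a-1}, identify the anisotropy (why $a_{12},a_{13}$ only tolerate $\sigma\ge\sigma_0$ while the other entries go down to $\sigma_0-1$), bound the endpoints and interpolate, use Lemma~\ref{lem-embedding-ineq-1} for the $L^\infty$ parts, and invoke Lemma~\ref{lem-composition-1} for $J^{-1}-1$. One small point: for the quadratic terms at the low endpoint $\sigma=\sigma_0-1<0$ you cannot use the second line of \eqref{product-law-1} (it requires $s>0$); the paper instead uses the first line in $\mathbb{R}^2_h$, e.g.\ $\|\dot\Lambda_h^{\sigma_0-1}(fg)\|_{L^2_h}\lesssim\|\dot\Lambda_h^{\sigma_0}f\|_{L^2_h}\|g\|_{L^2_h}$ with $s_1=\sigma_0$, $s_2=0$, combined with an $L^\infty_{x_1}$--$L^2_{x_1}$ split in the vertical variable---the same idea you sketch, just with the correct product inequality.
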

\begin{proof}
Due to the expression of $a_{ij}$ in \eqref{expre-a-1}, we obtain from Lemma \ref{lem-product-law-1} that
\begin{equation*}
  \begin{split}
 &\|\dot{\Lambda}_h^{\sigma_0}a_{12}\|_{L^2}\lesssim \|\dot{\Lambda}_h^{\sigma_0}\partial_1\xi^2\|_{L^2}
  +\|\dot{\Lambda}_h^{\sigma_0}(\partial_1\xi^2\partial_3\xi^3)\|_{L^2}
  +\|\dot{\Lambda}_h^{\sigma_0}(\partial_1\xi^3  \partial_3\xi^2)\|_{L^2}\\
  &\lesssim \|\dot{\Lambda}_h^{\sigma_0}\partial_1\xi^2\|_{L^2}
  +\|\dot{\Lambda}_h^{\sigma_0}\partial_1\xi^h\|_{L^2}
  \|\partial_3\xi^h\|_{L^\infty}+\|\partial_1\xi^h\|_{L^2_{x_1}L^\infty_h}
  \|\dot{\Lambda}_h^{\sigma_0}\partial_3\xi^h\|_{L^\infty_{x_1}L^2_h},
            \end{split}
\end{equation*}
which follows from \eqref{embedding-ineq-1} that
\begin{equation}\label{est-aij-2}
  \begin{split}
   \|\dot{\Lambda}_h^{\sigma_0}a_{12}\|_{L^2}\lesssim\|\dot{\Lambda}_h^{\sigma_0}\partial_1\xi^2\|_{L^2}
  &+\|\dot{\Lambda}_h^{\sigma_0}\partial_1\xi^h\|_{L^2}
  \|\dot{\Lambda}_h^{\sigma_0}\Lambda_h \partial_3\xi^h\|_{H^1}\\
  &+\|\dot{\Lambda}_h^{\sigma_0}\Lambda_h \partial_1\xi^h\|_{L^2}
  \|\dot{\Lambda}_h^{\sigma_0}\partial_3\xi^h\|_{H^1}\lesssim E_3^{\frac{1}{2}}.
            \end{split}
\end{equation}
While from $\nabla\,a_{21}=-(\nabla\partial_2\xi^1+\nabla\partial_2\xi^1\partial_3\xi^3+\partial_2\xi^1\nabla\partial_3\xi^3-\nabla\partial_2\xi^3  \partial_3\xi^1-\partial_2\xi^3  \nabla\partial_3\xi^1)$, applying Lemma \ref{lem-product-law-1} implies
\begin{equation}\label{est-aij-2a}
  \begin{split}
    & \|\dot{\Lambda}_h^{\sigma_0}\nabla\,a_{12}\|_{L^2}\lesssim \|\dot{\Lambda}_h^{\sigma_0}\nabla\,\partial_1\xi^2\|_{L^2}
  +\|\dot{\Lambda}_h^{(1+\sigma_0)/2}\nabla\,\partial_1\xi^h\|_{L^2}
  \|\dot{\Lambda}_h^{(1+\sigma_0)/2}\partial_3\xi^h\|_{L^\infty_{x_1}L^2_h} \\
  &\qquad\qquad\qquad\qquad\qquad\qquad+\|\dot{\Lambda}_h^{(1+\sigma_0)/2}\partial_1\xi^h\|_{L^\infty_{x_1}L^2_h}
  \|\dot{\Lambda}_h^{(1+\sigma_0)/2}\partial_3\nabla\,\xi^h\|_{L^2}\lesssim E_3^{\frac{1}{2}}.
            \end{split}
\end{equation}
Combining \eqref{est-aij-2} with \eqref{est-aij-2a} leads to $\|\dot{\Lambda}_h^{\sigma_0}\,a_{12}\|_{H^1}\lesssim E_3^{\frac{1}{2}}$.

The same conclusion can be drawn for $ \|\dot{\Lambda}_h^{2}\,a_{12}\|_{H^1}$, that is, $\|\dot{\Lambda}_h^{2}\,a_{12}\|_{H^1}\lesssim E_3^{\frac{1}{2}}$.

Combining the above two inequalities, we thus use the interpolation inequality to find
\begin{equation*}\label{a12-H1-est-1}
\|\dot{\Lambda}_h^{\sigma}\,a_{12}\|_{H^1}\lesssim E_3^{\frac{1}{2}} \quad \forall \, \sigma \in [\sigma_0, 2],
\end{equation*}
and then, from \eqref{embedding-ineq-1}, it follows
\begin{equation*}\label{a12-H1-est-2}
\begin{split}
&\|a_{12}\|_{L^\infty} \lesssim  \|\dot{\Lambda}_h^{\sigma_0}\Lambda_h a_{12}\|_{L^\infty_{x_1}L^2_h} \lesssim  \|\dot{\Lambda}_h^{\sigma_0}\Lambda_h a_{12}\|_{H^1}\lesssim E_3^{\frac{1}{2}}.
\end{split}
\end{equation*}
By a similar argument, one can see
\begin{equation*}\label{a13-H1-est-1}
\begin{split}
& \|\dot{\Lambda}_h^{\sigma}\,a_{13}\|_{H^1}+\|a_{13}\|_{L^\infty}\lesssim E_3^{\frac{1}{2}} \quad \forall \, \sigma \in [\sigma_0, 2].
\end{split}
\end{equation*}

For $a_{22}-1$, it immediately shows
\begin{equation}\label{a22-est-1}
  \begin{split}
&\|\dot{\Lambda}_h^{\sigma_0-1}(a_{22}-1)\|_{H^1} \lesssim\|\dot{\Lambda}_h^{\sigma_0-1}(a_{22}-1)\|_{L^2}
+\|\dot{\Lambda}_h^{\sigma_0-1}\nabla\,a_{22}\|_{L^2}\\
&\lesssim \|\dot{\Lambda}_h^{\sigma_0-1}(\partial_1\xi^1,\,\partial_3\xi^3)\|_{H^1}
+\|\dot{\Lambda}_h^{\sigma_0-1}(\partial_1\xi^1 \partial_3\xi^3,\,\partial_1\xi^3\partial_3\xi^1)\|_{L^2}\\
&\qquad
+\|\dot{\Lambda}_h^{\sigma_0-1}(\partial_1\xi^1\nabla\,\partial_3\xi^3,\,
\nabla\,\partial_1\xi^1\partial_3\xi^3,\,\nabla\,\partial_1\xi^3  \partial_3\xi^1,\,\partial_1\xi^3  \nabla\, \partial_3\xi^1)\|_{L^2}.
  \end{split}
\end{equation}
which along with \eqref{lem-product-law-1} yields  $\|\dot{\Lambda}_h^{\sigma_0-1}(\nabla\,\partial_1\xi^3  \partial_3\xi^1)\|_{L^2}\lesssim \|\dot{\Lambda}_h^{\sigma_0}\nabla\,\partial_1\xi^3\|_{L^2}
\|\partial_3\xi^1\|_{L^\infty_{x_1}L^2_h}\lesssim E_3.$
The same estimate holds for other nonlinear terms in the right hand side of \eqref{a22-est-1}, which, combining $\|\dot{\Lambda}_h^{\sigma_0-1}(\partial_1\xi^1,\,\partial_3\xi^3)\|_{H^1} \lesssim E_3^{\frac{1}{2}}$, implies $\|\dot{\Lambda}_h^{\sigma_0-1}(a_{22}-1)\|_{H^1}\lesssim E_3^{\frac{1}{2}}$.

Similarly,  for $k=1, 2, 3$, $(i, j)= (2, 1), (2, 3), (3, 1), (3, 2)$, there hold that $\|\dot{\Lambda}_h^{\sigma_0-1}(a_{kk}-1,\,J-1,\,a_{ij})\|_{H^1} \lesssim E_3^{\frac{1}{2}}$
and $\|\dot{\Lambda}_h^{2}(a_{kk}-1,\,J-1,\, a_{ij})\|_{H^1} \lesssim E_3^{\frac{1}{2}}$,
so, applying the interpolation inequality yields
\begin{equation}\label{akk-est-1}
  \begin{split}
&\|\dot{\Lambda}_h^{\sigma}(a_{kk}-1,\,J-1,\, a_{ij})\|_{H^1}+\|(a_{kk}-1,\,J-1,\, a_{ij})\|_{L^\infty} \lesssim E_3^{\frac{1}{2}} \quad \forall \,\sigma \in [\sigma_0-1, 2].
  \end{split}
\end{equation}

Next, applying Lemma \ref{lem-composition-1}, it produces from \eqref{akk-est-1} that
\begin{equation*}\label{J-inverse-1}
  \begin{split}
&\|\dot{\Lambda}_h^{\sigma}(J^{-1}-1)\|_{H^1}\lesssim E_3^{\frac{1}{2}}\quad \text{for} \quad \sigma \in [\sigma_0, 2],
  \end{split}
\end{equation*}
which ensures that $\mathcal{A}_i^j$ shares the same estimate with $a_{ij}$ with $i, j=1, 2, 3$.

Finally, for the last inequality in \eqref{est-aij-J-1}, we need only prove it for the term $a_{22}-1$, and the same proof remains valid for the others.
Indeed, for any $k\in \mathbb{N}$, it is easy to verify that
\begin{equation}\label{a22-Hk-1}
  \begin{split}
&\|\dot{\Lambda}_h^{k}(a_{22}-1)\|_{H^1} \lesssim\|\dot{\Lambda}_h^{k}(a_{22}-1)\|_{L^2}
+\|\dot{\Lambda}_h^{k}\nabla\,a_{22}\|_{L^2}\\
&\lesssim \|\dot{\Lambda}_h^{k}(\partial_1\xi^1,\,\partial_3\xi^3)\|_{H^1}
+\|\dot{\Lambda}_h^{k}(\partial_1\xi^1 \partial_3\xi^3,\,\partial_1\xi^3\partial_3\xi^1)\|_{L^2}\\
&\qquad
+\|\dot{\Lambda}_h^{k}(\partial_1\xi^1\nabla\,\partial_3\xi^3,\,
\nabla\,\partial_1\xi^1\partial_3\xi^3,\,\nabla\,\partial_1\xi^3  \partial_3\xi^1,\,\partial_1\xi^3  \nabla\, \partial_3\xi^1)\|_{L^2}.
  \end{split}
\end{equation}
Due to \eqref{lem-product-law-1}, we may get
\begin{equation*}
  \begin{split}
\|\dot{\Lambda}_h^{k}(\nabla\,\partial_1\xi^3  \partial_3\xi^1)\|_{L^2}&\lesssim \|\dot{\Lambda}_h^{k}\nabla\,\partial_1\xi^3\|_{L^2}
\|\partial_3\xi^1\|_{L^\infty}+\|\nabla\,\partial_1\xi^3\|_{L^2_{x_1}L^\infty_h}
\|\dot{\Lambda}_h^{k}\partial_3\xi^1\|_{L^\infty_{x_1}L^2_h}\\
&\lesssim E_{k+1}^{\frac{1}{2}} E_{3}^{\frac{1}{2}}.
  \end{split}
\end{equation*}
The same estimate holds for other nonlinear terms in the right hand side of \eqref{a22-Hk-1}, which, together with $\|\dot{\Lambda}_h^{k}(\partial_1\xi^1,\,\partial_3\xi^3)\|_{H^1}\lesssim E_{k+1}^{\frac{1}{2}}$,
follows $\|\dot{\Lambda}_h^{k}(a_{22}-1)\|_{H^1}\lesssim E_{k+1}^{\frac{1}{2}}$.

The proof of the lemma is thus completed.
\end{proof}

With Lemma \ref{lem-est-aij-1} in hand, let us now estimate $\mathcal{B}_{j, i}^{\alpha}$ and $\mathcal{B}_{j, i}^{\alpha}\partial_\alpha\,v^i$ with $j=1, \cdots, 9.$
\begin{lem}\label{lem-est-B-Bv-1}
Let $j=1, \cdots, 9.$, $i=1, 2, 3$, $\alpha=2, 3$, $\sigma \in [\sigma_0-1, 2]$, $2 \leq k \in \mathbb{N}$, if $E_3(t) \leq 1$, then there hold that
\begin{align}
&\|\dot{\Lambda}_h^{\sigma}\mathcal{B}_{j, i}^{\alpha}\|_{H^1} \lesssim E_3^{\frac{1}{2}},\quad \|\dot{\Lambda}_h^{k}\mathcal{B}_{j, i}^{\alpha}\|_{H^1}\lesssim E_{k+1}^{\frac{1}{2}},\quad\|\dot{\Lambda}_h^{\sigma}(\mathcal{B}_{j, i}^{\alpha}\partial_\alpha\,v^i)\|_{H^1}\lesssim E_{3}^{\frac{1}{2}} \|\partial_h\Lambda_h^2\,v\|_{H^1},\label{est-B-1}\\
&\|\dot{\Lambda}_h^{k}(\mathcal{B}_{j, i}^{\alpha}\partial_\alpha\,v^i)\|_{L^2}\lesssim E_{k+1}^{\frac{1}{2}}\|\partial_h\Lambda_h^2\,v\|_{L^2}+E_{3}^{\frac{1}{2}} \|\partial_h\Lambda_h^{k}\,v\|_{L^2},\label{est-Bv-1}\\
&\|\dot{\Lambda}_h^{k}(\mathcal{B}_{j, i}^{\alpha}\partial_\alpha\,v^i)\|_{H^1}\lesssim E_{k+1}^{\frac{1}{2}}\|\partial_h\Lambda_h^2\,v\|_{H^1}+E_{3}^{\frac{1}{2}} \|\partial_h\Lambda_h^{k}\,v\|_{H^1},\label{est-Bv-1a}\\
&\|\dot{\Lambda}_h^{\sigma}(\widetilde{a_{21}}\partial_1v^2,\,\widetilde{a_{31}}\partial_1v^3)\|_{H^1} \lesssim  E_3^{\frac{1}{2}}
(\|\dot{\Lambda}_h^{\sigma+1}\partial_1v^h\|_{H^1}+\|\partial_{h}\Lambda_h^2\nabla\,v\|_{L^2}),
\label{low-incom-fluid-1}\\
&\|\dot{\Lambda}_h^{k}(\widetilde{a_{21}}\partial_1v^2,\,\widetilde{a_{31}}\partial_1v^3)\|_{H^1} \lesssim  E_3^{\frac{1}{2}}
\dot{\mathcal{D}}_{k+1}^{\frac{1}{2}}+E_{k+1}^{\frac{1}{2}}
\dot{\mathcal{D}}_{3}^{\frac{1}{2}}.\label{k-incom-fluid-1}
\end{align}
\end{lem}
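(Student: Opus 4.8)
\textbf{Proof plan for Lemma \ref{lem-est-B-Bv-1}.}

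The strategy is entirely structural: each $\mathcal{B}_{j,i}^\alpha$ is, by its defining formulas in Section \ref{sect-form}, a finite sum of products of entries of $\mathcal{A}-I$, $J\mathcal{A}-I$, $J^{-1}-1$, and their horizontal-tangential cousins — i.e.\ exactly the quantities controlled by Lemma \ref{lem-est-aij-1}. So the whole lemma reduces to feeding Lemma \ref{lem-est-aij-1} into the anisotropic product laws of Lemma \ref{lem-product-law-1} (together with the embedding \eqref{embedding-ineq-1} to pass from $L^\infty$ in the fast variable to $L^\infty_{x_1}L^2_h$, and from $\dot\Lambda_h^{1/2}$-type norms to $H^1$). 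First I would record the book-keeping observation: every term appearing in $\mathcal{B}_{1,\cdots,9}$ is either linear in a factor like $a_{ij}\mathbf 1_{i\neq j}$ or $(a_{ii}-1)$ (hence already "small, low-horizontal-regularity good"), or it is a product of two such factors (the terms like $\mathcal{B}_{7,i}^\alpha$, which involve inverses $|\overrightarrow{\mathrm a_1}|^{-2}$, $a_{11}^{-1}$, $J^{-1}$ — these are handled by Lemma \ref{lem-composition-1} applied to smooth functions vanishing appropriately at the identity, after subtracting the constant). Since products of the controlled quantities are again controlled with the loss of one power of $E_3^{1/2}\le 1$, the first two estimates in \eqref{est-B-1} follow verbatim from parts (1)--(5) of Lemma \ref{lem-est-aij-1}: split $\dot\Lambda_h^\sigma(fg)$ by \eqref{product-law-1}, put the low-regularity $\dot\Lambda_h^\sigma$ on the $\mathcal{A}$-factor with the highest anisotropy, estimate the other factor in $L^\infty$ via \eqref{embedding-ineq-1}, and invoke Lemma \ref{lem-est-aij-1}; for the $\dot\Lambda_h^k$ version, keep one factor at $E_{k+1}^{1/2}$ and the others at $E_3^{1/2}$, which is the usual "one high, rest low" distribution.

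Next I would treat the $\mathcal{B}_{j,i}^\alpha\partial_\alpha v^i$ estimates. Here there is one extra tangential derivative $\partial_\alpha$ landing on $v$; note $\partial_\alpha v^i$ is one horizontal derivative of $v$, so $\|\dot\Lambda_h^\sigma(\mathcal{B}_{j,i}^\alpha\partial_\alpha v^i)\|_{H^1}$ should be bounded by $\|\mathcal{B}\|$-norms times $\|\partial_h v\|$-type norms with two extra horizontal derivatives to spare — which is exactly what the stated right-hand side $E_3^{1/2}\|\partial_h\Lambda_h^2 v\|_{H^1}$ reflects. Apply \eqref{product-law-1}: one term puts $\dot\Lambda_h^\sigma$ on $\mathcal B$ (controlled by the first part of \eqref{est-B-1}) and $\partial_\alpha v^i$ in $L^\infty$, which costs $\|\Lambda_h^2\partial_h v\|_{H^1}$ by \eqref{embedding-ineq-1}; the other puts the regularity on $\partial_\alpha v^i$ and $\mathcal B$ in $L^\infty$ (again via \eqref{est-B-1} and \eqref{embedding-ineq-1}). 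For \eqref{est-Bv-1} and \eqref{est-Bv-1a} with $\dot\Lambda_h^k$, use the same split but with the Leibniz distribution "$k$ derivatives on one factor, $\le 2$ remaining on the other": the branch where all $k$ hit $\mathcal B$ gives $E_{k+1}^{1/2}\|\partial_h\Lambda_h^2 v\|$, the branch where all $k$ hit $v$ gives $E_3^{1/2}\|\partial_h\Lambda_h^k v\|$, and intermediate splittings interpolate between these two endpoints (this is the familiar Bony-type paradifferential bookkeeping, valid because $k\ge 2$ gives enough room).

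Finally, the two estimates \eqref{low-incom-fluid-1} and \eqref{k-incom-fluid-1} for $\widetilde{a_{\alpha 1}}\partial_1 v^\alpha$ are the genuinely delicate ones, and this is where I expect the main obstacle. The factor $\partial_1 v^\alpha$ is a \emph{vertical} (normal) derivative, for which we do not have low-horizontal-regularity control as cheaply; moreover $\widetilde{a_{\alpha 1}}=a_{11}^{-1}a_{\alpha 1}$ involves the inverse $a_{11}^{-1}$, whose low-horizontal norms come from Lemma \ref{lem-composition-1} applied to $t\mapsto (1+t)^{-1}$. The trick is that $\widetilde{a_{\alpha 1}}$ is itself "small and anisotropic" — $a_{\alpha 1}$ (with $\alpha\neq 1$) is exactly of the type in Lemma \ref{lem-est-aij-1}(3) — so we can afford to spend the good horizontal regularity of $\widetilde{a_{\alpha 1}}$ to compensate the \emph{missing} horizontal regularity of $\partial_1 v^h$; concretely, after applying \eqref{product-law-1} one puts $\dot\Lambda_h^{\sigma+1}$ (or $\dot\Lambda_h^k$) on $\partial_1 v^h$, matching the $\|\dot\Lambda_h^{\sigma+1}\partial_1 v^h\|_{H^1}$ and $\dot{\mathcal D}_{k+1}^{1/2}$ factors in the statement, while the $\widetilde{a}$-factor carries $E_3^{1/2}$. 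The term where the derivatives land on $\widetilde{a_{\alpha 1}}$ instead produces, via the identity $\nabla a_{\alpha 1}=$ (linear in $\nabla^2\xi$) $+$ (quadratic), either a contribution controlled by $E_3^{1/2}\|\partial_h\Lambda_h^2\nabla v\|_{L^2}$ (the $H^1$ part acting on $\partial_1 v^h$ lands two horizontal derivatives on $\nabla v$) or, in the $\dot\Lambda_h^k$ case, by $E_{k+1}^{1/2}\dot{\mathcal D}_3^{1/2}$ — note that $\dot{\mathcal D}_3$ and $\dot{\mathcal D}_{k+1}$ contain precisely $\|\partial_h^i\nabla v\|_{L^2}$ up to the right order. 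The care needed is (i) to check that the powers $\sigma+1$ and $k$ stay inside the admissible ranges for \eqref{product-law-1} (so that $|s_1|,|s_2|<1$ when one reduces to the two-dimensional horizontal product law, slicing in $x_1$), and (ii) to handle $a_{11}^{-1}=(1+(a_{11}-1))^{-1}$ uniformly using $\|a_{11}-1\|_{L^\infty}\le E_3^{1/2}\le 1$ small (Lemma \ref{lem-est-aij-1}(4) plus Lemma \ref{lem-composition-1}). Everything else is routine Leibniz + Lemma \ref{lem-est-aij-1}; no new idea beyond the anisotropic product laws is required.
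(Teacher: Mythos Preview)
Your proposal is correct and follows essentially the same approach as the paper's own proof: reduce the $\mathcal{B}_{j,i}^\alpha$ bounds to Lemma~\ref{lem-est-aij-1} via the product laws~\eqref{product-law-1}, use a ``one high, rest low'' Leibniz split for \eqref{est-Bv-1}--\eqref{est-Bv-1a}, and for \eqref{low-incom-fluid-1} place $\widetilde{a_{\alpha 1}}$ in $L^\infty_{x_1}L^2_h$ (index $0$ in the horizontal product law) so that $\dot\Lambda_h^{\sigma+1}$ lands on $\partial_1 v^h$. The paper carries this out explicitly only at the endpoint $\sigma=\sigma_0-1$ and asserts the general $\sigma\in[\sigma_0-1,2]$ case ``by the same argument''; your remark about checking the admissible ranges for~\eqref{product-law-1} (and implicitly switching to the second product law / interpolating for $\sigma\ge 0$) is exactly the care required there.
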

\begin{proof}
In view of Lemma  \ref{lem-est-aij-1}, it is easy to get the first two inequalities in \eqref{est-B-1}. Let's now focus on the estimates of $\mathcal{B}_{j, i}^{\alpha}\partial_\alpha\,v^i$ with $j=1, \cdots, 9.$

We first directly use \eqref{lem-product-law-1} to get
\begin{equation*}\label{est-Bv-2}
  \begin{split}
&\|\dot{\Lambda}_h^{\sigma_0-1}(\mathcal{B}_{j, i}^{\alpha}\partial_\alpha\,v^i)\|_{H^1}\lesssim \|\dot{\Lambda}_h^{\sigma_0-1}(\mathcal{B}_{j, i}^{\alpha}\partial_\alpha\,v^i)\|_{L^2}+\|\dot{\Lambda}_h^{\sigma_0-1}(\nabla\,\mathcal{B}_{j, i}^{\alpha}\partial_\alpha\,v^i+\mathcal{B}_{j, i}^{\alpha}\nabla\,\partial_\alpha\,v^i)\|_{L^2}\\
&\lesssim \|\dot{\Lambda}_h^{\sigma_0}\mathcal{B}_{j, i}^{\alpha}\|_{L^2}\|\partial_\alpha\,v^i\|_{L^\infty_{x_1}L^2_h}+\|\dot{\Lambda}_h^{\sigma_0}\nabla\,\mathcal{B}_{j, i}^{\alpha}\|_{L^2}\|\partial_\alpha\,v^i\|_{L^\infty_{x_1}L^2_h}+\|\dot{\Lambda}_h^{\sigma_0}\mathcal{B}_{j, i}^{\alpha}\|_{L^\infty_{x_1}L^2_h}\|\nabla\,\partial_\alpha\,v^i\|_{L^2},
  \end{split}
\end{equation*}
then applying the Sobolev embedding theorem shows
\begin{equation*}\label{est-Bv-2}
  \begin{split}
&\|\dot{\Lambda}_h^{\sigma_0-1}(\mathcal{B}_{j, i}^{\alpha}\partial_\alpha\,v^i)\|_{H^1}\lesssim \|\dot{\Lambda}_h^{\sigma_0}\mathcal{B}_{j, i}^{\alpha}\|_{H^1}\|\partial_\alpha\,v^i\|_{H^1}\lesssim E_{3}^{\frac{1}{2}} \|\partial_\alpha\,v^i\|_{H^1}.
  \end{split}
\end{equation*}
Similar estimates hold for $\|\dot{\Lambda}_h^{\sigma}(\mathcal{B}_{j, i}^{\alpha}\partial_\alpha\,v^i)\|_{H^1}$ with $\sigma \in [\sigma_0-1, 2]$.

On the other hand, thanks to \eqref{lem-product-law-1} again, it can be found that
\begin{equation*}\label{est-Bv-4}
  \begin{split}
\|\dot{\Lambda}_h^{k}(\mathcal{B}_{j, i}^{\alpha}\partial_\alpha\,v^i)\|_{L^2}&\lesssim \|\dot{\Lambda}_h^{k}\mathcal{B}_{j, i}^{\alpha}\|_{L^\infty_{x_1}L^2_h}\|\partial_\alpha\,v^i\|_{L^2_{x_1}L^\infty_h}+\|\mathcal{B}_{j, i}^{\alpha}\|_{L^\infty}\|\dot{\Lambda}_h^{k}\partial_\alpha\,v^i\|_{L^2}\\
&\lesssim E_{k+1}^{\frac{1}{2}}\|\partial_h\Lambda_h^2\,v\|_{L^2}+E_{3}^{\frac{1}{2}} \|\partial_h\Lambda_h^{k}\,v\|_{L^2},
  \end{split}
\end{equation*}
and
\begin{equation*}\label{est-Bv-3}
  \begin{split}
&\|\dot{\Lambda}_h^{k}\nabla(\mathcal{B}_{j, i}^{\alpha}\partial_\alpha v^i)\|_{L^2}\lesssim \|\dot{\Lambda}_h^{k}(\nabla \mathcal{B}_{j, i}^{\alpha}\partial_\alpha v^i+\mathcal{B}_{j, i}^{\alpha}\nabla \partial_\alpha v^i)\|_{L^2}\\
&\lesssim \|\dot{\Lambda}_h^{k}\nabla\,\mathcal{B}_{j, i}^{\alpha}\|_{L^2}\|\partial_\alpha v^i\|_{L^\infty}+\|\dot{\Lambda}_h^{k}\partial_\alpha v^i\|_{L^\infty_{x_1}L^2_h}
\|\nabla\,\mathcal{B}_{j, i}^{\alpha}\|_{L^2_{x_1}L^\infty_h}\\
&\qquad+\|\dot{\Lambda}_h^{k}\mathcal{B}_{j, i}^{\alpha}\|_{L^\infty_{x_1}L^2_h}\|\nabla \partial_\alpha v^i\|_{L^2_{x_1}L^\infty_h}+
\|\mathcal{B}_{j, i}^{\alpha}\|_{L^\infty}\|\dot{\Lambda}_h^{k}\nabla \partial_\alpha v^i\|_{L^2},
  \end{split}
\end{equation*}
so using the Sobolev embedding theorem yields \eqref{est-Bv-1a}.

Finally, for $\eqref{low-incom-fluid-1}$, due to the same argument above, we need only prove it in the case of $\sigma=\sigma_0-1$. Notice that from the product law \eqref{product-law-1}
\begin{equation*}
  \begin{split}
&\|\dot{\Lambda}_h^{\sigma_0-1}(\widetilde{a_{21}}\partial_1v^2,\,\widetilde{a_{31}}\partial_1v^3)\|_{L^2}\lesssim \|(\widetilde{a_{21}},\,\widetilde{a_{31}})\|_{L^\infty_{x_1}L^2_h}
\|\dot{\Lambda}_h^{\sigma_0}(\partial_1v^2,\,\partial_1v^3)\|_{L^2},
  \end{split}
\end{equation*}
and
\begin{equation*}
  \begin{split}
\|\dot{\Lambda}_h^{\sigma_0-1}\nabla(\widetilde{a_{21}}\partial_1v^2,\,\widetilde{a_{31}}\partial_1v^3)\|_{L^2}
&\lesssim  \|(\widetilde{a_{21}},\,\widetilde{a_{31}})\|_{L^\infty_{x_1}L^2_h}
\|\dot{\Lambda}_h^{\sigma_0}\nabla(\partial_1v^2,\,\partial_1v^3)\|_{L^2}\\
&\qquad+\|\nabla(\widetilde{a_{21}},\,\widetilde{a_{31}})\|_{L^2}
\|\dot{\Lambda}_h^{\sigma_0}(\partial_1v^2,\,\partial_1v^3)\|_{L^\infty_{x_1}L^2_h},\\
  \end{split}
\end{equation*}
it follows from Lemma \ref{lem-est-aij-1} that\eqref{low-incom-fluid-1}. The estimate in \eqref{k-incom-fluid-1} can be showed by using the same argument in the proof of \eqref{est-Bv-1}.

 The lemma is therefore completely proved.
\end{proof}

Due to the incompressibility condition \eqref{incomp-cond-fluid-3}, the vertical derivatives of the vertical component $v^1$ of the velocity gains the $-1$-order horizontal regularity in the dissipation.
\begin{lem}\label{lem-EN-bdd-1}
Let $(v, \xi)$ be smooth solution to the system \eqref{eqns-pert-1}, if $E_3(t) \leq 1$ for all existence time $t$, then there holds
\begin{equation*}\label{dissip-equiv-1}
  \begin{split}
\ddot{\mathcal{D}}_{\ell, 1+\sigma_0}+\dot{\mathcal{D}}_{\ell, \sigma_0}+\ddot{\mathcal{D}}_{\ell, 2}\thicksim \|(\dot{\Lambda}_h^{\sigma_0-1}\nabla^2v^1,\,&\dot{\Lambda}_h^{\sigma_0}(\nabla^2v^h,\,\nabla p))\|_{L^2(\Omega)}^2\\
&+\|\dot{\Lambda}_h^{\sigma_0+1}\xi^1\|_{L^2(\Sigma_0)}^2+\ddot{\mathcal{D}}_{\ell, \sigma_0}+\dot{\mathcal{D}}_{\ell, 2},
            \end{split}
\end{equation*}
\end{lem}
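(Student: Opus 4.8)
## Proof Plan

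The statement is an equivalence of two sums of squared norms, so the strategy is to prove two inequalities in each direction, and in fact the only nontrivial content is that the vertical derivatives $\partial_1^2 v^1$ pick up one extra degree of horizontal smoothing in the dissipation, exactly as the paragraph preceding the lemma advertises. The plan is to \emph{start from the incompressibility relation} \eqref{incomp-cond-fluid-3} and its differentiated form \eqref{incomp-cond-v1-1}, namely $\partial_1^2 v^1 = -\partial_1\nabla_h\cdot v^h + \widetilde g_{11}$, where $\widetilde g_{11}$ collects the nonlinear contributions built from $\widetilde{a_{21}},\widetilde{a_{31}},\mathcal{B}_{5,i}^\alpha$ and their first derivatives times $\partial_1 v^h$, $\partial_1\partial_\alpha v^i$. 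Applying $\dot\Lambda_h^{\sigma_0-1}$ to this identity and taking $L^2(\Omega)$ norms, the linear term $\dot\Lambda_h^{\sigma_0-1}\partial_1\nabla_h\cdot v^h$ is controlled by $\|\dot\Lambda_h^{\sigma_0}\partial_1 v^h\|_{L^2}$, which is part of $\dot{\mathcal{D}}_{\ell,\sigma_0}$ (through $\|\dot\Lambda_h^{\sigma_0}\nabla v\|_{L^2}^2$); and the nonlinear term $\dot\Lambda_h^{\sigma_0-1}\widetilde g_{11}$ is exactly what Lemma \ref{lem-est-B-Bv-1}, in particular \eqref{low-incom-fluid-1}, is designed to bound, giving a factor $E_3^{1/2}$ times dissipative norms that are already present on the right-hand side. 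Since $E_3(t)\le 1$, this factor is harmless and can be absorbed.

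Concretely, I would organize the argument as follows. First, observe that the $\lesssim$ direction (left side bounded by right side) is immediate: $\ddot{\mathcal{D}}_{\ell,1+\sigma_0}$, $\dot{\mathcal{D}}_{\ell,\sigma_0}$, $\ddot{\mathcal{D}}_{\ell,2}$ all involve only $\dot\Lambda_h^{\sigma_0}\nabla^2 v$, $\dot\Lambda_h^{\sigma_0}\nabla p$, $\partial_h\nabla^2 v$, $\partial_h\nabla p$, $\dot\Lambda_h^{\sigma_0+1}\xi^1$, $\partial_h^2\xi^1$ and the lower-order pieces collected in $\ddot{\mathcal{D}}_{\ell,\sigma_0}$, $\dot{\mathcal{D}}_{\ell,2}$, each of which is trivially dominated by a term appearing on the right (noting $\|\dot\Lambda_h^{\sigma_0-1}\nabla^2 v^1\|_{L^2}\lesssim \|\dot\Lambda_h^{\sigma_0}\nabla^2 v^1\|_{L^2}+\|\dot\Lambda_h^{\sigma_0-1}\nabla^2 v^1\|_{L^2}$ already containing the weaker norm). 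Second, for the reverse direction, the only quantity on the right not manifestly on the left is $\|\dot\Lambda_h^{\sigma_0-1}\partial_1^2 v^1\|_{L^2}$ (the purely horizontal and mixed second derivatives $\dot\Lambda_h^{\sigma_0-1}\partial_h\nabla v^1$ are $\lesssim \|\dot\Lambda_h^{\sigma_0}\nabla^2 v\|_{L^2}$, hence controlled). I would bound this single term using \eqref{incomp-cond-v1-1}:
\begin{equation*}
\|\dot\Lambda_h^{\sigma_0-1}\partial_1^2 v^1\|_{L^2(\Omega)} \lesssim \|\dot\Lambda_h^{\sigma_0}\partial_1 v^h\|_{L^2(\Omega)} + \|\dot\Lambda_h^{\sigma_0-1}\widetilde g_{11}\|_{L^2(\Omega)},
\end{equation*}
then invoke \eqref{low-incom-fluid-1} (with $\sigma=\sigma_0-1$) to get $\|\dot\Lambda_h^{\sigma_0-1}\widetilde g_{11}\|_{L^2}\lesssim E_3^{1/2}(\|\dot\Lambda_h^{\sigma_0}\partial_1 v^h\|_{H^1} + \|\partial_h\Lambda_h^2\nabla v\|_{L^2})$, both of which sit inside $\ddot{\mathcal{D}}_{\ell,1+\sigma_0}+\dot{\mathcal{D}}_{\ell,\sigma_0}+\ddot{\mathcal{D}}_{\ell,2}$ on the left. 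Absorbing the small $E_3^{1/2}$-factor closes the estimate.

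The main obstacle — really the only place requiring care — is \emph{bookkeeping}: making sure that every term produced by differentiating the nonlinearity $\widetilde g_{11}$ (which contains products of $\mathcal{A}$-entries, or rather $\widetilde{a_{\alpha 1}}$ and $\mathcal{B}_{5,i}^\alpha$, with first vertical derivatives of $v$ and with second derivatives $\partial_1\partial_\alpha v$) is genuinely controlled by the dissipative norms on the \emph{left} side and not by something only available on the right, which would make the argument circular. The resolution is that \eqref{low-incom-fluid-1} was proved precisely with the right-hand norms $\|\dot\Lambda_h^{\sigma_0}\partial_1 v^h\|_{H^1}$ and $\|\partial_h\Lambda_h^2\nabla v\|_{L^2}$, and these are subsumed in $\ddot{\mathcal{D}}_{\ell,1+\sigma_0}$ (which contains $\dot\Lambda_h^{\sigma_0}\nabla^2 v$) together with $\ddot{\mathcal{D}}_{\ell,2}$ (which contains $\partial_h\nabla^2 v$). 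One should also double-check that the pressure terms $\dot\Lambda_h^{\sigma_0}\nabla p$ and $\partial_h\nabla p$ appear identically on both sides — they are not affected by the incompressibility substitution — so nothing needs to be said about them beyond noting they match. Once these inclusions are verified, the equivalence follows by combining the two directions, and the proof concludes with the standard remark that the hypothesis $E_3(t)\le 1$ guarantees the nonlinear factor is $\le C$ uniformly.
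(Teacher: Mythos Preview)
Your proposal is correct and follows essentially the same route as the paper: reduce the equivalence to the single nontrivial bound $\|\dot\Lambda_h^{\sigma_0-1}\partial_1^2 v^1\|_{L^2}^2\lesssim \ddot{\mathcal D}_{\ell,1+\sigma_0}+\dot{\mathcal D}_{\ell,\sigma_0}+\ddot{\mathcal D}_{\ell,2}$, then use the differentiated incompressibility identity \eqref{incomp-cond-v1-1} and the estimates of Lemma~\ref{lem-est-B-Bv-1}. One small point of precision: to control $\|\dot\Lambda_h^{\sigma_0-1}\widetilde g_{11}\|_{L^2}$ you need both \eqref{low-incom-fluid-1} (for the $\widetilde{a_{\alpha 1}}\partial_1 v^\alpha$ contribution) and \eqref{est-B-1} (for the $\mathcal B_{5,i}^\alpha\partial_\alpha v^i$ contribution), as the paper does, rather than \eqref{low-incom-fluid-1} alone.
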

\begin{proof}
By the definitions of $\ddot{\mathcal{D}}_{\ell, 1+\sigma_0}$, $\dot{\mathcal{D}}_{\ell, \sigma_0}$ and $\ddot{\mathcal{D}}_{\ell, 2}$, it suffices to show that
\begin{equation}\label{dissip-equiv-1a}
\begin{split}
\|\dot{\Lambda}_h^{\sigma_0-1}\partial_1^2v^1\|_{L^2(\Omega)}^2 \lesssim \ddot{\mathcal{D}}_{\ell, 1+\sigma_0}+\dot{\mathcal{D}}_{\ell, \sigma_0}+\ddot{\mathcal{D}}_{\ell, 2}.
\end{split}
\end{equation}
Indeed, thanks to the incompressibility condition \eqref{incomp-cond-fluid-3}, one can see
\begin{equation*}
\begin{split}
&\dot{\Lambda}_h^{\sigma_0-1}\partial_1^2v^1=-\dot{\Lambda}_h^{\sigma_0-1}\partial_1\nabla_h\cdot v^h -\dot{\Lambda}_h^{\sigma_0-1}\partial_1(\widetilde{a_{\alpha\,1}}\partial_1v^\alpha)
+\dot{\Lambda}_h^{\sigma_0-1}\partial_1(\mathcal{B}_{5, i}^{\alpha}\partial_{\alpha}v^{i}),
\end{split}
\end{equation*}
and then
\begin{equation*}
\begin{split}
&\|\dot{\Lambda}_h^{\sigma_0-1}\partial_1^2v^1\|_{L^2}\lesssim\|\dot{\Lambda}_h^{\sigma_0}\partial_1v^h \|_{L^2}+\|\dot{\Lambda}_h^{\sigma_0-1}\partial_1(\widetilde{a_{\alpha\,1}}\partial_1v^\alpha)\|_{L^2}
+\|\dot{\Lambda}_h^{\sigma_0-1}\partial_1(\mathcal{B}_{5, i}^{\alpha}\partial_{\alpha}v^{i})\|_{L^2}.
\end{split}
\end{equation*}
Hence, by virtue of \eqref{est-B-1} and \eqref{low-incom-fluid-1}, we deduce that
\begin{equation*}\label{dissip-equiv-1b}
\begin{split}
\|\dot{\Lambda}_h^{\sigma_0-1}\partial_1^2v^1\|_{L^2}&\lesssim \|\dot{\Lambda}_h^{\sigma_0}\partial_1v^h \|_{L^2}+E_{3}^{\frac{1}{2}} (\|\partial_h\Lambda_h^2\,v\|_{H^1}+\|\dot{\Lambda}_h^{\sigma_0}\partial_1v^h\|_{H^1}+\|\partial_{h}\Lambda_h^2\nabla\,v\|_{L^2})\\
&\lesssim (\ddot{\mathcal{D}}_{\ell, 1+\sigma_0}+\dot{\mathcal{D}}_{\ell, \sigma_0}+\ddot{\mathcal{D}}_{\ell, 2})^{\frac{1}{2}},
\end{split}
\end{equation*}
which results in \eqref{dissip-equiv-1a}, and ends the proof of Lemma \ref{lem-EN-bdd-1}.
\end{proof}

\renewcommand{\theequation}{\thesection.\arabic{equation}}
\setcounter{equation}{0}
\section{Energy estimates of the tangential derivatives of the velocity}\label{sect-energy}

In this section, we will derive some global energy estimates. In view of the local well-posedness theorem (Theorem \ref{thm-local}), it suffices to get necessary {\it a priori} estimates. Here and in what follows, all the $C$-forms, such as $C_j$, $c_j$, $\mathfrak{C}_j$, $\widetilde{\mathfrak{C}}_j$, $\mathfrak{c}_j$, and $\widetilde{\mathfrak{c}}_j$, are generic positive constants, which may be different on different lines.

\subsection{Energy estimates in $L^2$}
We begin with the energy identity in $L^2$ based on the nonlinear structure of the system \eqref{eqns-pert-1}.
\begin{prop}\label{prop-linear-v-L2-1}
Let $(v, \xi)$ be smooth solution to the system \eqref{eqns-pert-1}, then there holds
\begin{equation}\label{linear-v-L2-1-0}
\begin{split}
 & \frac{d}{dt}\bigg(\int_{\Omega}  |v|^2\,J\, dx+\int_{\Sigma_{0}} a_{11}|\xi^1|^2\,dS_0 \bigg)+\nu\int_{\Omega} |\mathbb{D}_{J\mathcal{A}}(v)|^2 \,J^{-1}\, dx=0.
\end{split}
\end{equation}
\end{prop}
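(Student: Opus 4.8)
The plan is to test the momentum equation $\partial_t v + \nabla_{\mathcal{A}} q - \nu \nabla_{\mathcal{A}} \cdot \mathbb{D}_{\mathcal{A}}(v) = 0$ against $v$ with respect to the measure $J\,dx$ on $\Omega$, and integrate by parts, exploiting the geometric identities \eqref{flow-map-identity-1}--\eqref{identity-deri-J} to handle the time derivative and the Piola identity \eqref{identity-Piola} to push the $\mathcal{A}$-weighted divergences around without generating extra terms. Concretely, I would first observe that $\int_\Omega \partial_t v \cdot v \, J\,dx = \frac12 \frac{d}{dt}\int_\Omega |v|^2 J\,dx - \frac12 \int_\Omega |v|^2 \partial_t J\,dx$, and then use $\partial_t J = J\mathcal{A}_i^j\partial_j v^i = J\,(\nabla_{\mathcal{A}}\cdot v)$, which vanishes by the incompressibility constraint $\eqref{eqns-pert-1}_3$. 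Hence the time-derivative term collapses to exactly $\frac12 \frac{d}{dt}\int_\Omega |v|^2 J\,dx$.

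Next I would treat the pressure and viscosity terms together. Writing $\int_\Omega \big(\nabla_{\mathcal{A}} q - \nu \nabla_{\mathcal{A}}\cdot \mathbb{D}_{\mathcal{A}}(v)\big)\cdot v \, J\,dx$ and recalling $\mathcal{N} = J\mathcal{A}\,n_0$, an integration by parts using the Piola identity $\partial_j(J\mathcal{A}_i^j) = 0$ moves all derivatives off the coefficients: the pressure term becomes $-\int_\Omega q\, (J\mathcal{A}_i^j\partial_j v^i)\,dx + \int_{\Sigma_0} q\, v\cdot \mathcal{N}\,dS_0$, where the bulk integral again vanishes by $\nabla_{\mathcal{A}}\cdot v = 0$ (the boundary contribution on $\Sigma_b$ dies because $v|_{\Sigma_b}=0$). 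Similarly, for the viscous term one gets a bulk contribution $\tfrac{\nu}{2}\int_\Omega |\mathbb{D}_{\mathcal{A}}(v)|^2 J\,dx$ — using the symmetry of $\mathbb{D}_{\mathcal{A}}(v)$ so that $\mathbb{D}_{\mathcal{A}}(v) : \nabla_{\mathcal{A}} v = \tfrac12 |\mathbb{D}_{\mathcal{A}}(v)|^2$ — minus a boundary term $\nu \int_{\Sigma_0} (\mathbb{D}_{\mathcal{A}}(v)\mathcal{N})\cdot v\,dS_0$. Then I invoke the free-boundary condition $\eqref{eqns-pert-1}_4$, $(q-\xi^1)\mathcal{N} = \nu\,\mathbb{D}_{\mathcal{A}}(v)\mathcal{N}$ on $\Sigma_0$, to combine the two surface integrals into $\int_{\Sigma_0}\xi^1\,(v\cdot\mathcal{N})\,dS_0$. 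Finally, since $v = \partial_t\xi = \partial_t\eta$ on $\Sigma_0$ and $\mathcal{N} = J\mathcal{A}n_0 = a_{j1}e_j$, one has $v\cdot\mathcal{N} = \partial_t\eta^j a_{j1}$; using $\eta^j = x_j + \xi^j$ together with the first Piola identity and $a_{11} = a_{11}$, a short computation identifies $\xi^1 (v\cdot \mathcal{N})$ with $\tfrac12 \partial_t(a_{11}|\xi^1|^2)$ modulo terms that integrate to zero over $\Sigma_0$ — indeed, on $\Sigma_0$, $\partial_t a_{11}$ involves only tangential derivatives of $v^h$, which integrate to zero against $|\xi^1|^2$ after integration by parts along $\Sigma_0$, so that $\int_{\Sigma_0} \xi^1 (v\cdot\mathcal{N})\,dS_0 = \tfrac12 \frac{d}{dt}\int_{\Sigma_0} a_{11}|\xi^1|^2\,dS_0$. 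Collecting everything and noting $|\mathbb{D}_{\mathcal{A}}(v)|^2 J = |\mathbb{D}_{J\mathcal{A}}(v)|^2 J^{-1}$ yields \eqref{linear-v-L2-1-0}.

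The main obstacle I anticipate is the careful bookkeeping of the boundary term on $\Sigma_0$: matching $\int_{\Sigma_0}\xi^1(v\cdot\mathcal{N})\,dS_0$ against $\tfrac12\frac{d}{dt}\int_{\Sigma_0} a_{11}|\xi^1|^2\,dS_0$ requires knowing precisely how $a_{11}$ evolves when restricted to $\Sigma_0$ and that the discrepancy is a pure tangential divergence (so that it vanishes upon integration over the boundary, which has no boundary of its own since $\Sigma_0 = \{x_1 = 0\}$ is a full copy of $\mathbb{R}^2$, assuming sufficient decay of $\xi$ at horizontal infinity). The bulk manipulations, by contrast, are routine once the incompressibility constraint and the Piola identity are in place. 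One should also be slightly careful that all integrations by parts are justified for smooth solutions with the decay implicit in the energy space $\mathfrak{F}_N$, so no boundary terms at horizontal infinity appear.
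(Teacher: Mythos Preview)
Your proposal is correct and follows essentially the same route as the paper: multiply the momentum equation by $Jv$, use $\partial_t J = J\,\nabla_{\mathcal A}\!\cdot v = 0$ and the Piola identity to integrate by parts, and combine the boundary terms on $\Sigma_0$ via the dynamic condition $\eqref{eqns-pert-1}_4$ into $\int_{\Sigma_0}\xi^1\,(v\cdot\mathcal N)\,dS_0$. Your claim that the discrepancy $\xi^1(v\cdot\mathcal N)-\tfrac12\partial_t(a_{11}|\xi^1|^2)$ is a pure tangential divergence on $\Sigma_0$ is exactly right (one can check it using the explicit formula for $a_{11}$ in \eqref{expre-a-1} and the relation $\partial_t a_{11}=\partial_2\big(v^2(1+\partial_3\xi^3)-v^3\partial_3\xi^2\big)+\partial_3\big(v^3(1+\partial_2\xi^2)-v^2\partial_2\xi^3\big)$), which is precisely how the paper closes the argument in \eqref{linear-v-L2-1-6a}.
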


\begin{proof}
We multiply the $i$-th component of the momentum equations of \eqref{eqns-pert-1} by $J\,v^i$ , sum over $i$, and integrate over $\Omega$ to find
\begin{equation}\label{linear-v-L2-1-1}
\begin{split}
 & \frac{1}{2}\frac{d}{dt}\int_{ \Omega}J|v|^2\, dx- \frac{1}{2} \int_{ \Omega}|v|^2\,\partial_tJ\, dx+I+II=0
\end{split}
\end{equation}
with
$I=\int_{ \Omega} \grad_{J\mathcal{A}}\,q\,\cdot v\,dx,\quad II=\int_{ \Omega}(-\nu\,\grad_{J\mathcal{A}} \cdot  \mathbb{D}_{\mathcal{A}}(v)) \cdot v\,dx.$

Due to the incompressibility condition $\grad_{J\mathcal{A}}\cdot v=0$, from \eqref{identity-deri-J}, it provides
\begin{equation}\label{linear-v-L2-1-2a}
\begin{split}
 &\partial_tJ=0.
\end{split}
\end{equation}
Integrating by parts in $I$, $II$, and utilizing the Piola identity \eqref{identity-Piola}, yields
\begin{equation}\label{linear-v-L2-1-2}
\begin{split}
 &I=\int_{\Sigma_{0}}v\cdot (q\, \mathcal{N})\,dS_0-\int_{ \Omega}q\,\grad_{J\mathcal{A}} \cdot v \, dx=\int_{\Sigma_{0}}v\cdot (q\, \mathcal{N})\,dS_0,
\end{split}
\end{equation}
and
\begin{equation}\label{linear-v-L2-1-3}
\begin{split}
 &II=\int_{\Sigma_{0}} (- \nu \mathbb{D}_{\mathcal{A}}(v)\mathcal{N} ) \cdot v \, dS_0+\nu\int_{\Omega} \mathbb{D}_{\mathcal{A}}(v): \grad_{J\mathcal{A}} v  \, dx\\
 &=\int_{\Sigma_{0}} (- \nu\mathbb{D}_{\mathcal{A}}(v)\mathcal{N} ) \cdot v \, dS_0+\int_{\Omega}\frac{\nu}{2}|\mathbb{D}_{J\mathcal{A}}(v)|^2\,J^{-1}\, dx,
 \end{split}
\end{equation}
where the identity $ \mathbb{D}_{\mathcal{A}}(v): \grad_{\mathcal{A}} u=\frac{1}{2}\mathbb{D}_{\mathcal{A}}(v):\mathbb{D}_{\mathcal{A}}(u)$ has been used in the second equality in \eqref{linear-v-L2-1-3}.

Plugging \eqref{linear-v-L2-1-2a}-\eqref{linear-v-L2-1-3} into \eqref{linear-v-L2-1-1} implies
\begin{equation}\label{linear-v-L2-1-6}
\begin{split}
 & \frac{1}{2}\frac{d}{dt}\int_{ \Omega}|v|^2 \,J\,dx+\int_{ \Omega} \frac{\nu}{2}|\mathbb{D}_{J\mathcal{A}}(v)|^2 \,J^{-1}\, dx+\int_{\Sigma_{0}}v\cdot \bigg(q\,\mathcal{N}- \nu\mathbb{D}_{\mathcal{A}}(v)\mathcal{N}\bigg)\,dS_0=0.
\end{split}
\end{equation}
For the boundary integral in \eqref{linear-v-L2-1-6}, making use of the interface boundary condition in \eqref{eqns-pert-1} yields
\begin{equation*}
\begin{split}
&\int_{\Sigma_{0}}v\cdot \bigg(q\,\mathcal{N}- \nu\mathbb{D}_{\mathcal{A}}(v)\mathcal{N}\bigg)\,dS_0=\int_{\Sigma_{0}} \xi^1\,\mathcal{N}\,\cdot {v}\,dS_0=\int_{\Sigma_{0}} \xi^1\,a_{i1}v^i\,dS_0\\
&=\int_{\Sigma_{0}} \frac{1}{2}a_{11}\partial_t |\xi^1|^2\,dS_0 +\int_{\Sigma_{0}} \xi^1\,(a_{21}v^2+a_{31} v^3)\,dS_0,
\end{split}
\end{equation*}
which together with the expression of $a_{ij}$ in \eqref{expre-a-1} follows
\begin{equation}\label{linear-v-L2-1-6a}
\begin{split}
&\int_{\Sigma_{0}}v\cdot \bigg(q\,\mathcal{N}- \nu\mathbb{D}_{\mathcal{A}}(v)\mathcal{N}\bigg)\,dS_0\\
& =\frac{1}{2}\frac{d}{dt}\int_{\Sigma_{0}} a_{11}|\xi^1|^2\,dS_0+\int_{\Sigma_{0}} \bigg(\xi^1\,(a_{21}v^2+a_{31} v^3)-\frac{1}{2}|\xi^1|^2\partial_ta_{11}\bigg)\,dS_0\\
& =\frac{1}{2}\frac{d}{dt}\int_{\Sigma_{0}} a_{11}|\xi^1|^2\,dS_0.
\end{split}
\end{equation}
Inserting  \eqref{linear-v-L2-1-6a} into \eqref{linear-v-L2-1-6} ensures \eqref{linear-v-L2-1-0}, which ends the proof of Lemma \ref{prop-linear-v-L2-1}.
\end{proof}

\subsection{Energy estimates of the horizontal derivatives of the velocity}\label{subsect-est-hori-1}

In general, since the equations of the derivatives of the solution $(v, \xi)$ to the system \eqref{eqns-pert-1} lack the nonlinear symmetric structure, we have no idea to get the high-order energy identity. For this reason, we turn to employ the linearized form \eqref{eqns-linear-1} of the system \eqref{eqns-pert-1} to study the high-order energy estimates.

For the general horizontal derivatives of the velocity, we first derive
\begin{lem}\label{lem-tan-pseudo-energy-1}
Let $(v, \xi)$ be smooth solution to the system \eqref{eqns-pert-1}, then there holds
\begin{equation}\label{tan-linear-pseudo-0}
\begin{split}
 & \frac{1}{2}\frac{d}{dt}(\|\mathcal{P}(\partial_h)v\|_{L^2(\Omega)}^2
 +\|\mathcal{P}(\partial_h)\xi^1\|_{L^2(\Sigma_0)}^2) +\frac{\nu}{2}\| \mathbb{D}(\mathcal{P}(\partial_h)v)\|_{L^2(\Omega)}^2=\sum_{j=1}^3\mathfrak{K}_j
\end{split}
\end{equation}
with
\begin{equation}\label{tan-linear-pseudo-0a}
\begin{split}
 &\mathfrak{K}_1:=\nu\int_{\Sigma_{0}}\bigg(\mathcal{P}(\partial_h)((\mathcal{B}_{9, i}^{\alpha}-2\mathcal{B}_{6, i}^{\alpha})\partial_{\alpha}v^{i})\, \mathcal{P}(\partial_h)v^1\\
&\qquad\qquad\qquad+\mathcal{P}(\partial_h)(\mathcal{B}_{7, i}^{\alpha}\partial_{\alpha}v^{i}) \,\mathcal{P}(\partial_h)v^2+\mathcal{P}(\partial_h)(\mathcal{B}_{8, i}^{\alpha}\partial_{\alpha}v^{i})\, \mathcal{P}(\partial_h)v^3\bigg) \, dS_0,\\
&\mathfrak{K}_2:=\int_{ \Omega}\mathcal{P}(\partial_h)q\,\mathcal{P}(\partial_h)(\mathcal{B}_{5, i}^{\alpha}\partial_{\gamma}v^{i}-\widetilde{a_{\alpha\,1}}\partial_1v^\alpha)\, dx,\ \mathfrak{K}_3:=\int_{ \Omega}\mathcal{P}(\partial_h) g\cdot \mathcal{P}(\partial_h) v\,dx.
\end{split}
\end{equation}
\end{lem}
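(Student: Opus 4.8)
The plan is to apply the horizontal Fourier multiplier $\mathcal{P}(\partial_h)$ to the linearized system \eqref{eqns-linear-1}, then perform the standard $L^2$ energy estimate by testing against $\mathcal{P}(\partial_h)v$. Since $\mathcal{P}(\partial_h)$ commutes with $\partial_t$, with $\nabla$, and with the flat Laplacian and symmetric gradient (these all act via constant-coefficient multipliers in the horizontal variables), applying it to $\eqref{eqns-linear-1}_2$ gives $\partial_t\mathcal{P}(\partial_h)v + \nabla\mathcal{P}(\partial_h)q - \nu\nabla\cdot\mathbb{D}(\mathcal{P}(\partial_h)v) = \mathcal{P}(\partial_h)g$, while the flow-map relation yields $\partial_t\mathcal{P}(\partial_h)\xi = \mathcal{P}(\partial_h)v$. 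I would take the $L^2(\Omega)$ inner product of this momentum equation with $\mathcal{P}(\partial_h)v$.

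The key steps, in order: first, integrate the viscous term by parts, using the identity $\mathbb{D}(w):\nabla w = \tfrac12|\mathbb{D}(w)|^2$ to produce $\tfrac{\nu}{2}\|\mathbb{D}(\mathcal{P}(\partial_h)v)\|_{L^2(\Omega)}^2$ plus a boundary integral $-\nu\int_{\Sigma_0}(\mathbb{D}(\mathcal{P}(\partial_h)v)e_1)\cdot\mathcal{P}(\partial_h)v\,dS_0$ (the $\Sigma_b$ contribution vanishes since $v|_{\Sigma_b}=0$). Second, integrate the pressure term by parts: $\int_\Omega\nabla\mathcal{P}(\partial_h)q\cdot\mathcal{P}(\partial_h)v = \int_{\Sigma_0}\mathcal{P}(\partial_h)q\,\mathcal{P}(\partial_h)v^1\,dS_0 - \int_\Omega\mathcal{P}(\partial_h)q\,\nabla\cdot\mathcal{P}(\partial_h)v\,dx$; here, instead of vanishing, the divergence is controlled by $\eqref{eqns-linear-1}_3$, which after applying $\mathcal{P}(\partial_h)$ gives $\nabla\cdot\mathcal{P}(\partial_h)v = \mathcal{P}(\partial_h)(-\widetilde{a_{\alpha 1}}\partial_1v^\alpha + \mathcal{B}_{5,i}^\alpha\partial_\alpha v^i)$, producing the interior term $\mathfrak{K}_2$. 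Third, combine the two boundary integrals on $\Sigma_0$, namely $\int_{\Sigma_0}(\mathcal{P}(\partial_h)q\,e_1 - \nu\mathbb{D}(\mathcal{P}(\partial_h)v)e_1)\cdot\mathcal{P}(\partial_h)v\,dS_0$, and substitute the boundary condition $\eqref{eqns-linear-1}_4$ after applying $\mathcal{P}(\partial_h)$: the first component of $\mathcal{P}(\partial_h)q\,e_1 - \nu\mathbb{D}(\mathcal{P}(\partial_h)v)e_1$ is $\mathcal{P}(\partial_h)\xi^1 + \nu\mathcal{P}(\partial_h)((\mathcal{B}_{9,i}^\alpha - 2\mathcal{B}_{6,i}^\alpha)\partial_\alpha v^i)$, and the other two components are the corresponding $\mathcal{B}_{7}$, $\mathcal{B}_8$ expressions. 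The $\mathcal{P}(\partial_h)\xi^1\cdot\mathcal{P}(\partial_h)v^1$ piece, using $\partial_t\mathcal{P}(\partial_h)\xi^1 = \mathcal{P}(\partial_h)v^1$ on $\Sigma_0$, becomes $\tfrac12\tfrac{d}{dt}\|\mathcal{P}(\partial_h)\xi^1\|_{L^2(\Sigma_0)}^2$; the remaining nonlinear boundary pieces are exactly $\mathfrak{K}_1$. Finally, the term $\int_\Omega\mathcal{P}(\partial_h)g\cdot\mathcal{P}(\partial_h)v\,dx = \mathfrak{K}_3$, and the time-derivative of $\tfrac12\|\mathcal{P}(\partial_h)v\|_{L^2(\Omega)}^2$ comes from the first term. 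Collecting everything gives \eqref{tan-linear-pseudo-0}.

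There is essentially no hard analytic obstacle here — the lemma is a bookkeeping identity rather than an estimate, so no smallness or regularity hypotheses are invoked and no inequalities are needed. The only point requiring care is the sign conventions and component-by-component matching in the boundary term: one must verify that taking the dot product of the vector boundary condition $\eqref{eqln-linear-1}_4$ with $\mathcal{P}(\partial_h)v$ reproduces precisely the combination $\xi^1\mathcal{P}(\partial_h)v^1$ plus the three $\mathcal{B}$-type terms with the coefficients $(\mathcal{B}_{9,i}^\alpha - 2\mathcal{B}_{6,i}^\alpha)$, $\mathcal{B}_{7,i}^\alpha$, $\mathcal{B}_{8,i}^\alpha$ as written in \eqref{tan-linear-pseudo-0a}, and that the factor $\nu$ and the overall sign are consistent with the integration by parts in the viscous term. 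I would also note in passing that $\mathcal{P}(\partial_h)$ is formally self-adjoint on $L^2(\mathbb{R}^2_h)$ when $\mathcal{P}(\varsigma_h)$ is real (or, more generally, one pairs $\mathcal{P}(\partial_h)$ with its adjoint), which is what legitimizes moving the operator onto $v$ in the inner products; in the applications $\mathcal{P}$ will be $\dot\Lambda_h^s\Lambda_h^{s'}$ or $\partial_h^k$, all of which have real symbols up to harmless factors, so this is not a genuine issue.
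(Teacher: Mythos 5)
Your proposal is correct and mirrors the paper's proof step by step: apply $\mathcal{P}(\partial_h)$ to the linearized momentum and divergence equations, test against $\mathcal{P}(\partial_h)v$, integrate the pressure and viscous terms by parts using $v|_{\Sigma_b}=0$, substitute the linearized boundary condition on $\Sigma_0$ together with $\partial_t\xi^1=v^1$, and use the divergence constraint to convert $\nabla\cdot\mathcal{P}(\partial_h)v$ into the nonlinear term $\mathfrak{K}_2$. Your caution about sign conventions in the boundary term is well placed — the paper's intermediate displays \eqref{linear-pseudo-4} and \eqref{linear-pseudo-6} contain sign typos (and $\mathfrak{K}_2$ writes $\partial_\gamma$ where $\partial_\alpha$ is meant), but these are immaterial since every subsequent use only bounds $|\mathfrak{K}_j|$.
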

\begin{proof}
Acting the operator $\mathcal{P}(\partial_h)$ to both the momentum equations and the incompressibility equation in \eqref{eqns-linear-1} implies
\begin{equation}\label{eqns-linear-pseudo-1}
\begin{cases}
 & \partial_t \mathcal{P}(\partial_h)v +\nabla\,\mathcal{P}(\partial_h)q- \nu \nabla\cdot\mathbb{D}(\mathcal{P}(\partial_h)v)=\mathcal{P}(\partial_h)g,\\
 &\nabla\cdot \mathcal{P}(\partial_h)v= \mathcal{P}(\partial_h)(-\widetilde{a_{\alpha\,1}}\partial_1v^\alpha +\mathcal{B}_{5, i}^{\alpha}\partial_{\alpha}v^{i}) \quad \text{in} \quad \Omega.
    \end{cases}
\end{equation}
We multiply the $i$-th component of the momentum equations of \eqref{eqns-linear-pseudo-1} by $\mathcal{P}(\partial_h)v^i$, sum over $i$, and integrate
over $\Omega$ to find
\begin{equation}\label{linear-pseudo-1}
\begin{split}
 & \frac{1}{2}\frac{d}{dt}\int_{ \Omega}|\mathcal{P}(\partial_h)v|^2\, dx+I+II=III
\end{split}
\end{equation}
with
$I=\int_{ \Omega} \nabla \mathcal{P}(\partial_h)\,q\,\cdot \mathcal{P}(\partial_h) v\,dx,\quad II=-\nu\int_{ \Omega}[\nabla\,\cdot \mathbb{D}(\mathcal{P}(\partial_h) v )]\cdot \mathcal{P}(\partial_h) v\,dx$,
$III= \int_{ \Omega}\mathcal{P}(\partial_h) g\cdot \mathcal{P}(\partial_h) v\,dx$.

In view of the boundary condition $v|_{\Sigma_b}=0$, integrating by parts in $I$ and $II$ shows
\begin{equation}\label{linear-pseudo-2}
\begin{split}
 &I=\int_{\Sigma_{0}}\mathcal{P}(\partial_h)v \cdot\,(\mathcal{P}(\partial_h)q\,e_1)\,dS_0-\int_{ \Omega}\mathcal{P}(\partial_h)\,q\,\grad \cdot \mathcal{P}(\partial_h)v \, dx,
\end{split}
\end{equation}
and
\begin{equation}\label{linear-pseudo-3}
\begin{split}
 &II=- \nu\int_{\Sigma_{0}}   (\mathbb{D}(\mathcal{P}(\partial_h)v)\,e_1)\cdot \mathcal{P}(\partial_h)v \, dS_0+\frac{\nu}{2}\int_{\Omega} | \mathbb{D}(\mathcal{P}(\partial_h)v)|^2 \, dx,
 \end{split}
\end{equation}
so summing up \eqref{linear-pseudo-2} and \eqref{linear-pseudo-3} provides
\begin{equation}\label{linear-pseudo-4a}
\begin{split}
 &I+II=\int_{\Sigma_{0}}\mathcal{P}(\partial_h)v\cdot\mathcal{P}(\partial_h)\bigg(q\,e_1
 -\nu\mathbb{D}(v)\,e_1\bigg)\,dS_0\\
 &\qquad\qquad\qquad-\int_{ \Omega}\mathcal{P}(\partial_h)q\,\grad \cdot \mathcal{P}(\partial_h)v \, dx+\frac{\nu}{2}\int_{\Omega} | \mathbb{D}(\mathcal{P}(\partial_h)v)|^2 \, dx.
 \end{split}
\end{equation}
For the first boundary integral of the right hand side in \eqref{linear-pseudo-4a}, applying the interface boundary condition in \eqref{eqns-linear-1} leads to
\begin{equation}\label{linear-pseudo-4}
\begin{split}
 &\int_{\Sigma_{0}}\mathcal{P}(\partial_h)v\cdot\mathcal{P}(\partial_h)\bigg(q\,e_1
 -\nu\mathbb{D}(v)\,e_1\bigg)\,dS_0\\
 &=\frac{1}{2}\frac{d}{dt}\int_{\Sigma_{0}}|\mathcal{P}(\partial_h)\xi^1|^2\,dS_0-\nu\int_{\Sigma_{0}}\bigg(\mathcal{P}(\partial_h)((\mathcal{B}_{9, i}^{\alpha}-2\mathcal{B}_{6, i}^{\alpha})\partial_{\alpha}v^{i})\, \mathcal{P}(\partial_h)v^1\\
&\qquad\qquad\qquad+\mathcal{P}(\partial_h)(\mathcal{B}_{7, i}^{\alpha}\partial_{\alpha}v^{i}) \,\mathcal{P}(\partial_h)v^2+\mathcal{P}(\partial_h)(\mathcal{B}_{8, i}^{\alpha}\partial_{\alpha}v^{i})\, \mathcal{P}(\partial_h)v^3\bigg) \, dS_0.
 \end{split}
\end{equation}
On the other hand, thanks to the second equation in \eqref{eqns-linear-pseudo-1}, one may see that
\begin{equation}\label{linear-pseudo-5}
\begin{split}
 &\int_{ \Omega}\mathcal{P}(\partial_h)q\,\grad \cdot \mathcal{P}(\partial_h)v \, dx=\int_{ \Omega}\mathcal{P}(\partial_h)q\,\mathcal{P}(\partial_h)(-\widetilde{a_{\alpha\,1}}\partial_1v^\alpha +\mathcal{B}_{5, i}^{\alpha}\partial_{\alpha}v^{i}) \, dx.
 \end{split}
\end{equation}
Hence, plugging \eqref{linear-pseudo-4} and \eqref{linear-pseudo-5} into \eqref{linear-pseudo-4a} gives rise to
\begin{equation}\label{linear-pseudo-6}
  \begin{split}
&I+II=\frac{1}{2}\frac{d}{dt}\|\mathcal{P}(\partial_h)\xi^1\|_{L^2(\Sigma_0)}^2-\nu\int_{\Sigma_{0}}
\bigg(\mathcal{P}(\partial_h)((\mathcal{B}_{9, i}^{\alpha}-2\mathcal{B}_{6, i}^{\alpha})\partial_{\alpha}v^{i})\, \mathcal{P}(\partial_h)v^1\\
&\qquad\qquad\qquad+\mathcal{P}(\partial_h)(\mathcal{B}_{7, i}^{\alpha}\partial_{\alpha}v^{i}) \,\mathcal{P}(\partial_h)v^2+\mathcal{P}(\partial_h)(\mathcal{B}_{8, i}^{\alpha}\partial_{\alpha}v^{i})\, \mathcal{P}(\partial_h)v^3\bigg) \, dS_0\\
&\qquad\qquad+\frac{\nu}{2}\| \mathbb{D}(\mathcal{P}(\partial_h)v)\|_{L^2(\Omega)}^2 +\int_{ \Omega}\mathcal{P}(\partial_h)q\,\mathcal{P}(\partial_h)(-\widetilde{a_{\alpha\,1}}\partial_1v^\alpha +\mathcal{B}_{5, i}^{\alpha}\partial_{\alpha}v^{i}) \, dx.
\end{split}
\end{equation}
Combining \eqref{linear-pseudo-6} with \eqref{linear-pseudo-1} yields \eqref{tan-linear-pseudo-0}, which completes the proof of Lemma \ref{lem-tan-pseudo-energy-1}.
\end{proof}

In order to get the energy estimates of the tangential derivatives of the velocity, we will deal with the remainder terms on the right hand side in \eqref{tan-linear-pseudo-0}. For this, let us first estimate the terms with respect to $g$ in \eqref{tan-linear-pseudo-0}.

\begin{lem}\label{lem-est-g-1}
Under the assumption in Lemma \ref{lem-tan-pseudo-energy-1}, if $(\lambda,\,\sigma_0) \in (0, 1)$ satisfies $1-\lambda< \sigma_0\leq 1-\frac{1}{2}\lambda$, and $E_3(t) \leq 1$, then
\begin{equation}\label{est-g-2}
\begin{split}
&\|\dot{\Lambda}_h^{\sigma_0} g\|_{L^2}\lesssim E_3^{\frac{1}{2}}  \dot{\mathcal{D}}_{3}^{\frac{1}{2}},\quad \|\dot{\Lambda}_h^{-\lambda}g_{jj}\|_{L^2}
  +\|\dot{\Lambda}_h^{-\lambda}\widetilde{g}_{jj}\|_{L^2}\lesssim E_3^{\frac{1}{2}} \dot{\mathcal{D}}_3^{\frac{1}{2}} \quad(\text{with}\quad j=1, 2, 3),\\
   &\|\dot{\Lambda}_h^{-\lambda} g\|_{L^2}\lesssim E_3^{\frac{1}{2}} \|\dot{\Lambda}_h^{2-\sigma_0-\lambda}\xi^1\|_{L^2(\Sigma_0)} +E_3^{\frac{1}{2}}\, \dot{\mathcal{D}}_3^{\frac{1}{2}},\\
&\|g\|_{L^2}\lesssim E_3^{\frac{1}{2}} \|\dot{\Lambda}_h^{2-\sigma_0}\xi^1\|_{L^2(\Sigma_0)} +E_3^{\frac{1}{2}}\, \dot{\mathcal{D}}_3^{\frac{1}{2}},\,\|\partial_h^{N-1} \,g\|_{L^2}\lesssim \, E_N^{\frac{1}{2}} \dot{\mathcal{D}}_3^{\frac{1}{2}}+ E_3^{\frac{1}{2}} \dot{\mathcal{D}}_N^{\frac{1}{2}} \,(\text{for} \,\,N\geq 3).
\end{split}
\end{equation}
\end{lem}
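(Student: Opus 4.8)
The plan is to estimate $g = (g_1, g_2, g_3)^T$ defined by \eqref{def-g-linear-1} by splitting each $g_k$ into a ``pressure part'' built from $\mathcal{A}_k^j \partial_j q$ type terms (more precisely, terms with at least one factor of $\mathcal{A}_i^j - \delta_i^j$ multiplying $\nabla q$) and a ``viscous part'' $\nu g_{kk}$, whose explicit form is the long list of quadratic expressions of the type $(\mathcal{A}^\cdot_\cdot - \delta^\cdot_\cdot)\,\partial^2 v$, $(\mathcal{A}\partial\mathcal{A})\,\partial v$ appearing right after \eqref{dissipative-split-1}. Throughout, I would systematically use the product laws of Lemma \ref{lem-product-law-1} together with the anisotropic coefficient bounds of Lemma \ref{lem-est-aij-1} and the nonlinear-term bounds of Lemma \ref{lem-est-B-Bv-1}; the guiding principle is that each factor of $\mathcal{A}-I$ (or $\mathcal{A}\partial\mathcal{A}$, or $J^{-1}-1$) is controlled in a suitable $\dot\Lambda_h$-weighted $H^1(\Omega)$ norm by $E_3^{1/2}$, while the remaining factor involving $\nabla v$, $\nabla^2 v$, or $\nabla q$ is absorbed into the appropriate dissipation norm.

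The key steps, in order. First, for $\|\dot{\Lambda}_h^{\sigma_0} g\|_{L^2}$: since $g$ is a sum of products (coefficient)$\times$(second-order or first-order derivative of $v$, or first-order derivative of $q$), I would apply the second inequality of \eqref{product-law-1}, i.e. $\|\dot\Lambda_h^{\sigma_0}(fg)\|_{L^2} \lesssim \|\dot\Lambda_h^{\sigma_0} f\|_{L^2}\|g\|_{L^\infty} + \|f\|_{L^\infty}\|\dot\Lambda_h^{\sigma_0} g\|_{L^2}$, split on whether the horizontal derivative lands on the coefficient or on the analytic factor, and in each case use Lemma \ref{lem-est-aij-1} ($\sigma = \sigma_0 \le 2$ is in the allowed range for the off-diagonal entries, $\sigma_0 - 1$ for diagonal ones) plus Sobolev embedding in $x_1$ (the $L^\infty_{x_1} L^2_h \hookrightarrow$ trick used repeatedly in the proof of Lemma \ref{lem-est-aij-1}) to bound the coefficient factor by $E_3^{1/2}$ and the derivative-of-$v$ or derivative-of-$q$ factor by $\dot{\mathcal D}_3^{1/2}$ (recall $\dot{\mathcal D}_3$ contains $\ddot{\mathcal D}_{\ell,1+\sigma_0}$, hence $\|\dot\Lambda_h^{\sigma_0}(\nabla^2 v, \nabla q)\|_{L^2}$, and $\dot{\mathcal D}_{\ell,2}$ with two tangential derivatives of $\nabla v$ and $\partial_t v$). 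This yields the first bound. The estimates for $g_{jj}$ and $\widetilde g_{jj}$ in the $\dot\Lambda_h^{-\lambda}$ norm are the same kind of bilinear estimate but using the first inequality of \eqref{product-law-1} with negative Sobolev index: here I would write $\dot\Lambda_h^{-\lambda}(fg)$ with $s_1 = \sigma_0$ (or $\sigma_0-1$), $s_2 = 1-\lambda-\sigma_0$ (or $2 - \lambda - \sigma_0$), noting $|s_1|, |s_2| < 1$ and $s_1 + s_2 > 0$ exactly because of the hypothesis $1 - \lambda < \sigma_0 \le 1 - \tfrac12\lambda$ (this is where that numerical constraint on $(\lambda,\sigma_0)$ is used), then place the coefficient in $\dot H^{\sigma_0}_h$ (bounded by $E_3^{1/2}$ via Lemma \ref{lem-est-aij-1}) and the derivative factor in $\dot H^{1-\lambda-\sigma_0}_h(\mathbb{R}^2)$ uniformly in $x_1$, again absorbing into $\dot{\mathcal D}_3^{1/2}$. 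Second, for the two estimates $\|\dot\Lambda_h^{-\lambda} g\|_{L^2}$ and $\|g\|_{L^2}$ involving a boundary term $\|\dot\Lambda_h^{2-\sigma_0-\lambda}\xi^1\|_{L^2(\Sigma_0)}$ respectively $\|\dot\Lambda_h^{2-\sigma_0}\xi^1\|_{L^2(\Sigma_0)}$: the difference from the previous step is that the $\nabla q$ factor must be handled via the representation $q = \mathcal{Q} + \widetilde{\mathcal{Q}}$, where $\widetilde{\mathcal{Q}} = \mathcal{H}(\xi^1) - 2\nu\nabla_h\cdot v^h + \nu\mathcal{H}(\mathcal B_{9,i}^\alpha)\partial_\alpha v^i$; the harmonic-extension term $\mathcal{H}(\xi^1)$ is the one that cannot be put into the dissipation and instead produces, via Lemma \ref{est-harmonic-ext-1} and a trace estimate, the boundary norm of $\xi^1$ with a half-derivative loss — the $2 - \sigma_0$ exponent reflecting one horizontal derivative from $\nabla$ landing on $\mathcal{H}(\xi^1)$ plus the regularity accounting, while the coefficient prefactor $\mathcal{A}-I$ contributes the extra $E_3^{1/2}$ and drops the index by $\sigma_0$. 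The remaining pieces of $q$ (namely $\mathcal{Q}$ and the $v$-dependent parts of $\widetilde{\mathcal{Q}}$) are absorbed into $\dot{\mathcal D}_3^{1/2}$ as before. Third, for $\|\partial_h^{N-1} g\|_{L^2}$ with $N \ge 3$: this is a standard Leibniz-rule high-order estimate — distribute $N-1$ horizontal derivatives over the bilinear structure, use \eqref{est-Bv-1}/\eqref{est-Bv-1a}/\eqref{k-incom-fluid-1} with $k = N-1$, and in the generic term put either all derivatives on the coefficient (bounded by $E_N^{1/2}$, giving the term $E_N^{1/2}\dot{\mathcal D}_3^{1/2}$ since the low-order derivative-of-$v$ factor sits in $\dot{\mathcal D}_3$) or all on the analytic factor (bounded by $\dot{\mathcal D}_N^{1/2}$, the coefficient then needing only $E_3^{1/2}$), with intermediate distributions controlled by interpolation — exactly the split already performed in Lemma \ref{lem-est-B-Bv-1}.

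The main obstacle I anticipate is the bookkeeping in the $\|\dot\Lambda_h^{-\lambda} g\|_{L^2}$ and $\|g\|_{L^2}$ bounds: one has to track \emph{which} occurrences of $\nabla q$ unavoidably generate the harmonic-extension-of-$\xi^1$ term versus which are genuinely dissipative, and to check that the $\dot H^{s}_h$ product law can be applied with indices all in the open interval $(-1,1)$ with positive sum — the hypothesis $1-\lambda < \sigma_0 \le 1-\tfrac12\lambda$ is precisely what guarantees $1-\lambda-\sigma_0 \in (-1, 0)$ and $2 - \lambda - \sigma_0 \in (0,1)$, so the algebra is tight and must be done carefully. A secondary subtlety is the anisotropy: the vertical derivative $\partial_1^2 v^1$ only lives in $\dot\Lambda_h^{\sigma_0 - 1}$-weighted norms (Lemma \ref{lem-EN-bdd-1}), so whenever a term in $g_{11}$ or $\widetilde g_{11}$ contains $\partial_1^2 v^1$ I must shift a horizontal derivative off it onto the coefficient using Lemma \ref{lem-est-aij-1}(2)--(4) (which do allow the index $\sigma_0 - 1$), rather than blindly applying the generic product law — this is the same mechanism already exploited in the proof of Lemma \ref{lem-EN-bdd-1} and I would imitate it.
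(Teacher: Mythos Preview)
Your proposal is correct and follows essentially the same strategy as the paper: bilinear product estimates with Lemma~\ref{lem-est-aij-1} supplying the $E_3^{1/2}$ factor from the coefficients and the dissipation absorbing the $(\nabla v,\nabla^2 v,\nabla q)$ factor, with the numerical constraint $1-\lambda<\sigma_0\le 1-\tfrac12\lambda$ ensuring the indices stay in range. Two organizational differences are worth noting. First, for $\|\dot\Lambda_h^{\sigma_0}g\|_{L^2}$ the paper does \emph{not} use the Kato--Ponce product law you cite but rather the first inequality of \eqref{product-law-1} with a symmetric split at $(1+\sigma_0)/2$, e.g.\ $\|\dot\Lambda_h^{\sigma_0}(\mathcal A_\alpha^1\mathcal A_1^1\,\partial_1^2 v^\alpha)\|_{L^2}\lesssim \|\dot\Lambda_h^{(1+\sigma_0)/2}(\mathcal A_\alpha^1\mathcal A_1^1)\|_{L^\infty_{x_1}L^2_h}\|\dot\Lambda_h^{(1+\sigma_0)/2}\partial_1^2 v^\alpha\|_{L^2}$; this sidesteps any need for $\|\partial_1^2 v\|_{L^\infty}$, which is not directly available. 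Your mixed $L^p_{x_1}L^q_h$ variant of Kato--Ponce also works, but be sure to place $\partial_1^2 v$ in $L^2_{x_1}L^\infty_h$ rather than full $L^\infty$. Second, for the $\xi^1$ boundary term in $\|\dot\Lambda_h^{-\lambda}g\|_{L^2}$, the paper does \emph{not} invoke the good-unknown decomposition $q=\mathcal Q+\widetilde{\mathcal Q}$; instead it applies Poincar\'e in $x_1$ directly to $\dot\Lambda_h^{1-\sigma_0-\lambda}\partial_h q$ and reads off the trace using the raw boundary condition \eqref{q-interface-1}, which yields exactly $\|\dot\Lambda_h^{2-\sigma_0-\lambda}\xi^1\|_{L^2(\Sigma_0)}$. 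Your route through $\mathcal H(\xi^1)$ is equivalent in spirit but, using only Lemma~\ref{est-harmonic-ext-1} as stated, would naturally produce $\|\dot\Lambda_h^{2-\sigma_0-\lambda}\xi^1\|_{H^{1/2}(\Sigma_0)}$ rather than $L^2(\Sigma_0)$; the paper's Poincar\'e-plus-trace argument is cleaner here and matches the stated bound exactly. The paper also singles out that the term $\mathcal A_1^h\partial_h q$ is the one forcing this detour, since $\mathcal A_1^2,\mathcal A_1^3$ are only controlled for $\sigma\ge\sigma_0$ (item (1) of Lemma~\ref{lem-est-aij-1}), unlike the other coefficients which allow $\sigma\ge\sigma_0-1$.
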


\begin{proof}
We first bound the term $g_{jj}$ with $j=1,2,3$ of $g$ in \eqref{est-g-2}. For this, we split the terms of $g_{jj}$ in \eqref{def-g-linear-1} into two types: the one is of the second-order derivative terms of $v$ and the other is of the first-order derivative terms of $v$.

 For the first one, we use the product law \eqref{product-law-1} to get
\begin{equation*}
\begin{split}
 &\|\dot{\Lambda}_h^{\sigma_0}\,(\mathcal{A}_\alpha^1\mathcal{A}_1^1\,\partial_1^2v^{\alpha})\|_{L^2}\lesssim \|\dot{\Lambda}_h^{(1+\sigma_0)/2}(\mathcal{A}_\alpha^1\mathcal{A}_1^1)\|_{L^\infty_{x_1}L^2_h}
 \|\dot{\Lambda}_h^{(1+\sigma_0)/2}\,\partial_1^2v^{\alpha}\|_{L^2}\\
 &\lesssim (\|\dot{\Lambda}_h^{(1+\sigma_0)/2}\mathcal{A}_\alpha^1\|_{L^\infty_{x_1}L^2_h}
 +\|\dot{\Lambda}_h^{(3+\sigma_0)/4}\mathcal{A}_\alpha^1\|_{L^\infty_{x_1}L^2_h}
 \|\dot{\Lambda}_h^{(3+\sigma_0)/4}(\mathcal{A}_1^1-1)\|_{L^\infty_{x_1}L^2_h})
 \|\dot{\Lambda}_h^{(1+\sigma_0)/2}\,\partial_1^2v^{\alpha}\|_{L^2},
\end{split}
\end{equation*}
which, from Lemma \ref{lem-est-aij-1}, follows
\begin{equation*}
\begin{split}
 &\|\dot{\Lambda}_h^{\sigma_0}\,(\mathcal{A}_\alpha^1\mathcal{A}_1^1\,\partial_1^2v^{\alpha})\|_{L^2} \lesssim \|\dot{\Lambda}_h^{(1+\sigma_0)/2}\,\partial_1^2v^{\alpha}\|_{L^2}
 \bigg(\|\dot{\Lambda}_h^{(1+\sigma_0)/2}\mathcal{A}_\alpha^1\|_{H^1}
 \\
 &\qquad\qquad\qquad\qquad\qquad\qquad+\|\dot{\Lambda}_h^{(3+\sigma_0)/4}\mathcal{A}_\alpha^1\|_{H^1}
 \|\dot{\Lambda}_h^{(3+\sigma_0)/4}(\mathcal{A}_1^1-1)\|_{H^1}\bigg)
\lesssim E_3^{\frac{1}{2}}\dot{\mathcal{D}}_{3}^{\frac{1}{2}}.
\end{split}
\end{equation*}
The same estimate holds for any second-order derivative terms of $v$ in $g_{jj}$ with $j=1,2,3$.

While for the first-order derivative term of $v$ in $g_{jj}$ with $j=1,2,3$, we employ the product law \eqref{product-law-1} once again to yield
\begin{equation*}
\begin{split}
 &\|\dot{\Lambda}_h^{\sigma_0}\,(\mathcal{A}_i^1\partial_1 \mathcal{A}_1^1\,\partial_1v^{i})\|_{L^2}\lesssim \|\dot{\Lambda}_h^{(1+\sigma_0)/2}(\mathcal{A}_i^1\partial_1 \mathcal{A}_1^1)\|_{L^2}\,\|\dot{\Lambda}_h^{(1+\sigma_0)/2}\partial_1 v\|_{L^\infty_{x_1}L^2_h}\\
 &\lesssim (\|\dot{\Lambda}_h^{(3+\sigma_0)/4}(\mathcal{A}_\alpha^1,\,\mathcal{A}_1^1-1)\|_{L^\infty_{x_1}L^2_h}
 (\|\dot{\Lambda}_h^{(1+\sigma_0)/2}\partial_1 \mathcal{A}_1^1\|_{L^2}+\|\dot{\Lambda}_h^{(3+\sigma_0)/4}\partial_1 \mathcal{A}_1^1\|_{L^2})\\
 &\qquad\qquad\qquad\qquad\qquad\qquad\qquad\times\,(\|\dot{\Lambda}_h^{\sigma_0}\,\partial_1 v\|_{L^2}+\|\dot{\Lambda}_h^{\sigma_0}\,\partial_1^2 v\|_{L^2}),
\end{split}
\end{equation*}
from which, applying Lemma \ref{lem-est-aij-1} implies
\begin{equation*}
\begin{split}
 &\|\dot{\Lambda}_h^{\sigma_0}\,(\mathcal{A}_i^1\partial_1 \mathcal{A}_1^1\,\partial_1v^{i})\|_{L^2}\lesssim E_3^{\frac{1}{2}} \dot{\mathcal{D}}_{3}^{\frac{1}{2}}.
\end{split}
\end{equation*}
This bound also holds true for any first-order derivative term of $v$ in $g_{jj}$ and $\widetilde{g}_{jj}$ with $j=1,2,3$.

Therefore, we conclude that, for any $j=1,2,3$,
\begin{equation}\label{est-g-2-1}
\begin{split}
 &\|\dot{\Lambda}_h^{\sigma_0}\,g_{jj}\|_{L^2}+\|\dot{\Lambda}_h^{\sigma_0}\,\widetilde{g}_{jj}\|_{L^2}\lesssim E_3^{\frac{1}{2}} \dot{\mathcal{D}}_{3}^{\frac{1}{2}}.
\end{split}
\end{equation}
For the pressure term in $g$, making use of the product law \eqref{product-law-1} again yields
\begin{equation}\label{est-g-2-2}
\begin{split}
 &\|\dot{\Lambda}_h^{\sigma_0}\,(\mathcal{A}_2^1\partial_1q)\|_{L^2}\lesssim \|\dot{\Lambda}_h^{(1+\sigma_0)/2}\, \partial_1q \|_{L^2}
 \|\dot{\Lambda}_h^{(1+\sigma_0)/2}\mathcal{A}_2^1 \|_{L^\infty_{x_1}L^2_h}\lesssim E_3^{\frac{1}{2}} \dot{\mathcal{D}}_{3}^{\frac{1}{2}},
\end{split}
\end{equation}
where we used the estimate $\|\dot{\Lambda}_h^{(1+\sigma_0)/2}\mathcal{A}_2^1 \|_{L^\infty_{x_1}L^2_h}\lesssim \|\dot{\Lambda}_h^{(1+\sigma_0)/2}\mathcal{A}_2^1 \|_{H^1}\lesssim E_3^{\frac{1}{2}}$ in Lemma \ref{lem-est-aij-1}.

In the same manner it is easy to get
\begin{equation}\label{est-g-2-3}
\begin{split}
 &\|\dot{\Lambda}_h^{\sigma_0}(\mathcal{A}_1^h\partial_hq)\|_{L^2}
 +\|\dot{\Lambda}_h^{\sigma_0}((\mathcal{A}_1^1-1)\partial_1q)\|_{L^2} +\|\dot{\Lambda}_h^{\sigma_0}((\mathcal{A}_2^2-1)\partial_2q)\|_{L^2}\\
 &+\|\dot{\Lambda}_h^{\sigma_0}( \mathcal{A}_2^3 \partial_3q)\|_{L^2}+\|\dot{\Lambda}_h^{\sigma_0}(\mathcal{A}_3^1\partial_1q)\|_{L^2}
 +\|\dot{\Lambda}_h^{\sigma_0}((\mathcal{A}_3^3-1)\partial_3q)\|_{L^2}\lesssim E_3^{\frac{1}{2}} \dot{\mathcal{D}}_{3}^{\frac{1}{2}}.
\end{split}
\end{equation}

Combing \eqref{est-g-2-1}-\eqref{est-g-2-3} yields the first inequality in \eqref{est-g-2}.

Let's now estimate $\|\dot{\Lambda}_h^{-\lambda}g\|_{L^2}$ including the $L^2$ norms of the negative horizontal derivative of $g_{jj}$, $\widetilde{g}_{jj}$ and the pressure terms.

Firstly, the product law \eqref{product-law-1} ensures
\begin{equation}\label{est-g-2-4}
\begin{split}
 &\|\dot{\Lambda}_h^{-\lambda}\,(\mathcal{A}_\alpha^1\mathcal{A}_1^1\,\partial_1^2v^{\alpha})\|_{L^2}\lesssim \|\dot{\Lambda}_h^{\sigma_0-1}(\mathcal{A}_\alpha^1\mathcal{A}_1^1)\|_{L^\infty_{x_1}L^2_h}
 \|\dot{\Lambda}_h^{2-\sigma_0-\lambda}\,\partial_1^2v^{\alpha}\|_{L^2}.
\end{split}
\end{equation}
While from Lemma \ref{lem-est-aij-1} and $E_3(t) \leq 1$, there holds
\begin{equation}\label{est-g-2-5}
\begin{split}
 & \|\dot{\Lambda}_h^{\sigma_0-1}(\mathcal{A}_\alpha^1\mathcal{A}_1^1)\|_{L^\infty_{x_1}L^2_h}\lesssim \|\dot{\Lambda}_h^{\sigma_0-1}\mathcal{A}_\alpha^1\|_{L^\infty_{x_1}L^2_h}
 +\|\dot{\Lambda}_h^{\sigma_0}\mathcal{A}_\alpha^1\|_{L^\infty_{x_1}L^2_h}
 \|\mathcal{A}_1^1-1\|_{L^\infty_{x_1}L^2_h}\\
 &\lesssim \|\dot{\Lambda}_h^{\sigma_0-1}\mathcal{A}_\alpha^1\|_{H^1}
 +\|\dot{\Lambda}_h^{\sigma_0}\mathcal{A}_\alpha^1\|_{H^1}
 \|\mathcal{A}_1^1-1\|_{H^1}\lesssim E_3^{\frac{1}{2}}.
\end{split}
\end{equation}
Substituting \eqref{est-g-2-5} into \eqref{est-g-2-4} yields
\begin{equation}\label{est-g-2-6}
\begin{split}
 &\|\dot{\Lambda}_h^{-\lambda}\,(\mathcal{A}_\alpha^1\mathcal{A}_1^1\,\partial_1^2v^{\alpha})\|_{L^2}\lesssim E_3^{\frac{1}{2}}
 \|\dot{\Lambda}_h^{2-\sigma_0-\lambda}\,\partial_1^2v^{\alpha}\|_{L^2}\lesssim  E_3^{\frac{1}{2}} \dot{\mathcal{D}}_{3}^{\frac{1}{2}},
\end{split}
\end{equation}
where the fact $2-\sigma_0-\lambda \geq \sigma_0$ (since $\sigma_0\leq 1-\frac{1}{2}\lambda$) has been used in the last inequality.

For $\mathcal{A}_2^1 \mathcal{A}_1^3\partial_3\partial_1v^{2}$, thanks to the fact $2-\sigma_0-\lambda \geq \sigma_0$ again, we have
\begin{equation}\label{est-g-2-6a}
\begin{split}
 &\|\dot{\Lambda}_h^{-\lambda}\,(\mathcal{A}_2^1 \mathcal{A}_1^3\partial_3\partial_1v^{2})\|_{L^2}\lesssim \|\dot{\Lambda}_h^{2-\sigma_0-\lambda}(\mathcal{A}_2^1 \mathcal{A}_1^3)\|_{L^2}
 \|\dot{\Lambda}_h^{\sigma_0-1}\,\partial_3\partial_1v^{2}\|_{L^\infty_{x_1}L^2_h}\lesssim  E_3^{\frac{1}{2}} \dot{\mathcal{D}}_{3}^{\frac{1}{2}}.
\end{split}
\end{equation}

This estimate also holds for any second-order derivative terms of $v$ in $g_{jj}$ and $\widetilde{g}_{jj}$ with $j=1,2,3$.

For the first-order derivative term of $v$ in $g_{jj}$ and $\widetilde{g}_{jj}$ with $j=1,2,3$, applying the product law \eqref{product-law-1}, and then using Lemma \ref{lem-est-aij-1} ensures
\begin{equation}\label{est-g-2-7}
\begin{split}
 &\|\dot{\Lambda}_h^{-\lambda}\,(\mathcal{A}_i^1\partial_1 \mathcal{A}_1^1\,\partial_1v^{i})\|_{L^2}\lesssim \|\dot{\Lambda}_h^{\sigma_0-1}(\mathcal{A}_i^1\partial_1 \mathcal{A}_1^1)\|_{L^2}\,\|\dot{\Lambda}_h^{2-\sigma_0-\lambda}\partial_1 v\|_{L^\infty_{x_1}L^2_h}\\
 &\lesssim \|\dot{\Lambda}_h^{\sigma_0-1}(\mathcal{A}_i^1\partial_1 \mathcal{A}_1^1)\|_{L^2}\,\|\dot{\Lambda}_h^{2-\sigma_0-\lambda}\partial_1 v\|_{H^1}\lesssim\,E_3^{\frac{1}{2}} \dot{\mathcal{D}}_{3}^{\frac{1}{2}}.
\end{split}
\end{equation}

This estimate remains valid for any other first-order derivative terms of $v$ in $g_{jj}$ and $\widetilde{g}_{jj}$ with $j=1,2,3$.

Therefore, we obtain that for any $j=1,2,3$
\begin{equation*}\label{est-g-2-8}
\begin{split}
 &\|\dot{\Lambda}_h^{-\lambda}\,g_{jj}\|_{L^2}+\|\dot{\Lambda}_h^{-\lambda}\,\widetilde{g}_{jj}\|_{L^2}\lesssim E_3^{\frac{1}{2}} \dot{\mathcal{D}}_{3}^{\frac{1}{2}}.
\end{split}
\end{equation*}
For the pressure terms in $g$, from \eqref{product-law-1}, it can be obtained that
\begin{equation*}\label{est-g-2-9}
\begin{split}
 &\|\dot{\Lambda}_h^{-\lambda}(\mathcal{A}_2^1\partial_1q)\|_{L^2}\lesssim \|\dot{\Lambda}_h^{\sigma_0-1}\mathcal{A}_2^1\|_{L^\infty_{x_1}L^2_h}
 \|\dot{\Lambda}_h^{2-\sigma_0-\lambda} \partial_1q \|_{L^2}\lesssim E_3^{\frac{1}{2}}\,
 \|\dot{\Lambda}_h^{2-\sigma_0-\lambda} \partial_1q \|_{L^2}\lesssim E_3^{\frac{1}{2}} \dot{\mathcal{D}}_{3}^{\frac{1}{2}}.\\
\end{split}
\end{equation*}
While for the term about $\partial_hq$, we have
\begin{equation*}\label{est-g-2-10}
\begin{split}
 &\|\dot{\Lambda}_h^{-\lambda}( \mathcal{A}_1^h \partial_hq)\|_{L^2}\lesssim \|\dot{\Lambda}_h^{\sigma_0}\mathcal{A}_1^2\|_{L^\infty_{x_1}L^2_h}
 \|\dot{\Lambda}_h^{1-\sigma_0-\lambda} \partial_hq \|_{L^2}\lesssim E_3^{\frac{1}{2}}\,
 \|\dot{\Lambda}_h^{1-\sigma_0-\lambda} \partial_hq \|_{L^2}.\\
\end{split}
\end{equation*}
 In order to control $\|\dot{\Lambda}_h^{1-\sigma_0-\lambda} \partial_hq \|_{L^2}$, we employ the Poincare's inequality and the boundary condition $q=\xi^1-2\nu\,\nabla_h\cdot v^h+\nu\, \mathcal{B}_{9, i}^{\alpha} \partial_{\alpha}v^{i}$ on $\Sigma_0$ to get
\begin{equation*}
\begin{split}
 &\|\dot{\Lambda}_h^{1-\sigma_0-\lambda} \partial_hq \|_{L^2(\Omega)}\lesssim \|\dot{\Lambda}_h^{1-\sigma_0-\lambda}\partial_h( \xi^1-2\nu\,\nabla_h\cdot v^h+\nu\, \mathcal{B}_{9, i}^{\alpha} \partial_{\alpha}v^{i})\|_{L^2(\Sigma_0)}+\|\dot{\Lambda}_h^{1-\sigma_0-\lambda} \partial_h\partial_1q \|_{L^2(\Omega)}\\
 &\lesssim \|\dot{\Lambda}_h^{2-\sigma_0-\lambda}\xi^1\|_{L^2(\Sigma_0)}+ \|\dot{\Lambda}_h^{1-\sigma_0-\lambda}\partial_h(-2\nu\,\nabla_h\cdot v^h+\nu\, \mathcal{B}_{9, i}^{\alpha} \partial_{\alpha}v^{i})\|_{H^1(\Omega)}+\|\dot{\Lambda}_h^{2-\sigma_0-\lambda}\partial_1q \|_{L^2(\Omega)}\\
 &\lesssim \|\dot{\Lambda}_h^{2-\sigma_0-\lambda}\xi^1\|_{L^2(\Sigma_0)}+ \dot{\mathcal{D}}_{3}^{\frac{1}{2}},
\end{split}
\end{equation*}
where we have used the assumption $2-\sigma_0-\lambda \geq \sigma_0$ in the last inequality.
It thus follows
\begin{equation*}\label{est-g-2-11}
\begin{split}
 &\|\dot{\Lambda}_h^{-\lambda}( \mathcal{A}_1^h \partial_hq)\|_{L^2}\lesssim E_3^{\frac{1}{2}}\,
(\|\dot{\Lambda}_h^{2-\sigma_0-\lambda}\xi^1\|_{L^2(\Sigma_0)}+ \dot{\mathcal{D}}_{3}^{\frac{1}{2}}).\\
\end{split}
\end{equation*}
Repeating the above argument to the other pressure terms in $g$ ensures that
\begin{equation*}
\begin{split}
 &\|\dot{\Lambda}_h^{-\lambda}(\mathcal{A}_2^1\partial_1q)\|_{L^2}
 +\|\dot{\Lambda}_h^{-\lambda}(\mathcal{A}_1^h\partial_hq)\|_{L^2}
 +\|\dot{\Lambda}_h^{-\lambda}((\mathcal{A}_1^1-1)\partial_1q)\|_{L^2} +\|\dot{\Lambda}_h^{-\lambda}((\mathcal{A}_2^2-1)\partial_2q)\|_{L^2}\\
 &+\|\dot{\Lambda}_h^{-\lambda}( \mathcal{A}_2^3 \partial_3q)\|_{L^2}+\|\dot{\Lambda}_h^{-\lambda}(\mathcal{A}_3^1\partial_1q)\|_{L^2}
 +\|\dot{\Lambda}_h^{-\lambda}((\mathcal{A}_3^3-1)\partial_3q)\|_{L^2}\\
 &\lesssim E_3^{\frac{1}{2}} \|\dot{\Lambda}_h^{2-\sigma_0-\lambda}\xi^1\|_{L^2(\Sigma_0)} +E_3^{\frac{1}{2}} \dot{\mathcal{D}}_3^{\frac{1}{2}}.
\end{split}
\end{equation*}
Consequently, we conclude that
\begin{equation*}\label{est-g-2-12}
\begin{split}
&\|\dot{\Lambda}_h^{-\lambda} g\|_{L^2}\lesssim E_3^{\frac{1}{2}} \|\dot{\Lambda}_h^{2-\sigma_0-\lambda}\xi^1\|_{L^2(\Sigma_0)} +E_3^{\frac{1}{2}} \dot{\mathcal{D}}_3^{\frac{1}{2}},
\end{split}
\end{equation*}
this is, the third inequality in \eqref{est-g-2} holds true.

The forth inequality can be proved by repeating the same line in the proof of the third inequality in \eqref{est-g-2}.

Finally, let us now turn to derive the last inequality in \eqref{est-g-2}.

For this, we first apply the product law \eqref{product-law-1} to get
\begin{equation*}
\begin{split}
 &\|\partial_h^{N-1} \,(\mathcal{A}_\alpha^1\mathcal{A}_1^1\,\partial_1^2v^{\alpha})\|_{L^2}\lesssim \|\partial_h^{N-1}(\mathcal{A}_\alpha^1\mathcal{A}_1^1)\|_{L^\infty_{x_1}(L^2_h)}\,
 \|\partial_1^2v^{\alpha}\|_{L^2_{x_1}L^\infty_h}+ \|\partial_h^{N-1}\partial_1^2v^{\alpha}\|_{L^2}\,
 \|\mathcal{A}_\alpha^1\mathcal{A}_1^1\|_{L^\infty}.
\end{split}
\end{equation*}
It is easy to check $\|\partial_h^{N-1} (\mathcal{A}_\alpha^1\mathcal{A}_1^1)\|_{L^\infty_{x_1}(L^2_h)}\lesssim\|\partial_h^{N-1} (\mathcal{A}_\alpha^1\mathcal{A}_1^1)\|_{H^1}\lesssim E_{N}^{\frac{1}{2}}$
and
$\|\mathcal{A}_\alpha^1\mathcal{A}_1^1\|_{L^\infty}
  \lesssim\|\dot{\Lambda}_h^{\sigma_0}\Lambda_h(\mathcal{A}_\alpha^1\mathcal{A}_1^1)\|_{H^1}\lesssim  E_3^{\frac{1}{2}}$, and by using the Sobolev embedding theorem $\|\partial_1^2v^{\alpha}\|_{L^2_{x_1}L^\infty_h} \lesssim \|\dot{\Lambda}_h^{\sigma_0}\Lambda_h\partial_1^2v^{\alpha}\|_{L^2} \lesssim \dot{\mathcal{D}}_3^{\frac{1}{2}}$, we obtain
\begin{equation*}
\begin{split}
 &\|\partial_h^{N-1} \,(\mathcal{A}_\alpha^1\mathcal{A}_1^1\,\partial_1^2v^{\alpha})\|_{L^2}\lesssim E_{N}^{\frac{1}{2}}\dot{\mathcal{D}}_3^{\frac{1}{2}}+ E_3^{\frac{1}{2}} \dot{\mathcal{D}}_{N}^{\frac{1}{2}}.
\end{split}
\end{equation*}
The same conclusion can be drawn for any other terms of $v$ in $g_{jj}$ with $j=1,2,3$.

Therefore, we deduce that for any $j=1,2,3$
\begin{equation*}
\begin{split}
 &\|\partial_h^{N-1} \,g_{jj}\|_{L^2} \lesssim  E_N^{\frac{1}{2}} \dot{\mathcal{D}}_3^{\frac{1}{2}}+ E_3^{\frac{1}{2}} \dot{\mathcal{D}}_N^{\frac{1}{2}}.
\end{split}
\end{equation*}
Similarly, thanks to the product law \eqref{product-law-1} again, it can be established that
\begin{equation*}
\begin{split}
 \|\partial_h^{N-1}\,(\mathcal{A}_2^1\partial_1q)\|_{L^2}&\lesssim \|\partial_h^{N-1}\mathcal{A}_2^1\|_{L^\infty_{x_1}L^2_h}\|\partial_1q\|_{L^2_{x_1}(L^\infty_h)}
 +\|\partial_h^{N-1}\,\partial_1q\|_{L^2}\|\mathcal{A}_2^1\|_{L^\infty}\\
 &\lesssim  E_N^{\frac{1}{2}} \dot{\mathcal{D}}_3^{\frac{1}{2}}+ E_3^{\frac{1}{2}} \dot{\mathcal{D}}_N^{\frac{1}{2}},
\end{split}
\end{equation*}
and also
\begin{equation*}
\begin{split}
 &\|\partial_h^{N-1} (\mathcal{A}_1^h\partial_hq)\|_{L^2}
 +\|\partial_h^{N-1} ((\mathcal{A}_1^1-1)\partial_1q)\|_{L^2} +\|\partial_h^{N-1} ((\mathcal{A}_2^2-1)\partial_2q)\|_{L^2}\\
 &+\|\partial_h^{N-1} ( \mathcal{A}_2^3 \partial_3q)\|_{L^2}+\|\partial_h^{N-1} (\mathcal{A}_3^1\partial_1q)\|_{L^2}
 +\|\partial_h^{N-1} ((\mathcal{A}_3^3-1)\partial_3q)\|_{L^2}\\
 &\lesssim E_N^{\frac{1}{2}} \dot{\mathcal{D}}_3^{\frac{1}{2}}+ E_3^{\frac{1}{2}} \dot{\mathcal{D}}_N^{\frac{1}{2}}.
\end{split}
\end{equation*}
We thus prove that $\|\partial_h^{N-1} \,g\|_{L^2}\lesssim
 E_N^{\frac{1}{2}} \dot{\mathcal{D}}_3^{\frac{1}{2}}+ E_3^{\frac{1}{2}} \dot{\mathcal{D}}_N^{\frac{1}{2}}$.
The proof of the lemma is therefore accomplished.
\end{proof}
With Lemmas \ref{lem-tan-pseudo-energy-1} and \ref{lem-est-g-1} in hand, we will deal with the estimates $\|\dot{\Lambda}_h^{s}v\|_{L^\infty_tL^2(\Omega)}
+\|\dot{\Lambda}_h^{s}\xi^1\|_{L^\infty_tL^2(\Sigma_0)}$ with $s=\sigma_0,\,N,\,-\lambda$.

\subsubsection{Estimate of $\|\dot{\Lambda}_h^{\sigma_0}v\|_{L^\infty_tL^2(\Omega)}+\|\dot{\Lambda}_h^{\sigma_0}\xi^1\|_{L^\infty_tL^2(\Sigma_0)}$}
\begin{lem}\label{lem-tan-decay-sigma0-1}
Under the assumption of Lemma \ref{lem-tan-pseudo-energy-1}, if $E_3(t) \leq 1$ for all the existence times $t$, then there holds
\begin{equation}\label{linear-decay-sigma0-1}
\begin{split}
\frac{d}{dt}(\|\dot{\Lambda}_h^{\sigma_0}v\|_{L^2(\Omega)}^2
+\|\dot{\Lambda}_h^{\sigma_0}\xi^1\|_{L^2(\Sigma_0)}^2) + \nu \| \mathbb{D}(\dot{\Lambda}_h^{\sigma_0}v)\|_{L^2(\Omega)}^2 \lesssim E_3^{\frac{1}{2}}\dot{\mathcal{D}}_{3}.
\end{split}
\end{equation}
\end{lem}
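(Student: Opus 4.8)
The plan is to apply Lemma~\ref{lem-tan-pseudo-energy-1} with the particular choice $\mathcal{P}(\partial_h)=\dot{\Lambda}_h^{\sigma_0}$, i.e.\ the homogeneous horizontal operator of order $\sigma_0$. This immediately gives the energy identity
\[
\frac{1}{2}\frac{d}{dt}\bigl(\|\dot{\Lambda}_h^{\sigma_0}v\|_{L^2(\Omega)}^2+\|\dot{\Lambda}_h^{\sigma_0}\xi^1\|_{L^2(\Sigma_0)}^2\bigr)+\frac{\nu}{2}\|\mathbb{D}(\dot{\Lambda}_h^{\sigma_0}v)\|_{L^2(\Omega)}^2=\mathfrak{K}_1+\mathfrak{K}_2+\mathfrak{K}_3,
\]
so the whole task reduces to showing $|\mathfrak{K}_1|+|\mathfrak{K}_2|+|\mathfrak{K}_3|\lesssim E_3^{1/2}\dot{\mathcal{D}}_3$. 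Each $\mathfrak{K}_j$ is a bilinear/trilinear pairing of factors that should each be controlled by $\dot{\mathcal{D}}_3^{1/2}$ up to one factor of $E_3^{1/2}$ coming from the coefficients $\mathcal{A}-I$, $a_{ij}-\delta_i^j$ via Lemma~\ref{lem-est-aij-1} and from the nonlinear terms $\mathcal{B}_{j,i}^\alpha$ via Lemma~\ref{lem-est-B-Bv-1}.

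For $\mathfrak{K}_3=\int_\Omega \dot{\Lambda}_h^{\sigma_0}g\cdot\dot{\Lambda}_h^{\sigma_0}v\,dx$ I would use Cauchy--Schwarz together with the first inequality of Lemma~\ref{lem-est-g-1}, namely $\|\dot{\Lambda}_h^{\sigma_0}g\|_{L^2}\lesssim E_3^{1/2}\dot{\mathcal{D}}_3^{1/2}$, and then bound $\|\dot{\Lambda}_h^{\sigma_0}v\|_{L^2}$ by $\dot{\mathcal{D}}_3^{1/2}$; here one uses that $\dot{\mathcal{D}}_{\ell,\sigma_0}=\|\dot{\Lambda}_h^{\sigma_0}\nabla v\|_{L^2(\Omega)}^2$ together with Korn's inequality (Lemma~\ref{lem-korn-2}) applied to $\dot{\Lambda}_h^{\sigma_0}v$ (which still vanishes on $\Sigma_b$) — or more simply the bound $\|\dot{\Lambda}_h^{\sigma_0}v\|_{L^2(\Omega)}\lesssim \|\dot{\Lambda}_h^{\sigma_0}\nabla v\|_{L^2(\Omega)}$ from Poincar\'e in the bounded-in-$x_1$ slab. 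For $\mathfrak{K}_2=\int_\Omega \dot{\Lambda}_h^{\sigma_0}q\,\dot{\Lambda}_h^{\sigma_0}(\mathcal{B}_{5,i}^\alpha\partial_\alpha v^i-\widetilde{a_{\alpha 1}}\partial_1 v^\alpha)\,dx$, I would similarly pair $\|\dot{\Lambda}_h^{\sigma_0}q\|_{L^2}\lesssim \dot{\mathcal{D}}_3^{1/2}$ (recall $\dot{\mathcal{D}}_{\ell,\sigma_0}$ and $\ddot{\mathcal{D}}_{\ell,1+\sigma_0}$ control $\|\dot{\Lambda}_h^{\sigma_0}\nabla q\|_{L^2}$, hence $\|\dot{\Lambda}_h^{\sigma_0}q\|_{L^2}$ after using the boundary relation $q=\xi^1-2\nu\nabla_h\cdot v^h+\nu\mathcal{B}_{9,i}^\alpha\partial_\alpha v^i$ on $\Sigma_0$ with Poincar\'e) against $\|\dot{\Lambda}_h^{\sigma_0}(\mathcal{B}_{5,i}^\alpha\partial_\alpha v^i)\|_{L^2}\lesssim E_3^{1/2}\|\partial_h\Lambda_h^2 v\|_{L^2}\lesssim E_3^{1/2}\dot{\mathcal{D}}_3^{1/2}$ from \eqref{est-B-1} and $\|\dot{\Lambda}_h^{\sigma_0}(\widetilde{a_{\alpha1}}\partial_1 v^\alpha)\|_{L^2}\lesssim E_3^{1/2}\dot{\mathcal{D}}_3^{1/2}$ from \eqref{low-incom-fluid-1}.

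The boundary term $\mathfrak{K}_1$ will be the main obstacle, since it is an integral over $\Sigma_0$ of products $\dot{\Lambda}_h^{\sigma_0}(\mathcal{B}_{\ast,i}^\alpha\partial_\alpha v^i)\cdot\dot{\Lambda}_h^{\sigma_0}v^k$. The natural route is to move half a horizontal derivative across the pairing so that, on $\Sigma_0$, one pairs something like $\dot{\Lambda}_h^{\sigma_0-1/2}(\mathcal{B}\,\partial_h v)$ against $\dot{\Lambda}_h^{\sigma_0+1/2}v$, and then control each trace by the $H^{1/2}(\Sigma_0)$ norm, i.e.\ by the interior $H^1(\Omega)$ norm via the trace theorem. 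Concretely: $\|\dot{\Lambda}_h^{\sigma_0+1/2}v\|_{L^2(\Sigma_0)}\lesssim \|\dot{\Lambda}_h^{\sigma_0}v\|_{H^1(\Omega)}\lesssim \dot{\mathcal{D}}_3^{1/2}$ (using $\dot{\mathcal{D}}_{\ell,\sigma_0}$ and Poincar\'e/Korn), while $\|\dot{\Lambda}_h^{\sigma_0-1/2}(\mathcal{B}_{j,i}^\alpha\partial_\alpha v^i)\|_{L^2(\Sigma_0)}\lesssim \|\dot{\Lambda}_h^{\sigma_0-1/2}(\mathcal{B}_{j,i}^\alpha\partial_\alpha v^i)\|_{H^{1/2}(\Omega)}\lesssim \|\dot{\Lambda}_h^{\sigma_0}(\mathcal{B}_{j,i}^\alpha\partial_\alpha v^i)\|_{H^1(\Omega)}^{1/2}\|\mathcal{B}_{j,i}^\alpha\partial_\alpha v^i\|_{H^0}^{1/2}$ — wait, this needs care because $\sigma_0-1/2$ could be negative. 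Since $\sigma_0>1-\lambda$ and $\lambda\le 1$ we only know $\sigma_0\in(0,1)$, so I should instead interpolate more robustly: use $\|\dot{\Lambda}_h^{\sigma_0}v\|_{L^2(\Sigma_0)}\lesssim \|\dot{\Lambda}_h^{\sigma_0}v\|_{H^{1/2}(\Omega)}^{\text{(trace)}}$ bounded by $\|\dot{\Lambda}_h^{\sigma_0}\nabla v\|_{L^2(\Omega)}^{1/2}\|\dot{\Lambda}_h^{\sigma_0}v\|_{L^2(\Omega)}^{1/2}\lesssim\dot{\mathcal{D}}_3^{1/2}$, and pair it against $\|\dot{\Lambda}_h^{\sigma_0}(\mathcal{B}_{j,i}^\alpha\partial_\alpha v^i)\|_{L^2(\Sigma_0)}\lesssim\|\dot{\Lambda}_h^{\sigma_0}(\mathcal{B}_{j,i}^\alpha\partial_\alpha v^i)\|_{H^1(\Omega)}\lesssim E_3^{1/2}\|\partial_h\Lambda_h^2 v\|_{H^1(\Omega)}\lesssim E_3^{1/2}\dot{\mathcal{D}}_3^{1/2}$ from \eqref{est-B-1}. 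Combining the three bounds and integrating the resulting differential inequality yields \eqref{linear-decay-sigma0-1}; the delicate point to get right in the write-up is exactly the trace/interpolation bookkeeping in $\mathfrak{K}_1$ so that no regularity beyond what $\dot{\mathcal{D}}_3$ affords is used, and that the $H^1(\Omega)$ norm of $\mathcal{B}_{j,i}^\alpha\partial_\alpha v^i$ (not just its $L^2$ norm) is what Lemma~\ref{lem-est-B-Bv-1} supplies.
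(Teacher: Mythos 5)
Your overall plan coincides with the paper's: take $\mathcal{P}(\partial_h)=\dot{\Lambda}_h^{\sigma_0}$ in the pseudo-energy identity of Lemma~\ref{lem-tan-pseudo-energy-1} and bound $\mathfrak{K}_1,\mathfrak{K}_2,\mathfrak{K}_3$ using Lemmas~\ref{lem-est-aij-1},~\ref{lem-est-B-Bv-1} and~\ref{lem-est-g-1}. Your treatments of $\mathfrak{K}_1$ (after self-correcting to the symmetric pairing $\|\dot{\Lambda}_h^{\sigma_0}v\|_{L^2(\Sigma_0)}\cdot\|\dot{\Lambda}_h^{\sigma_0}(\mathcal{B}_{j,i}^\alpha\partial_\alpha v^i)\|_{L^2(\Sigma_0)}$ controlled via the trace theorem by $H^1(\Omega)$ norms) and of $\mathfrak{K}_3$ are essentially identical to the paper's.

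There is, however, a genuine gap in your estimate of $\mathfrak{K}_2$. You propose to use $\|\dot{\Lambda}_h^{\sigma_0}q\|_{L^2(\Omega)}\lesssim \dot{\mathcal{D}}_3^{1/2}$, obtained from $\|\dot{\Lambda}_h^{\sigma_0}\nabla q\|_{L^2}$ plus the boundary relation~\eqref{q-interface-1} and a Poincar\'e argument in $x_1$. But the boundary relation brings in $\|\dot{\Lambda}_h^{\sigma_0}\xi^1\|_{L^2(\Sigma_0)}$, and this quantity is \emph{not} controlled by $\dot{\mathcal{D}}_3^{1/2}$: in the dissipation only $\|\dot{\Lambda}_h^{\sigma_0+1}\xi^1\|_{L^2(\Sigma_0)}$ (from $\ddot{\mathcal{D}}_{\ell,1+\sigma_0}$) and $\|\partial_h^2\xi^1\|_{H^{1/2}(\Sigma_0)}$ (from $\ddot{\mathcal{D}}_{\ell,3}$) appear, both strictly higher order, and there is no horizontal Poincar\'e on $\mathbb{R}^2_h$ to deduce the lower-order trace from them. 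The term $\|\dot{\Lambda}_h^{\sigma_0}\xi^1\|_{L^2(\Sigma_0)}$ sits in the \emph{energy} $\dot{\mathcal{E}}_{\ell,\sigma_0}$, not the dissipation, so your chain only yields $|\mathfrak{K}_2|\lesssim E_3^{1/2}\dot{\mathcal{D}}_3^{1/2}(\dot{\mathcal{E}}_3^{1/2}+\dot{\mathcal{D}}_3^{1/2})$, which is weaker than the claimed $E_3^{1/2}\dot{\mathcal{D}}_3$ and would ruin the decay argument in Section~\ref{sect-proof-mainthm}. The way out (and the route the paper takes) is to redistribute one full horizontal derivative: write
\begin{equation*}
\mathfrak{K}_2=\int_\Omega \dot{\Lambda}_h^{\sigma_0+1}q\,\cdot\,\dot{\Lambda}_h^{\sigma_0-1}\bigl(\mathcal{B}_{5,i}^\alpha\partial_\alpha v^i-\widetilde{a_{\alpha 1}}\partial_1 v^\alpha\bigr)\,dx,
\end{equation*}
observe $\|\dot{\Lambda}_h^{\sigma_0+1}q\|_{L^2}=\|\dot{\Lambda}_h^{\sigma_0}\partial_h q\|_{L^2}\lesssim\dot{\mathcal{D}}_3^{1/2}$ directly from $\ddot{\mathcal{D}}_{\ell,1+\sigma_0}$, and bound the other factor at the negative order $\sigma_0-1$ using the first product law of Lemma~\ref{lem-product-law-1}, which hands the missing regularity to the coefficients $\widetilde{a_{\alpha 1}},\mathcal{B}_{5,i}^\alpha$ (controlled in $L^\infty_{x_1}L^2_h$ via Lemma~\ref{lem-est-aij-1}) and leaves only $\dot{\Lambda}_h^{\sigma_0}$ on $v$. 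This avoids needing any control of $q$ itself in $L^2$.
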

\begin{proof}
Taking the horizontal derivative operator $\mathcal{P}(\partial_h)=\dot{\Lambda}_h^{\sigma_0}$ in \eqref{tan-linear-pseudo-0} yields
\begin{equation}\label{linear-v-sigma0-0}
\begin{split}
 & \frac{1}{2}\frac{d}{dt}\bigg(\|\dot{\Lambda}_h^{\sigma_0}v\|_{L^2(\Omega)}^2
+\|\dot{\Lambda}_h^{\sigma_0}\xi^1\|_{L^2(\Sigma_0)}^2 \bigg) + \frac{\nu }{2}\| \mathbb{D}(\dot{\Lambda}_h^{\sigma_0}v)\|_{L^2(\Omega)}^2=\sum_{j=1}^3\mathfrak{K}_j,
\end{split}
\end{equation}
where the remainder terms $\mathfrak{K}_j$ with $j=1, 2, 3$ are defined in \eqref{tan-linear-pseudo-0a} in which we take $\mathcal{P}(\partial_h)=\dot{\Lambda}_h^{\sigma_0}$.

For the first integral $\mathfrak{K}_1$, we first observe
\begin{equation*}\label{nonlin-v-sigma0-1}
\begin{split}
|\int_{\Sigma_{0}}\dot{\Lambda}_h^{\sigma_0} (\mathcal{B}_{9, i}^{\alpha} \partial_{\alpha}v^{i})\, \dot{\Lambda}_h^{\sigma_0}v^1\, dS_0|&\lesssim \|\dot{\Lambda}_h^{\sigma_0}(\mathcal{B}_{9, i}^{\alpha} \partial_{\alpha}v^{i}\|_{L^2(\Sigma_0)}) \|\dot{\Lambda}_h^{\sigma_0}v^1\|_{L^2(\Sigma_0)}\\
  &\lesssim \|\dot{\Lambda}_h^{\sigma_0}(\mathcal{B}_{9, i}^{\alpha} \partial_{\alpha}v^{i}\|_{H^1(\Omega)}  \|\dot{\Lambda}_h^{\sigma_0}v^1\|_{H^1(\Omega)},
\end{split}
\end{equation*}
which follows from Lemma \ref{lem-est-B-Bv-1} that $
|\int_{\Sigma_{0}}\dot{\Lambda}_h^{\sigma_0} (\mathcal{B}_{6, i}^{\alpha} \partial_{\alpha}v^{i})\, \dot{\Lambda}_h^{\sigma_0}v^1\, dS_0|\lesssim E_3^{\frac{1}{2}}  \dot{\mathcal{D}}_{3}.$
Along the same line, it can be obtained that
\begin{equation*}\label{nonlin-v-sigma0-2}
\begin{split}
 & |\int_{\Sigma_{0}}\dot{\Lambda}_h^{\sigma_0}(\mathcal{B}_{9, i}^{\alpha} \partial_{\alpha}v^{i})\dot{\Lambda}_h^{\sigma_0}v^1\, dS_0|+|\int_{\Sigma_{0}}\dot{\Lambda}_h^{\sigma_0}(\mathcal{B}_{7, i}^{\alpha}\partial_{\alpha}v^{i}) \dot{\Lambda}_h^{\sigma_0}v^2\, dS_0|\\
&\qquad+|\int_{\Sigma_{0}}\dot{\Lambda}_h^{\sigma_0}(\mathcal{B}_{8, i}^{\alpha}\partial_{\alpha}v^{i})\, \dot{\Lambda}_h^{\sigma_0}v^3\, dS_0|\lesssim E_3^{\frac{1}{2}}  \dot{\mathcal{D}}_{3}.
\end{split}
\end{equation*}
We thus get
\begin{equation}\label{nonlin-v-sigma0-3}
\begin{split}
 & |\mathfrak{K}_1|\lesssim E_3^{\frac{1}{2}}  \dot{\mathcal{D}}_{3}.
\end{split}
\end{equation}
On the other hand, due to the product law \eqref{product-law-1}, one can see that
\begin{equation*}\label{nonlin-v-sigma0-4}
\begin{split}
&|\mathfrak{K}_2|\lesssim\|\dot{\Lambda}_h^{\sigma_0+1}q\|_{L^2}\,
(\|\dot{\Lambda}_h^{\sigma_0-1}(\widetilde{a_{\alpha\,1}}\partial_1v^\alpha)\|_{L^2}
+\|\dot{\Lambda}_h^{\sigma_0-1}(\mathcal{B}_{5,i}^{\alpha}\partial_{\alpha}v^{i})\|_{L^2})\\
&\lesssim\|\dot{\Lambda}_h^{\sigma_0}\partial_h\,q\|_{L^2}\,
\bigg(\|\widetilde{a_{\alpha\,1}}\|_{L^\infty_{x_1}L^2_h}
\|\dot{\Lambda}_h^{\sigma_0}\partial_1v^h\|_{L^2}
+\|\dot{\Lambda}_h^{\sigma_0}\mathcal{B}_{5, i}^{\alpha}\|_{L^2}\|\partial_{\alpha}v^{i}\|_{L^\infty_{x_1}L^2_h}\bigg),
\end{split}
\end{equation*}
which follows from \eqref{est-aij-J-1} and \eqref{est-B-1} that
\begin{equation}\label{nonlin-v-sigma0-5}
\begin{split}
&|\mathfrak{K}_2|\lesssim\|\dot{\Lambda}_h^{\sigma_0}\partial_h\,q\|_{L^2}\,
\bigg(\|\widetilde{a_{\alpha\,1}}\|_{H^1}
\|\dot{\Lambda}_h^{\sigma_0}\partial_1v^h\|_{L^2}
+\|\dot{\Lambda}_h^{\sigma_0}\mathcal{B}_{5, i}^{\alpha}\|_{L^2}\|\partial_{\alpha}v^{i}\|_{H^1}\bigg) \lesssim E_3^{\frac{1}{2}}  \dot{\mathcal{D}}_{3}.
\end{split}
\end{equation}
For the term $\mathfrak{K}_3=\int_{ \Omega}\dot{\Lambda}_h^{\sigma_0} g\cdot \dot{\Lambda}_h^{\sigma_0} v\,dx$, from \eqref{est-g-2}, it follows that
\begin{equation}\label{nonlin-v-sigma0-6}
\begin{split}
&|\mathfrak{K}_3|\lesssim \|\dot{\Lambda}_h^{\sigma_0} g\|_{L^2}\|\dot{\Lambda}_h^{\sigma_0} v\|_{L^2}\lesssim \|\dot{\Lambda}_h^{\sigma_0} g\|_{L^2}\|\dot{\Lambda}_h^{\sigma_0} v\|_{H^1}\lesssim E_3^{\frac{1}{2}}\dot{\mathcal{D}}_{3}.
\end{split}
\end{equation}
Substituting \eqref{nonlin-v-sigma0-3}-\eqref{nonlin-v-sigma0-6} into \eqref{linear-v-sigma0-0} leads to \eqref{linear-decay-sigma0-1}, which furnishes the proof of Lemma \ref{lem-tan-decay-sigma0-1}.
\end{proof}

\subsubsection{Estimate of the tangential derivatives $\|\partial_h^{N}v\|_{L^\infty_t(L^2)}$}

\begin{lem}\label{lem-tan-decay-N-1}
Let $N\geq 3$, under the assumption of Lemma \ref{lem-tan-pseudo-energy-1}, if $E_3(t) \leq 1$ for all the existence times $t$, there holds
\begin{equation}\label{linear-decay-N-1}
\begin{split}
\frac{d}{dt}\bigg(\|\partial_h^{N}v\|_{L^2(\Omega)}^2
+\|\partial_h^{N}\xi^1\|_{L^2(\Sigma_0)}^2 \bigg) + \nu \| \mathbb{D}(\partial_h^{N}v)\|_{L^2(\Omega)}^2 \lesssim \dot{\mathcal{D}}_N^{\frac{1}{2}}(E_N^{\frac{1}{2}} \dot{\mathcal{D}}_3^{\frac{1}{2}}+  E_3^{\frac{1}{2}} \dot{\mathcal{D}}_N^{\frac{1}{2}}).
\end{split}
\end{equation}
\end{lem}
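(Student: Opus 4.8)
The plan is to run the same argument as for Lemma~\ref{lem-tan-decay-sigma0-1}, now specializing the horizontal multiplier in Lemma~\ref{lem-tan-pseudo-energy-1} to $\mathcal{P}(\partial_h)=\partial_h^{N}$ (the collection of all horizontal derivatives of order $N$). This reduces the claim to bounding the three remainder terms $\mathfrak{K}_1$ (the $\Sigma_0$--boundary integral), $\mathfrak{K}_2=\int_\Omega \partial_h^{N}q\,\partial_h^{N}\!\bigl(\mathcal{B}_{5,i}^{\alpha}\partial_\alpha v^i-\widetilde{a_{\alpha\,1}}\partial_1 v^\alpha\bigr)\,dx$ and $\mathfrak{K}_3=\int_\Omega\partial_h^{N}g\cdot\partial_h^{N}v\,dx$, each by $\dot{\mathcal{D}}_N^{1/2}\bigl(E_N^{1/2}\dot{\mathcal{D}}_3^{1/2}+E_3^{1/2}\dot{\mathcal{D}}_N^{1/2}\bigr)$. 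I will repeatedly use two elementary facts: since horizontal derivatives are tangential to $\Sigma_b$, one has $\partial_h^{N}v|_{\Sigma_b}=0$, so Korn's lemma (Lemma~\ref{lem-korn-2}) and Poincar\'e give $\|\partial_h^{N}v\|_{H^1(\Omega)}\lesssim\|\partial_h^{N}\nabla v\|_{L^2(\Omega)}\lesssim\dot{\mathcal{D}}_N^{1/2}$ and hence $\|\partial_h^{N}v\|_{L^2(\Sigma_0)}\lesssim\dot{\mathcal{D}}_N^{1/2}$ by the trace theorem; and $\|\partial_h^{N}q\|_{L^2(\Omega)}=\|\partial_h^{N-1}(\partial_h q)\|_{L^2(\Omega)}\le\|\partial_h^{N-1}\nabla q\|_{L^2(\Omega)}\lesssim\dot{\mathcal{D}}_N^{1/2}$, because $\partial_h^{N-1}\nabla p$ is one of the terms in $\ddot{\mathcal{D}}_{\ell,N}$.

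For $\mathfrak{K}_3$ I would integrate by parts once in a horizontal variable, $\mathfrak{K}_3=-\int_\Omega\partial_h^{N-1}g\cdot\partial_h^{N+1}v\,dx$ (no boundary term, the horizontal slice being $\mathbf{R}^2$), and then use $\|\partial_h^{N+1}v\|_{L^2(\Omega)}=\|\partial_h^{N}(\partial_h v)\|_{L^2(\Omega)}\le\|\partial_h^{N}\nabla v\|_{L^2(\Omega)}\lesssim\dot{\mathcal{D}}_N^{1/2}$ together with the last estimate of Lemma~\ref{lem-est-g-1}, namely $\|\partial_h^{N-1}g\|_{L^2(\Omega)}\lesssim E_N^{1/2}\dot{\mathcal{D}}_3^{1/2}+E_3^{1/2}\dot{\mathcal{D}}_N^{1/2}$. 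One cannot bound $\|\partial_h^{N}g\|_{L^2}$ directly, because $g$ contains $\partial_1^2v$ and $\partial_h^{N}\partial_1^2v$ lies outside $\dot{\mathcal{D}}_N$; shifting one horizontal derivative onto $v$ cures this. For $\mathfrak{K}_2$ I would combine the bound on $\|\partial_h^{N}q\|_{L^2}$ with the tame product estimate $\|\partial_h^{N}(\mathcal{B}_{5,i}^{\alpha}\partial_\alpha v^i-\widetilde{a_{\alpha\,1}}\partial_1 v^\alpha)\|_{L^2(\Omega)}\lesssim E_N^{1/2}\dot{\mathcal{D}}_3^{1/2}+E_3^{1/2}\dot{\mathcal{D}}_N^{1/2}$, obtained as in \eqref{est-Bv-1} and \eqref{k-incom-fluid-1} but with the sharper bookkeeping noted below. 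For the boundary term $\mathfrak{K}_1$, whose integrand is a sum of terms $\partial_h^{N}(\mathcal{B}_{j,i}^{\alpha}\partial_\alpha v^i)\,\partial_h^{N}v^k$ on $\Sigma_0$, I would expand by Leibniz: in the pieces where all of $\partial_h^{N}$ hits $\partial_\alpha v^i$ one gets $\mathcal{B}_j\,\partial_\alpha(\partial_h^{N}v^i)\,\partial_h^{N}v^k$, which I integrate by parts in $\partial_\alpha$ along $\Sigma_0$ (a boundaryless surface); using the symmetry of $\mathbb{D}$ these combine into $\int_{\Sigma_0}(\partial_\alpha\mathcal{B}_j)\,\partial_h^{N}v^i\,\partial_h^{N}v^k\,dS_0$, controlled by $\|\partial_h\mathcal{B}_j\|_{L^\infty(\Sigma_0)}\|\partial_h^{N}v\|_{L^2(\Sigma_0)}^2\lesssim E_3^{1/2}\dot{\mathcal{D}}_N$; the remaining pieces, in which at least one derivative hits $\mathcal{B}_j$ so that one velocity factor carries fewer than $N$ derivatives, are disposed of by the trace theorem, Poincar\'e and the product estimates of Lemmas~\ref{lem-est-aij-1}--\ref{lem-est-B-Bv-1}, exactly as in Lemma~\ref{lem-tan-decay-sigma0-1}.

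The delicate point---and the main obstacle---is the bookkeeping at the very top order: the off-the-shelf bounds of Lemma~\ref{lem-est-B-Bv-1} applied with $k=N$ carry $E_{N+1}$ and $\dot{\mathcal{D}}_{N+1}$, one order too many for the stated inequality (and fatal, since in the eventual bootstrap $E_N$ stays bounded while $E_{N+1}$ may grow in time). The remedy, which must be implemented by hand inside each Leibniz expansion, is that when all (or most) of the $N$ horizontal derivatives land on the flow-map coefficients $a_{ij},\widetilde{a_{ij}},\mathcal{A}_i^j,J^{-1}$, one only needs an $L^2(\Omega)$--- or $L^2(\Sigma_0)$---norm of $\partial_h^{N}$ of those coefficients; since each is a smooth function of $\nabla\xi$ vanishing at the origin, its top-order part is $\partial_h^{N}$ of a first-order derivative of $\xi$, hence bounded by $\|\partial_h^{N-1}\xi\|_{H^2}\lesssim E_N^{1/2}$, while the complementary low-order factor $\partial_h v$ (or $\partial_\alpha v$) is placed in $L^\infty$ via the embedding of Lemma~\ref{lem-embedding-ineq-1}, costing $\dot{\mathcal{D}}_3^{1/2}$; when instead the top derivatives fall on $v$, the coefficient factor goes into $L^\infty$, costing the small factor $E_3^{1/2}$. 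This refined splitting, together with the integration by parts along $\Sigma_0$ that removes the top tangential derivative from the boundary data, is precisely what keeps the highest-order estimate inside $E_N$ and $\dot{\mathcal{D}}_N$; it also makes transparent why no compatibility condition on $\partial_t v$ is needed, the linearized boundary condition \eqref{linearf-bdry-1} being what makes $\mathfrak{K}_1$ appear with the small coefficients $\mathcal{B}_j$ rather than with $q$ and $\partial_1 v$.
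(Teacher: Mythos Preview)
Your treatment of $\mathfrak{K}_2$ and $\mathfrak{K}_3$ is correct and matches the paper's argument: for $\mathfrak{K}_3$ one shifts a single horizontal derivative onto $v$ and invokes the $\partial_h^{N-1}g$ bound of Lemma~\ref{lem-est-g-1}; for $\mathfrak{K}_2$ one pairs $\|\partial_h^{N}q\|_{L^2}\lesssim\dot{\mathcal D}_N^{1/2}$ with a tame product estimate on $\|\partial_h^{N}(\widetilde{a_{\alpha 1}}\partial_1 v^\alpha)\|_{L^2}+\|\partial_h^{N}(\mathcal B_{5,i}^\alpha\partial_\alpha v^i)\|_{L^2}$, exactly as you outline in your ``sharper bookkeeping'' paragraph.

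The gap is in your handling of $\mathfrak{K}_1$. After integrating $\int_{\Sigma_0}\mathcal{B}_j^{\alpha,i}\,\partial_\alpha(\partial_h^{N}v^i)\,\partial_h^{N}v^k\,dS_0$ by parts in $\partial_\alpha$, you obtain, besides the good term $-\int_{\Sigma_0}(\partial_\alpha\mathcal{B}_j^{\alpha,i})\partial_h^{N}v^i\partial_h^{N}v^k$, the term $-\int_{\Sigma_0}\mathcal{B}_j^{\alpha,i}\,\partial_h^{N}v^i\,\partial_\alpha(\partial_h^{N}v^k)\,dS_0$, which still carries $N+1$ tangential derivatives on $v$ at the boundary. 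For this to cancel against the original you would need the coefficient array $(C_k^{\alpha,i})$ with $C_1=\mathcal{B}_9-2\mathcal{B}_6$, $C_2=\mathcal{B}_7$, $C_3=\mathcal{B}_8$ to be symmetric in $(i,k)$; a glance at the explicit formulas after \eqref{prtv-interface-1} shows no such structure, and ``the symmetry of $\mathbb{D}$'' does not survive the passage to the nonlinear remainders $\mathcal{B}_j$. Since $\|\partial_h^{N+1}v\|_{L^2(\Sigma_0)}$ cannot be controlled by $\dot{\mathcal D}_N$ (trace would demand $\|\partial_h^{N+1}\partial_1 v\|_{L^2(\Omega)}$), your argument does not close.

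The paper sidesteps this entirely by a half-derivative redistribution on $\Sigma_0$: one writes
\[
\Bigl|\int_{\Sigma_0}\partial_h^{N}(\mathcal{B}_j^{\alpha,i}\partial_\alpha v^i)\,\partial_h^{N}v^k\,dS_0\Bigr|
\lesssim \|\dot\Lambda_h^{N-\frac12}(\mathcal{B}_j^{\alpha,i}\partial_\alpha v^i)\|_{L^2(\Sigma_0)}\,\|\dot\Lambda_h^{N+\frac12}v^k\|_{L^2(\Sigma_0)},
\]
then uses the trace inequality to pass to $\|\partial_h^{N-1}(\mathcal{B}_j^{\alpha,i}\partial_\alpha v^i)\|_{H^1(\Omega)}\,\|\partial_h^{N}v\|_{H^1(\Omega)}$. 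Now Lemma~\ref{lem-est-B-Bv-1} applies with $k=N-1$ (not $k=N$), giving directly $E_N^{1/2}\dot{\mathcal D}_3^{1/2}+E_3^{1/2}\dot{\mathcal D}_N^{1/2}$ for the first factor and $\dot{\mathcal D}_N^{1/2}$ for the second. This half-derivative shift, rather than integration by parts along $\Sigma_0$, is what keeps $\mathfrak{K}_1$ inside $E_N$ and $\dot{\mathcal D}_N$.
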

\begin{proof}
Similar to the proof of Lemma \ref{lem-tan-decay-sigma0-1}, taking the operator $\mathcal{P}(\partial_h)=\partial_h^{N}$ in \eqref{tan-linear-pseudo-0}, it can be easily obtained that
\begin{equation}\label{linear-v-Ntan-0}
\begin{split}
 &\frac{1}{2}\frac{d}{dt}(\|\partial_h^{N}v\|_{L^2(\Omega)}^2+ \|\partial_h^{N}\xi^1\|_{L^2(\Sigma_0)}^2) +\frac{\nu}{2}\| \mathbb{D}(\partial_h^{N}v)\|_{L^2(\Omega)}^2=\sum_{j=1}^3\mathfrak{K}_j,
\end{split}
\end{equation}
where $\mathfrak{K}_j$ with $j=1, 2, 3$ are defined in \eqref{tan-linear-pseudo-0a} in which we take $\mathcal{P}(\partial_h)=\partial_h^{N}$.

For the boundary integral $\mathfrak{K}_1$, making use of the trace theorem ensures
\begin{equation*}\label{linear-v-Ntan-2}
\begin{split}
|\int_{\Sigma_{0}}\partial_h^{N}(\mathcal{B}_{6, i}^{\alpha} \partial_{\alpha}v^{i})\, \partial_h^{N}v^1\, dS_0|&\lesssim \|\dot{\Lambda}_h^{N-\frac{1}{2}}(\mathcal{B}_{6, i}^{\alpha} \partial_{\alpha}v^{i})\|_{L^2(\Sigma_0)}\, \|\dot{\Lambda}_h^{N+\frac{1}{2}}v^1\|_{L^2(\Sigma_0)}\\
&\lesssim \|\dot{\Lambda}_h^{N-1}(\mathcal{B}_{6, i}^{\alpha} \partial_{\alpha}v^{i})\|_{H^1(\Omega)}\, \|\dot{\Lambda}_h^{N}v^1\|_{H^1(\Omega)}.
\end{split}
\end{equation*}
We thus obtain from Lemma \ref{lem-est-B-Bv-1} that
\begin{equation}\label{linear-v-Ntan-4}
\begin{split}
&|\int_{\Sigma_{0}}\partial_h^{N}(\mathcal{B}_{6, i}^{\alpha} \partial_{\alpha}v^{i})\, \partial_h^{N}v^1\, dS_0|\lesssim \bigg( E_{N}^{\frac{1}{2}}\dot{\mathcal{D}}_3^{\frac{1}{2}}
+E_3^{\frac{1}{2}} \dot{\mathcal{D}}_N^{\frac{1}{2}}\bigg)\,\dot{\mathcal{D}}_N^{\frac{1}{2}}.
\end{split}
\end{equation}
It can be similarly proved that
\begin{equation*}\label{linear-v-Ntan-5}
\begin{split}
 & |\int_{\Sigma_{0}}\partial_h^{N}(\mathcal{B}_{9, i}^{\alpha})\partial_{\alpha}v^{i})\partial_h^{N}v^1\, dS_0|+|\int_{\Sigma_{0}}\partial_h^{N}(\mathcal{B}_{7, i}^{\alpha}\partial_{\alpha}v^{i}) \partial_h^{N}v^2\, dS_0|\\
&\qquad+|\int_{\Sigma_{0}}\partial_h^{N}(\mathcal{B}_{8, i}^{\alpha}\partial_{\alpha}v^{i})\, \partial_h^{N}v^3\, dS_0|\lesssim \bigg( E_{N}^{\frac{1}{2}}\dot{\mathcal{D}}_3^{\frac{1}{2}}
+E_3^{\frac{1}{2}} \dot{\mathcal{D}}_N^{\frac{1}{2}}\bigg)\,\dot{\mathcal{D}}_N^{\frac{1}{2}},
\end{split}
\end{equation*}
which along with \eqref{linear-v-Ntan-4} leads to
\begin{equation}\label{linear-v-Ntan-6}
\begin{split}
 & |\mathfrak{K}_1|\lesssim \bigg( E_{N}^{\frac{1}{2}}\dot{\mathcal{D}}_3^{\frac{1}{2}}
+E_3^{\frac{1}{2}} \dot{\mathcal{D}}_N^{\frac{1}{2}}\bigg)\,\dot{\mathcal{D}}_N^{\frac{1}{2}}.\\
\end{split}
\end{equation}
On the other hand, for $\mathfrak{K}_2$, it is easy to produce
\begin{equation}\label{linear-v-Ntan-7}
\begin{split}
&|\mathfrak{K}_2|\lesssim\|\partial_h^{N}q\|_{L^2}\,
(\|\partial_h^{N}(\widetilde{a_{\alpha\,1}}\partial_1v^\alpha)\|_{L^2}
+\|\partial_h^{N}(\mathcal{B}_{5, i}^{\alpha}\partial_{\gamma}v^{i})\|_{L^2}).
\end{split}
\end{equation}
Thanks to Lemma \ref{lem-est-B-Bv-1} again, one has
\begin{equation*}\label{linear-v-Ntan-8}
\begin{split}
&\|\partial_h^{N}(\widetilde{a_{\alpha\,1}}\partial_1v^\alpha)\|_{L^2}
+\|\partial_h^{N}(\mathcal{B}_{5, i}^{\alpha}\partial_{\alpha}v^{i})\|_{L^2}\lesssim  E_{N}^{\frac{1}{2}}\dot{\mathcal{D}}_3^{\frac{1}{2}}
+E_3^{\frac{1}{2}} \dot{\mathcal{D}}_N^{\frac{1}{2}},\\
\end{split}
\end{equation*}
which along with \eqref{linear-v-Ntan-7} follows
\begin{equation}\label{linear-v-Ntan-9}
\begin{split}
&|\mathfrak{K}_2|\lesssim ( E_{N}^{\frac{1}{2}}\dot{\mathcal{D}}_3^{\frac{1}{2}}
+E_3^{\frac{1}{2}} \dot{\mathcal{D}}_N^{\frac{1}{2}})\,\dot{\mathcal{D}}_N^{\frac{1}{2}}.
\end{split}
\end{equation}
Finally, for $\mathfrak{K}_3=\int_{ \Omega}\partial_h^{N} g\cdot \partial_h^{N} v\,dx$, we use \eqref{est-g-2} to get
\begin{equation}\label{linear-v-Ntan-10}
\begin{split}
&|\mathfrak{K}_3|\lesssim \|\partial_h^{N-1} g\|_{L^2}\|\partial_h^{N+1}v\|_{L^2}\lesssim ( E_{N}^{\frac{1}{2}}\dot{\mathcal{D}}_3^{\frac{1}{2}}
+E_3^{\frac{1}{2}} \dot{\mathcal{D}}_N^{\frac{1}{2}})\dot{\mathcal{D}}_{N}^{\frac{1}{2}}.
\end{split}
\end{equation}
Therefore, inserting \eqref{linear-v-Ntan-6}, \eqref{linear-v-Ntan-9}, \eqref{linear-v-Ntan-10} into \eqref{linear-v-Ntan-0} yields \eqref{linear-decay-N-1}, which gives the desired result.
\end{proof}
Thanks to Lemmas \ref{lem-tan-decay-sigma0-1}, \ref{lem-tan-decay-N-1}, and the Korn's inequality, we obtain
\begin{lem}\label{lem-tan-decay-total-1}
Let $N\geq 3$, under the assumption of Lemma \ref{lem-tan-pseudo-energy-1}, if $E_3(t) \leq 1$ for all the existence times $t$, then there holds
\begin{equation*}\label{tan-decay-total-1}
\begin{split}
&\frac{d}{dt}\bigg(\|\dot{\Lambda}_h^{\sigma_0}v\|_{L^2(\Omega)}^2
+\|\dot{\Lambda}_h^{\sigma_0}\xi^1\|_{L^2(\Sigma_0)}^2+\sum_{i=1}^N(\|\partial_h^{i}v\|_{L^2(\Omega)}^2
+\|\partial_h^{i}\xi^1\|_{L^2(\Sigma_0)}^2)\bigg) \\
&\qquad+ c_1(\|\dot{\Lambda}_h^{\sigma_0}\nabla\,v\|_{L^2(\Omega)}^2 +\sum_{i=1}^N\|\partial_h^{i}\nabla\,v\|_{L^2(\Omega)}^2 )\lesssim \dot{\mathcal{D}}_N^{\frac{1}{2}}(E_N^{\frac{1}{2}} \dot{\mathcal{D}}_3^{\frac{1}{2}}+  E_3^{\frac{1}{2}} \dot{\mathcal{D}}_N^{\frac{1}{2}}).
\end{split}
\end{equation*}
\end{lem}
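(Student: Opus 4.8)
The idea is simply to combine the two energy identities already established—Lemma \ref{lem-tan-decay-sigma0-1} for the horizontal derivative $\dot{\Lambda}_h^{\sigma_0}$ and Lemma \ref{lem-tan-decay-N-1} for the integer tangential derivatives $\partial_h^i$ with $1\le i\le N$—and then upgrade the symmetric-gradient dissipation $\|\mathbb{D}(\cdot)\|_{L^2}^2$ to a genuine $H^1$-type dissipation $\|\nabla(\cdot)\|_{L^2}^2$ via Korn's inequality (Lemma \ref{lem-korn-2}). First I would note that Lemma \ref{lem-tan-decay-N-1} is stated only for the top index $N$; applied at each level $i=1,\dots,N$ it gives
\[
\frac{d}{dt}\bigl(\|\partial_h^{i}v\|_{L^2(\Omega)}^2+\|\partial_h^{i}\xi^1\|_{L^2(\Sigma_0)}^2\bigr)+\nu\|\mathbb{D}(\partial_h^{i}v)\|_{L^2(\Omega)}^2\lesssim \dot{\mathcal{D}}_i^{1/2}\bigl(E_i^{1/2}\dot{\mathcal{D}}_3^{1/2}+E_3^{1/2}\dot{\mathcal{D}}_i^{1/2}\bigr),
\]
and since $\dot{\mathcal{D}}_i\le\dot{\mathcal{D}}_N$ and $E_i\le E_N$ monotonically in $i$, each right-hand side is bounded by $\dot{\mathcal{D}}_N^{1/2}(E_N^{1/2}\dot{\mathcal{D}}_3^{1/2}+E_3^{1/2}\dot{\mathcal{D}}_N^{1/2})$. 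For $i=1$ one may just take $\mathcal{P}(\partial_h)=\partial_h$ in Lemma \ref{lem-tan-pseudo-energy-1}; for $i=2,\dots,N$ the same computation as in Lemma \ref{lem-tan-decay-N-1} applies verbatim. (Alternatively one quotes Lemma \ref{lem-tan-decay-N-1} with $N$ replaced by each $i\le N$, which is legitimate since $N\ge 3$ is arbitrary there as long as $E_3(t)\le1$.)

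\textbf{Assembling the sum.} Next I would add the estimate \eqref{linear-decay-sigma0-1} from Lemma \ref{lem-tan-decay-sigma0-1} to the $N$ estimates just obtained. The left-hand time-derivative terms combine to
\[
\frac{d}{dt}\Bigl(\|\dot{\Lambda}_h^{\sigma_0}v\|_{L^2(\Omega)}^2+\|\dot{\Lambda}_h^{\sigma_0}\xi^1\|_{L^2(\Sigma_0)}^2+\sum_{i=1}^N(\|\partial_h^{i}v\|_{L^2(\Omega)}^2+\|\partial_h^{i}\xi^1\|_{L^2(\Sigma_0)}^2)\Bigr),
\]
and on the dissipative side we get $\nu\bigl(\|\mathbb{D}(\dot{\Lambda}_h^{\sigma_0}v)\|_{L^2}^2+\sum_{i=1}^N\|\mathbb{D}(\partial_h^{i}v)\|_{L^2}^2\bigr)$. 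The right-hand side of \eqref{linear-decay-sigma0-1} is $\lesssim E_3^{1/2}\dot{\mathcal{D}}_3\le \dot{\mathcal{D}}_N^{1/2}(E_3^{1/2}\dot{\mathcal{D}}_N^{1/2})$ using $\dot{\mathcal{D}}_3\le\dot{\mathcal{D}}_N^{1/2}\dot{\mathcal{D}}_3^{1/2}\le\dot{\mathcal{D}}_N$, so it is absorbed into the same cubic bound. Hence the summed inequality reads
\[
\frac{d}{dt}(\cdots)+\nu\Bigl(\|\mathbb{D}(\dot{\Lambda}_h^{\sigma_0}v)\|_{L^2(\Omega)}^2+\sum_{i=1}^N\|\mathbb{D}(\partial_h^{i}v)\|_{L^2(\Omega)}^2\Bigr)\lesssim \dot{\mathcal{D}}_N^{1/2}\bigl(E_N^{1/2}\dot{\mathcal{D}}_3^{1/2}+E_3^{1/2}\dot{\mathcal{D}}_N^{1/2}\bigr).
\]

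\textbf{Korn's inequality.} The remaining point is to replace each $\|\mathbb{D}(w)\|_{L^2(\Omega)}^2$ by $c_1\|\nabla w\|_{L^2(\Omega)}^2$. This is exactly Lemma \ref{lem-korn-2}: for $w=\partial_h^{i}v$ and $w=\dot{\Lambda}_h^{\sigma_0}v$ we have $w|_{\Sigma_b}=0$, since horizontal differentiation and the Fourier multiplier $\dot{\Lambda}_h^{\sigma_0}$ act only in the tangential variables and therefore preserve the Dirichlet condition on $\Sigma_b=\{x_1=-\underline b\}$ inherited from $v|_{\Sigma_b}=0$. Thus $\|w\|_{H^1(\Omega)}\le C\|\mathbb{D}(w)\|_{L^2(\Omega)}$, in particular $\|\nabla w\|_{L^2(\Omega)}^2\le C^2\|\mathbb{D}(w)\|_{L^2(\Omega)}^2$, and choosing $c_1=\nu/C^2$ (possibly shrinking it to absorb implicit constants) gives the claimed bound
\[
c_1\Bigl(\|\dot{\Lambda}_h^{\sigma_0}\nabla v\|_{L^2(\Omega)}^2+\sum_{i=1}^N\|\partial_h^{i}\nabla v\|_{L^2(\Omega)}^2\Bigr)\le \nu\Bigl(\|\mathbb{D}(\dot{\Lambda}_h^{\sigma_0}v)\|_{L^2(\Omega)}^2+\sum_{i=1}^N\|\mathbb{D}(\partial_h^{i}v)\|_{L^2(\Omega)}^2\Bigr).
\]
Combining the last two displays yields the statement of Lemma \ref{lem-tan-decay-total-1}. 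The only genuinely delicate point—and the one I would flag as the "main obstacle"—is verifying that the nonhomogeneous/homogeneous horizontal multipliers commute harmlessly with the boundary trace on $\Sigma_b$ so that Korn's lemma applies; everything else is bookkeeping of the monotone chain $\dot{\mathcal{D}}_3\le\dot{\mathcal{D}}_N$, $E_3\le E_N$ and the trivial interpolation $\dot{\mathcal{D}}_3\le\dot{\mathcal{D}}_N^{1/2}\dot{\mathcal{D}}_3^{1/2}$.
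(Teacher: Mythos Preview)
Your proposal is correct and follows exactly the approach indicated in the paper, which merely states ``Thanks to Lemmas \ref{lem-tan-decay-sigma0-1}, \ref{lem-tan-decay-N-1}, and the Korn's inequality, we obtain'' without further detail. You have correctly filled in the bookkeeping---summing the tangential estimates over $i=1,\dots,N$, using the monotonicity $\dot{\mathcal{D}}_i\le\dot{\mathcal{D}}_N$, $E_i\le E_N$ to unify the right-hand sides, and applying Korn's inequality (legitimate since the horizontal operators preserve $v|_{\Sigma_b}=0$)---and your remark that the low levels $i=1,2$ are handled by repeating the computation of Lemma \ref{lem-tan-decay-N-1} directly from Lemma \ref{lem-tan-pseudo-energy-1} is exactly the right patch.
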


\subsubsection{Estimate of $\|\dot{\Lambda}_h^{-\lambda} v\|_{L^\infty_t(L^2)}$}

\begin{lem}\label{lem-tan-bdd-lambda-1}
Under the assumption of Lemma \ref{lem-tan-pseudo-energy-1}, if$(\lambda,\,\sigma_0) \in (0, 1)$ satisfies $1-\lambda< \sigma_0\leq 1-\frac{1}{2}\lambda$, and $E_3(t) \leq 1$ for all the existence times $t$, then there holds
\begin{equation}\label{linear-bdd-lambda-1}
\begin{split}
& \frac{d}{dt}\bigg(\|\dot{\Lambda}_h^{-\lambda}v\|_{L^2(\Omega)}^2
+\|\dot{\Lambda}_h^{-\lambda}\xi^1\|_{L^2(\Sigma_0)}^2 \bigg) +c_1 \|\dot{\Lambda}_h^{-\lambda}\nabla\,v\|_{L^2(\Omega)}^2 \\
 &\lesssim \|\dot{\Lambda}_h^{-\lambda}\xi^1\|_{L^2(\Sigma_0)}E_3^{\frac{1}{2}} \dot{\mathcal{D}}_3^{\frac{1}{2}}+\|\dot{\Lambda}_h^{-\lambda}\partial_t v^{1}\|_{L^2}E_3^{\frac{1}{2}} \dot{\mathcal{D}}_3^{\frac{1}{2}} +
E_3^{\frac{1}{2}} \dot{\mathcal{D}}_3+E_3  \|\dot{\Lambda}_h^{\sigma_0}\xi^1\|_{L^2(\Sigma_0)}^2.
\end{split}
\end{equation}
\end{lem}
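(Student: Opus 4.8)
The plan is to argue exactly as in the proofs of Lemma~\ref{lem-tan-decay-sigma0-1} and Lemma~\ref{lem-tan-decay-N-1}, now taking $\mathcal{P}(\partial_h)=\dot{\Lambda}_h^{-\lambda}$ in the energy identity \eqref{tan-linear-pseudo-0} of Lemma~\ref{lem-tan-pseudo-energy-1}. This produces
\begin{equation*}
\frac12\frac{d}{dt}\big(\|\dot{\Lambda}_h^{-\lambda}v\|_{L^2(\Omega)}^2+\|\dot{\Lambda}_h^{-\lambda}\xi^1\|_{L^2(\Sigma_0)}^2\big)+\frac{\nu}{2}\|\mathbb{D}(\dot{\Lambda}_h^{-\lambda}v)\|_{L^2(\Omega)}^2=\mathfrak{K}_1+\mathfrak{K}_2+\mathfrak{K}_3,
\end{equation*}
with $\mathfrak{K}_1,\mathfrak{K}_2,\mathfrak{K}_3$ the terms in \eqref{tan-linear-pseudo-0a} for $\mathcal{P}(\partial_h)=\dot{\Lambda}_h^{-\lambda}$. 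Since $\dot{\Lambda}_h^{-\lambda}v$ vanishes on $\Sigma_b$, Korn's inequality (Lemma~\ref{lem-korn-2}) bounds the viscous term below by $c\,\|\dot{\Lambda}_h^{-\lambda}v\|_{H^1(\Omega)}^2$, and the vertical Poincaré inequality gives $\|\dot{\Lambda}_h^{-\lambda}v\|_{L^2(\Omega)}\lesssim\|\dot{\Lambda}_h^{-\lambda}\nabla v\|_{L^2(\Omega)}$; hence it suffices to bound each $\mathfrak{K}_j$ by the right-hand side of \eqref{linear-bdd-lambda-1} plus a small multiple of $\|\dot{\Lambda}_h^{-\lambda}\nabla v\|_{L^2(\Omega)}^2$, which is then absorbed.

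The one recurring device is that, because $-\lambda<\sigma_0-1$ falls outside the range of Lemma~\ref{lem-est-B-Bv-1}, the negative horizontal regularity must be put on the velocity factor: by the first product law in \eqref{product-law-1} with exponents $\sigma_0$ and $1-\lambda-\sigma_0\in(-1,0)$ (admissible since $\sigma_0>1-\lambda$ and $\sigma_0+\lambda<2$),
\begin{equation*}
\|\dot{\Lambda}_h^{-\lambda}(\mathcal{B}_{j,i}^{\alpha}\partial_\alpha v^i)\|_{L^2}\lesssim\|\dot{\Lambda}_h^{\sigma_0}\mathcal{B}_{j,i}^{\alpha}\|_{L^\infty_{x_1}L^2_h}\,\|\dot{\Lambda}_h^{1-\lambda-\sigma_0}\partial_\alpha v^i\|_{L^2},
\end{equation*}
and likewise for $\widetilde{a_{\alpha1}}\partial_1v^\alpha$ (exponents $\sigma_0-1$ and $2-\lambda-\sigma_0$); the coefficient factors are $\lesssim E_3^{1/2}$ by Lemmas~\ref{lem-est-aij-1}--\ref{lem-est-B-Bv-1}, while the velocity factors carry total horizontal order $2-\lambda-\sigma_0\in[\sigma_0,\sigma_0+1]$ and so are $\lesssim\dot{\mathcal{D}}_3^{1/2}$ after interpolating $\dot{\mathcal{D}}_{\ell,\sigma_0}$ against $\ddot{\mathcal{D}}_{\ell,1+\sigma_0}$ (here $\sigma_0\le1-\tfrac12\lambda$ is used). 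With this, $\mathfrak{K}_1\lesssim E_3^{1/2}\dot{\mathcal{D}}_3^{1/2}\,\|\dot{\Lambda}_h^{-\lambda}v\|_{L^2(\Sigma_0)}\lesssim E_3^{1/2}\dot{\mathcal{D}}_3^{1/2}\|\dot{\Lambda}_h^{-\lambda}\nabla v\|_{L^2(\Omega)}$ by trace and Poincaré, absorbable up to $C\,E_3^{1/2}\dot{\mathcal{D}}_3$ via Young's inequality. For $\mathfrak{K}_3=\int_\Omega\dot{\Lambda}_h^{-\lambda}g\cdot\dot{\Lambda}_h^{-\lambda}v$ I would use the third bound of Lemma~\ref{lem-est-g-1}, pair it with $\|\dot{\Lambda}_h^{-\lambda}v\|_{L^2}\lesssim\|\dot{\Lambda}_h^{-\lambda}\nabla v\|_{L^2}$, and use the reduction $\|\dot{\Lambda}_h^{2-\sigma_0-\lambda}\xi^1\|_{L^2(\Sigma_0)}\lesssim\|\dot{\Lambda}_h^{\sigma_0}\xi^1\|_{L^2(\Sigma_0)}+\|\dot{\Lambda}_h^{\sigma_0+1}\xi^1\|_{L^2(\Sigma_0)}\lesssim\|\dot{\Lambda}_h^{\sigma_0}\xi^1\|_{L^2(\Sigma_0)}+\dot{\mathcal{D}}_3^{1/2}$ (valid since $2-\sigma_0-\lambda\in[\sigma_0,\sigma_0+1]$); Young's inequality then gives precisely $C\,E_3\|\dot{\Lambda}_h^{\sigma_0}\xi^1\|_{L^2(\Sigma_0)}^2+C\,E_3^{1/2}\dot{\mathcal{D}}_3$ plus absorbable terms.

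The pressure term $\mathfrak{K}_2=\int_\Omega\dot{\Lambda}_h^{-\lambda}q\;\dot{\Lambda}_h^{-\lambda}(\mathcal{B}_{5,i}^{\alpha}\partial_\alpha v^i-\widetilde{a_{\alpha1}}\partial_1 v^\alpha)\,dx$ is the crux and the place where $\|\dot{\Lambda}_h^{-\lambda}\partial_tv^1\|_{L^2}$ enters. Its second factor is $\lesssim E_3^{1/2}\dot{\mathcal{D}}_3^{1/2}$ by the device above; for $\|\dot{\Lambda}_h^{-\lambda}q\|_{L^2(\Omega)}$ I would integrate along $x_1$ from $\Sigma_0$ to get $\|\dot{\Lambda}_h^{-\lambda}q\|_{L^2(\Omega)}\lesssim\|\dot{\Lambda}_h^{-\lambda}q\|_{L^2(\Sigma_0)}+\|\dot{\Lambda}_h^{-\lambda}\partial_1q\|_{L^2(\Omega)}$, bound the trace through the boundary relation \eqref{q-interface-1} (so it is controlled by $\|\dot{\Lambda}_h^{-\lambda}\xi^1\|_{L^2(\Sigma_0)}+\|\dot{\Lambda}_h^{1-\lambda}v^h\|_{L^2(\Sigma_0)}+E_3^{1/2}\dot{\mathcal{D}}_3^{1/2}$, the middle term estimated by trace and — because $1-\lambda<\sigma_0$ — by interpolating $\|\dot{\Lambda}_h^{-\lambda}\nabla v\|_{L^2}$ against $\dot{\mathcal{D}}_3^{1/2}$, hence absorbable after multiplication by the second factor), and bound $\|\dot{\Lambda}_h^{-\lambda}\partial_1q\|_{L^2(\Omega)}$ by \eqref{prt1-q-vh-rela-2} with $\mathcal{P}(\partial_h)=\dot{\Lambda}_h^{-\lambda}$, i.e. $\lesssim\|\dot{\Lambda}_h^{-\lambda}\partial_tv^1\|_{L^2}+\|\dot{\Lambda}_h^{1-\lambda}\nabla v\|_{L^2}+\|\dot{\Lambda}_h^{-\lambda}(g_1,\widetilde g_{11})\|_{L^2}$, the last two being $\lesssim\dot{\mathcal{D}}_3^{1/2}+E_3^{1/2}\|\dot{\Lambda}_h^{\sigma_0}\xi^1\|_{L^2(\Sigma_0)}+E_3^{1/2}\dot{\mathcal{D}}_3^{1/2}$ by Lemma~\ref{lem-est-g-1} and the $\xi^1$-reduction (again absorbing the $\|\dot{\Lambda}_h^{-\lambda}\nabla v\|_{L^2}$-part of $\|\dot{\Lambda}_h^{1-\lambda}\nabla v\|_{L^2}$). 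Multiplying the two factors, applying Young's inequality, and absorbing the small multiples of $\|\dot{\Lambda}_h^{-\lambda}\nabla v\|_{L^2(\Omega)}^2$ into the Korn-controlled viscous term yields exactly \eqref{linear-bdd-lambda-1}. I expect the only genuine difficulty to be the bookkeeping of the many interpolation exponents clustering around the threshold $\sigma_0$ and checking each against the admissible ranges in Lemmas~\ref{lem-product-law-1}, \ref{lem-est-aij-1} and \ref{lem-est-g-1}; the hypothesis $1-\lambda<\sigma_0\le1-\tfrac12\lambda$ is used precisely to keep all of them in range, and no idea beyond those already in Lemmas~\ref{lem-tan-pseudo-energy-1}--\ref{lem-est-g-1} is required.
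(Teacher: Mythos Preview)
Your proposal is correct and follows essentially the same overall route as the paper: take $\mathcal{P}(\partial_h)=\dot\Lambda_h^{-\lambda}$ in Lemma~\ref{lem-tan-pseudo-energy-1}, bound $\mathfrak{K}_1$ and $\mathfrak{K}_3$ via the product law with the negative regularity placed on the velocity factor and the $\sigma_0$-regularity on the coefficients, and feed Lemma~\ref{lem-est-g-1} into $\mathfrak{K}_3$; the exponent bookkeeping you sketch (all living in $[\sigma_0-1,2]$ for coefficients and in $[\sigma_0,\sigma_0+1]$ for velocities, thanks to $1-\lambda<\sigma_0\le 1-\tfrac\lambda2$) is exactly what the paper does for its $I_2$ and $I_3$.

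The one genuine difference is in the pressure integral $\mathfrak{K}_2$. The paper first decomposes $q=\mathcal{Q}+\widetilde{\mathcal{Q}}$ with $\mathcal{Q}|_{\Sigma_0}=0$, pairs the factors with the \emph{asymmetric} exponents $2(1-\sigma_0-\lambda)$ and $2(\sigma_0-1)$ in $L^\infty_{x_1}L^2_h\times L^1_{x_1}L^2_h$, and then uses the vanishing trace of $\mathcal{Q}$ to reduce to $\|\dot\Lambda_h^{2(1-\sigma_0-\lambda)}\partial_1\mathcal{Q}\|_{L^2}$, which is subsequently controlled through \eqref{prt1-q-vh-rela-2}. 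You instead use a plain $L^2\times L^2$ Cauchy--Schwarz, estimate $\|\dot\Lambda_h^{-\lambda}q\|_{L^2(\Omega)}$ by vertical Poincar\'e from $\Sigma_0$ combined with the boundary identity \eqref{q-interface-1}, and then invoke \eqref{prt1-q-vh-rela-2} directly for $\|\dot\Lambda_h^{-\lambda}\partial_1q\|_{L^2}$. Both routes land on the same $\|\dot\Lambda_h^{-\lambda}\partial_tv^1\|_{L^2}$ and $\|\dot\Lambda_h^{-\lambda}\xi^1\|_{L^2(\Sigma_0)}$ contributions; yours is more elementary and avoids the good-unknown machinery, while the paper's stays consistent with the $\mathcal{Q}$-framework it uses elsewhere. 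One small point: in your $\mathfrak{K}_1$ estimate the product law should be applied on $\Sigma_0$ (not in $\Omega$) before invoking trace on each factor, but this is a cosmetic correction and the conclusion is unchanged.
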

\begin{proof}
First, taking the operator $\mathcal{P}(\partial_h)=\dot{\Lambda}_h^{-\lambda}$ in \eqref{tan-linear-pseudo-0} shows
\begin{equation}\label{linear-bdd-lambda-2}
\begin{split}
 & \frac{1}{2}\frac{d}{dt}\bigg(\int_{ \Omega}|\dot{\Lambda}_h^{-\lambda}v|^2\, dx+ \int_{\Sigma_{0}}|\dot{\Lambda}_h^{-\lambda}\xi^1|^2\,dS_0\bigg) +\frac{\nu}{2}\int_{\Omega} | \mathbb{D}(\dot{\Lambda}_h^{-\lambda}v)|^2 \, dx=I_1+I_2+I_3
\end{split}
\end{equation}
with $I_1:=\int_{ \Omega}\dot{\Lambda}_h^{-\lambda}q\,\dot{\Lambda}_h^{-\lambda}(\mathcal{B}_{5, i}^{\alpha}\partial_{\alpha}v^{i}-\widetilde{a_{\alpha\,1}}\partial_1v^\alpha)\, dx$,
\begin{equation*}
\begin{split}
 &I_2:=\nu\int_{\Sigma_{0}}\bigg(\dot{\Lambda}_h^{-\lambda}((\mathcal{B}_{9, i}^{\alpha}-2\mathcal{B}_{6, i}^{\alpha})\partial_{\alpha}v^{i})\, \dot{\Lambda}_h^{-\lambda}v^1+\dot{\Lambda}_h^{-\lambda}(\mathcal{B}_{7, i}^{\alpha}\partial_{\alpha}v^{i}) \,\dot{\Lambda}_h^{-\lambda}v^2\\
&\qquad\qquad\qquad+\dot{\Lambda}_h^{-\lambda}(\mathcal{B}_{8, i}^{\alpha}\partial_{\alpha}v^{i})\, \dot{\Lambda}_h^{-\lambda}v^3\bigg) \, dS_0,\quad\,I_3:=\int_{ \Omega}\dot{\Lambda}_h^{-\lambda} g\cdot \dot{\Lambda}_h^{-\lambda} v\,dx.
\end{split}
\end{equation*}
Using the relation $q= \mathcal{Q}+\mathcal{H}(\xi^1)-2\nu\,\nabla_h\cdot v^h+\nu\,\mathcal{H}(\mathcal{B}_{9, i}^{\alpha})\partial_{\alpha}v^{i}$, we split $I_1$ into two parts:
\begin{equation*}\label{linear-bdd-lambda-3}
\begin{split}
 I_1&=\int_{ \Omega}\dot{\Lambda}_h^{-\lambda} \mathcal{Q}\,\dot{\Lambda}_h^{-\lambda}(\mathcal{B}_{5, i}^{\alpha}\partial_{\alpha}v^{i}-\widetilde{a_{\alpha\,1}}\partial_1v^\alpha)\, dx\\
 &\qquad+\int_{ \Omega}\dot{\Lambda}_h^{-\lambda}(\mathcal{H}(\xi^1)-2\nu\,\nabla_h\cdot v^h+\nu\,\mathcal{H}(\mathcal{B}_{9, i}^{\alpha})\partial_{\alpha}v^{i})\,\dot{\Lambda}_h^{-\lambda}(\mathcal{B}_{5, i}^{\alpha}\partial_{\alpha}v^{i}-\widetilde{a_{\alpha\,1}}\partial_1v^\alpha)\, dx\\
 &=:I_{1,1}+I_{1,2}.
 \end{split}
\end{equation*}
For $I_{1,1}$, we have
\begin{equation}\label{linear-bdd-lambda-4}
\begin{split}
 &|I_{1,1}|=|\int_{ \Omega}\dot{\Lambda}_h^{-\lambda} \mathcal{Q}\,\dot{\Lambda}_h^{-\lambda}(\mathcal{B}_{5, i}^{\alpha}\partial_{\alpha}v^{i}-\widetilde{a_{\alpha\,1}}\partial_1v^\alpha)\, dx|\\
 &\lesssim\|\dot{\Lambda}_h^{2(1-\sigma_0-\lambda)}\mathcal{Q}\|_{L^\infty_{x_1}L^2_h}
 \|\dot{\Lambda}_h^{2(\sigma_0-1)}(\mathcal{B}_{5, i}^{\alpha}\partial_{\alpha}v^{i}-\widetilde{a_{\alpha\,1}}\partial_1v^\alpha)\|_{L^1_{x_1}L^2_h}.
 \end{split}
\end{equation}
By using the product law \eqref{product-law-1}, it follows
\begin{equation}\label{linear-bdd-lambda-5}
\begin{split}
 &\|\dot{\Lambda}_h^{2(\sigma_0-1)}(\mathcal{B}_{5, i}^{\alpha}\partial_{\alpha}v^{i}-\widetilde{a_{\alpha\,1}}\partial_1v^\alpha)\|_{L^1_{x_1}L^2_h}\\
 &\lesssim \|\dot{\Lambda}_h^{\sigma_0}\mathcal{B}_{5, i}^{\alpha}\|_{L^2}\|\dot{\Lambda}_h^{\sigma_0-1}\partial_{\alpha}v^{i}\|_{L^2} +\|\dot{\Lambda}_h^{\sigma_0}\partial_1v^\alpha\|_{L^2}\|\dot{\Lambda}_h^{\sigma_0-1}\widetilde{a_{\alpha\,1}}\|_{L^2} \lesssim E_3^{\frac{1}{2}}\|\dot{\Lambda}_h^{\sigma_0}\nabla\,v\|_{L^2},
 \end{split}
\end{equation}
which along with $\mathcal{Q}|_{\Sigma_0}=0$ and \eqref{linear-bdd-lambda-4} implies
\begin{equation*}\label{linear-bdd-lambda-6}
\begin{split}
 &|I_{1, 1}|\lesssim\|\dot{\Lambda}_h^{2(1-\sigma_0-\lambda)}\partial_1\mathcal{Q}\|_{L^2}
E_3^{\frac{1}{2}}\|\dot{\Lambda}_h^{\sigma_0}\nabla\,v\|_{L^2}.
 \end{split}
\end{equation*}
The estimate of $\|\dot{\Lambda}_h^{2(1-\sigma_0-\lambda)}\partial_1\mathcal{Q}\|_{L^2}$ can be treated by
\begin{equation}\label{linear-bdd-lambda-7a}
\begin{split}
&\|\dot{\Lambda}_h^{2(1-\sigma_0-\lambda)}\partial_1\mathcal{Q}\|_{L^2}\lesssim \|\dot{\Lambda}_h^{2(1-\sigma_0-\lambda)}\partial_1q\|_{L^2} +\|\dot{\Lambda}_h^{2(1-\sigma_0-\lambda)}\partial_1\mathcal{H}(\xi^1)\|_{L^2} \\
&\qquad\qquad\qquad\qquad\qquad+\|\dot{\Lambda}_h^{2(1-\sigma_0-\lambda)}\partial_1\nabla_h\cdot v^h\|_{L^2} +\|\dot{\Lambda}_h^{2(1-\sigma_0-\lambda)}\partial_1(\mathcal{H}(\mathcal{B}_{9, i}^{\alpha})\partial_{\alpha}v^{i})\|_{L^2} \\
&\lesssim  \|\dot{\Lambda}_h^{-\lambda}\partial_1q\|_{L^2} +\|\dot{\Lambda}_h^{-\lambda}\xi^1\|_{L^2(\Sigma_0)}+\|\dot{\Lambda}_h^{\sigma_0+2}\partial_1q\|_{L^2} +\|\dot{\Lambda}_h^{\sigma_0+1}\xi^1\|_{H^1(\Sigma_0)} \\
&\qquad+\|\dot{\Lambda}_h^{-\lambda}\nabla\, v\|_{L^2} +\|\dot{\Lambda}_h^{\sigma_0+1}\nabla\, v\|_{L^2} +\|\dot{\Lambda}_h^{2(1-\sigma_0-\lambda)}\partial_1(\mathcal{H}(\mathcal{B}_{9, i}^{\alpha})\partial_{\alpha}v^{i})\|_{L^2}.
\end{split}
\end{equation}
For $\|\dot{\Lambda}_h^{-\lambda}\partial_1q\|_{L^2}$ in \eqref{linear-bdd-lambda-7a}, thanks to \eqref{prt1-q-vh-rela-2} and \eqref{est-g-2}, one can see that
\begin{equation*}\label{linear-bdd-lambda-7b}
\begin{split}
&\|\dot{\Lambda}_h^{-\lambda}\partial_1q\|_{L^2}\lesssim\|\dot{\Lambda}_h^{-\lambda}\partial_t v^{1}\|_{L^2}
 +\|\dot{\Lambda}_h^{-\lambda}\partial_h\,\nabla\,v\|_{L^2}+\|\dot{\Lambda}_h^{-\lambda}g_1\|_{L^2}
  +\|\dot{\Lambda}_h^{-\lambda}\widetilde{g}_{11}\|_{L^2}\\
  &\lesssim \|\dot{\Lambda}_h^{-\lambda}\partial_t v^{1}\|_{L^2}
 +\|\dot{\Lambda}_h^{-\lambda}\partial_h\,\nabla\,v\|_{L^2}+E_3^{\frac{1}{2}} \|\dot{\Lambda}_h^{2-\sigma_0-\lambda}\xi^1\|_{L^2(\Sigma_0)} +E_3^{\frac{1}{2}}\, \dot{\mathcal{D}}_3^{\frac{1}{2}},\\
\end{split}
\end{equation*}
which, together with \eqref{linear-bdd-lambda-7a}, $1-\lambda<\sigma_0\leq 1-\frac{\lambda}{2}$, and the estimate
\begin{equation*}
\begin{split}
&\|\dot{\Lambda}_h^{-\lambda}\partial_1(\mathcal{H}(\mathcal{B}_{9, i}^{\alpha})\partial_{\alpha}v^{i})\|_{L^2}
\lesssim\|\dot{\Lambda}_h^{-\lambda}(\partial_1\mathcal{H}(\mathcal{B}_{9, i}^{\alpha})\partial_{\alpha}v^{i})\|_{L^2} +\|\dot{\Lambda}_h^{-\lambda}(\mathcal{H}(\mathcal{B}_{9, i}^{\alpha})\partial_1\partial_{\alpha}v^{i})\|_{L^2}\\
&
\lesssim\|\dot{\Lambda}_h^{\sigma_0}\partial_1\mathcal{H}(\mathcal{B}_{9, i}^{\alpha})\|_{L^2}\|\dot{\Lambda}_h^{1-\sigma_0-\lambda}\partial_{\alpha}v^{i}\|_{L^\infty_{x_1}L^2_h} +\|\dot{\Lambda}_h^{\sigma_0}\mathcal{H}(\mathcal{B}_{9, i}^{\alpha})\|_{L^\infty_{x_1}L^2_h} \|\dot{\Lambda}_h^{1-\sigma_0-\lambda}\partial_1\partial_{\alpha}v^{i})\|_{L^2}\\
 &\lesssim E_3^{\frac{1}{2}}\, \|\dot{\Lambda}_h^{2-\sigma_0-\lambda} v \|_{H^1} \lesssim E_3^{\frac{1}{2}}\, \dot{\mathcal{D}}_3^{\frac{1}{2}}
\end{split}
\end{equation*}
by using \eqref{product-law-1}, ensures that
\begin{equation*}\label{linear-bdd-lambda-7c}
\begin{split}
&\|\dot{\Lambda}_h^{2(1-\sigma_0-\lambda)}\partial_1\mathcal{Q}\|_{L^2}\lesssim \|\dot{\Lambda}_h^{2(1-\sigma_0-\lambda)}\partial_1q\|_{L^2} +\|\dot{\Lambda}_h^{2(1-\sigma_0-\lambda)}\partial_1\mathcal{H}(\xi^1)\|_{L^2} \\
&\qquad\qquad\qquad\qquad\qquad+\|\dot{\Lambda}_h^{2(1-\sigma_0-\lambda)}\partial_1\nabla_h\cdot v^h\|_{L^2} +\|\dot{\Lambda}_h^{2(1-\sigma_0-\lambda)}\partial_1(\mathcal{H}(\mathcal{B}_{9, i}^{\alpha})\partial_{\alpha}v^{i})\|_{L^2} \\
&\lesssim   \|\dot{\Lambda}_h^{-\lambda}\xi^1\|_{L^2(\Sigma_0)}+\|\dot{\Lambda}_h^{-\lambda}\partial_t v^{1}\|_{L^2}
 +\|\dot{\Lambda}_h^{-\lambda}\nabla\,v\|_{L^2} +\dot{\mathcal{D}}_3^{\frac{1}{2}}.
\end{split}
\end{equation*}
Hence, we conclude that
\begin{equation}\label{linear-bdd-lambda-7}
\begin{split}
 &|I_{1, 1}|\lesssim  \|\dot{\Lambda}_h^{-\lambda}\xi^1\|_{L^2(\Sigma_0)}E_3^{\frac{1}{2}} \dot{\mathcal{D}}_3^{\frac{1}{2}}+(\|\dot{\Lambda}_h^{-\lambda}\partial_t v^{1}\|_{L^2}
 +\|\dot{\Lambda}_h^{-\lambda}\nabla\,v\|_{L^2})E_3^{\frac{1}{2}} \dot{\mathcal{D}}_3^{\frac{1}{2}} +
E_3^{\frac{1}{2}} \dot{\mathcal{D}}_3.
 \end{split}
\end{equation}
For $I_{1, 2}$, from \eqref{linear-bdd-lambda-5} and \eqref{linear-bdd-lambda-7}, we find
\begin{equation}\label{linear-bdd-lambda-8}
\begin{split}
 &|I_{1,2}|\lesssim \|\dot{\Lambda}_h^{2(1-\sigma_0-\lambda)}(\mathcal{H}(\xi^1)-2\nu\,\nabla_h\cdot v^h+\nu\,\mathcal{H}(\mathcal{B}_{9, i}^{\alpha})\partial_{\alpha}v^{i})\|_{L^\infty_{x_1}L^2_h}\,\\
 &\qquad\qquad\qquad\qquad\qquad\times\|\dot{\Lambda}_h^{2(1-\sigma_0)}(\mathcal{B}_{5, i}^{\alpha}\partial_{\alpha}v^{i}-\widetilde{a_{\alpha\,1}}\partial_1v^\alpha)\|_{L^1_{x_1}L^2_h}\\
 &\lesssim (\|\dot{\Lambda}_h^{-\lambda}\xi^1\|_{L^2(\Sigma_0)}
 +\|\dot{\Lambda}_h^{-\lambda}\nabla\,v\|_{L^2} +\dot{\mathcal{D}}_3^{\frac{1}{2}})  E_3^{\frac{1}{2}}\|\dot{\Lambda}_h^{\sigma_0}\nabla\,v\|_{L^2}.
 \end{split}
\end{equation}
Combining \eqref{linear-bdd-lambda-7} with \eqref{linear-bdd-lambda-8} yields
\begin{equation}\label{linear-bdd-lambda-9}
\begin{split}
 &|I_{1}|\lesssim \|\dot{\Lambda}_h^{-\lambda}\xi^1\|_{L^2(\Sigma_0)}E_3^{\frac{1}{2}} \dot{\mathcal{D}}_3^{\frac{1}{2}}+(\|\dot{\Lambda}_h^{-\lambda}\partial_t v^{1}\|_{L^2}
 +\|\dot{\Lambda}_h^{-\lambda}\nabla\,v\|_{L^2})E_3^{\frac{1}{2}} \dot{\mathcal{D}}_3^{\frac{1}{2}} +
E_3^{\frac{1}{2}} \dot{\mathcal{D}}_3.
 \end{split}
\end{equation}
For $I_2$, from \eqref{product-law-1}, it is easy to get
\begin{equation*}\label{linear-bdd-lambda-10}
\begin{split}
 &|\int_{\Sigma_{0}} \dot{\Lambda}_h^{-\lambda}(\mathcal{B}_{7, i}^{\alpha}\partial_{\alpha}v^{i}) \,\dot{\Lambda}_h^{-\lambda}v^2\, dS_0|\lesssim \|\dot{\Lambda}_h^{-\lambda}(\mathcal{B}_{7, i}^{\alpha}\partial_{\alpha}v^{i})\|_{L^2(\Sigma_0)} \|\dot{\Lambda}_h^{-\lambda}v^2\|_{L^2(\Sigma_0)} \\
 &\lesssim \|\dot{\Lambda}_h^{\sigma_0}\mathcal{B}_{7, i}^{\alpha}\|_{L^2(\Sigma_0)}\|\dot{\Lambda}_h^{1-\sigma_0-\lambda}\partial_{\alpha}v^{i}\|_{L^2(\Sigma_0)} \|\dot{\Lambda}_h^{-\lambda}v^h\|_{L^2(\Sigma_0)} \\
  &\lesssim E_3^{\frac{1}{2}}\|\dot{\Lambda}_h^{2-\sigma_0-\lambda}\nabla\,v\|_{L^2} \|\dot{\Lambda}_h^{-\lambda}\nabla\,v^h\|_{L^2}.
\end{split}
\end{equation*}
Similarly, one may see
\begin{equation*}\label{linear-bdd-lambda-11}
\begin{split}
 &|\int_{\Sigma_{0}}\bigg(\dot{\Lambda}_h^{-\lambda}((\mathcal{B}_{9, i}^{\alpha}-2\mathcal{B}_{6, i}^{\alpha})\partial_{\alpha}v^{i})\, \dot{\Lambda}_h^{-\lambda}v^1+\dot{\Lambda}_h^{-\lambda}(\mathcal{B}_{8, i}^{\alpha}\partial_{\alpha}v^{i})\, \dot{\Lambda}_h^{-\lambda}v^3\bigg) \, dS_0|\\
 &\lesssim\,E_3^{\frac{1}{2}}\|\dot{\Lambda}_h^{2-\sigma_0-\lambda}\nabla\,v\|_{L^2} \|\dot{\Lambda}_h^{-\lambda}\nabla\,v^h\|_{L^2}.
\end{split}
\end{equation*}
Hence, from the condition $1-\lambda<\sigma_0\leq 1-\frac{\lambda}{2}$, it follows
\begin{equation}\label{linear-bdd-lambda-12}
\begin{split}
 &|I_2|\lesssim E_3^{\frac{1}{2}}\|\dot{\Lambda}_h^{2-\sigma_0-\lambda}\nabla\,v\|_{L^2} \|\dot{\Lambda}_h^{-\lambda}\nabla\,v\|_{L^2}\lesssim E_3^{\frac{1}{2}}\dot{\mathcal{D}}_3^{\frac{1}{2}} \|\dot{\Lambda}_h^{-\lambda}\nabla\,v\|_{L^2}.
\end{split}
\end{equation}
Finally, to deal with $I_3$, we use \eqref{est-g-2} to deduce
\begin{equation}\label{linear-bdd-lambda-13}
\begin{split}
|I_3|&\lesssim \|\dot{\Lambda}_h^{-\lambda} g\|_{L^2} \|\dot{\Lambda}_h^{-\lambda} v\|_{L^2}\lesssim
(E_3^{\frac{1}{2}} \|\dot{\Lambda}_h^{2-\sigma_0-\lambda}\xi^1\|_{L^2(\Sigma_0)} +E_3^{\frac{1}{2}}\,
\dot{\mathcal{D}}_3^{\frac{1}{2}})\|\dot{\Lambda}_h^{-\lambda} \nabla\,v\|_{L^2}\\
&\lesssim
(E_3^{\frac{1}{2}} \|\dot{\Lambda}_h^{\sigma_0}\xi^1\|_{L^2(\Sigma_0)} +E_3^{\frac{1}{2}}\,
\dot{\mathcal{D}}_3^{\frac{1}{2}})\|\dot{\Lambda}_h^{-\lambda} \nabla\,v\|_{L^2}.
\end{split}
\end{equation}
Therefore, substituting \eqref{linear-bdd-lambda-9}, \eqref{linear-bdd-lambda-12} and \eqref{linear-bdd-lambda-13} into \eqref{linear-bdd-lambda-2} yields
\begin{equation*}\label{linear-bdd-lambda-14}
\begin{split}
 & \frac{1}{2}\frac{d}{dt}\bigg(\int_{ \Omega}|\dot{\Lambda}_h^{-\lambda}v|^2\, dx+ \int_{\Sigma_{0}}|\dot{\Lambda}_h^{-\lambda}\xi^1|^2\,dS_0\bigg) +\frac{\nu}{2}\int_{\Omega} | \mathbb{D}(\dot{\Lambda}_h^{-\lambda}v)|^2 \, dx\\
 &\lesssim \|\dot{\Lambda}_h^{-\lambda}\xi^1\|_{L^2(\Sigma_0)}E_3^{\frac{1}{2}} \dot{\mathcal{D}}_3^{\frac{1}{2}}+\|\dot{\Lambda}_h^{-\lambda}\partial_t v^{1}\|_{L^2}E_3^{\frac{1}{2}} \dot{\mathcal{D}}_3^{\frac{1}{2}} +
E_3^{\frac{1}{2}} \dot{\mathcal{D}}_3\\
 &\qquad+(E_3^{\frac{1}{2}} \|\dot{\Lambda}_h^{\sigma_0}\xi^1\|_{L^2(\Sigma_0)} +E_3^{\frac{1}{2}}\,
\dot{\mathcal{D}}_3^{\frac{1}{2}})\|\dot{\Lambda}_h^{-\lambda} \nabla\,v\|_{L^2}.
\end{split}
\end{equation*}
Applying Young's inequality and the Korn inequality implies \eqref{linear-bdd-lambda-1}, which is what we wanted to prove.
\end{proof}

Thanks to \eqref{linear-v-L2-1-0}, Lemmas \ref{lem-tan-decay-total-1} and \ref{lem-tan-bdd-lambda-1}, we have
\begin{lem}\label{lem-tan-bdd-N+1-1}
Let $N\geq 3$, under the assumption of Lemma \ref{lem-tan-pseudo-energy-1}, if $(\lambda,\,\sigma_0) \in (0, 1)$ satisfies $1-\lambda< \sigma_0\leq 1-\frac{1}{2}\lambda$, and $E_3(t) \leq 1$ for all the existence times $t$, then there holds
\begin{equation*}\label{tan-bdd-N+1-1}
\begin{split}
& \frac{d}{dt} \bigg[\|\dot{\Lambda}_h^{-\lambda}v\|_{L^2(\Omega)}^2
+\|\dot{\Lambda}_h^{-\lambda}\xi^1\|_{L^2(\Sigma_0)}^2 \\
&\qquad\qquad\qquad+\|\dot{\Lambda}_h^{\sigma_0}v\|_{L^2(\Omega)}^2
+\|\dot{\Lambda}_h^{\sigma_0}\xi^1\|_{L^2(\Sigma_0)}^2+\sum_{i=0}^{N+1}(\|\partial_h^{i}v\|_{L^2(\Omega)}^2
+\|\partial_h^{i}\xi^1\|_{L^2(\Sigma_0)}^2\bigg]\\
&+ c_1 (\|\dot{\Lambda}_h^{-\lambda}\nabla\,v\|_{L^2(\Omega)}^2+\|\nabla\,v\|_{L^2(\Omega)}^2
+\|\dot{\Lambda}_h^{\sigma_0}\nabla\,v\|_{L^2(\Omega)}^2 +\sum_{i=0}^{N+1}\|\partial_h^{i}\nabla\,v\|_{L^2(\Omega)}^2 )\\
&\lesssim\dot{\mathcal{D}}_{N+1}^{\frac{1}{2}}(E_{N+1}^{\frac{1}{2}} \dot{\mathcal{D}}_3^{\frac{1}{2}}+  E_3^{\frac{1}{2}} \dot{\mathcal{D}}_{N+1}^{\frac{1}{2}})+\|\dot{\Lambda}_h^{-\lambda}\xi^1\|_{L^2(\Sigma_0)}E_3^{\frac{1}{2}} \dot{\mathcal{D}}_3^{\frac{1}{2}}\\
 &\qquad\qquad\qquad\qquad\qquad\qquad\qquad\qquad+E_3  \|\dot{\Lambda}_h^{\sigma_0}\xi^1\|_{L^2(\Sigma_0)}^2+\|\dot{\Lambda}_h^{-\lambda}\partial_t v^{1}\|_{L^2}E_3^{\frac{1}{2}} \dot{\mathcal{D}}_3^{\frac{1}{2}}.
\end{split}
\end{equation*}
\end{lem}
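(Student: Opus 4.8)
The plan is to add the three differential inequalities already at our disposal and then reconcile the nonlinear weights appearing in the first of them with the flat Sobolev norms used in the other two. Concretely, I would combine the $L^2$ energy identity \eqref{linear-v-L2-1-0}, the estimate of Lemma \ref{lem-tan-decay-total-1} with $N$ replaced by $N+1$ (legitimate since $N+1\ge 4\ge 3$ and $E_3(t)\le 1$), and the estimate of Lemma \ref{lem-tan-bdd-lambda-1}, whose hypothesis $1-\lambda<\sigma_0\le 1-\tfrac{1}{2}\lambda$ is exactly what we assume here. The $\dot{\Lambda}_h^{\sigma_0}$ pieces and the $i\ge 1$ tangential pieces of the energy and dissipation come from Lemma \ref{lem-tan-decay-total-1}; the $\dot{\Lambda}_h^{-\lambda}$ pieces come from Lemma \ref{lem-tan-bdd-lambda-1}; and the $i=0$ pieces, namely $\|v\|_{L^2(\Omega)}^2+\|\xi^1\|_{L^2(\Sigma_0)}^2$ in the energy and $\|\nabla v\|_{L^2(\Omega)}^2$ in the dissipation, come from \eqref{linear-v-L2-1-0}.

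The one point that needs a little care is that \eqref{linear-v-L2-1-0} is stated with the nonlinear weights $J$ and $a_{11}$: it reads $\frac{d}{dt}\bigl(\int_{\Omega}|v|^2 J\,dx+\int_{\Sigma_0}a_{11}|\xi^1|^2\,dS_0\bigr)+\nu\int_{\Omega}|\mathbb{D}_{J\mathcal{A}}(v)|^2 J^{-1}\,dx=0$. By Lemma \ref{lem-est-aij-1} one has $\|J-1\|_{L^\infty}+\|J^{-1}-1\|_{L^\infty}+\|a_{11}-1\|_{L^\infty}\lesssim E_3^{1/2}$, so for $E_3$ small the weighted $L^2$ quantities are equivalent to $\|v\|_{L^2(\Omega)}^2$ and $\|\xi^1\|_{L^2(\Sigma_0)}^2$; hence the energy functional produced by the summation is comparable to the bracketed expression in the statement (with the $i=0$ slot included), and the statement is to be read through this equivalence rather than with flat norms on the nose. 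For the dissipation, writing $\mathbb{D}_{J\mathcal{A}}(v)=\mathbb{D}(v)+O\bigl(|\mathcal{A}-I|\,|\nabla v|\bigr)$ gives $\nu\int_{\Omega}|\mathbb{D}_{J\mathcal{A}}(v)|^2 J^{-1}\,dx\ge \tfrac{\nu}{4}\|\mathbb{D}(v)\|_{L^2(\Omega)}^2-CE_3\|\nabla v\|_{L^2(\Omega)}^2$, and then Korn's lemma (Lemma \ref{lem-korn-2}) together with the smallness of $E_3$ yields $\nu\int_{\Omega}|\mathbb{D}_{J\mathcal{A}}(v)|^2 J^{-1}\,dx\gtrsim \|\nabla v\|_{L^2(\Omega)}^2$, which supplies the $i=0$ contribution to the dissipation.

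It then remains to check that the three right-hand sides add up to the asserted one. The right-hand side of Lemma \ref{lem-tan-decay-total-1} with $N+1$ is $\dot{\mathcal{D}}_{N+1}^{1/2}\bigl(E_{N+1}^{1/2}\dot{\mathcal{D}}_3^{1/2}+E_3^{1/2}\dot{\mathcal{D}}_{N+1}^{1/2}\bigr)$, which is the first term of the claim. Among the four terms on the right-hand side of Lemma \ref{lem-tan-bdd-lambda-1}, three of them, namely $\|\dot{\Lambda}_h^{-\lambda}\xi^1\|_{L^2(\Sigma_0)}E_3^{1/2}\dot{\mathcal{D}}_3^{1/2}$, $E_3\|\dot{\Lambda}_h^{\sigma_0}\xi^1\|_{L^2(\Sigma_0)}^2$, and $\|\dot{\Lambda}_h^{-\lambda}\partial_t v^{1}\|_{L^2}E_3^{1/2}\dot{\mathcal{D}}_3^{1/2}$, appear verbatim in the asserted bound, while the remaining one, $E_3^{1/2}\dot{\mathcal{D}}_3$, is absorbed into $E_3^{1/2}\dot{\mathcal{D}}_{N+1}\le \dot{\mathcal{D}}_{N+1}^{1/2}\cdot E_3^{1/2}\dot{\mathcal{D}}_{N+1}^{1/2}$ using $\dot{\mathcal{D}}_3\le\dot{\mathcal{D}}_{N+1}$; and \eqref{linear-v-L2-1-0} contributes nothing to the right-hand side. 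The only genuine obstacle, such as it is, is the weight bookkeeping described in the second paragraph; once that is set up, the proof is a routine addition of the three bounds.
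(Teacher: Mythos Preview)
Your proposal is correct and follows exactly the paper's approach: the paper simply states that the lemma follows by combining \eqref{linear-v-L2-1-0}, Lemma \ref{lem-tan-decay-total-1} (applied at level $N+1$), and Lemma \ref{lem-tan-bdd-lambda-1}. Your extra paragraph on the $J$ and $a_{11}$ weight bookkeeping is a detail the paper glosses over but is indeed needed to reconcile the weighted norms in \eqref{linear-v-L2-1-0} with the flat ones in the statement; note in particular that $\partial_t J=0$ by \eqref{linear-v-L2-1-2a}, which simplifies this reconciliation.
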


\subsection{Energy estimates of the gradient of the velocity}
In this subsection, we derive energy estimates in terms of $\nabla\,v$ as well as its horizontal derivatives.
For this, we still use the linearized form \eqref{eqns-linear-1} of the system \eqref{eqns-pert-1} as in Section \ref{subsect-est-hori-1}. Set
\begin{equation*}\label{def-pseudo-energy-tv-0}
\begin{split}
 & \mathring{\mathcal{E}}(\mathcal{P}(\partial_h)\nabla\, v)\eqdefa \|\mathcal{P}(\partial_h)\nabla\, v\|_{L^2}^2+4\|\mathcal{P}(\partial_h)\nabla_h\cdot v^h\|_{L^2}^2\\
 &\qquad\qquad+2\int_{\Omega}(\mathcal{P}(\partial_h)\partial_1 v^h\,\cdot\,\mathcal{P}(\partial_h) (\nabla_h v^1) +\mathcal{P}(\partial_h)\partial_1 v^{1}\,\mathcal{P}(\partial_h)\nabla_h\cdot v^h)\,dx\\
   &\,+\frac{2}{\nu}\int_{\Omega}\mathcal{P}(\partial_h) v\cdot\mathcal{P}(\partial_h)\nabla\,\mathcal{H}(\xi^1)  \,dx-2\int_{\Omega}\sum_{j=1}^3\mathcal{P}(\partial_h)\partial_1 v^j\,\mathcal{P}(\partial_h) (\mathcal{H}(\mathcal{B}_{j+5, i}^{\alpha})\partial_{\alpha}v^{i})\,dx.
        \end{split}
\end{equation*}
The standard procedure for getting the energy estimates can be adopted to get
\begin{lem}\label{lem-pseudo-energy-tv-1}
Let $(v, \xi)$ be smooth solution to the system \eqref{eqns-pert-1}, there holds that
\begin{equation}\label{pseudo-energy-tv-1}
\begin{split}
 & \frac{\nu}{2}\frac{d}{dt}\mathring{\mathcal{E}}(\mathcal{P}(\partial_h)\nabla\, v)+\|\mathcal{P}(\partial_h)\partial_t v\|_{L^2}^2 =\sum_{j=1}^8\mathfrak{J}_j,
     \end{split}
\end{equation}
where
\begin{equation*}
\begin{split}
  &\mathfrak{J}_1:=\int_{\Omega}\mathcal{P}(\partial_h) v\cdot\mathcal{P}(\partial_h)\nabla\partial_t \mathcal{H}(\xi^1) \,dx, \,\mathfrak{J}_2:=-\int_{ \Omega}\mathcal{P}(\partial_h)\partial_1\mathcal{Q}
 \mathcal{P}(\partial_h)(\widetilde{a_{\alpha\,1}} \partial_tv^\alpha)\, dx,\\
    &\mathfrak{J}_3:=2\nu\,\int_{\Omega}\mathcal{P}(\partial_h)\partial_t v^h\,\cdot\,\mathcal{P}(\partial_h)\nabla_h\, ((\widetilde{a_{\alpha\,1}}\,\partial_1v^\alpha)
-\mathcal{B}_{5, i}^{\alpha}\,\partial_{\alpha}v^{i})\,dx,\\
     \end{split}
\end{equation*}
\begin{equation*}
\begin{split}
   &\mathfrak{J}_4:=-\nu\int_{\Omega}\mathcal{P}(\partial_h)\partial_t v\,\cdot\,\mathcal{P}(\partial_h) \nabla\,\bigg(\widetilde{a_{\alpha\,1}}\partial_1v^\alpha+(\mathcal{H}(\mathcal{B}_{9, i}^{\alpha})-\mathcal{B}_{5, i}^{\alpha})\partial_{\alpha}v^{i}\bigg) \,dx,\\
 &\mathfrak{J}_5:=-\int_{ \Omega}\mathcal{P}(\partial_h)\mathcal{Q}
 \mathcal{P}(\partial_h)\bigg(\partial_1\widetilde{a_{\alpha\,1}} \partial_tv^\alpha-\partial_t\widetilde{a_{\alpha\,1}}\partial_1v^\alpha
+\partial_t\mathcal{B}_{5, i}^{\alpha}\partial_{\alpha}v^{i}\bigg)\, dx,\\
     \end{split}
\end{equation*}
\begin{equation*}
\begin{split}
 &\mathfrak{J}_6:=\int_{ \Omega}\bigg(\mathcal{P}(\partial_h)\partial_{\alpha}\mathcal{Q} \,\mathcal{P}(\partial_h)(\mathcal{B}_{5, i}^{\alpha}\partial_tv^{i})-\mathcal{P}(\partial_h)\mathcal{Q} \,\mathcal{P}(\partial_h)(\partial_{\alpha}\mathcal{B}_{5, i}^{\alpha}\partial_tv^{i})
\bigg)\, dx,\\
&\mathfrak{J}_7:=\nu\int_{\Omega} \sum_{j=1}^3\bigg(\mathcal{P}(\partial_h)\partial_1\partial_{\alpha} v^j\,\mathcal{P}(\partial_h) \partial_t ( \mathcal{H}(\mathcal{B}_{j+5, i}^{\alpha})v^{i})
  +\mathcal{P}(\partial_h)\partial_1  v^j\,\mathcal{P}(\partial_h) \partial_t ( \partial_{\alpha} \mathcal{H}(\mathcal{B}_{j+5, i}^{\alpha})v^{i}) \\
  &\qquad+\mathcal{P}(\partial_h)\partial_t v^j\, \mathcal{P}(\partial_h)\partial_1(\mathcal{H}(\mathcal{B}_{j+5, i}^{\alpha})\partial_{\alpha}v^{i}) \bigg)\,dx,\quad \mathfrak{J}_8:=\int_{\Omega}\mathcal{P}(\partial_h)\partial_t v\cdot\mathcal{P}(\partial_h)g\,dx.
     \end{split}
\end{equation*}
\end{lem}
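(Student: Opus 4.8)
The plan is to imitate the proof of Lemma~\ref{lem-tan-pseudo-energy-1}, but to test the horizontally differentiated momentum equation against the acceleration $\mathcal{P}(\partial_h)\partial_t v$ instead of against $\mathcal{P}(\partial_h)v$. Set $w:=\mathcal{P}(\partial_h)v$. Applying $\mathcal{P}(\partial_h)$ to \eqref{eqns-linear-1} gives $\partial_t w+\nabla\mathcal{P}(\partial_h)q-\nu\nabla\cdot\mathbb{D}(w)=\mathcal{P}(\partial_h)g$ together with $\nabla\cdot w=\mathcal{P}(\partial_h)(-\widetilde{a_{\alpha\,1}}\partial_1v^\alpha+\mathcal{B}_{5,i}^{\alpha}\partial_{\alpha}v^{i})$. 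Taking the $L^2(\Omega)$ inner product of the momentum equation with $\partial_t w$, and noting that $v|_{\Sigma_b}=0$ forces $\partial_t w|_{\Sigma_b}=0$ so that every integration by parts produces a surface term only on $\Sigma_0$, one obtains
\[
\|\mathcal{P}(\partial_h)\partial_t v\|_{L^2}^2+\mathcal{I}_{\mathrm{pres}}+\mathcal{I}_{\mathrm{visc}}=\mathfrak{J}_8,\qquad\mathfrak{J}_8=\int_\Omega\mathcal{P}(\partial_h)\partial_t v\cdot\mathcal{P}(\partial_h)g\,dx,
\]
where $\mathcal{I}_{\mathrm{pres}}=\int_\Omega\nabla\mathcal{P}(\partial_h)q\cdot\partial_t w\,dx$ and $\mathcal{I}_{\mathrm{visc}}=-\nu\int_\Omega(\nabla\cdot\mathbb{D}(w))\cdot\partial_t w\,dx$ still have to be reorganized into $\tfrac{\nu}{2}\tfrac{d}{dt}\mathring{\mathcal{E}}(\mathcal{P}(\partial_h)\nabla v)$ plus the remaining $\mathfrak{J}_1,\dots,\mathfrak{J}_7$.

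For the viscous term I would write $\nabla\cdot\mathbb{D}(v)=\Delta v+\nabla(\nabla\cdot v)$. One integration by parts of the Laplacian part yields the leading piece $\tfrac{\nu}{2}\tfrac{d}{dt}\|\nabla w\|_{L^2}^2$ of $\mathring{\mathcal{E}}$ plus the surface integral $-\nu\int_{\Sigma_0}\partial_1 w\cdot\partial_t w\,dS_0$; in the $\nabla(\nabla\cdot v)$ part I would use \eqref{incomp-cond-fluid-3} and \eqref{incomp-cond-v1-1} to rewrite $\nabla\cdot w$ and $\partial_1^2w^1$ through the nonlinear terms $\widetilde{a_{\alpha\,1}}\partial_1v^\alpha$, $\mathcal{B}_{5,i}^{\alpha}\partial_{\alpha}v^{i}$ and $\widetilde{g}_{11}$. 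For the pressure term I would integrate by parts once, $\mathcal{I}_{\mathrm{pres}}=\int_{\Sigma_0}\mathcal{P}(\partial_h)q\,\partial_t w^1\,dS_0-\int_\Omega\mathcal{P}(\partial_h)q\,\nabla\cdot\partial_t w\,dx$, and then substitute the splitting $q=\mathcal{Q}+\widetilde{\mathcal{Q}}$ from \eqref{partialv-q-1}, with $\widetilde{\mathcal{Q}}=\mathcal{H}(\xi^1)-2\nu\nabla_h\cdot v^h+\nu\mathcal{H}(\mathcal{B}_{9,i}^{\alpha})\partial_{\alpha}v^{i}$. The $\mathcal{Q}$-contributions, after one further integration by parts in $x_1$ (respectively in $x_\alpha$), lose their $\Sigma_0$-traces thanks to $\mathcal{Q}|_{\Sigma_0}=0$ in \eqref{unknown-bdry-1}, and together with $\nabla\cdot\partial_t w=\partial_t(\nabla\cdot w)$ they yield $\mathfrak{J}_2,\mathfrak{J}_5,\mathfrak{J}_6$; the $\widetilde{\mathcal{Q}}$-contributions, combined with the $\nabla(\nabla\cdot v)$ viscous part, reassemble into the correction pieces $4\|\mathcal{P}(\partial_h)\nabla_h\cdot v^h\|_{L^2}^2$ and $2\int(\mathcal{P}(\partial_h)\partial_1 v^h\cdot\mathcal{P}(\partial_h)\nabla_h v^1+\mathcal{P}(\partial_h)\partial_1 v^1\,\mathcal{P}(\partial_h)\nabla_h\cdot v^h)$ of $\mathring{\mathcal{E}}$ together with the interior remainders $\mathfrak{J}_3,\mathfrak{J}_4$.

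It remains to dispose of the surviving $\Sigma_0$-integrals. Using the linearized interface condition in \eqref{eqns-linear-1}, the pressure trace $\int_{\Sigma_0}\mathcal{P}(\partial_h)q\,\partial_t w^1$ and the viscous trace $-\nu\int_{\Sigma_0}\partial_1 w\cdot\partial_t w$ combine (up to the explicitly nonlinear $\mathcal{B}$-terms displayed there) into $\int_{\Sigma_0}\mathcal{P}(\partial_h)\xi^1\,\partial_t w^1\,dS_0$. I would convert this into an interior integral by the harmonic extension: since $\partial_t w|_{\Sigma_b}=0$ and $\mathcal{H}(\mathcal{P}(\partial_h)\xi^1)=\mathcal{P}(\partial_h)\xi^1$ on $\Sigma_0$, the divergence theorem gives
\[
\int_{\Sigma_0}\mathcal{P}(\partial_h)\xi^1\,\partial_t w^1\,dS_0=\int_\Omega\big(\nabla\mathcal{H}(\mathcal{P}(\partial_h)\xi^1)\cdot\partial_t w+\mathcal{H}(\mathcal{P}(\partial_h)\xi^1)\,\nabla\cdot\partial_t w\big)\,dx,
\]
and in the first integral $\int_\Omega\nabla\mathcal{H}(\cdot)\cdot\partial_t w=\tfrac{d}{dt}\int_\Omega\nabla\mathcal{H}(\cdot)\cdot w-\int_\Omega\nabla\partial_t\mathcal{H}(\cdot)\cdot w$ produces the piece $\tfrac{2}{\nu}\int\mathcal{P}(\partial_h)v\cdot\mathcal{P}(\partial_h)\nabla\mathcal{H}(\xi^1)$ of $\mathring{\mathcal{E}}$ and the remainder $\mathfrak{J}_1$, while the second integral, expanded through \eqref{incomp-cond-fluid-3}, feeds $\mathfrak{J}_3,\mathfrak{J}_4$. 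Applying the same harmonic-extension device to the vertical derivatives — writing, via \eqref{partialv-q-1}, $\partial_1 w^j=\mathcal{G}^j+(\text{horizontal-derivative terms})$ with $\mathcal{G}^j|_{\Sigma_0}=0$ by \eqref{unknown-bdry-1} — produces the last piece $-2\int\sum_j\mathcal{P}(\partial_h)\partial_1 v^j\,\mathcal{P}(\partial_h)(\mathcal{H}(\mathcal{B}_{j+5,i}^{\alpha})\partial_{\alpha}v^{i})$ of $\mathring{\mathcal{E}}$ together with $\mathfrak{J}_7$. Collecting all perfect time derivatives into $\tfrac{\nu}{2}\tfrac{d}{dt}\mathring{\mathcal{E}}(\mathcal{P}(\partial_h)\nabla v)$ and every leftover interior integral into $\mathfrak{J}_1,\dots,\mathfrak{J}_8$ gives \eqref{pseudo-energy-tv-1}.

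The main obstacle is not any single estimate but the bookkeeping that makes the scheme legitimate: a priori the integrations by parts leave on $\Sigma_0$ the surface values of $\partial_1 v$ and $q$, which are \emph{not} controlled by the energy and dissipation unless one imposes nonlinear compatibility conditions on $\partial_t v$, so the crucial verification — and the reason for introducing the good unknowns $\mathcal{G}^1,\mathcal{G}^2,\mathcal{G}^3,\mathcal{Q}$ and the harmonic extensions — is that after the substitutions \eqref{partialv-q-1} every occurrence of $\mathcal{G}^j$ or $\mathcal{Q}$ either sits directly on $\Sigma_0$ (where it vanishes by \eqref{unknown-bdry-1}) or is paired so that one more integration by parts in $x_1$ lands its value on $\Sigma_0$, leaving on the boundary only horizontal derivatives of $v$ and the explicitly nonlinear coefficients. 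One must simultaneously track the (lengthy but routine) commutator and time-derivative-on-coefficient terms so that the five pieces of $\mathring{\mathcal{E}}$ and the eight remainders $\mathfrak{J}_j$ come out \emph{exactly} as stated; the most delicate match is that of the correction pieces $4\|\nabla_h\cdot v^h\|^2$ and $2\int(\partial_1 v^h\cdot\nabla_h v^1+\partial_1 v^1\nabla_h\cdot v^h)$ against the ``divergence-is-not-zero'' contributions arising from $\nabla(\nabla\cdot v)$, from $\widetilde{\mathcal{Q}}$ and from $\nabla\cdot\partial_t w$.
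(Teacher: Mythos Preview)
Your plan has the right ingredients --- testing against $\mathcal{P}(\partial_h)\partial_t v$, the splitting $q=\mathcal{Q}+\widetilde{\mathcal{Q}}$, the good unknowns $\mathcal{G}^j$, and the harmonic extensions --- but the order of operations you describe is not the one that actually produces the identity, and one of your intermediate claims is false.

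You integrate by parts on the full Laplacian first, obtaining $\tfrac{\nu}{2}\tfrac{d}{dt}\|\nabla w\|_{L^2}^2$ together with the boundary term $-\nu\int_{\Sigma_0}\partial_1 w\cdot\partial_t w$, and then assert that this boundary term combined with the pressure trace $\int_{\Sigma_0}\mathcal{P}(\partial_h)q\,\partial_t w^1$ equals $\int_{\Sigma_0}\mathcal{P}(\partial_h)\xi^1\,\partial_t w^1$ up to nonlinear $\mathcal{B}$-terms. This is not correct: substituting \eqref{prtv-interface-1} and \eqref{q-interface-1} leaves, in addition, the \emph{linear} surface pieces
\[
-\nu\int_{\Sigma_0}\mathcal{P}(\partial_h)(\nabla_h\!\cdot v^h)\,\mathcal{P}(\partial_h)\partial_t v^1\,dS_0
\;+\;\nu\int_{\Sigma_0}\mathcal{P}(\partial_h)\partial_\alpha v^1\,\mathcal{P}(\partial_h)\partial_t v^\alpha\,dS_0,
\]
both of which involve $\partial_t v$ on $\Sigma_0$. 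One can massage these into a time derivative of a boundary integral, but converting that back into exactly the bulk cross-terms $2\int(\partial_1 v^h\cdot\nabla_h v^1+\partial_1 v^1\,\nabla_h\!\cdot v^h)$ and $4\|\nabla_h\!\cdot v^h\|_{L^2}^2$ of $\mathring{\mathcal{E}}$ does not drop out of your scheme, and your proposal does not indicate where they would come from.

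The paper avoids this entirely by reversing the order: it never lets a $\Sigma_0$-term with $\partial_1 v$ or $\partial_t v$ appear. For the pressure it substitutes $q=\mathcal{Q}+\widetilde{\mathcal{Q}}$ \emph{inside the bulk integral} $\int_\Omega\partial_t w\cdot\nabla\mathcal{P}(\partial_h)q$; the $\mathcal{Q}$-part is then integrated by parts with $\mathcal{Q}|_{\Sigma_0}=0$, and the $\widetilde{\mathcal{Q}}$-part is expanded directly (its $\mathcal{H}(\xi^1)$ piece gives $\mathfrak{J}_1$ and the $\nabla\mathcal{H}(\xi^1)$ term of $\mathring{\mathcal{E}}$; its $-2\nu\nabla_h\!\cdot v^h$ piece gives half of the $4\|\nabla_h\!\cdot v^h\|^2$ term; its $\mathcal{H}(\mathcal{B}_{9})$ piece feeds $\mathfrak{J}_4$). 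For the viscous term the paper splits $\Delta v=\Delta_h v+\partial_1^2 v$, handles the horizontal part as you do, and for each $-\nu\int_\Omega\partial_t w^j\,\partial_1^2 w^j$ it first writes $\partial_1 v^j=\mathcal{G}^j+(\text{horizontal-derivative piece})$ from \eqref{partialv-q-1} \emph{inside the integrand}, so that the $x_1$-integration by parts hits only $\partial_1\mathcal{G}^j$ and produces no boundary term thanks to $\mathcal{G}^j|_{\Sigma_0}=0$; re-expanding $\mathcal{G}^j$ then yields $\tfrac{\nu}{2}\tfrac{d}{dt}\|\partial_1 v^j\|^2$ \emph{plus} the cross-terms of $\mathring{\mathcal{E}}$ and the pieces of $\mathfrak{J}_7$. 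The remaining $\|\nabla_h\!\cdot v^h\|^2$ contribution and $\mathfrak{J}_3$ come from using \eqref{incomp-cond-fluid-3} on $\partial_1 v^1$ once more. In short, the identity you quote in your last paragraph --- ``every $\mathcal{G}^j$ or $\mathcal{Q}$ is paired so that one more integration by parts in $x_1$ lands its value on $\Sigma_0$'' --- is exactly right, but it must be implemented by substituting \eqref{partialv-q-1} in the bulk \emph{before} any vertical integration by parts, not by first creating $\Sigma_0$-terms and then trying to remove them.
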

\begin{proof}
We multiply the $i$-th component of the momentum equations of \eqref{eqns-linear-pseudo-1} by $\mathcal{P}(\partial_h)\partial_tv^i$, sum over $i$, and integrate
over $\Omega$ to find that
\begin{equation}\label{pseudo-energy-tv-2}
\begin{split}
 & \| \mathcal{P}(\partial_h) \partial_t v\|_{L^2}^2 +I+ II=\int_{\Omega} \mathcal{P}(\partial_h) \partial_t v\,\cdot\, \mathcal{P}(\partial_h) g\,dx,
     \end{split}
\end{equation}
where
$I:=\int_{\Omega} \mathcal{P}(\partial_h) \partial_t v\,\cdot\,\, \mathcal{P}(\partial_h) \nabla\,q\,dx$, $II:=-\nu\int_{\Omega} \mathcal{P}(\partial_h) \partial_t v\,\cdot\,(\nabla\,\cdot\,\mathbb{D}( \mathcal{P}(\partial_h) v))\,dx$.

Using $q=\mathcal{Q} +\mathcal{H}(\xi^1)-2\nu\,\nabla_h\cdot v^h+\nu\,\mathcal{H}(\mathcal{B}_{9, i}^{\alpha})\partial_{\alpha}v^{i}$, the integral $I$ can be split into four parts
\begin{equation}\label{pseudo-energy-tv-3}
\begin{split}
 &I=\int_{\Omega} \mathcal{P}(\partial_h) \partial_t v \cdot  \mathcal{P}(\partial_h) \nabla \mathcal{Q} \,dx+\int_{\Omega} \mathcal{P}(\partial_h) \partial_t v\cdot \mathcal{P}(\partial_h) \nabla\,\mathcal{H}(\xi^1)  \,dx\\
 &-2\nu\int_{\Omega} \mathcal{P}(\partial_h) \partial_t v\cdot\mathcal{P}(\partial_h) \nabla\nabla_h\cdot v^h\,dx+\nu\int_{\Omega} \mathcal{P}(\partial_h) \partial_t v\cdot\mathcal{P}(\partial_h) \nabla\,(\mathcal{H}(\mathcal{B}_{9, i}^{\alpha})\partial_{\alpha}v^{i})\,dx\\
 &=:\sum_{i=1}^4I_i.
\end{split}
\end{equation}
In view of the boundary conditions $v|_{\Sigma_b}=0$ and $\mathcal{Q}|_{\Sigma_0}=0$, integrating by parts in $I_1$ shows
\begin{equation}\label{pseudo-energy-tv-4}
\begin{split}
 &I_1=-\int_{\Omega} \mathcal{P}(\partial_h) \mathcal{Q} \,\nabla\cdot \mathcal{P}(\partial_h) \partial_t v\,dx=\int_{ \Omega} \mathcal{P}(\partial_h) \mathcal{Q} \,
 \mathcal{P}(\partial_h)(\widetilde{a_{\alpha\,1}}\partial_1\partial_tv^\alpha)
 \, dx\\
 &\qquad-\int_{ \Omega} \mathcal{P}(\partial_h) \mathcal{Q} \, \mathcal{P}(\partial_h) \bigg(\mathcal{B}_{5, i}^{\alpha}\partial_{\alpha}\partial_tv^{i}
-\partial_t\widetilde{a_{\alpha\,1}}\,\partial_1v^\alpha
+\partial_t\mathcal{B}_{5, i}^{\alpha}\,\partial_{\alpha}v^{i}\bigg)\, dx,
\end{split}
\end{equation}
where we used the fact $\grad \cdot \partial_tv=-\widetilde{a_{\alpha\,1}}\partial_1\partial_tv^\alpha+(\mathcal{B}_{5, i}^{\alpha}\partial_{\alpha}\partial_tv^{i}
-\partial_t\widetilde{a_{\alpha\,1}}\,\partial_1v^\alpha
+\partial_t\mathcal{B}_{5, i}^{\alpha}\,\partial_{\alpha}v^{i})$,
which comes from the divergence-free condition \eqref{incomp-cond-fluid-3}.

Applying integration by parts ensures
\begin{equation*}
\begin{split}
 &\int_{ \Omega} \mathcal{P}(\partial_h) \mathcal{Q} \,
 \mathcal{P}(\partial_h)(\widetilde{a_{\alpha\,1}}\partial_1\partial_tv^\alpha)
 \, dx\\
 &=-\int_{ \Omega}\mathcal{P}(\partial_h)\partial_1\mathcal{Q}\,
\mathcal{P}(\partial_h)(\widetilde{a_{\alpha\,1}} \partial_tv^\alpha)\, dx-\int_{ \Omega}\mathcal{P}(\partial_h)\mathcal{Q}\,
\mathcal{P}(\partial_h)(\partial_1\widetilde{a_{\alpha\,1}} \partial_tv^\alpha)\, dx,
 \end{split}
\end{equation*}
and
\begin{equation*}
\begin{split}
 &-\int_{ \Omega} \mathcal{P}(\partial_h) \mathcal{Q} \, \mathcal{P}(\partial_h) (\mathcal{B}_{5, i}^{\alpha}\partial_{\alpha}\partial_tv^{i})\, dx\\
 &=\int_{ \Omega} \mathcal{P}(\partial_h) \partial_{\alpha}\mathcal{Q} \, \mathcal{P}(\partial_h) (\mathcal{B}_{5, i}^{\alpha}\partial_tv^{i})\, dx+\int_{ \Omega} \mathcal{P}(\partial_h) \mathcal{Q} \, \mathcal{P}(\partial_h) (\partial_{\alpha}\mathcal{B}_{5, i}^{\alpha}\partial_tv^{i})\, dx,
 \end{split}
\end{equation*}
then plugging these two equalities into \eqref{pseudo-energy-tv-4} leads to
\begin{equation}\label{pseudo-energy-tv-7}
\begin{split}
 &I_1=-\int_{ \Omega}\mathcal{P}(\partial_h)\partial_1\mathcal{Q}\,
\mathcal{P}(\partial_h)(\widetilde{a_{\alpha\,1}} \partial_tv^\alpha)\, dx+\int_{ \Omega} \mathcal{P}(\partial_h) \partial_{\alpha}\mathcal{Q} \, \mathcal{P}(\partial_h) (\mathcal{B}_{5, i}^{\alpha}\partial_tv^{i})\, dx\\
 &\qquad-\int_{ \Omega} \mathcal{P}(\partial_h) \mathcal{Q} \, \mathcal{P}(\partial_h) \bigg(\partial_1\widetilde{a_{\alpha\,1}} \partial_tv^\alpha-\partial_{\alpha}\mathcal{B}_{5, i}^{\alpha}\partial_tv^{i}
-\partial_t\widetilde{a_{\alpha\,1}}\,\partial_1v^\alpha
+\partial_t\mathcal{B}_{5, i}^{\alpha}\,\partial_{\alpha}v^{i}\bigg)\, dx.
\end{split}
\end{equation}
For $I_2$, a direct computation gives rise to
\begin{equation}\label{pseudo-energy-tv-8}
\begin{split}
 &I_2=\int_{\Omega} \mathcal{P}(\partial_h) \partial_t v\cdot\mathcal{P}(\partial_h) \nabla\,\mathcal{H}(\xi^1)  \,dx\\
 &=\frac{d}{dt}\int_{\Omega} \mathcal{P}(\partial_h) v\cdot\mathcal{P}(\partial_h) \nabla\,\mathcal{H}(\xi^1)  \,dx-\int_{\Omega} \mathcal{P}(\partial_h)  v\cdot\mathcal{P}(\partial_h) \nabla\, \mathcal{H}(v^1)  \,dx.
\end{split}
\end{equation}
While for $I_3$, it is easy to get
\begin{equation}\label{pseudo-energy-tv-8a}
\begin{split}
 &I_3=-2\nu\int_{\Omega} \mathcal{P}(\partial_h) \partial_t v^1\,\mathcal{P}(\partial_h) \partial_1\nabla_h\cdot v^h\,dx-2\nu\int_{\Omega} \mathcal{P}(\partial_h) \partial_t v^h\cdot\mathcal{P}(\partial_h) \nabla_h\nabla_h\cdot v^h\,dx\\
 &=-2\nu\int_{\Omega} \mathcal{P}(\partial_h) \partial_t v^1\,\mathcal{P}(\partial_h) \partial_1\nabla_h\cdot v^h\,dx+\nu\frac{d}{dt}\|\mathcal{P}(\partial_h)\nabla_h\cdot v^h\|_{L^2}^2.
\end{split}
\end{equation}
Thus, plugging \eqref{pseudo-energy-tv-7}-\eqref{pseudo-energy-tv-8a} into \eqref{pseudo-energy-tv-3}, one has
\begin{equation*}\label{pseudo-energy-tv-9}
\begin{split}
 &I=\frac{d}{dt}\bigg(\int_{\Omega}\mathcal{P}(\partial_h) v\cdot\mathcal{P}(\partial_h)\nabla\,\mathcal{H}(\xi^1)  \,dx+\nu\|\mathcal{P}(\partial_h)\nabla_h\cdot v^h\|_{L^2}^2\bigg)\\
 &-\int_{\Omega}\mathcal{P}(\partial_h) v\cdot\mathcal{P}(\partial_h)\nabla\,\mathcal{H}(v^1)  \,dx\\
 & \,-2\nu\int_{\Omega}\mathcal{P}(\partial_h)\partial_t v^1\,\mathcal{P}(\partial_h)\partial_1\nabla_h\cdot v^h\,dx-\int_{ \Omega}\mathcal{P}(\partial_h)\partial_1\mathcal{Q}
\mathcal{P}(\partial_h)(\widetilde{a_{\alpha\,1}} \partial_tv^\alpha)\, dx\\
 &\,-\int_{ \Omega}\mathcal{P}(\partial_h)\mathcal{Q}\,
 \mathcal{P}(\partial_h)\bigg(\partial_1\widetilde{a_{\alpha\,1}} \partial_tv^\alpha-\partial_{\alpha}\mathcal{B}_{5, i}^{\alpha}\partial_tv^{i}-\partial_t\widetilde{a_{\alpha\,1}}\,\partial_1v^\alpha
+\partial_t\mathcal{B}_{5, i}^{\alpha}\,\partial_{\alpha}v^{i}\bigg)\, dx\\
 &\,+\int_{ \Omega} \mathcal{P}(\partial_h)\partial_{\alpha}\mathcal{Q} \,\mathcal{P}(\partial_h)(\mathcal{B}_{5, i}^{\alpha}\partial_tv^{i})\, dx+\nu\,\int_{\Omega}\mathcal{P}(\partial_h)\partial_t v\,\cdot\,\mathcal{P}(\partial_h)\nabla\,( \mathcal{H}(\mathcal{B}_{9, i}^{\alpha})\partial_{\alpha}v^{i})\,dx.
\end{split}
\end{equation*}
For $II$, we split it into two parts:
\begin{equation*}\label{pseudo-energy-tv-10}
\begin{split}
   &II=-\nu\int_{\Omega}\mathcal{P}(\partial_h)\partial_t v\cdot\Delta(\mathcal{P}(\partial_h)v) \,dx-\nu\int_{\Omega}\mathcal{P}(\partial_h)\partial_t v\cdot\nabla(\nabla\cdot\mathcal{P}(\partial_h)v) \,dx=:II_1+II_2.
    \end{split}
\end{equation*}
Due to the second equation in \eqref{eqns-linear-pseudo-1}, one can see
\begin{equation}\label{pseudo-energy-tv-12}
\begin{split}
   &II_2=-\nu\int_{\Omega}\mathcal{P}(\partial_h)\partial_t v\,\cdot\,\mathcal{P}(\partial_h) (- \nabla\,(\widetilde{a_{\alpha\,1}}\partial_1v^\alpha)+\nabla\,(\mathcal{B}_{5, i}^{\alpha}\partial_{\alpha}v^{i})) \,dx.
    \end{split}
\end{equation}
The calculation of $II_1$ should be more delicate. In fact, it can be immediately verified that
\begin{equation}\label{pseudo-energy-tv-13}
\begin{split}
   II_1&=-\nu\int_{\Omega}\mathcal{P}(\partial_h)\partial_t v\cdot\,\mathcal{P}(\partial_h)\Delta_hv\,dx-\nu\int_{\Omega}\mathcal{P}(\partial_h)\partial_t v\cdot\,\mathcal{P}(\partial_h)\partial_1^2v\,dx\\
    &=\frac{\nu}{2}\frac{d}{dt}\int_{\Omega}|\mathcal{P}(\partial_h)\partial_hv|^2\,dx +\sum_{j=1}^3II_{1, j},\\
        \end{split}
\end{equation}
where $II_{1, j}:= -\nu\int_{\Omega}\mathcal{P}(\partial_h)\partial_t v^j\, \partial_1^2v^j \,dx$
with $j=1, 2, 3$.

For $II_{1, 1}$, using the equation $\partial_1v^1=\mathcal{G}^1-\nabla_h\cdot v^h+\mathcal{H}(\mathcal{B}_{6, i}^{\alpha})\partial_{\alpha}v^{i}$ in \eqref{partialv-q-1} produces that
\begin{equation}\label{pseudo-energy-tv-14}
\begin{split}
 & II_{1, 1}=- \nu\int_{\Omega}\mathcal{P}(\partial_h)\partial_t v^{1}\,\mathcal{P}(\partial_h)\partial_1\mathcal{G}^1\,dx\\
  &\qquad\qquad- \nu\int_{\Omega}\mathcal{P}(\partial_h)\partial_t v^{1}\,\mathcal{P}(\partial_h)\partial_1(-\nabla_h\cdot v^h+\mathcal{H}(\mathcal{B}_{6, i}^{\alpha})\partial_{\alpha}v^{i})\,dx.
     \end{split}
\end{equation}
For the first integral of the right hand side in \eqref{pseudo-energy-tv-14}, thanks to the boundary conditions $v|_{\Sigma_b}=0$ and $\mathcal{G}^1|_{\Sigma_0}=0$, integrating by parts yields
\begin{equation*}\label{pseudo-energy-tv-15}
\begin{split}
 & - \nu\int_{\Omega}\mathcal{P}(\partial_h)\partial_t v^{1}\,\mathcal{P}(\partial_h)\partial_1\mathcal{G}^1\,dx=\nu\int_{\Omega}\mathcal{P}(\partial_h)\partial_t\partial_1 v^{1}\,\mathcal{P}(\partial_h)\mathcal{G}^1\,dx\\
 &=\nu\int_{\Omega}\mathcal{P}(\partial_h)\partial_t\partial_1 v^{1}\,\mathcal{P}(\partial_h)(\partial_1v^1+\nabla_h\cdot v^h-\mathcal{H}(\mathcal{B}_{6, i}^{\alpha})\partial_{\alpha}v^{i})\,dx\\
  &=\frac{\nu}{2}\frac{d}{dt}\int_{\Omega}|\mathcal{P}(\partial_h)\partial_1 v^{1}|^2\,dx+\nu\frac{d}{dt}\int_{\Omega}\mathcal{P}(\partial_h)\partial_1 v^{1}\,\mathcal{P}(\partial_h)(\nabla_h\cdot v^h-\mathcal{H}(\mathcal{B}_{6, i}^{\alpha})\partial_{\alpha}v^{i})\,dx\\
   &\qquad-\nu \int_{\Omega}\mathcal{P}(\partial_h)\partial_1 v^{1}\,\mathcal{P}(\partial_h)\partial_t(\nabla_h\cdot v^h-\mathcal{H}(\mathcal{B}_{6, i}^{\alpha})\partial_{\alpha}v^{i})\,dx,\\
     \end{split}
\end{equation*}
which follows
\begin{equation}\label{pseudo-energy-tv-16}
\begin{split}
 & - \nu\int_{\Omega}\mathcal{P}(\partial_h)\partial_t v^{1}\,\mathcal{P}(\partial_h)\partial_1\mathcal{G}^1\,dx=-\nu \int_{\Omega}\mathcal{P}(\partial_h)\partial_1 \partial_{\alpha} v^{1}\,\mathcal{P}(\partial_h)\partial_t(\mathcal{H}(\mathcal{B}_{6, i}^{\alpha})v^{i})\,dx\\
 &-\nu \int_{\Omega}\mathcal{P}(\partial_h)\partial_1  v^{1}\,\mathcal{P}(\partial_h)\partial_t(\partial_{\alpha}\mathcal{H}(\mathcal{B}_{6, i}^{\alpha})v^{i})\,dx+\nu \int_{\Omega}\mathcal{P}(\partial_h)\nabla_h\partial_1 v^{1}\,\cdot\,\mathcal{P}(\partial_h)\partial_t v^h\,dx\\
 &+\frac{\nu}{2}\frac{d}{dt}\int_{\Omega}\bigg(|\mathcal{P}(\partial_h)\partial_1 v^{1}|^2+2\mathcal{P}(\partial_h)\partial_t\partial_1 v^{1}\,\mathcal{P}(\partial_h)(\nabla_h\cdot v^h-\mathcal{H}(\mathcal{B}_{6, i}^{\alpha})\partial_{\alpha}v^{i})\bigg)\,dx.
     \end{split}
\end{equation}
Plugging \eqref{pseudo-energy-tv-16} into \eqref{pseudo-energy-tv-14}, we get
\begin{equation}\label{pseudo-energy-tv-17}
\begin{split}
 & II_{1, 1}=\frac{\nu}{2}\frac{d}{dt}\bigg[\|\mathcal{P}(\partial_h)\partial_1 v^{1}\|_{L^2}^2+2\int_{\Omega}\mathcal{P}(\partial_h)\partial_1 v^{1}\,\mathcal{P}(\partial_h)(\nabla_h\cdot v^h-\mathcal{H}(\mathcal{B}_{6, i}^{\alpha})\partial_{\alpha}v^{i})\,dx\bigg]\\
   &+\nu \bigg(\int_{\Omega}\mathcal{P}(\partial_h)\nabla_h\partial_1 v^{1}\,\cdot\,\mathcal{P}(\partial_h)\partial_t v^h\,dx+ \int_{\Omega}\mathcal{P}(\partial_h)\partial_t v^{1}\,\mathcal{P}(\partial_h)\partial_1\nabla_h\cdot v^h\,dx\bigg)\\
   &-\nu \bigg(\int_{\Omega}\mathcal{P}(\partial_h)\partial_1 \partial_{\alpha} v^{1}\,\mathcal{P}(\partial_h)\partial_t(\mathcal{H}(\mathcal{B}_{6, i}^{\alpha})v^{i})\,dx+ \int_{\Omega}\mathcal{P}(\partial_h)\partial_1  v^{1}\,\mathcal{P}(\partial_h)\partial_t(\partial_{\alpha}\mathcal{H}(\mathcal{B}_{6, i}^{\alpha})v^{i})\,dx\\
   &\qquad\qquad\qquad+\int_{\Omega}\mathcal{P}(\partial_h)\partial_t v^{1}\,\mathcal{P}(\partial_h)\partial_1(\mathcal{H}(\mathcal{B}_{6, i}^{\alpha})\partial_{\alpha}v^{i})\,dx\bigg).
     \end{split}
\end{equation}
Following our analysis of the term $II_{1, 1}$, we use
$\partial_1 v^2=\mathcal{G}^2-\partial_2 v^1+\mathcal{H}(\mathcal{B}_{7, i}^{\alpha})\partial_{\alpha}v^{i}$, $\partial_1 v^3=\mathcal{G}^3-\partial_3 v^1+\mathcal{H}(\mathcal{B}_{8, i}^{\alpha})\partial_{\alpha}v^{i}$ in \eqref{partialv-q-1} to $II_{1, 2}$, $II_{1, 3}$ respectively to obtain
\begin{equation}\label{pseudo-energy-tv-18}
\begin{split}
   &II_{1, 2}+II_{1, 3}= \frac{\nu}{2}\frac{d}{dt}\bigg[\|\mathcal{P}(\partial_h)\partial_1 v^h\|_{L^2}^2+2\int_{\Omega}\mathcal{P}(\partial_h)\partial_1 v^h\,\cdot\,\mathcal{P}(\partial_h) (\nabla_h v^1) \,dx\\
   &\qquad\qquad\qquad\qquad\qquad\qquad-2\int_{\Omega} \sum_{\beta=2}^3\mathcal{P}(\partial_h)\partial_1 v^\beta\,\mathcal{P}(\partial_h) (\mathcal{H}(\mathcal{B}_{\beta+5, i}^{\alpha})\partial_{\alpha}v^{i})\,dx\bigg]\\
  &-\nu \sum_{\beta=2}^3\int_{\Omega}\bigg(\mathcal{P}(\partial_h)\partial_1\partial_{\alpha} v^\beta\,\mathcal{P}(\partial_h) \partial_t ( \mathcal{H}(\mathcal{B}_{\beta+5, i}^{\alpha})v^{i})+\mathcal{P}(\partial_h)\partial_1  v^\beta\,\mathcal{P}(\partial_h) \partial_t ( \partial_{\alpha} \mathcal{H}(\mathcal{B}_{\beta+5, i}^{\alpha})v^{i}) \\
  & \qquad\qquad\qquad\qquad\qquad\qquad\qquad\qquad+\mathcal{P}(\partial_h)\partial_t v^\beta\, \mathcal{P}(\partial_h)\partial_1(\mathcal{H}(\mathcal{B}_{\beta+5, i}^{\alpha})\partial_{\alpha}v^{i})\bigg)\,dx\\
   &+\nu\int_{\Omega} \mathcal{P}(\partial_h)\partial_1 \nabla_h\cdot v^h\,\mathcal{P}(\partial_h) \partial_t  v^1  \,dx+\nu\int_{\Omega}\mathcal{P}(\partial_h)\partial_t v^h\, \cdot\,\mathcal{P}(\partial_h)\nabla_h\partial_1 v^1 \,dx.
    \end{split}
\end{equation}
Combining \eqref{pseudo-energy-tv-17} with \eqref{pseudo-energy-tv-18} leads to
\begin{equation}\label{pseudo-energy-tv-19}
\begin{split}
   &II_{1, 1}+II_{1, 2}+II_{1, 3}= \frac{\nu}{2}\frac{d}{dt}\bigg[\|\mathcal{P}(\partial_h)\partial_1 v\|_{L^2}^2+2\int_{\Omega}\mathcal{P}(\partial_h)\partial_1 v^h\,\cdot\,\mathcal{P}(\partial_h) (\nabla_h v^1) \,dx\\
   &+2\int_{\Omega}\mathcal{P}(\partial_h)\partial_1 v^{1}\,\mathcal{P}(\partial_h)\nabla_h\cdot v^h\,dx-2\int_{\Omega}\sum_{j=1}^3\mathcal{P}(\partial_h)\partial_1 v^j\,\mathcal{P}(\partial_h) (\mathcal{H}(\mathcal{B}_{j+5, i}^{\alpha})\partial_{\alpha}v^{i})\,dx\bigg]\\
  &-\nu\int_{\Omega} \sum_{j=1}^3\bigg(\mathcal{P}(\partial_h)\partial_1\partial_{\alpha} v^j\,\mathcal{P}(\partial_h) \partial_t ( \mathcal{H}(\mathcal{B}_{j+5, i}^{\alpha})v^{i})
  +\mathcal{P}(\partial_h)\partial_1  v^j\,\mathcal{P}(\partial_h) \partial_t ( \partial_{\alpha} \mathcal{H}(\mathcal{B}_{j+5, i}^{\alpha})v^{i}) \\
  &\qquad\qquad\qquad\qquad\qquad\qquad\qquad\qquad\qquad\qquad+\mathcal{P}(\partial_h)\partial_t v^j\, \partial_1(\mathcal{H}(\mathcal{B}_{j+5, i}^{\alpha})\partial_{\alpha}v^{i}) \bigg)\,dx\\
   &+2\nu\int_{\Omega} \mathcal{P}(\partial_h)\partial_1 \nabla_h\cdot v^h\,\mathcal{P}(\partial_h) \partial_t  v^1  \,dx+2\nu\int_{\Omega}\mathcal{P}(\partial_h)\partial_t v^h\, \cdot\,\mathcal{P}(\partial_h)\nabla_h\partial_1 v^1 \,dx.
    \end{split}
\end{equation}
Using the incompressible condition $\partial_1 v^1=-\nabla_h\cdot v^h-\widetilde{a_{\alpha\,1}}\,\partial_1v^\alpha
+\mathcal{B}_{5, i}^{\alpha}\,\partial_{\alpha}v^{i}$ in \eqref{incomp-cond-fluid-3}, we infer
\begin{equation*}\label{pseudo-energy-tv-20}
\begin{split}
   &2\nu\int_{\Omega}\mathcal{P}(\partial_h)\partial_t v^h\, \cdot\,\mathcal{P}(\partial_h)\nabla_h\partial_1 v^1 \,dx=2\nu\int_{\Omega}\mathcal{P}(\partial_h)\partial_t v^h\, \cdot\,\mathcal{P}(\partial_h)\nabla_h(-\nabla_h\cdot v^h) \,dx\\
   &\qquad\qquad+2\nu\int_{\Omega}\mathcal{P}(\partial_h)\partial_t v^h\, \cdot\,\mathcal{P}(\partial_h)\nabla_h(-\widetilde{a_{\alpha\,1}}\,\partial_1v^\alpha
+\mathcal{B}_{5, i}^{\alpha}\,\partial_{\alpha}v^{i}) \,dx\\
&=\nu\,\frac{d}{dt}\|\mathcal{P}(\partial_h)\nabla_h\cdot v^h\|_{L^2}^2+2\nu\int_{\Omega}\mathcal{P}(\partial_h)\partial_t v^h\, \cdot\,\mathcal{P}(\partial_h)\nabla_h(-\widetilde{a_{\alpha\,1}}\,\partial_1v^\alpha
+\mathcal{B}_{5, i}^{\alpha}\,\partial_{\alpha}v^{i}) \,dx,
    \end{split}
\end{equation*}
which follows from \eqref{pseudo-energy-tv-19} and \eqref{pseudo-energy-tv-13} that
\begin{equation}\label{pseudo-energy-tv-21a}
\begin{split}
   &II_{1}= \frac{\nu}{2}\frac{d}{dt}\bigg[\|\mathcal{P}(\partial_h)\nabla\, v\|_{L^2}^2+\|\mathcal{P}(\partial_h)\nabla_h\cdot v^h\|_{L^2}^2+2\int_{\Omega}\mathcal{P}(\partial_h)\partial_1 v^h\,\cdot\,\mathcal{P}(\partial_h) (\nabla_h v^1) \,dx\\
   &+2\int_{\Omega}\mathcal{P}(\partial_h)\partial_1 v^{1}\,\mathcal{P}(\partial_h)\nabla_h\cdot v^h\,dx-2\int_{\Omega}\sum_{j=1}^3\mathcal{P}(\partial_h)\partial_1 v^j\,\mathcal{P}(\partial_h) (\mathcal{H}(\mathcal{B}_{j+5, i}^{\alpha})\partial_{\alpha}v^{i})\,dx\bigg]\\
  &+2\nu\int_{\Omega} \mathcal{P}(\partial_h)\partial_1 \nabla_h\cdot v^h\,\mathcal{P}(\partial_h) \partial_t  v^1  \,dx-\nu\int_{\Omega} \sum_{j=1}^3\bigg(\mathcal{P}(\partial_h)\partial_1\partial_{\alpha} v^j\,\mathcal{P}(\partial_h) \partial_t ( \mathcal{H}(\mathcal{B}_{j+5, i}^{\alpha})v^{i})
  \\
  &\qquad+\mathcal{P}(\partial_h)\partial_1  v^j\,\mathcal{P}(\partial_h) \partial_t ( \partial_{\alpha} \mathcal{H}(\mathcal{B}_{j+5, i}^{\alpha})v^{i}) +\mathcal{P}(\partial_h)\partial_t v^j\, \partial_1(\mathcal{H}(\mathcal{B}_{j+5, i}^{\alpha})\partial_{\alpha}v^{i}) \bigg)\,dx\\
   &+2\nu\int_{\Omega}\mathcal{P}(\partial_h)\partial_t v^h\, \cdot\,\mathcal{P}(\partial_h)\nabla_h(-\widetilde{a_{\alpha\,1}}\,\partial_1v^\alpha
+\mathcal{B}_{5, i}^{\alpha}\,\partial_{\alpha}v^{i}) \,dx.
    \end{split}
\end{equation}
Combining \eqref{pseudo-energy-tv-2}, \eqref{pseudo-energy-tv-7}, \eqref{pseudo-energy-tv-12}, \eqref{pseudo-energy-tv-21a} gives rise to
\begin{equation}\label{pseudo-energy-tv-21}
\begin{split}
   I+II=I+II_1+II_2=\frac{\nu}{2}\frac{d}{dt}\mathring{\mathcal{E}}(\mathcal{P}(\partial_h)\nabla\, v)-\sum_{j=1}^7\mathfrak{J}_j.
        \end{split}
\end{equation}
Inserting \eqref{pseudo-energy-tv-21} into \eqref{pseudo-energy-tv-2} yields \eqref{pseudo-energy-tv-1}, which is the desired result.
\end{proof}

\subsubsection{Estimate of $\|\dot{\Lambda}_h^{\sigma_0}\nabla\,v\|_{L^2}$}

\begin{lem}\label{lem-grad-sigma0-1}
Under the assumption of Lemma \ref{lem-pseudo-energy-tv-1}, if $E_3(t) \leq 1$ for all the existence times $t$, then there holds
\begin{equation}\label{grad-sigma0-tv-1}
\begin{split}
 & \frac{\nu}{2}\frac{d}{dt}\mathring{\mathcal{E}}(\dot{\Lambda}_h^{\sigma_0}\nabla\, v)+\|\dot{\Lambda}_h^{\sigma_0}\partial_t v\|_{L^2}^2\lesssim  \|\dot{\Lambda}_h^{\sigma_0}\nabla\,v\|_{L^2}^2+E_3^{\frac{1}{2}}\dot{\mathcal{D}}_3.
        \end{split}
\end{equation}
\end{lem}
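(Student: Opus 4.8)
The plan is to specialize the energy identity of Lemma~\ref{lem-pseudo-energy-tv-1} to the homogeneous horizontal multiplier $\mathcal{P}(\partial_h)=\dot{\Lambda}_h^{\sigma_0}$, which reduces \eqref{grad-sigma0-tv-1} to showing that each of the eight remainder terms $\mathfrak{J}_1,\dots,\mathfrak{J}_8$ produced thereby satisfies $|\mathfrak{J}_j|\lesssim \|\dot{\Lambda}_h^{\sigma_0}\nabla v\|_{L^2}^2+E_3^{1/2}\dot{\mathcal{D}}_3$, with a fraction of $\|\dot{\Lambda}_h^{\sigma_0}\partial_t v\|_{L^2}^2$ allowed on the right to be absorbed by the left-hand side. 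The toolkit is exactly the one already assembled: the harmonic-extension bounds of Lemma~\ref{est-harmonic-ext-1}, the product laws and embedding of Lemmas~\ref{lem-product-law-1} and \ref{lem-embedding-ineq-1}, the anisotropic coefficient estimates of Lemmas~\ref{lem-est-aij-1} and \ref{lem-est-B-Bv-1}, the bounds for $g$, $g_{jj}$, $\widetilde g_{jj}$ of Lemma~\ref{lem-est-g-1}, the relations \eqref{prt1-q-vh-rela-2} expressing $\partial_1 q$ and $\partial_1^2 v^1$ through $\partial_t v$, horizontal derivatives of $v$ and lower-order nonlinearities, the Poincar\'e inequality (valid for $v|_{\Sigma_b}=0$ and for $\mathcal{Q},\mathcal{G}^j$, which vanish on $\Sigma_0$), and finally Young's inequality together with the smallness $E_3\le 1$.

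I would treat $\mathfrak{J}_1$ first, since it is the source of the $\|\dot{\Lambda}_h^{\sigma_0}\nabla v\|_{L^2}^2$ term. Using $\partial_t\xi^1=v^1$ and the linearity of the harmonic extension gives $\partial_t\mathcal{H}(\xi^1)=\mathcal{H}(v^1)$, so $\mathfrak{J}_1=\int_\Omega \dot{\Lambda}_h^{\sigma_0}v\cdot\dot{\Lambda}_h^{\sigma_0}\nabla\mathcal{H}(v^1)\,dx$; by Cauchy--Schwarz, Lemma~\ref{est-harmonic-ext-1}, the trace theorem and Poincar\'e this is $\lesssim\|\dot{\Lambda}_h^{\sigma_0}v\|_{L^2}\,\|\dot{\Lambda}_h^{\sigma_0}v^1\|_{H^{1/2}(\Sigma_0)}\lesssim\|\dot{\Lambda}_h^{\sigma_0}\nabla v\|_{L^2}^2$.

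For $\mathfrak{J}_2,\dots,\mathfrak{J}_8$ the point is that each term carries a genuinely nonlinear factor that is $O(E_3^{1/2})$ in the relevant anisotropic norm: either $\widetilde{a_{\alpha1}}$, $\mathcal{B}_{k,i}^\alpha$, $\mathcal{A}_i^j-\delta_i^j$, a time derivative of such a quantity — which by \eqref{identity-Lagrangian-1} equals $\nabla v$ contracted with bounded matrices and is again small — or a harmonic extension of these, or $v$ itself. Pairing this small factor (controlled by Lemmas~\ref{lem-est-aij-1} and \ref{lem-est-B-Bv-1}) against the remaining factors via the product law \eqref{product-law-1} and the embedding \eqref{embedding-ineq-1}, rewriting every occurrence of $\partial_1 q$, $\partial_1^2 v^1$, $g$, $g_{jj}$, $\widetilde g_{jj}$ through \eqref{prt1-q-vh-rela-2} and Lemma~\ref{lem-est-g-1}, and every occurrence of $\mathcal{Q}$ or $\partial_h\mathcal{Q}$ through $\mathcal{Q}|_{\Sigma_0}=0$, Poincar\'e in $x_1$ and again \eqref{prt1-q-vh-rela-2}, one finds that all the non-small factors are dominated by $\dot{\mathcal{D}}_3^{1/2}$ (using that $\dot{\mathcal{D}}_3$ contains $\|\dot{\Lambda}_h^{\sigma_0}(\nabla^2 v,\nabla p,\partial_t v)\|_{L^2}^2$, $\|\dot{\Lambda}_h^{\sigma_0+1}\xi^1\|_{L^2(\Sigma_0)}^2$ and up to three plain horizontal derivatives of $\nabla v,\partial_t v,\nabla^2 v$, plus interpolation since $\sigma_0\in(0,1)$). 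This yields $|\mathfrak{J}_j|\lesssim E_3^{1/2}\dot{\mathcal{D}}_3$ for $j=2,\dots,8$, possibly after a Young's inequality producing an $E_3^{1/2}\|\dot{\Lambda}_h^{\sigma_0}\partial_t v\|_{L^2}^2$ to be absorbed on the left; summing all the estimates gives \eqref{grad-sigma0-tv-1}.

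The main obstacle will be the clean handling of the $\mathcal{Q}$- and $\mathcal{H}(\xi^1)$-driven pieces, namely $\mathfrak{J}_2,\mathfrak{J}_5,\mathfrak{J}_6$ and the $\mathcal{H}(\xi^1)$, $\mathcal{H}(\mathcal{B}_{9,i}^\alpha)$ contributions hidden inside $\partial_1\mathcal{Q}$: since $\mathcal{Q}$ is not itself an energy or dissipation quantity, one must repeatedly invoke $\mathcal{Q}|_{\Sigma_0}=0$ together with the momentum equation, and verify that the homogeneous fractional traces of $\xi^1$ generated by $\partial_1\mathcal{H}(\xi^1)$ (of the form $\|\dot{\Lambda}_h^{\sigma_0+1/2}\xi^1\|_{L^2(\Sigma_0)}$) are genuinely controlled by $\dot{\mathcal{D}}_3$ after interpolation. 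This is pure bookkeeping, but it is where the precise constraints on $\sigma_0$ and the exact composition of $\dot{\mathcal{D}}_3$ enter; everything else is a routine repetition of the arguments already used in Lemmas~\ref{lem-tan-decay-sigma0-1} and \ref{lem-tan-bdd-lambda-1}.
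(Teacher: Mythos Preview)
Your proposal is correct and follows essentially the same approach as the paper: specialize Lemma~\ref{lem-pseudo-energy-tv-1} to $\mathcal{P}(\partial_h)=\dot{\Lambda}_h^{\sigma_0}$, bound $\mathfrak{J}_1$ by $\|\dot{\Lambda}_h^{\sigma_0}\nabla v\|_{L^2}^2$ via the harmonic extension, and bound $\mathfrak{J}_2,\dots,\mathfrak{J}_8$ by $E_3^{1/2}\dot{\mathcal{D}}_3$ using the product laws, Lemmas~\ref{lem-est-aij-1}--\ref{lem-est-g-1}, and $\mathcal{Q}|_{\Sigma_0}=0$. The only minor difference is that for the $\partial_1\mathcal{Q}$ terms the paper controls $\|\dot{\Lambda}_h^{1+\sigma_0}\partial_1 q\|_{L^2}$ directly from the dissipation $\ddot{\mathcal{D}}_{\ell,2}\subset\dot{\mathcal{D}}_3$ rather than passing through \eqref{prt1-q-vh-rela-2}, and no absorption of $\|\dot{\Lambda}_h^{\sigma_0}\partial_t v\|_{L^2}^2$ via Young is actually needed---the bounds are direct.
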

\begin{proof}
Taking $\mathcal{P}(\partial_h)=\dot{\Lambda}_h^{\sigma_0}$ in \eqref{pseudo-energy-tv-1}, we will estimate all the integrals in the right hand side of \eqref{pseudo-energy-tv-1} one by one.

For $\mathfrak{J}_1$, we directly bound it by
\begin{equation*}\label{grad-sigma0-tv-2}
\begin{split}
 &|\mathfrak{J}_1|\lesssim\|\dot{\Lambda}_h^{\sigma_0} v\|_{L^2}\|\dot{\Lambda}_h^{\sigma_0}\nabla\,\partial_t \mathcal{H}(\xi^1)\|_{L^2}\lesssim\|\dot{\Lambda}_h^{\sigma_0} v\|_{L^2}\|\dot{\Lambda}_h^{\sigma_0}\nabla\,v\|_{L^2}\lesssim \|\dot{\Lambda}_h^{\sigma_0}\nabla\,v\|_{L^2}^2,\\
          \end{split}
\end{equation*}
While for $\mathfrak{J}_2$, the product law \eqref{product-law-1} ensures that
\begin{equation*}\label{grad-sigma0-tv-3}
\begin{split}
 &|\mathfrak{J}_2|\lesssim \|\dot{\Lambda}_h^{\sigma_0+1}\partial_1\mathcal{Q}\|_{L^2} \|\dot{\Lambda}_h^{\sigma_0-1}(\widetilde{a_{\alpha\,1}} \partial_tv^\alpha)\|_{L^2}\\
 &\lesssim \|\dot{\Lambda}_h^{\sigma_0+1}\partial_1\mathcal{Q}\|_{L^2} \|\widetilde{a_{\alpha\,1}}\|_{L^\infty_{x_1}L^2_h}
 \|\dot{\Lambda}_h^{\sigma_0}\partial_tv^h \|_{L^2}.
         \end{split}
\end{equation*}
Since $\mathcal{Q}= q-\mathcal{H}(\xi^1)  +2\nu\,\nabla_h\cdot v^h-\nu\,\mathcal{H}(\mathcal{B}_{9, i}^{\alpha})\partial_{\alpha}v^{i}$, we have
\begin{equation}\label{grad-sigma0-tv-4}
\begin{split}
& \|\dot{\Lambda}_h^{1+\sigma_0}\partial_1\mathcal{Q}\|_{L^2}\lesssim  \|\dot{\Lambda}_h^{1+\sigma_0}\partial_1q\|_{L^2}+ \|\dot{\Lambda}_h^{1+\sigma_0}\partial_1\mathcal{H}(\xi^1)\|_{L^2}  + \|\dot{\Lambda}_h^{1+\sigma_0}\partial_1\nabla_h\cdot v^h \|_{L^2}\\
 &\qquad\qquad\qquad\qquad\qquad\qquad\qquad\qquad+ \|\dot{\Lambda}_h^{1+\sigma_0}\partial_1(\mathcal{H}(\mathcal{B}_{9, i}^{\alpha})\partial_{\alpha}v^{i})\|_{L^2}\\
 &\lesssim  \|\dot{\Lambda}_h^{1+\sigma_0}\partial_1q\|_{L^2}+ \|\dot{\Lambda}_h^{1+\sigma_0} \xi^1 \|_{H^{\frac{1}{2}}(\Sigma_0)}  + \|\dot{\Lambda}_h^{1+\sigma_0}\partial_1\nabla_h\cdot v^h \|_{L^2}\\
 &\qquad\qquad\qquad\qquad\qquad\qquad\qquad\qquad+ \|\dot{\Lambda}_h^{1+\sigma_0}\partial_1(\mathcal{H}(\mathcal{B}_{9, i}^{\alpha})\partial_{\alpha}v^{i})\|_{L^2}\lesssim \dot{\mathcal{D}}_3^{\frac{1}{2}},
\end{split}
\end{equation}
which, along with $\|(\widetilde{a_{21}},\widetilde{a_{31}})\|_{L^\infty_{x_1}L^2_h}
 \lesssim E_3^{\frac{1}{2}}$, follows that
 \begin{equation*}\label{grad-sigma0-tv-5}
\begin{split}
 &|\mathfrak{J}_2|\lesssim  E_3^{\frac{1}{2}}\dot{\mathcal{D}}_3.
         \end{split}
\end{equation*}
For $\mathfrak{J}_3$, one has
 \begin{equation*}\label{grad-sigma0-tv-6}
\begin{split}
   &|\mathfrak{J}_3|\lesssim \|\dot{\Lambda}_h^{\sigma_0}\partial_t v\|_{L^2} (\|\dot{\Lambda}_h^{\sigma_0} \nabla\,(\widetilde{a_{\alpha\,1}}\partial_1v^\alpha)\|_{L^2}+\|\dot{\Lambda}_h^{\sigma_0} \nabla\,(\mathcal{B}_{5, i}^{\alpha}\partial_{\alpha}v^{i})\|_{L^2}),\\
           \end{split}
\end{equation*}
Notice that
 \begin{equation*}\label{grad-sigma0-tv-7}
\begin{split}
   &\|\dot{\Lambda}_h^{\sigma_0} \nabla\,(\widetilde{a_{\alpha\,1}}\partial_1v^\alpha)\|_{L^2} \lesssim \|\dot{\Lambda}_h^{\sigma_0}(\widetilde{a_{\alpha\,1}} \nabla\,\partial_1v^\alpha)\|_{L^2}+\|\dot{\Lambda}_h^{\sigma_0} ( \nabla\,\widetilde{a_{\alpha\,1}}\partial_1v^\alpha)\|_{L^2}\\
   & \lesssim \|\dot{\Lambda}_h^{(1+\sigma_0)/2}\widetilde{a_{\alpha\,1}}\|_{L^\infty_{x_1}(L^2_h)} \|\dot{\Lambda}_h^{(1+\sigma_0)/2}\nabla\,\partial_1v^\alpha\|_{L^2}\\
   &\qquad\qquad+\| \|\dot{\Lambda}_h^{(1+\sigma_0)/2}\nabla\,\widetilde{a_{\alpha\,1}}\|_{L^2}
   \|\dot{\Lambda}_h^{(1+\sigma_0)/2}\partial_1v^\alpha\|_{L^\infty_{x_1}L^2_h}\lesssim E_3^{\frac{1}{2}}\dot{\mathcal{D}}_3^{\frac{1}{2}}.
           \end{split}
\end{equation*}
Similarly, it can be checked that $\|\dot{\Lambda}_h^{\sigma_0} \nabla\,(\mathcal{B}_{5, i}^{\alpha}\partial_{\alpha}v^{i})\|_{L^2})\lesssim E_3^{\frac{1}{2}}\dot{\mathcal{D}}_3^{\frac{1}{2}}$,
which ensures that
 \begin{equation*}\label{grad-sigma0-tv-9}
\begin{split}
   &|\mathfrak{J}_3|\lesssim E_3^{\frac{1}{2}}\dot{\mathcal{D}}_3.
           \end{split}
\end{equation*}
The same proof remains valid for $\mathfrak{J}_4$, and one has
 \begin{equation*}\label{grad-sigma0-tv-10}
\begin{split}
   &|\mathfrak{J}_4|\lesssim E_3^{\frac{1}{2}}\dot{\mathcal{D}}_3.
           \end{split}
\end{equation*}
We control $\mathfrak{J}_5$ by
 \begin{equation*}\label{grad-sigma0-tv-11}
\begin{split}
 &|\mathfrak{J}_5|\lesssim \|\dot{\Lambda}_h^{\sigma_0+1}\mathcal{Q}\|_{L^\infty_{x_1}L^2_h} (\|\dot{\Lambda}_h^{\sigma_0-1}(\partial_1\widetilde{a_{21}} \partial_tv^2
 +\partial_1\widetilde{a_{31}} \partial_tv^3)\|_{L^1_{x_1}L^2_h}\\
 &\qquad\qquad\qquad\qquad+\|\dot{\Lambda}_h^{\sigma_0-1}(-\partial_t\widetilde{a_{21}}\,\partial_1v^2
 -\partial_t\widetilde{a_{31}}\,\partial_1v^3
+\partial_t\mathcal{B}_{5, i}^{\alpha}\,\partial_{\alpha}v^{i})\|_{L^1_{x_1}L^2_h})\\
 &\lesssim \|\dot{\Lambda}_h^{\sigma_0+1}\partial_1\mathcal{Q}\|_{L^2} \|(\partial_1\widetilde{a_{21}},\partial_1\widetilde{a_{31}}, \partial_t\widetilde{a_{21}},\,\partial_t\widetilde{a_{31}},\,\partial_t\mathcal{B}_{5, i}^{\alpha})\|_{L^2} \|\dot{\Lambda}_h^{\sigma_0}(\partial_tv^h,\,\partial_1v^h, \,\partial_{\alpha}v^{i}) \|_{L^2},
         \end{split}
\end{equation*}
which along with \eqref{grad-sigma0-tv-4} yields that
 \begin{equation*}\label{grad-sigma0-tv-12}
\begin{split}
 &|\mathfrak{J}_5|\lesssim E_3^{\frac{1}{2}}\dot{\mathcal{D}}_3.
         \end{split}
\end{equation*}
For $\mathfrak{J}_6$, in the same manner, it can be obtained that
 \begin{equation*}
\begin{split}
 &|\mathfrak{J}_6|\lesssim\|\dot{\Lambda}_h^{\sigma_0}\partial_{\alpha}\mathcal{Q}\|_{L^\infty_{x_1}L^2_h} \|\dot{\Lambda}_h^{\sigma_0}(\mathcal{B}_{5, i}^{\alpha}\partial_tv^{i})\|_{L^1_{x_1}L^2_h} +\|\dot{\Lambda}_h^{\sigma_0+1}\mathcal{Q}\|_{L^\infty_{x_1}L^2_h}  \|\dot{\Lambda}_h^{\sigma_0-1}(\partial_{\alpha}\mathcal{B}_{5, i}^{\alpha}\partial_tv^{i})\|_{L^1_{x_1}L^2_h}\\
  &\lesssim \|\dot{\Lambda}_h^{\sigma_0+1}\partial_1\mathcal{Q}\|_{L^2} (\|\dot{\Lambda}_h^{(\sigma_0+1)/2}\mathcal{B}_{5, i}^{\alpha}\|_{L^2}\|\dot{\Lambda}_h^{(\sigma_0+1)/2}\partial_tv^{i}\|_{L^2} +  \|\partial_{\alpha}\mathcal{B}_{5, i}^{\alpha}\|_{L^2}\|\dot{\Lambda}_h^{\sigma_0}\partial_tv^{i}\|_{L^2}),\\
\end{split}
\end{equation*}
which implies
 \begin{equation}\label{grad-sigma0-tv-14}
\begin{split}
 &|\mathfrak{J}_6|\lesssim  E_3^{\frac{1}{2}}\dot{\mathcal{D}}_3.
         \end{split}
\end{equation}
For $\mathfrak{J}_7$, we deal with the integral $\int_{\Omega}  \dot{\Lambda}_h^{\sigma_0}\partial_1\partial_{\alpha} v^j\,\dot{\Lambda}_h^{\sigma_0} \partial_t ( \mathcal{H}(\mathcal{B}_{j+5, i}^{\alpha})v^{i}) \,dx$ in it by
\begin{equation*}
\begin{split}
  &|\int_{\Omega}  \dot{\Lambda}_h^{\sigma_0}\partial_1\partial_{\alpha} v^j\,\dot{\Lambda}_h^{\sigma_0} \partial_t ( \mathcal{H}(\mathcal{B}_{j+5, i}^{\alpha})v^{i}) \,dx|\lesssim \| \dot{\Lambda}_h^{\sigma_0}\partial_1\partial_{\alpha} v^j\|_{L^2}\|\dot{\Lambda}_h^{\sigma_0} \partial_t ( \mathcal{H}(\mathcal{B}_{j+5, i}^{\alpha})v^{i}) \|_{L^2}\\
  &\lesssim \| \dot{\Lambda}_h^{\sigma_0}\partial_1\partial_{\alpha} v^j\|_{L^2}(\|\dot{\Lambda}_h^{\sigma_0} ( \mathcal{H}(\mathcal{B}_{j+5, i}^{\alpha})\partial_tv^{i}) \|_{L^2}+\|\dot{\Lambda}_h^{\sigma_0}  ( \partial_t \mathcal{H}(\mathcal{B}_{j+5, i}^{\alpha})v^{i}) \|_{L^2}),
              \end{split}
\end{equation*}
from this, it follows that
\begin{equation}\label{grad-sigma0-tv-15}
\begin{split}
  &|\int_{\Omega}  \dot{\Lambda}_h^{\sigma_0}\partial_1\partial_{\alpha} v^j\,\dot{\Lambda}_h^{\sigma_0} \partial_t ( \mathcal{H}(\mathcal{B}_{j+5, i}^{\alpha})v^{i}) \,dx|\\
    &\lesssim \| \dot{\Lambda}_h^{\sigma_0}\partial_1\partial_{\alpha} v^j\|_{L^2}(\|\dot{\Lambda}_h^{(\sigma_0+1)/2}\mathcal{H}(\mathcal{B}_{j+5, i}^{\alpha})\|_{L^\infty_{x_1}(L^2_h)}\|\dot{\Lambda}_h^{(\sigma_0+1)/2} \partial_tv^{i}  \|_{L^2}\\
    &\qquad +\|\dot{\Lambda}_h^{(\sigma_0+1)/2}\partial_t \mathcal{H}(\mathcal{B}_{j+5, i}^{\alpha})\|_{L^2}\|\dot{\Lambda}_h^{(\sigma_0+1)/2} v^{i}\|_{L^\infty_{x_1}L^2_h})\\
        &\lesssim \dot{\mathcal{D}}_3^{\frac{1}{2}}(E_3^{\frac{1}{2}}\dot{\mathcal{D}}_3^{\frac{1}{2}} +\dot{\mathcal{D}}_3^{\frac{1}{2}} \dot{\mathcal{E}}_3^{\frac{1}{2}})\lesssim E_3^{\frac{1}{2}}\dot{\mathcal{D}}_3,
              \end{split}
\end{equation}
where we have used that
\begin{equation*}\label{grad-sigma0-tv-16}
\begin{split}
  &\| \dot{\Lambda}_h^{\sigma_0}\partial_1\partial_{\alpha} v^j\|_{L^2}\lesssim \dot{\mathcal{D}}_3^{\frac{1}{2}}, \, \|\dot{\Lambda}_h^{(\sigma_0+1)/2}\mathcal{H}(\mathcal{B}_{j+5, i}^{\alpha})\|_{L^\infty_{x_1}(L^2_h)} \lesssim \|\dot{\Lambda}_h^{(\sigma_0+1)/2}\mathcal{H}(\mathcal{B}_{j+5, i}^{\alpha})\|_{H^1} \lesssim E_3^{\frac{1}{2}}, \\
  &\|\dot{\Lambda}_h^{(\sigma_0+1)/2}\partial_t \mathcal{H}(\mathcal{B}_{j+5, i}^{\alpha})\|_{L^2}\lesssim \dot{\mathcal{D}}_3^{\frac{1}{2}},\quad \|\dot{\Lambda}_h^{(\sigma_0+1)/2}  v^{i}\|_{L^\infty_{x_1}L^2_h} \lesssim \|\dot{\Lambda}_h^{(\sigma_0+1)/2}\nabla  v\|_{L^2} \lesssim \dot{\mathcal{E}}_3^{\frac{1}{2}}.
              \end{split}
\end{equation*}
The following results may be proved in much the same way as in the proof of \eqref{grad-sigma0-tv-15}
\begin{equation*}\label{grad-sigma0-tv-17}
\begin{split}
  &|\int_{\Omega}\dot{\Lambda}_h^{\sigma_0}\partial_1  v^j\,\dot{\Lambda}_h^{\sigma_0} \partial_t ( \partial_{\alpha} \mathcal{H}(\mathcal{B}_{j+5, i}^{\alpha})v^{i}) \,dx|\lesssim E_3^{\frac{1}{2}}\dot{\mathcal{D}}_3,\\
  &|\int_{\Omega}\dot{\Lambda}_h^{\sigma_0}\partial_t v^j\, \dot{\Lambda}_h^{\sigma_0}\partial_1(\mathcal{H}(\mathcal{B}_{j+5, i}^{\alpha})\partial_{\alpha}v^{i})\,dx|\lesssim E_3^{\frac{1}{2}}\dot{\mathcal{D}}_3.
          \end{split}
\end{equation*}
Hence, it follows
\begin{equation*}\label{grad-sigma0-tv-18}
\begin{split}
  &|\mathfrak{J}_7|\lesssim E_3^{\frac{1}{2}}\dot{\mathcal{D}}_3.
          \end{split}
\end{equation*}
Finally, making directly use of \eqref{est-g-2} implies
\begin{equation*}\label{grad-sigma0-tv-19}
\begin{split}
 &|\mathfrak{J}_8|\lesssim\|\dot{\Lambda}_h^{\sigma_0}\partial_t v\|_{L^2}\|\dot{\Lambda}_h^{\sigma_0}g\|_{L^2}\lesssim  E_3^{\frac{1}{2}}\dot{\mathcal{D}}_3.
 \end{split}
\end{equation*}
Therefore, we conclude that
\begin{equation*}\label{grad-sigma0-tv-20}
\begin{split}
 &\sum_{j=1}^8|\mathfrak{J}_j|\lesssim  \|\dot{\Lambda}_h^{\sigma_0}\nabla\,v\|_{L^2}^2+E_3^{\frac{1}{2}}\dot{\mathcal{D}}_3,
 \end{split}
\end{equation*}
which leads to \eqref{grad-sigma0-tv-1}, and we complete the proof of Lemma \ref{lem-grad-sigma0-1}.
\end{proof}

\subsubsection{Estimate of $\|\partial_h^{N-1}\nabla\,v\|_{L^\infty_t(L^2)}$}

\begin{lem}\label{lem-tan-grad-N-1}
Let $N\geq 3$, under the assumption of Lemma \ref{lem-pseudo-energy-tv-1}, if $E_3(t) \leq 1$ for all the existence times $t$, then there holds
\begin{equation}\label{tan-grad-N-tv-1}
\begin{split}
 & \frac{\nu}{2}\frac{d}{dt}\mathring{\mathcal{E}}(\partial_h^{N-1}\nabla\, v)+\|\partial_h^{N-1}\partial_t v\|_{L^2}^2 \lesssim \|\partial_h^{N-1}\nabla\,v\|_{L^2}^2+\dot{\mathcal{D}}_N^{\frac{1}{2}} (E_3^{\frac{1}{2}}\dot{\mathcal{D}}_{N}^{\frac{1}{2}}
   +E_{N}^{\frac{1}{2}}\,\dot{\mathcal{D}}_3^{\frac{1}{2}})
   +E_{N}\,\dot{\mathcal{D}}_3.
        \end{split}
\end{equation}
\end{lem}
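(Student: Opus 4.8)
\textbf{Proof proposal for Lemma \ref{lem-tan-grad-N-1}.}

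The plan is to specialize the master identity \eqref{pseudo-energy-tv-1} of Lemma \ref{lem-pseudo-energy-tv-1} to the choice $\mathcal{P}(\partial_h)=\partial_h^{N-1}$ and then bound each of the eight remainder terms $\mathfrak{J}_1,\dots,\mathfrak{J}_8$ in the spirit of the proof of Lemma \ref{lem-grad-sigma0-1}, the only difference being that we now keep track of which factor carries the full $N-1$ horizontal derivatives and which factor is put in the ``low'' (third-order) norm. First I would record that, with $\mathcal{P}(\partial_h)=\partial_h^{N-1}$,
\[
\frac{\nu}{2}\frac{d}{dt}\mathring{\mathcal{E}}(\partial_h^{N-1}\nabla v)+\|\partial_h^{N-1}\partial_t v\|_{L^2}^2=\sum_{j=1}^{8}\mathfrak{J}_j,
\]
and that it suffices to prove $\sum_{j=1}^{8}|\mathfrak{J}_j|\lesssim \|\partial_h^{N-1}\nabla v\|_{L^2}^2+\dot{\mathcal D}_N^{1/2}(E_3^{1/2}\dot{\mathcal D}_N^{1/2}+E_N^{1/2}\dot{\mathcal D}_3^{1/2})+E_N\dot{\mathcal D}_3$.

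For $\mathfrak{J}_1$ I would use $\|\partial_h^{N-1}\partial_t\mathcal{H}(\xi^1)\|_{L^2}\lesssim\|\partial_h^{N-1}\mathcal{H}(v^1)\|_{H^1}\lesssim\|\partial_h^{N-1}\nabla v\|_{L^2}$ together with the harmonic extension estimates of Lemma \ref{est-harmonic-ext-1} and the flow-map relation $\partial_t\xi^1=v^1$, producing a term absorbed by $\|\partial_h^{N-1}\nabla v\|_{L^2}^2$. For $\mathfrak{J}_2,\dots,\mathfrak{J}_8$ the strategy is uniform: expand $\mathcal{Q}$ (resp.\ $q$) via $q=\mathcal{Q}+\widetilde{\mathcal{Q}}$ and the boundary relations \eqref{partialv-q-1}, use the $\mathcal{Q}|_{\Sigma_0}=0$ Poincar\'e trick exactly as in \eqref{grad-sigma0-tv-4} to pass from $\mathcal{Q}$ or $\partial_h\mathcal{Q}$ to $\partial_1\mathcal{Q}$ and then to $\dot{\mathcal D}_N^{1/2}$-controlled quantities, and split all the bilinear nonlinearities $\mathcal{B}_{j,i}^{\alpha}\partial_\alpha v^i$, $\widetilde{a_{\alpha1}}\partial_1 v^\alpha$, $\mathcal{H}(\mathcal{B}_{j+5,i}^{\alpha})v^i$ by Lemma \ref{lem-product-law-1} (the product law \eqref{product-law-1}) into a ``many-derivatives-on-the-coefficient'' piece, controlled by $E_N^{1/2}$ via parts (5) of Lemma \ref{lem-est-aij-1} and \eqref{est-Bv-1}, \eqref{est-Bv-1a}, \eqref{k-incom-fluid-1}, times a low-norm velocity factor in $\dot{\mathcal D}_3^{1/2}$; and a ``many-derivatives-on-the-velocity'' piece, controlled by $\dot{\mathcal D}_N^{1/2}$ times a low coefficient norm $E_3^{1/2}$. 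The $g$-term $\mathfrak{J}_8$ is handled by the last inequality of \eqref{est-g-2}, $\|\partial_h^{N-1}g\|_{L^2}\lesssim E_N^{1/2}\dot{\mathcal D}_3^{1/2}+E_3^{1/2}\dot{\mathcal D}_N^{1/2}$, paired with $\|\partial_h^{N-1}\partial_t v\|_{L^2}\lesssim\dot{\mathcal D}_N^{1/2}$. Throughout I would use the anisotropic Sobolev embeddings $\|f\|_{L^\infty_{x_1}L^2_h}\lesssim\|f\|_{H^1}$ and $\|f\|_{L^2_{x_1}L^\infty_h}\lesssim\|\dot\Lambda_h^{\sigma_0}\Lambda_h f\|_{L^2}$ from Lemma \ref{lem-embedding-ineq-1}, precisely as in the proof of the last inequality in Lemma \ref{lem-est-g-1}, to place factors in $L^\infty_{x_1}L^2_h$ or $L^2_{x_1}L^\infty_h$ as dictated by the integrability of the complementary factor.

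The one term requiring genuine care is $\mathfrak{J}_7$ (and, to a lesser degree, the $\partial_t\mathcal{B}_{5,i}^{\alpha}\partial_\alpha v^i$ contribution inside $\mathfrak{J}_5,\mathfrak{J}_6$), because it contains time derivatives of the coefficients $\mathcal{H}(\mathcal{B}_{j+5,i}^{\alpha})$ and $\widetilde{a_{\alpha1}}$, and an integration by parts has already been performed so that a third spatial derivative $\partial_1\partial_\alpha v^j$ carrying $N-1$ tangential derivatives appears. Here the key observations are that $\|\partial_h^{N-1}\partial_1\partial_\alpha v^j\|_{L^2}\lesssim\dot{\mathcal D}_N^{1/2}$ (so the dangerous factor is in the dissipation), that $\partial_t\mathcal{B}_{j+5,i}^{\alpha}$ and $\partial_t\widetilde{a_{\alpha1}}$ are linear in $\nabla v$ up to coefficient factors (by \eqref{identity-Lagrangian-1} and the chain rule through the $a_{ij}$-formulas), hence controlled in $L^2$ by $\dot{\mathcal D}_3^{1/2}$, and that the harmonic extension commutes with $\partial_t$ so Lemma \ref{est-harmonic-ext-1} still applies after differentiating in time. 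Balancing $N-1$ derivatives against $3$: if all $N-1$ land on the velocity we get $E_3^{1/2}\dot{\mathcal D}_N^{1/2}\cdot\dot{\mathcal D}_N^{1/2}$; if they land on the coefficient we get $E_N^{1/2}\dot{\mathcal D}_3^{1/2}\cdot\dot{\mathcal D}_N^{1/2}$; and the purely-low pieces give the cubic-in-low $E_N\dot{\mathcal D}_3$ leftover (arising when a third, quadratic coefficient factor $E_N^{1/2}$ multiplies $\dot{\mathcal D}_3^{1/2}\cdot\dot{\mathcal E}_3^{1/2}$ and one estimates $\dot{\mathcal E}_3\lesssim E_3\le E_N$). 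Summing these bounds over $j=1,\dots,8$, absorbing the $\mathfrak{J}_1$-type contributions and the diagonal Young's-inequality pieces into $\|\partial_h^{N-1}\nabla v\|_{L^2}^2$, and feeding everything back into the specialized identity gives exactly \eqref{tan-grad-N-tv-1}, completing the proof.
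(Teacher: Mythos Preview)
Your proposal is correct and follows essentially the same approach as the paper: specialize identity \eqref{pseudo-energy-tv-1} to $\mathcal{P}(\partial_h)=\partial_h^{N-1}$, bound $\mathfrak{J}_1$ by $\|\partial_h^{N-1}\nabla v\|_{L^2}^2$, and estimate $\mathfrak{J}_2$--$\mathfrak{J}_8$ via the high/low product-law splitting using Lemmas \ref{lem-est-aij-1}, \ref{lem-est-B-Bv-1}, and \ref{lem-est-g-1}. One small clarification: in the paper the $E_N\dot{\mathcal D}_3$ term arises most transparently in $\mathfrak{J}_2$ and $\mathfrak{J}_5$, where the bound $\|\partial_h^{N-1}\partial_1\mathcal{Q}\|_{L^2}\lesssim \dot{\mathcal D}_N^{1/2}+E_N^{1/2}\dot{\mathcal D}_3^{1/2}$ (the second summand coming from the nonlinear $\mathcal{H}(\mathcal{B}_{9,i}^{\alpha})\partial_\alpha v^i$ piece of $\mathcal{Q}$) is multiplied against $(E_3^{1/2}\dot{\mathcal D}_N^{1/2}+E_N^{1/2}\dot{\mathcal D}_3^{1/2})$, rather than primarily from $\mathfrak{J}_7$ as you suggest; but this does not affect the validity of your argument.
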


\begin{proof}
We estimate all the integrals in the right hand side of \eqref{pseudo-energy-tv-1}, where we take $\mathcal{P}(\partial_h)=\partial_h^{N-1}$.
We first estimate $\mathfrak{J}_1$ to get
\begin{equation*}\label{tan-grad-N-tv-2}
\begin{split}
 &|\mathfrak{J}_1|\lesssim\|\partial_h^{N-1} v\|_{L^2}\|\partial_h^{N-1}\nabla\,\partial_t \mathcal{H}(\xi^1)\|_{L^2}\lesssim\|\partial_h^{N-1} v\|_{L^2}\|\partial_h^{N-1}\nabla\,v\|_{L^2}\lesssim \|\partial_h^{N-1}\nabla\,v\|_{L^2}^2.
          \end{split}
\end{equation*}
Notice that
\begin{equation*}\label{tan-grad-N-tv-3}
\begin{split}
 &|\mathfrak{J}_2|\lesssim \|\partial_h^{N-1}\partial_1\mathcal{Q}\|_{L^2} \|\partial_h^{N-1}(\widetilde{a_{\alpha\,1}} \partial_tv^\alpha)\|_{L^2}\\
 &\lesssim \|\partial_h^{N-1}\partial_1\mathcal{Q}\|_{L^2} (\|\partial_h^{N-1}\widetilde{a_{\alpha\,1}}\|_{L^\infty_{x_1}L^2_h} \|\partial_tv^h \|_{L^2_{x_1}L^\infty_h}+\|\widetilde{a_{\alpha\,1}}\|_{L^\infty} \|\partial_h^{N-1}\partial_tv^h \|_{L^2})\\
 &\lesssim \|\partial_h^{N-1}\partial_1\mathcal{Q}\|_{L^2} (E_3^{\frac{1}{2}}\dot{\mathcal{D}}_{N}^{\frac{1}{2}}
   +E_{N}^{\frac{1}{2}}\,\dot{\mathcal{D}}_3^{\frac{1}{2}}),\\
         \end{split}
\end{equation*}
Since $\mathcal{Q}= q-\mathcal{H}(\xi^1)  +2\nu\,\nabla_h\cdot v^h-\nu\,\mathcal{H}(\mathcal{B}_{9, i}^{\alpha})\partial_{\alpha}v^{i}$, we have
\begin{equation}\label{tan-grad-N-tv-4}
\begin{split}
& \|\partial_h^{N-1}\partial_1\mathcal{Q}\|_{L^2}\lesssim  \|\partial_h^{N-1}\partial_1q\|_{L^2}+ \|\partial_h^{N-1}\partial_1\mathcal{H}(\xi^1)\|_{L^2}  + \|\partial_h^{N-1}\partial_1\nabla_h\cdot v^h \|_{L^2}\\
 &\qquad\qquad\qquad\qquad\qquad\qquad\qquad\qquad+ \|\partial_h^{N-1}\partial_1(\mathcal{H}(\mathcal{B}_{9, i}^{\alpha})\partial_{\alpha}v^{i})\|_{L^2}\\
 &\lesssim  \|\partial_h^{N-1}\partial_1q\|_{L^2}+ \|\partial_h^{N-1} \xi^1 \|_{H^{\frac{1}{2}}(\Sigma_0)}  + \|\partial_h^{N-1}\partial_1\nabla_h\cdot v^h \|_{L^2}+ \|\partial_h^{N-1}\partial_1(\mathcal{H}(\mathcal{B}_{9, i}^{\alpha})\partial_{\alpha}v^{i})\|_{L^2}\\
 &\lesssim \dot{\mathcal{D}}_N^{\frac{1}{2}}
   +E_{N}^{\frac{1}{2}}\,\dot{\mathcal{D}}_3^{\frac{1}{2}},
\end{split}
\end{equation}
where we used the estimate \eqref{est-Bv-1} $ \|\partial_h^{N-1}(\mathcal{B}_{9, i}^{\alpha}\partial_{\alpha}v^{i})\|_{H^1}\lesssim E_3^{\frac{1}{2}}\dot{\mathcal{D}}_{N}^{\frac{1}{2}}
   +E_{N}^{\frac{1}{2}}\,\dot{\mathcal{D}}_3^{\frac{1}{2}}$
and
\begin{equation*}\label{tan-grad-N-tv-6}
\begin{split}
   \|\partial_h^{N-1}\xi^1\|_{H^{\frac{1}{2}}(\Sigma_0)}&\lesssim \|\partial_h^{N-1}q\|_{H^{\frac{1}{2}}(\Sigma_0)} +\|\,\partial_h^{N-1}\nabla_h\cdot v^h\|_{H^{\frac{1}{2}}(\Sigma_0)} +\|\partial_h^{N-1}(\mathcal{B}_{9, i}^{\alpha}\partial_{\alpha}v^{i})\|_{H^{\frac{1}{2}}(\Sigma_0)}\\
   &\lesssim \dot{\mathcal{D}}_N^{\frac{1}{2}}
   +E_{N}^{\frac{1}{2}}\,\dot{\mathcal{D}}_3^{\frac{1}{2}}.
         \end{split}
\end{equation*}
Hence, one has
\begin{equation*}\label{tan-grad-N-tv-7}
\begin{split}
 &|\mathfrak{J}_2|\lesssim (\dot{\mathcal{D}}_N^{\frac{1}{2}}
   +E_{N}^{\frac{1}{2}}\,\dot{\mathcal{D}}_3^{\frac{1}{2}})(E_3^{\frac{1}{2}}\dot{\mathcal{D}}_{N}^{\frac{1}{2}}
   +E_{N}^{\frac{1}{2}}\,\dot{\mathcal{D}}_3^{\frac{1}{2}})\lesssim \dot{\mathcal{D}}_N^{\frac{1}{2}} (E_3^{\frac{1}{2}}\dot{\mathcal{D}}_{N}^{\frac{1}{2}}
   +E_{N}^{\frac{1}{2}}\,\dot{\mathcal{D}}_3^{\frac{1}{2}})
   +E_{N}\,\dot{\mathcal{D}}_3.
         \end{split}
\end{equation*}
For $\mathfrak{J}_3$, we get
\begin{equation*}\label{tan-grad-N-tv-8}
\begin{split}
   &|\mathfrak{J}_3|\lesssim \|\partial_h^{N-1}\partial_t v\|_{L^2} (\|\partial_h^{N-1} \nabla\,(\widetilde{a_{\alpha\,1}}\partial_1v^\alpha)\|_{L^2}+\|\partial_h^{N-1} \nabla\,(\mathcal{B}_{5, i}^{\alpha}\partial_{\alpha}v^{i})\|_{L^2}),
           \end{split}
\end{equation*}
which along with Lemma \ref{lem-est-B-Bv-1} leads to
\begin{equation*}\label{tan-grad-N-tv-12}
\begin{split}
   &|\mathfrak{J}_3|\lesssim \dot{\mathcal{D}}_N^{\frac{1}{2}}(E_3^{\frac{1}{2}}\dot{\mathcal{D}}_N^{\frac{1}{2}}
    +E_N^{\frac{1}{2}}\dot{\mathcal{D}}_3^{\frac{1}{2}}).
           \end{split}
\end{equation*}
This estimate also holds true for $\mathfrak{J}_4$:
\begin{equation*}\label{tan-grad-N-tv-13}
\begin{split}
   &|\mathfrak{J}_4|\lesssim \dot{\mathcal{D}}_N^{\frac{1}{2}}(E_3^{\frac{1}{2}}\dot{\mathcal{D}}_N^{\frac{1}{2}}
    +E_N^{\frac{1}{2}}\dot{\mathcal{D}}_3^{\frac{1}{2}}).
           \end{split}
\end{equation*}
For $\mathfrak{J}_5$, we first get
\begin{equation*}\label{tan-grad-N-tv-14}
\begin{split}
 &|\int_{ \Omega}\partial_h^{N-1}\mathcal{Q}\,
 \partial_h^{N-1}(\partial_1\widetilde{a_{\alpha\,1}} \partial_tv^\alpha)\, dx|\lesssim \|\partial_h^{N-1}\mathcal{Q}\|_{L^\infty_{x_1}L^2_h} \|\partial_h^{N-1}(\partial_1\widetilde{a_{\alpha\,1}} \partial_tv^\alpha)\|_{L^1_{x_1}L^2_h} \\
 &\lesssim \|\partial_h^{N-1}\partial_1\mathcal{Q}\|_{L^2} ( \|\partial_h^{N-1}\partial_1\widetilde{a_{\alpha\,1}}\|_{L^2} \|\partial_tv^\alpha\|_{L^2_{x_1}L^\infty_h}+ \|\partial_h^{N-1}\partial_tv^\alpha\|_{L^2}
\|\partial_1\widetilde{a_{\alpha\,1}}\|_{L^2_{x_1}L^\infty_h}),
         \end{split}
\end{equation*}
which along with the estimate \eqref{tan-grad-N-tv-4} yields that
\begin{equation}\label{tan-grad-N-tv-15}
\begin{split}
 &|\int_{ \Omega}\partial_h^{N-1}\mathcal{Q}\,
 \partial_h^{N-1}(\partial_1\widetilde{a_{\alpha\,1}} \partial_tv^\alpha)\, dx|\\
&\lesssim (\dot{\mathcal{D}}_N^{\frac{1}{2}}+E_N^{\frac{1}{2}}\dot{\mathcal{D}}_3^{\frac{1}{2}})(E_3^{\frac{1}{2}}\dot{\mathcal{D}}_N^{\frac{1}{2}}
    +E_N^{\frac{1}{2}}\dot{\mathcal{D}}_3^{\frac{1}{2}})\lesssim \dot{\mathcal{D}}_N^{\frac{1}{2}} (E_3^{\frac{1}{2}}\dot{\mathcal{D}}_{N}^{\frac{1}{2}}
   +E_{N}^{\frac{1}{2}}\,\dot{\mathcal{D}}_3^{\frac{1}{2}})
   +E_{N}\,\dot{\mathcal{D}}_3,
         \end{split}
\end{equation}
Repeating the argument in the proof of \eqref{tan-grad-N-tv-15}, one can immediately obtain
\begin{equation*}\label{tan-grad-N-tv-16}
\begin{split}
 &|\int_{ \Omega}\partial_h^{N-1}\mathcal{Q} \,\,\partial_h^{N-1}(-\partial_t\widetilde{a_{\alpha\,1}}\,\partial_1v^\alpha
+\partial_t\mathcal{B}_{5, i}^{\alpha}\,\partial_{\alpha}v^{i})\, dx|\\
&\lesssim\|\partial_h^{N}\mathcal{Q} \|_{L^2} \|\partial_h^{N-2}(-\partial_t\widetilde{a_{\alpha\,1}}\,\partial_1v^\alpha
+\partial_t\mathcal{B}_{5, i}^{\alpha}\,\partial_{\alpha}v^{i})\|_{L^2}\\
&\lesssim\|\partial_h^{N}\mathcal{Q} \|_{L^2} (\|\partial_h^{N-2}(\partial_t\widetilde{a_{\alpha\,1}}, \partial_t\mathcal{B}_{5, i}^{\alpha})\|_{L^2} \|(\partial_1v^h, \partial_{\alpha}v^{i})\|_{L^\infty}\\
&\qquad\qquad\qquad\qquad+\|\partial_h^{N-2}(\partial_1v^h, \partial_{\alpha}v^{i})\|_{L^2} \|(\partial_t\widetilde{a_{21}}, \partial_t\widetilde{a_{31}}, \partial_t\mathcal{B}_{5, i}^{\alpha})\|_{L^\infty})\\
&\lesssim  (\dot{\mathcal{D}}_N^{\frac{1}{2}}+E_N^{\frac{1}{2}}\dot{\mathcal{D}}_3^{\frac{1}{2}})(E_3^{\frac{1}{2}}\dot{\mathcal{D}}_N^{\frac{1}{2}}
    +E_N^{\frac{1}{2}}\dot{\mathcal{D}}_3^{\frac{1}{2}})\lesssim  \dot{\mathcal{D}}_N^{\frac{1}{2}}(E_3^{\frac{1}{2}}\dot{\mathcal{D}}_N^{\frac{1}{2}}
    +E_N^{\frac{1}{2}}\dot{\mathcal{D}}_3^{\frac{1}{2}})+E_N\,\dot{\mathcal{D}}_3.
     \end{split}
\end{equation*}
Therefore, we have
\begin{equation*}\label{tan-grad-N-tv-17}
\begin{split}
 &|\mathfrak{J}_5| \lesssim \dot{\mathcal{D}}_N^{\frac{1}{2}} (E_3^{\frac{1}{2}}\dot{\mathcal{D}}_{N}^{\frac{1}{2}}
   +E_{N}^{\frac{1}{2}}\,\dot{\mathcal{D}}_3^{\frac{1}{2}})
   +E_{N}\,\dot{\mathcal{D}}_3.
         \end{split}
\end{equation*}
The same conclusion can be drawn for $\mathfrak{J}_6$, $\mathfrak{J}_7$, and $\mathfrak{J}_8$
\begin{equation*}\label{tan-grad-N-tv-18}
\begin{split}
 &|\mathfrak{J}_6|\lesssim \dot{\mathcal{D}}_N^{\frac{1}{2}} (E_3^{\frac{1}{2}}\dot{\mathcal{D}}_{N}^{\frac{1}{2}}
   +E_{N}^{\frac{1}{2}}\,\dot{\mathcal{D}}_3^{\frac{1}{2}})
   +E_{N}\,\dot{\mathcal{D}}_3, \quad|\mathfrak{J}_7|+|\mathfrak{J}_8|\lesssim \dot{\mathcal{D}}_N^{\frac{1}{2}} (E_3^{\frac{1}{2}}\dot{\mathcal{D}}_{N}^{\frac{1}{2}}
   +E_{N}^{\frac{1}{2}}\,\dot{\mathcal{D}}_3^{\frac{1}{2}}).
         \end{split}
\end{equation*}
Therefore, we obtain
\begin{equation*}\label{tan-grad-N-tv-25}
\begin{split}
 &\sum_{j=1}^8|\mathfrak{J}_j|\lesssim \|\partial_h^{N-1}\nabla\,v\|_{L^2}^2+\dot{\mathcal{D}}_N^{\frac{1}{2}} (E_3^{\frac{1}{2}}\dot{\mathcal{D}}_{N}^{\frac{1}{2}}
   +E_{N}^{\frac{1}{2}}\,\dot{\mathcal{D}}_3^{\frac{1}{2}})
   +E_{N}\,\dot{\mathcal{D}}_3,
          \end{split}
\end{equation*}
and then reach \eqref{tan-grad-N-tv-1}, which completes the proof of Lemma \ref{lem-tan-grad-N-1}.
\end{proof}
With Lemmas \ref{lem-grad-sigma0-1} and \ref{lem-tan-grad-N-1} in hand, we have
\begin{lem}\label{lem-tan-grad-total-1}
Let $N\geq 3$, under the assumption of Lemma \ref{lem-tan-pseudo-energy-1}, if $E_3(t) \leq 1$ for all the existence times $t$, then there holds
\begin{equation*}\label{tan-grad-total-1}
\begin{split}
&\frac{d}{dt}\bigg(\mathring{\mathcal{E}}(\dot{\Lambda}_h^{\sigma_0}\nabla\, v) +\sum_{k=1}^{N-1}\mathring{\mathcal{E}}(\partial_h^{k}\nabla\, v)\bigg)+ c_2(\|\dot{\Lambda}_h^{\sigma_0}\partial_t v\|_{L^2}^2 +\sum_{k=1}^{N-1}\|\partial_h^{k}\partial_t v\|_{L^2}^2 )\\
&\leq C_1 (\|\dot{\Lambda}_h^{\sigma_0}\nabla\,v\|_{L^2}^2 +\sum_{k=1}^{N-1}\|\partial_h^{k}\nabla\,v\|_{L^2}^2 + E_N^{\frac{1}{2}} \dot{\mathcal{D}}_3^{\frac{1}{2}}\dot{\mathcal{D}}_N^{\frac{1}{2}}+  E_3^{\frac{1}{2}} \dot{\mathcal{D}}_N+E_N\,\dot{\mathcal{D}}_3).
\end{split}
\end{equation*}
\end{lem}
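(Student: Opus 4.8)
\textbf{Proof proposal for Lemma \ref{lem-tan-grad-total-1}.}

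The plan is to combine the single-parameter estimates \eqref{grad-sigma0-tv-1} and \eqref{tan-grad-N-tv-1} by simply adding them over the relevant family of horizontal derivative operators, and then to repackage the resulting right-hand side in terms of the instantaneous quantities $\dot{\mathcal{D}}_3$, $\dot{\mathcal{D}}_N$, $E_3$ and $E_N$. Concretely, I would first apply Lemma \ref{lem-grad-sigma0-1} with $\mathcal{P}(\partial_h)=\dot{\Lambda}_h^{\sigma_0}$ to get
\begin{equation*}
\frac{\nu}{2}\frac{d}{dt}\mathring{\mathcal{E}}(\dot{\Lambda}_h^{\sigma_0}\nabla\,v)+\|\dot{\Lambda}_h^{\sigma_0}\partial_t v\|_{L^2}^2\lesssim \|\dot{\Lambda}_h^{\sigma_0}\nabla\,v\|_{L^2}^2+E_3^{\frac{1}{2}}\dot{\mathcal{D}}_3,
\end{equation*}
and then apply Lemma \ref{lem-tan-grad-N-1} successively with $\mathcal{P}(\partial_h)=\partial_h^{k}$ for each $k\in\{1,\dots,N-1\}$; the proof of Lemma \ref{lem-tan-grad-N-1} goes through verbatim for any $1\le k\le N-1$ in place of $N-1$, since all the nonlinear estimates in Lemma \ref{lem-est-B-Bv-1} and Lemma \ref{lem-est-g-1} are monotone in the number of horizontal derivatives, giving
\begin{equation*}
\frac{\nu}{2}\frac{d}{dt}\mathring{\mathcal{E}}(\partial_h^{k}\nabla\,v)+\|\partial_h^{k}\partial_t v\|_{L^2}^2\lesssim \|\partial_h^{k}\nabla\,v\|_{L^2}^2+\dot{\mathcal{D}}_N^{\frac{1}{2}}(E_3^{\frac{1}{2}}\dot{\mathcal{D}}_{N}^{\frac{1}{2}}+E_{N}^{\frac{1}{2}}\dot{\mathcal{D}}_3^{\frac{1}{2}})+E_N\dot{\mathcal{D}}_3.
\end{equation*}
Summing these $N$ inequalities and absorbing the universal constant $\nu/2$ into generic constants yields the left-hand side of the claimed estimate with $c_2\sim\nu$, while the right-hand side is a finite sum ($N$ terms) of the displayed bounds, hence still controlled by $C_1$ times the same combination.

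The only genuine point to check is that the $k=\sigma_0$ term and the $k=1,\dots,N-1$ terms are all dominated by the \emph{same} right-hand side as in the statement; for this I would note that $\|\dot{\Lambda}_h^{\sigma_0}\nabla v\|_{L^2}^2+\sum_{k=1}^{N-1}\|\partial_h^k\nabla v\|_{L^2}^2$ is exactly the gradient part appearing on the right, that $E_3^{1/2}\dot{\mathcal{D}}_3\le E_3^{1/2}\dot{\mathcal{D}}_N$ (since $\dot{\mathcal{D}}_3\le\dot{\mathcal{D}}_N$ for $N\ge 3$), that $E_N\dot{\mathcal{D}}_3$ already appears, and that $E_N^{1/2}\dot{\mathcal{D}}_3^{1/2}\dot{\mathcal{D}}_N^{1/2}$ is precisely the cross term written as $E_N^{\frac12}\dot{\mathcal{D}}_3^{\frac12}\dot{\mathcal{D}}_N^{\frac12}$. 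Thus every term produced by the summation fits into $C_1(\|\dot{\Lambda}_h^{\sigma_0}\nabla v\|_{L^2}^2+\sum_{k=1}^{N-1}\|\partial_h^k\nabla v\|_{L^2}^2+E_N^{\frac12}\dot{\mathcal{D}}_3^{\frac12}\dot{\mathcal{D}}_N^{\frac12}+E_3^{\frac12}\dot{\mathcal{D}}_N+E_N\dot{\mathcal{D}}_3)$.

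I do not anticipate a serious obstacle here: this lemma is a bookkeeping consolidation of the two preceding lemmas rather than a new estimate. The mildly delicate step is verifying that Lemma \ref{lem-tan-grad-N-1}, whose proof is written for $\mathcal{P}(\partial_h)=\partial_h^{N-1}$, indeed applies to each intermediate order $\partial_h^{k}$ with $1\le k\le N-1$ and still produces the bound with $\dot{\mathcal{D}}_N$ on the right — this is true because every product-law estimate invoked in that proof (namely those in Lemmas \ref{lem-est-B-Bv-1}, \ref{lem-est-g-1}) places at most $N-1$ tangential derivatives on the ``high'' factor and at most $3$ on the ``low'' factor, which is consistent with the definitions of $\dot{\mathcal{D}}_N$, $E_N$, and the monotonicity $\dot{\mathcal{D}}_3\le\dot{\mathcal{D}}_N$, $E_3\le E_N$. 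Once that is in place, the conclusion follows by adding $N$ differential inequalities, so the proof will be short.
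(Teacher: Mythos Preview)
Your proposal is correct and matches the paper's approach exactly: the paper presents Lemma \ref{lem-tan-grad-total-1} as an immediate consequence of Lemmas \ref{lem-grad-sigma0-1} and \ref{lem-tan-grad-N-1} (writing only ``With Lemmas \ref{lem-grad-sigma0-1} and \ref{lem-tan-grad-N-1} in hand, we have''), i.e., it is obtained by summing those estimates over $\dot{\Lambda}_h^{\sigma_0}$ and $\partial_h^k$, $1\le k\le N-1$, precisely as you describe. Your observation that the intermediate orders $k<N-1$ are handled by the same argument as $k=N-1$ with the monotonicity $\dot{\mathcal{D}}_k\le\dot{\mathcal{D}}_N$, $E_k\le E_N$ is the only point worth noting, and it is straightforward.
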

Thanks to Lemmas \ref{lem-tan-decay-total-1} and \ref{lem-tan-grad-total-1}, for any small positive constant $\delta \leq \min\{1, \frac{c_1}{2C_1}\}$ (which will be determined later), we have
\begin{lem}\label{lem-decay-total-1}
Let $N\geq 3$, under the assumption of Lemma \ref{lem-tan-pseudo-energy-1}, if $E_3(t) \leq 1$ for all the existence times $t$, then there holds
\begin{equation*}\label{tan-decay-total-1}
\begin{split}
&\frac{d}{dt}\bigg[\|\dot{\Lambda}_h^{\sigma_0}v\|_{L^2(\Omega)}^2
+\|\dot{\Lambda}_h^{\sigma_0}\xi^1\|_{L^2(\Sigma_0)}^2+\sum_{i=1}^N(\|\partial_h^{i}v\|_{L^2(\Omega)}^2
+\|\partial_h^{i}\xi^1\|_{L^2(\Sigma_0)}^2\\
&\quad+\delta\bigg(\mathring{\mathcal{E}}(\dot{\Lambda}_h^{\sigma_0}\nabla\, v) +\sum_{k=1}^{N-1}\mathring{\mathcal{E}}(\partial_h^{k}\nabla\, v)\bigg)\bigg] \\
&\qquad+ \frac{c_1}{2}(\|\dot{\Lambda}_h^{\sigma_0}\nabla\,v\|_{L^2(\Omega)}^2 +\sum_{i=1}^N\|\partial_h^{i}\nabla\,v\|_{L^2(\Omega)}^2 )+\delta\,c_2(\|\dot{\Lambda}_h^{\sigma_0}\partial_t v\|_{L^2}^2 +\sum_{k=1}^{N-1}\|\partial_h^{k}\partial_t v\|_{L^2}^2 )\\
&\lesssim E_N^{\frac{1}{2}} \dot{\mathcal{D}}_3^{\frac{1}{2}}\dot{\mathcal{D}}_N^{\frac{1}{2}}+  E_3^{\frac{1}{2}} \dot{\mathcal{D}}_N+E_N\,\dot{\mathcal{D}}_3.
\end{split}
\end{equation*}
\end{lem}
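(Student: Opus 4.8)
The statement of Lemma~\ref{lem-decay-total-1} is precisely a linear combination of the two estimates already established: Lemma~\ref{lem-tan-decay-total-1} (the $L^2$-energy inequality for $\dot{\Lambda}_h^{\sigma_0}v$ and the tangential derivatives $\partial_h^i v$, $i=1,\dots,N$, with the full dissipation of $\nabla v$ on the left) and Lemma~\ref{lem-tan-grad-total-1} (the $\mathring{\mathcal{E}}$-type energy inequality that produces the dissipation $\|\dot{\Lambda}_h^{\sigma_0}\partial_t v\|_{L^2}^2 + \sum_k \|\partial_h^k\partial_t v\|_{L^2}^2$ but whose right-hand side contains the bad term $C_1(\|\dot{\Lambda}_h^{\sigma_0}\nabla v\|_{L^2}^2 + \sum_k\|\partial_h^k\nabla v\|_{L^2}^2)$). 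The plan is simply to multiply the inequality of Lemma~\ref{lem-tan-grad-total-1} by the small parameter $\delta$ and add it to the inequality of Lemma~\ref{lem-tan-decay-total-1}.

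\textbf{Key steps.} First I would write down the two inequalities side by side, noting that the left-hand time-derivative terms add up to exactly the bracketed quantity in the statement (the $\dot{\mathcal{E}}_{\ell,\sigma_0}$, $\partial_h^i$ pieces from Lemma~\ref{lem-tan-decay-total-1}, plus $\delta$ times the $\mathring{\mathcal{E}}(\dot{\Lambda}_h^{\sigma_0}\nabla v)$ and $\mathring{\mathcal{E}}(\partial_h^k\nabla v)$ pieces from Lemma~\ref{lem-tan-grad-total-1}). Second, on the dissipation side the good term $c_1\big(\|\dot{\Lambda}_h^{\sigma_0}\nabla v\|_{L^2}^2 + \sum_{i=1}^N\|\partial_h^i\nabla v\|_{L^2}^2\big)$ coming from Lemma~\ref{lem-tan-decay-total-1} must absorb the bad term $\delta C_1\big(\|\dot{\Lambda}_h^{\sigma_0}\nabla v\|_{L^2}^2 + \sum_{k=1}^{N-1}\|\partial_h^k\nabla v\|_{L^2}^2\big)$ produced on the right of $\delta\times$Lemma~\ref{lem-tan-grad-total-1}; since the latter sum runs only to $N-1$ it is dominated termwise by the former, so choosing $\delta \le c_1/(2C_1)$ (which is built into the hypothesis $\delta \le \min\{1,\frac{c_1}{2C_1}\}$ stated just before the lemma) leaves at least $\tfrac{c_1}{2}$ of the $\nabla v$ dissipation, which is the coefficient appearing in the conclusion. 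Third, the $\delta c_2$ dissipation of $\partial_t v$ is untouched and carries over directly. Finally, the right-hand sides combine: Lemma~\ref{lem-tan-decay-total-1} contributes $\dot{\mathcal{D}}_N^{1/2}(E_N^{1/2}\dot{\mathcal{D}}_3^{1/2} + E_3^{1/2}\dot{\mathcal{D}}_N^{1/2})$, and $\delta$ times Lemma~\ref{lem-tan-grad-total-1} contributes $\delta C_1\big(E_N^{1/2}\dot{\mathcal{D}}_3^{1/2}\dot{\mathcal{D}}_N^{1/2} + E_3^{1/2}\dot{\mathcal{D}}_N + E_N\dot{\mathcal{D}}_3\big)$; using $\dot{\mathcal{D}}_N^{1/2}E_3^{1/2}\dot{\mathcal{D}}_N^{1/2} = E_3^{1/2}\dot{\mathcal{D}}_N$ and Young's inequality to merge the $E_N^{1/2}\dot{\mathcal{D}}_3^{1/2}\dot{\mathcal{D}}_N^{1/2}$ terms, everything is bounded by $E_N^{1/2}\dot{\mathcal{D}}_3^{1/2}\dot{\mathcal{D}}_N^{1/2} + E_3^{1/2}\dot{\mathcal{D}}_N + E_N\dot{\mathcal{D}}_3$, up to the generic constant absorbed in $\lesssim$.

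\textbf{Main obstacle.} There is essentially no analytic difficulty here — the lemma is a bookkeeping assembly, and the only point requiring a moment's care is the absorption argument in the second step: one must verify that the bad $\nabla v$-term on the right of Lemma~\ref{lem-tan-grad-total-1} is genuinely controlled, termwise and with the correct horizontal-derivative count (note the mismatch $N-1$ versus $N$, which is harmless), by the dissipation $\nabla v$-term on the left of Lemma~\ref{lem-tan-decay-total-1} so that the choice $\delta \le \frac{c_1}{2C_1}$ is admissible. Once that is checked, adding the two inequalities and invoking Young's inequality on the cubic remainder terms finishes the proof.
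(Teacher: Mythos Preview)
Your proposal is correct and matches the paper's approach exactly: the paper presents Lemma~\ref{lem-decay-total-1} as an immediate consequence of Lemmas~\ref{lem-tan-decay-total-1} and~\ref{lem-tan-grad-total-1}, obtained precisely by multiplying the latter by $\delta \le \min\{1,\frac{c_1}{2C_1}\}$ and adding, so that the $\delta C_1$ bad $\nabla v$-term is absorbed by the $c_1$ dissipation. Your explanation of the bookkeeping (the $N-1$ vs.\ $N$ mismatch, the absorption, and the combination of the right-hand sides) is more detailed than what the paper actually writes, which simply states the lemma without further proof.
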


\subsubsection{Estimate of $\|\dot{\Lambda}_h^{-\lambda}\nabla\,v\|_{L^\infty_t(L^2)}$}

\begin{lem}\label{lem-grad-bdd-lambda-1}
Under the assumption of Lemma \ref{lem-tan-pseudo-energy-1}, if$(\lambda,\,\sigma_0) \in (0, 1)$ satisfies $1-\lambda< \sigma_0\leq 1-\frac{1}{2}\lambda$, and $E_3(t) \leq 1$ for all the existence times $t$, then there holds
\begin{equation}\label{grad-lambda-tv-1}
\begin{split}
 & \frac{d}{dt}\mathring{\mathcal{E}}(\dot{\Lambda}_h^{-\lambda}\nabla\, v)+c_1\|\dot{\Lambda}_h^{-\lambda}\partial_t v\|_{L^2}^2 \\
 &\leq C_1\bigg(\|\dot{\Lambda}_h^{-\lambda}\nabla\,v\|_{L^2}^2+E_3^{\frac{1}{2}} \dot{\mathcal{D}}_3^{\frac{1}{2}}\|\dot{\Lambda}_h^{-\lambda}\xi^1 \|_{H^{\frac{1}{2}}(\Sigma_0)}+E_3\|\dot{\Lambda}_h^{\sigma_0}\xi^1\|_{L^2(\Sigma_0)}^2+E_3^{\frac{1}{2}}\, \dot{\mathcal{D}}_3\bigg).
     \end{split}
\end{equation}
\end{lem}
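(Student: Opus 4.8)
The plan is to reproduce the scheme of Lemmas \ref{lem-grad-sigma0-1} and \ref{lem-tan-bdd-lambda-1}, now inserting $\mathcal{P}(\partial_h)=\dot{\Lambda}_h^{-\lambda}$ into the energy identity \eqref{pseudo-energy-tv-1}. This reduces the claim to estimating the eight remainder integrals $\mathfrak{J}_1,\dots,\mathfrak{J}_8$ at this choice of $\mathcal{P}(\partial_h)$. Many of these carry a factor $\dot{\Lambda}_h^{-\lambda}\partial_t v$; those are handled by Cauchy--Schwarz followed by Young's inequality, the quadratic term $\|\dot{\Lambda}_h^{-\lambda}\partial_t v\|_{L^2}^2$ being absorbed into the corresponding term on the left of \eqref{pseudo-energy-tv-1} (leaving a residual coefficient, renamed $c_1$, and a constant $C_1$ absorbing the factor $\nu/2$). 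The workhorse throughout is the two-dimensional product law \eqref{product-law-1} applied fiberwise in $x_1$ and completed by H\"older in $x_1$, combined with Lemmas \ref{lem-est-aij-1}, \ref{lem-est-B-Bv-1}, \ref{lem-est-g-1}, the harmonic-extension bounds of Lemma \ref{est-harmonic-ext-1}, and the Stokes-type relations \eqref{prt1-q-vh-rela-2}. The hypothesis $1-\lambda<\sigma_0\le1-\tfrac12\lambda$ enters in two distinct places: $\sigma_0+\lambda>1$ makes the exponent $2-\sigma_0-\lambda$ strictly less than $1$, so that $\partial_tv$ and $\nabla v$ at horizontal regularity $2-\sigma_0-\lambda$ are controlled by $\dot{\mathcal{D}}_3$ (interpolating between $\dot{\Lambda}_h^{\sigma_0}$ and $\partial_h$); and $\sigma_0\le1-\tfrac12\lambda$ guarantees $2-\sigma_0-\lambda\ge\sigma_0$ and $2(1-\sigma_0-\lambda)>-1$, keeping all exponents in the admissible ranges of the cited lemmas.

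First I would dispose of the terms that do not involve $\mathcal{Q}$. For $\mathfrak{J}_1$ one uses $\partial_t\mathcal{H}(\xi^1)=\mathcal{H}(\partial_t\xi^1)=\mathcal{H}(v^1)$ and Lemma \ref{est-harmonic-ext-1} to get $|\mathfrak{J}_1|\lesssim\|\dot{\Lambda}_h^{-\lambda}v\|_{L^2}\|\dot{\Lambda}_h^{-\lambda}\nabla\mathcal{H}(v^1)\|_{L^2}\lesssim\|\dot{\Lambda}_h^{-\lambda}\nabla v\|_{L^2}^2$. For $\mathfrak{J}_3$, $\mathfrak{J}_4$ and the $\partial_tv$-against-nonlinearity part of $\mathfrak{J}_7$, one applies $|\mathfrak{J}|\lesssim\|\dot{\Lambda}_h^{-\lambda}\partial_t v\|_{L^2}\,\|\dot{\Lambda}_h^{-\lambda}(\text{nonlinear factor})\|_{L^2}$ and bounds the factors $\dot{\Lambda}_h^{-\lambda}\nabla(\widetilde{a_{\alpha1}}\partial_1v^\alpha)$, $\dot{\Lambda}_h^{-\lambda}\nabla((\mathcal{H}(\mathcal{B}_{9,i}^\alpha)-\mathcal{B}_{5,i}^\alpha)\partial_\alpha v^i)$, $\dot{\Lambda}_h^{-\lambda}\partial_t(\mathcal{H}(\mathcal{B}_{j+5,i}^\alpha)v^i)$, etc.\ by $E_3^{1/2}\dot{\mathcal{D}}_3^{1/2}$: one places regularity $\sigma_0-1$ on the coefficient (where it is controlled by $E_3^{1/2}$ via Lemmas \ref{lem-est-aij-1}, \ref{lem-est-B-Bv-1}) and $2-\sigma_0-\lambda\ge\sigma_0$ on the velocity (controlled by $\dot{\mathcal{D}}_3^{1/2}$). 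Young's inequality then converts these into $\epsilon\|\dot{\Lambda}_h^{-\lambda}\partial_t v\|_{L^2}^2+C_\epsilon E_3\dot{\mathcal{D}}_3$. For $\mathfrak{J}_8$ I would invoke the $g$-estimate $\|\dot{\Lambda}_h^{-\lambda}g\|_{L^2}\lesssim E_3^{1/2}\|\dot{\Lambda}_h^{2-\sigma_0-\lambda}\xi^1\|_{L^2(\Sigma_0)}+E_3^{1/2}\dot{\mathcal{D}}_3^{1/2}$ of Lemma \ref{lem-est-g-1}, then use $\sigma_0\le2-\sigma_0-\lambda\le\sigma_0+1$ together with interpolation and Young to replace $\|\dot{\Lambda}_h^{2-\sigma_0-\lambda}\xi^1\|_{L^2(\Sigma_0)}$ by $\|\dot{\Lambda}_h^{\sigma_0}\xi^1\|_{L^2(\Sigma_0)}+\dot{\mathcal{D}}_3^{1/2}$ (since $\ddot{\mathcal{D}}_{\ell,1+\sigma_0}$ controls $\|\dot{\Lambda}_h^{\sigma_0+1}\xi^1\|_{L^2(\Sigma_0)}$), producing a contribution $\le\epsilon\|\dot{\Lambda}_h^{-\lambda}\partial_t v\|_{L^2}^2+C_\epsilon(E_3\|\dot{\Lambda}_h^{\sigma_0}\xi^1\|_{L^2(\Sigma_0)}^2+E_3^{1/2}\dot{\mathcal{D}}_3)$, all of the required form.

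The main obstacle is the family $\mathfrak{J}_2$, $\mathfrak{J}_5$, $\mathfrak{J}_6$, which contains $\partial_1\mathcal{Q}$, $\mathcal{Q}$ and $\partial_\alpha\mathcal{Q}$, and here I would mimic the treatment of $I_{1,1}$ in the proof of Lemma \ref{lem-tan-bdd-lambda-1}. One pairs the $\mathcal{Q}$-factor, measured through $\dot{\Lambda}_h^{2(1-\sigma_0-\lambda)}(\cdot)$ in $L^\infty_{x_1}L^2_h$, with the nonlinear factor $\dot{\Lambda}_h^{2(\sigma_0-1)}(\widetilde{a_{\alpha1}}\partial_t v^\alpha)$ (respectively $\mathcal{B}_{5,i}^\alpha\partial_t v^i$, $\partial_1\widetilde{a_{\alpha1}}\partial_t v^\alpha$, $\partial_t\widetilde{a_{\alpha1}}\partial_1v^\alpha$, $\partial_t\mathcal{B}_{5,i}^\alpha\partial_\alpha v^i$, $\partial_\alpha\mathcal{B}_{5,i}^\alpha\partial_t v^i$) in $L^1_{x_1}L^2_h$; the latter is bounded by $E_3^{1/2}\dot{\mathcal{D}}_3^{1/2}$ using \eqref{product-law-1} and Lemmas \ref{lem-est-aij-1}, \ref{lem-est-B-Bv-1}. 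Since $\mathcal{Q}|_{\Sigma_0}=0$, Poincar\'e's inequality reduces $\|\dot{\Lambda}_h^{2(1-\sigma_0-\lambda)}\mathcal{Q}\|_{L^\infty_{x_1}L^2_h}$ and $\|\dot{\Lambda}_h^{2(1-\sigma_0-\lambda)}\partial_\alpha\mathcal{Q}\|_{L^\infty_{x_1}L^2_h}$ to $\|\dot{\Lambda}_h^{2(1-\sigma_0-\lambda)}\partial_1\mathcal{Q}\|_{L^2}$; writing $q=\mathcal{Q}+\widetilde{\mathcal{Q}}$ with $\widetilde{\mathcal{Q}}=\mathcal{H}(\xi^1)-2\nu\nabla_h\cdot v^h+\nu\mathcal{H}(\mathcal{B}_{9,i}^\alpha)\partial_\alpha v^i$, using \eqref{prt1-q-vh-rela-2} for $\partial_1 q$ together with Lemmas \ref{est-harmonic-ext-1} and \ref{lem-est-g-1}, and exploiting $1-\lambda<\sigma_0\le1-\tfrac\lambda2$ once more, I expect the bound $\|\dot{\Lambda}_h^{2(1-\sigma_0-\lambda)}\partial_1\mathcal{Q}\|_{L^2}\lesssim\|\dot{\Lambda}_h^{-\lambda}\xi^1\|_{L^2(\Sigma_0)}+\|\dot{\Lambda}_h^{-\lambda}\partial_t v^1\|_{L^2}+\|\dot{\Lambda}_h^{-\lambda}\nabla v\|_{L^2}+\dot{\mathcal{D}}_3^{1/2}$, exactly the estimate already established inside the proof of Lemma \ref{lem-tan-bdd-lambda-1}. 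Multiplying this by the $E_3^{1/2}\dot{\mathcal{D}}_3^{1/2}$-factor and applying Young's inequality: the $\|\dot{\Lambda}_h^{-\lambda}\partial_t v^1\|_{L^2}$-term is absorbed into $c_1\|\dot{\Lambda}_h^{-\lambda}\partial_t v\|_{L^2}^2$; the term $\|\dot{\Lambda}_h^{-\lambda}\nabla v\|_{L^2}\,E_3^{1/2}\dot{\mathcal{D}}_3^{1/2}$ splits as $\tfrac12\|\dot{\Lambda}_h^{-\lambda}\nabla v\|_{L^2}^2+\tfrac12E_3\dot{\mathcal{D}}_3$; the term $\|\dot{\Lambda}_h^{-\lambda}\xi^1\|_{L^2(\Sigma_0)}\,E_3^{1/2}\dot{\mathcal{D}}_3^{1/2}$ is already of the desired shape after $\|\cdot\|_{L^2(\Sigma_0)}\le\|\cdot\|_{H^{1/2}(\Sigma_0)}$; and $\dot{\mathcal{D}}_3^{1/2}\,E_3^{1/2}\dot{\mathcal{D}}_3^{1/2}=E_3^{1/2}\dot{\mathcal{D}}_3$. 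Collecting all eight estimates, choosing $\epsilon$ small to absorb the $\dot{\Lambda}_h^{-\lambda}\partial_t v$-terms, and using $E_3\le1$ to merge $E_3\dot{\mathcal{D}}_3$ into $E_3^{1/2}\dot{\mathcal{D}}_3$, yields \eqref{grad-lambda-tv-1}. The delicate point, as in Lemma \ref{lem-tan-bdd-lambda-1}, is that the Stokes reduction of $\partial_1\mathcal{Q}$ reintroduces $\partial_t v^1$ and $\nabla v$ at horizontal regularity $-\lambda$, and one must verify that these are genuinely absorbable into the left-hand side rather than merely bounded.
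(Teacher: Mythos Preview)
Your proposal is correct and follows essentially the same route as the paper: insert $\mathcal{P}(\partial_h)=\dot{\Lambda}_h^{-\lambda}$ into \eqref{pseudo-energy-tv-1}, bound each $\mathfrak{J}_j$ via the product law, Lemmas \ref{lem-est-aij-1}--\ref{lem-est-g-1} and the relation \eqref{prt1-q-vh-rela-2}, control the $\mathcal{Q}$-terms through $\|\dot{\Lambda}_h^{-\lambda}\partial_1\mathcal{Q}\|_{L^2}$ (or your equivalent $\|\dot{\Lambda}_h^{2(1-\sigma_0-\lambda)}\partial_1\mathcal{Q}\|_{L^2}$) exactly as in Lemma \ref{lem-tan-bdd-lambda-1}, and absorb the resulting $\|\dot{\Lambda}_h^{-\lambda}\partial_t v\|_{L^2}$ contributions by Young's inequality. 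The only cosmetic difference is that the paper pairs $\mathfrak{J}_2,\mathfrak{J}_5,\mathfrak{J}_6$ directly at regularity $-\lambda$ on both factors rather than through your $2(1-\sigma_0-\lambda)/2(\sigma_0-1)$ split, and handles the non-$\partial_tv$ pieces of $\mathfrak{J}_7$ with a slightly different exponent distribution (placing $\dot{\Lambda}_h^{\sigma_0-1}\partial_1\partial_\alpha v$ in $L^\infty_{x_1}L^2_h$ against $\dot{\Lambda}_h^{1-\sigma_0-2\lambda}$ of the time-derivative factor), but these are interchangeable under the hypothesis $1-\lambda<\sigma_0\le 1-\tfrac{\lambda}{2}$.
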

\begin{proof}
Taking $\mathcal{P}(\partial_h)=\dot{\Lambda}_h^{-\lambda}$ in  \eqref{pseudo-energy-tv-1},  we will estimate all the integrals in the right hand side of \eqref{pseudo-energy-tv-1}.

For $\mathfrak{J}_1:=\int_{\Omega}\dot{\Lambda}_h^{-\lambda} v\,\cdot\,\nabla\,\dot{\Lambda}_h^{-\lambda}\partial_t \mathcal{H}(\xi^1)  \,dx$, it is easy to see
\begin{equation*}\label{grad-lambda-tv-2}
\begin{split}
 &|\mathfrak{J}_1|\lesssim\|\dot{\Lambda}_h^{-\lambda}v\|_{L^2}\|\dot{\Lambda}_h^{-\lambda}\nabla\,\partial_t \mathcal{H}(\xi^1)\|_{L^2}\lesssim\| \dot{\Lambda}_h^{-\lambda}v\|_{L^2}\|\dot{\Lambda}_h^{-\lambda}\nabla\,v\|_{L^2}\lesssim \|\dot{\Lambda}_h^{-\lambda}\nabla\,v\|_{L^2}^2.
          \end{split}
\end{equation*}
While the product law \eqref{product-law-1} ensures that
\begin{equation*}\label{grad-lambda-tv-3}
\begin{split}
 |\mathfrak{J}_2|&\lesssim \|\dot{\Lambda}_h^{-\lambda}\partial_1\mathcal{Q}\|_{L^2} \|\dot{\Lambda}_h^{-\lambda}(\widetilde{a_{\alpha\,1}} \partial_tv^\alpha)\|_{L^2} \\
 &\lesssim \|\dot{\Lambda}_h^{-\lambda}\partial_1\mathcal{Q}\|_{L^2}  \|\dot{\Lambda}_h^{1-\lambda-\sigma_0}(\widetilde{a_{\alpha\,1}})\|_{L^\infty_{x_1}L^2_h}
 \|\dot{\Lambda}_h^{\sigma_0}\partial_tv^h \|_{L^2}.
         \end{split}
\end{equation*}
Thanks to $\mathcal{Q}= q-\mathcal{H}(\xi^1)  +2\nu\,\nabla_h\cdot v^h-\nu\,\mathcal{H}(\mathcal{B}_{9, i}^{\alpha})\partial_{\alpha}v^{i}$,
we have
\begin{equation*}\label{grad-lambda-tv-4}
\begin{split}
& \|\dot{\Lambda}_h^{-\lambda}\partial_1\mathcal{Q}\|_{L^2}\lesssim  \|\dot{\Lambda}_h^{-\lambda}\partial_1q\|_{L^2}+ \|\dot{\Lambda}_h^{-\lambda}\partial_1\mathcal{H}(\xi^1)\|_{L^2}  + \|\dot{\Lambda}_h^{-\lambda}\partial_1\nabla_h\cdot v^h \|_{L^2}\\
 &\qquad\qquad\qquad\qquad\qquad\qquad\qquad\qquad+ \|\dot{\Lambda}_h^{-\lambda}\partial_1(\mathcal{H}(\mathcal{B}_{9, i}^{\alpha})\partial_{\alpha}v^{i})\|_{L^2}.
\end{split}
\end{equation*}
According to the equation of $\partial_1q$, we get from \eqref{est-g-2} that
\begin{equation*}\label{grad-lambda-tv-3a}
\begin{split}
 &\|\dot{\Lambda}_h^{-\lambda}\partial_1q\|_{L^2}\lesssim\|\dot{\Lambda}_h^{-\lambda}\partial_t v^{1}\|_{L^2}
 +\|\dot{\Lambda}_h^{-\lambda}\partial_h\,\nabla\,v\|_{L^2}+\|\dot{\Lambda}_h^{-\lambda}g_1\|_{L^2}
  +\|\dot{\Lambda}_h^{-\lambda}\widetilde{g}_{11}\|_{L^2}\\
  &\lesssim\|\dot{\Lambda}_h^{-\lambda}\partial_t v^{1}\|_{L^2}
 +\|\dot{\Lambda}_h^{-\lambda}\partial_h\,\nabla\,v\|_{L^2}+E_3^{\frac{1}{2}} \|\dot{\Lambda}_h^{2-\sigma_0-\lambda}\xi^1\|_{L^2(\Sigma_0)} +E_3^{\frac{1}{2}}\, \dot{\mathcal{D}}_3^{\frac{1}{2}},\\
       \end{split}
\end{equation*}
which, together with
\begin{equation*}\label{grad-lambda-tv-3b}
\begin{split}
 &\|\dot{\Lambda}_h^{-\lambda}\partial_1(\mathcal{H}(\mathcal{B}_{9, i}^{\alpha})\partial_{\alpha}v^{i})\|_{L^2}\lesssim\|\dot{\Lambda}_h^{\sigma_0} v\|_{H^1}\|\dot{\Lambda}_h^{2-\sigma_0-\lambda}\mathcal{B}_{9, i}^{\alpha}\|_{H^1} \lesssim\,E_3^{\frac{1}{2}}\, \dot{\mathcal{D}}_3^{\frac{1}{2}}
       \end{split}
\end{equation*}
(where we used the fact $ 2-\lambda-\sigma_0\geq \sigma_0$ since $ \sigma_0\leq 1-\frac{\lambda}{2}$), follows that
\begin{equation}\label{grad-lambda-tv-3c}
\begin{split}
\|\dot{\Lambda}_h^{-\lambda}\partial_1\mathcal{Q}\|_{L^2}\lesssim &\|\dot{\Lambda}_h^{-\lambda}\xi^1 \|_{H^{\frac{1}{2}}(\Sigma_0)}+E_3^{\frac{1}{2}} \|\dot{\Lambda}_h^{2-\sigma_0-\lambda}\xi^1\|_{L^2(\Sigma_0)} \\
 &+ \|\dot{\Lambda}_h^{-\lambda}\partial_t v^{1}\|_{L^2}
 +\|\dot{\Lambda}_h^{-\lambda}\partial_h\,\nabla\,v\|_{L^2}+E_3^{\frac{1}{2}}\, \dot{\mathcal{D}}_3^{\frac{1}{2}}\\
 \lesssim &\|\dot{\Lambda}_h^{-\lambda}\xi^1 \|_{H^{\frac{1}{2}}(\Sigma_0)}+ \|\dot{\Lambda}_h^{-\lambda}\partial_t v^{1}\|_{L^2}
 +\|\dot{\Lambda}_h^{-\lambda}\partial_h\,\nabla\,v\|_{L^2}+E_3^{\frac{1}{2}}\, \dot{\mathcal{D}}_3^{\frac{1}{2}},
\end{split}
\end{equation}
where the interpolation inequality $\|\dot{\Lambda}_h^{2-\sigma_0-\lambda}\xi^1\|_{L^2(\Sigma_0)}\lesssim \|\dot{\Lambda}_h^{-\lambda}\xi^1\|_{L^2(\Sigma_0)}+\|\dot{\Lambda}_h^{1+\sigma_0}\xi^1\|_{L^2(\Sigma_0)}$ is used in the last inequality.
Therefore, we obtain, from
\begin{equation*}\label{grad-lambda-tv-3d}
\begin{split}
&\|\dot{\Lambda}_h^{1-\lambda-\sigma_0}(\widetilde{a_{21}},\widetilde{a_{31}})\|_{L^\infty_{x_1}L^2_h}
 \lesssim \|\dot{\Lambda}_h^{2-\lambda-2\sigma_0}\dot{\Lambda}_h^{\sigma_0-1}(\widetilde{a_{21}},\widetilde{a_{31}})\|_{H^1} \lesssim E_3^{\frac{1}{2}}
 \end{split}
\end{equation*}
(where we used the fact $ 2-\lambda-2\sigma_0\geq 0$ since $ \sigma_0\leq 1-\frac{\lambda}{2}$),  that
 \begin{equation*}\label{grad-lambda-tv-5}
\begin{split}
&|\mathfrak{J}_2|\lesssim  E_3^{\frac{1}{2}} \dot{\mathcal{D}}_3^{\frac{1}{2}}\bigg(\|\dot{\Lambda}_h^{-\lambda}\xi^1 \|_{H^{\frac{1}{2}}(\Sigma_0)}+ \|\dot{\Lambda}_h^{-\lambda}\partial_t v^{1}\|_{L^2}
 +\|\dot{\Lambda}_h^{-\lambda}\partial_h\,\nabla\,v\|_{L^2}+E_3^{\frac{1}{2}}\, \dot{\mathcal{D}}_3^{\frac{1}{2}}\bigg)\\
 &\lesssim  E_3^{\frac{1}{2}} \dot{\mathcal{D}}_3^{\frac{1}{2}}\|\dot{\Lambda}_h^{-\lambda}\xi^1 \|_{H^{\frac{1}{2}}(\Sigma_0)}+ E_3^{\frac{1}{2}} \dot{\mathcal{D}}_3^{\frac{1}{2}}\|\dot{\Lambda}_h^{-\lambda}\partial_t v^{1}\|_{L^2}
 +E_3^{\frac{1}{2}} \dot{\mathcal{D}}_3^{\frac{1}{2}}\|\dot{\Lambda}_h^{-\lambda}\partial_h\,\nabla\,v\|_{L^2}+E_3^{\frac{1}{2}}\, \dot{\mathcal{D}}_3.
         \end{split}
\end{equation*}
For $\mathfrak{J}_3$, one has
 \begin{equation*}\label{grad-lambda-tv-6}
\begin{split}
   &|\mathfrak{J}_3|\lesssim \|\dot{\Lambda}_h^{-\lambda}\partial_t v\|_{L^2} (\| \dot{\Lambda}_h^{-\lambda}\nabla\,(\widetilde{a_{\alpha\,1}}\partial_1v^\alpha)\|_{L^2}+\| \dot{\Lambda}_h^{-\lambda}\nabla\,(\mathcal{B}_{5, i}^{\alpha}\partial_{\alpha}v^{i})\|_{L^2}),\\
           \end{split}
\end{equation*}
Notice that
 \begin{equation*}\label{grad-lambda-tv-7}
\begin{split}
   &\|\dot{\Lambda}_h^{-\lambda}\nabla\,(\widetilde{a_{\alpha\,1}}\partial_1v^\alpha)\|_{L^2} \lesssim \|\dot{\Lambda}_h^{-\lambda}(\widetilde{a_{\alpha\,1}} \nabla\,\partial_1v^\alpha)\|_{L^2}+\|\dot{\Lambda}_h^{-\lambda}( \nabla\,\widetilde{a_{\alpha\,1}}\partial_1v^\alpha)\|_{L^2}\\
   & \lesssim \|\dot{\Lambda}_h^{\sigma_0-1}\widetilde{a_{\alpha\,1}}\|_{L^\infty_{x_1}L^2_h} \|\dot{\Lambda}_h^{2-\sigma_0-\lambda}\nabla\,\partial_1v^\alpha\|_{L^2}+\| \|\dot{\Lambda}_h^{\sigma_0-1}\nabla\,\widetilde{a_{\alpha\,1}}\|_{L^2}
   \|\dot{\Lambda}_h^{2-\sigma_0-\lambda}\partial_1v^h\|_{L^\infty_{x-1}L^2_h}\\
   &\lesssim E_3^{\frac{1}{2}}\dot{\mathcal{D}}_3^{\frac{1}{2}}.
           \end{split}
\end{equation*}
Similarly, we have
 \begin{equation*}\label{grad-lambda-tv-8}
\begin{split}
   &\|\dot{\Lambda}_h^{-\lambda}\nabla\,(\mathcal{B}_{5, i}^{\alpha}\partial_{\alpha}v^{i})\|_{L^2})\lesssim E_3^{\frac{1}{2}}\dot{\mathcal{D}}_3^{\frac{1}{2}},\\
           \end{split}
\end{equation*}
which ensures that
 \begin{equation}\label{grad-lambda-tv-9}
\begin{split}
   &|\mathfrak{J}_3|\lesssim \|\dot{\Lambda}_h^{-\lambda}\partial_t v\|_{L^2} E_3^{\frac{1}{2}}\dot{\mathcal{D}}_3^{\frac{1}{2}}.
           \end{split}
\end{equation}
Similar to the proof of \eqref{grad-lambda-tv-9}, one can obtain
 \begin{equation*}\label{grad-lambda-tv-10}
\begin{split}
   &|\mathfrak{J}_4|\lesssim \|\dot{\Lambda}_h^{-\lambda}\partial_t v\|_{L^2} E_3^{\frac{1}{2}}\dot{\mathcal{D}}_3^{\frac{1}{2}}.
           \end{split}
\end{equation*}
We control $\mathfrak{J}_5$ by
 \begin{equation*}\label{grad-lambda-tv-11}
\begin{split}
 &|\mathfrak{J}_5|\lesssim \|\dot{\Lambda}_h^{-\lambda}\mathcal{Q}\|_{L^\infty_{x_1}L^2_h} (\|\dot{\Lambda}_h^{-\lambda}(\partial_1\widetilde{a_{\alpha\,1}} \partial_tv^\alpha-\partial_t\widetilde{a_{\alpha\,1}}\,\partial_1v^\alpha
+\partial_t\mathcal{B}_{5, i}^{\alpha}\,\partial_{\alpha}v^{i})\|_{L^1_{x_1}L^2_h})\\
 &\lesssim \|\dot{\Lambda}_h^{-\lambda}\partial_1\mathcal{Q}\|_{L^2} \bigg( \|\dot{\Lambda}_h^{1-\sigma_0-\lambda}\partial_1\widetilde{a_{\alpha\,1}}\|_{L^\infty_{x_1}L^2_h} \|\dot{\Lambda}_h^{\sigma_0}\partial_tv^h\|_{L^2}\\
 &+ \|\dot{\Lambda}_h^{1-\sigma_0-\lambda}\partial_t\widetilde{a_{\alpha\,1}}\|_{L^2} \|\dot{\Lambda}_h^{\sigma_0}\partial_1v^h\|_{L^\infty_{x_1}L^2_h}+ \|\dot{\Lambda}_h^{2-\sigma_0-\lambda}\partial_t\mathcal{B}_{5, i}^{\alpha}\|_{L^2} \|\dot{\Lambda}_h^{\sigma_0-1}\partial_{h}v\|_{L^\infty_{x_1}L^2_h}\bigg)\\
 &\lesssim \|\dot{\Lambda}_h^{-\lambda}\partial_1\mathcal{Q}\|_{L^2}    E_3^{\frac{1}{2}}\dot{\mathcal{D}}_3^{\frac{1}{2}},
         \end{split}
\end{equation*}
which along with \eqref{grad-lambda-tv-3c} follows that
 \begin{equation*}\label{grad-lambda-tv-12}
\begin{split}
 &|\mathfrak{J}_5| \lesssim  E_3^{\frac{1}{2}} \dot{\mathcal{D}}_3^{\frac{1}{2}}\|\dot{\Lambda}_h^{-\lambda}\xi^1 \|_{H^{\frac{1}{2}}(\Sigma_0)}+ E_3^{\frac{1}{2}} \dot{\mathcal{D}}_3^{\frac{1}{2}}\|\dot{\Lambda}_h^{-\lambda}\partial_t v^{1}\|_{L^2}
 +E_3^{\frac{1}{2}} \dot{\mathcal{D}}_3^{\frac{1}{2}}(\|\dot{\Lambda}_h^{-\lambda}\partial_h\,\nabla\,v\|_{L^2}+ \dot{\mathcal{D}}_3^{\frac{1}{2}}).
         \end{split}
\end{equation*}
Similarly, for $\mathfrak{J}_6$, we have
 \begin{equation*}\label{grad-lambda-tv-13}
\begin{split}
 &|\mathfrak{J}_6|\lesssim\,E_3^{\frac{1}{2}} \dot{\mathcal{D}}_3^{\frac{1}{2}}\|\dot{\Lambda}_h^{-\lambda}\xi^1 \|_{H^{\frac{1}{2}}(\Sigma_0)}+ E_3^{\frac{1}{2}} \dot{\mathcal{D}}_3^{\frac{1}{2}}\|\dot{\Lambda}_h^{-\lambda}\partial_t v^{1}\|_{L^2}
 +E_3^{\frac{1}{2}} \dot{\mathcal{D}}_3^{\frac{1}{2}}(\|\dot{\Lambda}_h^{-\lambda}\partial_h\,\nabla\,v\|_{L^2}+ \dot{\mathcal{D}}_3^{\frac{1}{2}}).
\end{split}
\end{equation*}
For $\mathfrak{J}_7$, we bound the integral $\int_{\Omega}\dot{\Lambda}_h^{-\lambda}\partial_1\partial_{\alpha} v^j\,\dot{\Lambda}_h^{-\lambda}\partial_t ( \mathcal{H}(\mathcal{B}_{j+5, i}^{\alpha})v^{i}) \,dx$ in it by
\begin{equation*}
\begin{split}
  &|\int_{\Omega} \dot{\Lambda}_h^{-\lambda}\partial_1\partial_{\alpha} v^j\,\dot{\Lambda}_h^{-\lambda}\partial_t ( \mathcal{H}(\mathcal{B}_{j+5, i}^{\alpha})v^{i}) \,dx|\\
  &\lesssim \|\dot{\Lambda}_h^{\sigma_0-1}\partial_1\partial_{\alpha} v^j\|_{L^\infty_{x_1}L^2_h}\|\dot{\Lambda}_h^{1-\sigma_0-2\lambda}\partial_t ( \mathcal{H}(\mathcal{B}_{j+5, i}^{\alpha})v^{i}) \|_{L^1_{x_1}L^2_h}\\
  &\lesssim \|\dot{\Lambda}_h^{\sigma_0}\partial_1 v\|_{H^1}(\|\dot{\Lambda}_h^{1-\sigma_0-2\lambda}(\mathcal{H}(\mathcal{B}_{j+5, i}^{\alpha})\partial_tv^{i})\|_{L^1_{x_1}L^2_h}+\|\dot{\Lambda}_h^{1-\sigma_0-2\lambda}(\partial_t \mathcal{H}(\mathcal{B}_{j+5, i}^{\alpha})v^{i})\|_{L^1_{x_1}L^2_h}),
              \end{split}
\end{equation*}
which, together with $2-2\sigma_0-2\lambda \geq -\lambda$ and
\begin{equation*}
\begin{split}
  &\|\dot{\Lambda}_h^{1-\sigma_0-2\lambda}(\mathcal{H}(\mathcal{B}_{j+5, i}^{\alpha})\partial_tv^{i})\|_{L^1_{x_1}L^2_h}\lesssim\|\dot{\Lambda}_h^{2-\sigma_0-\lambda}\mathcal{H}(\mathcal{B}_{j+5, i}^{\alpha})\|_{L^2}\|\dot{\Lambda}_h^{-\lambda}\partial_tv\|_{L^2}\lesssim  E_3^{\frac{1}{2}}\|\dot{\Lambda}_h^{-\lambda}\partial_tv\|_{L^2},\\
  &\|\dot{\Lambda}_h^{1-\sigma_0-2\lambda}(\partial_t \mathcal{H}(\mathcal{B}_{j+5, i}^{\alpha})v^{i})\|_{L^1_{x_1}L^2_h}\lesssim \|\dot{\Lambda}_h^{\sigma_0}\partial_t \mathcal{H}(\mathcal{B}_{j+5, i}^{\alpha}) \|_{L^2}\|\dot{\Lambda}_h^{2-2\sigma_0-2\lambda} v\|_{L^2}\lesssim \dot{\mathcal{D}}_3^{\frac{1}{2}}E_3^{\frac{1}{2}},
              \end{split}
\end{equation*}
follows that
\begin{equation*}\label{grad-lambda-tv-15}
\begin{split}
  &|\int_{\Omega}\partial_1\partial_{\alpha} v^j\,\partial_t ( \mathcal{H}(\mathcal{B}_{j+5, i}^{\alpha})v^{i}) \,dx|\lesssim \dot{\mathcal{D}}_3^{\frac{1}{2}}  E_3^{\frac{1}{2}}\|\dot{\Lambda}_h^{-\lambda}\partial_tv\|_{L^2}
    +\dot{\mathcal{D}}_3E_3^{\frac{1}{2}}.
              \end{split}
\end{equation*}
Similarly, we have
\begin{equation*}\label{grad-lambda-tv-17}
\begin{split}
  &|\int_{\Omega}\dot{\Lambda}_h^{\sigma_0}\partial_1  v^j\,\dot{\Lambda}_h^{\sigma_0} \partial_t ( \partial_{\alpha} \mathcal{H}(\mathcal{B}_{j+5, i}^{\alpha})v^{i}) \,dx|\lesssim \dot{\mathcal{D}}_3^{\frac{1}{2}}  E_3^{\frac{1}{2}}\|\dot{\Lambda}_h^{-\lambda}\partial_tv\|_{L^2}
    +\dot{\mathcal{D}}_3E_3^{\frac{1}{2}},\\
  &|\int_{\Omega}\dot{\Lambda}_h^{\sigma_0}\partial_t v^j\, \dot{\Lambda}_h^{\sigma_0}\partial_1(\mathcal{H}(\mathcal{B}_{j+5, i}^{\alpha})\partial_{\alpha}v^{i})\,dx|\lesssim \dot{\mathcal{D}}_3^{\frac{1}{2}}  E_3^{\frac{1}{2}}\|\dot{\Lambda}_h^{-\lambda}\partial_tv\|_{L^2}
    +\dot{\mathcal{D}}_3E_3^{\frac{1}{2}}.
          \end{split}
\end{equation*}
Hence, we have
\begin{equation*}\label{grad-lambda-tv-18}
\begin{split}
  &|\mathfrak{J}_7|\lesssim \dot{\mathcal{D}}_3^{\frac{1}{2}}  E_3^{\frac{1}{2}}\|\dot{\Lambda}_h^{-\lambda}\partial_tv\|_{L^2}
    +\dot{\mathcal{D}}_3E_3^{\frac{1}{2}}.
          \end{split}
\end{equation*}
Finally, due to \eqref{est-g-2}, we find
\begin{equation*}\label{grad-lambda-tv-19}
\begin{split}
 &|\mathfrak{J}_8|\lesssim\|\dot{\Lambda}_h^{-\lambda}\partial_t v\|_{L^2}\|\dot{\Lambda}_h^{-\lambda}g\|_{L^2}\lesssim  E_3^{\frac{1}{2}}\|\dot{\Lambda}_h^{-\lambda}\partial_t v\|_{L^2}(\|\dot{\Lambda}_h^{2-\sigma_0-\lambda}\xi^1\|_{L^2(\Sigma_0)} + \dot{\mathcal{D}}_3^{\frac{1}{2}}).
 \end{split}
\end{equation*}
Therefore, we obtain that
\begin{equation*}\label{grad-lambda-tv-20}
\begin{split}
 \sum_{j=1}^8|\mathfrak{J}_j|\lesssim  &\|\dot{\Lambda}_h^{-\lambda}\nabla\,v\|_{L^2}^2+E_3^{\frac{1}{2}} \dot{\mathcal{D}}_3^{\frac{1}{2}}\|\dot{\Lambda}_h^{-\lambda}\xi^1 \|_{H^{\frac{1}{2}}(\Sigma_0)}+ E_3^{\frac{1}{2}} \dot{\mathcal{D}}_3^{\frac{1}{2}}\|\dot{\Lambda}_h^{-\lambda}\partial_t v^{1}\|_{L^2}\\
 &
 +E_3^{\frac{1}{2}} \dot{\mathcal{D}}_3^{\frac{1}{2}}\|\dot{\Lambda}_h^{-\lambda}\partial_h\,\nabla\,v\|_{L^2}+E_3^{\frac{1}{2}}\, \dot{\mathcal{D}}_3+E_3^{\frac{1}{2}}\|\dot{\Lambda}_h^{-\lambda}\partial_t v\|_{L^2}\|\dot{\Lambda}_h^{2-\sigma_0-\lambda}\xi^1\|_{L^2(\Sigma_0)},
 \end{split}
\end{equation*}
which together with Young's inequality applied leads to \eqref{grad-lambda-tv-1}, and the desired result is proved.
\end{proof}

\subsubsection{Estimate of $\|\nabla\,v\|_{L^\infty_t(L^2)}$}

\begin{lem}\label{lem-grad0-bdd-0-1}
Under the assumption of Lemma \ref{lem-pseudo-energy-tv-1}, if $E_3(t) \leq 1$ for all the existence times $t$, then there holds
\begin{equation}\label{grad-0-tv-1}
\begin{split}
 & \frac{d}{dt}\mathring{\mathcal{E}}(\nabla\, v)+c_1\|\partial_t v\|_{L^2}^2 \\
 &\lesssim \|\nabla\,v\|_{L^2}^2+ E_3^{\frac{1}{2}} \dot{\mathcal{D}}_3^{\frac{1}{2}} \|\dot{\Lambda}_h^{2(1-\sigma_0)}\xi^1 \|_{H^{\frac{1}{2}}(\Sigma_0)}
 + E_3^{\frac{1}{2}} \dot{\mathcal{D}}_3+E_3 \|\dot{\Lambda}_h^{\sigma_0}\xi^1 \|_{H^{\frac{3}{2}}(\Sigma_0)}^2.
     \end{split}
\end{equation}
\end{lem}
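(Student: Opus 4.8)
The plan is to specialize the master identity \eqref{pseudo-energy-tv-1} of Lemma \ref{lem-pseudo-energy-tv-1} to the trivial operator $\mathcal{P}(\partial_h)=\mathrm{Id}$ (i.e. $\mathcal{P}(\varsigma_h)\equiv 1$), so that \eqref{grad-0-tv-1} is obtained by estimating the eight integrals $\mathfrak{J}_1,\dots,\mathfrak{J}_8$ appearing on the right-hand side in this particular case. This is the exact analogue of the proofs of Lemma \ref{lem-grad-sigma0-1} (case $\mathcal{P}(\partial_h)=\dot{\Lambda}_h^{\sigma_0}$) and Lemma \ref{lem-tan-grad-N-1} (case $\mathcal{P}(\partial_h)=\partial_h^{N-1}$); the only real difference is bookkeeping about which low-horizontal-regularity exponents to invoke in the product law \eqref{product-law-1} and in Lemma \ref{lem-est-aij-1}, Lemma \ref{lem-est-B-Bv-1}. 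The term $\|\partial_t v\|_{L^2}^2$ on the left comes directly from the left side of \eqref{pseudo-energy-tv-1} with $\mathcal{P}=\mathrm{Id}$, while the coefficient $c_1$ is extracted at the end after absorbing the harmless $\mathring{\mathcal{E}}(\nabla v)$-type boundary/cross terms into $\|\nabla v\|_{L^2}^2$ via Young's inequality.

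First I would handle $\mathfrak{J}_1=\int_\Omega v\cdot\nabla\partial_t\mathcal{H}(\xi^1)\,dx$: since $\partial_t\xi^1=v^1$ on $\Sigma_0$, Lemma \ref{est-harmonic-ext-1} gives $\|\nabla\partial_t\mathcal{H}(\xi^1)\|_{L^2}\lesssim\|v^1\|_{H^{1/2}(\Sigma_0)}\lesssim\|\nabla v\|_{L^2}$, hence $|\mathfrak{J}_1|\lesssim\|\nabla v\|_{L^2}^2$. For $\mathfrak{J}_2=-\int_\Omega\partial_1\mathcal{Q}\,(\widetilde{a_{\alpha1}}\partial_tv^\alpha)\,dx$ I would use the product law with a split $\|\partial_1\mathcal{Q}\|_{L^2}$ against $\|\widetilde{a_{\alpha1}}\partial_tv^\alpha\|_{L^2}$: controlling $\|\partial_1\mathcal{Q}\|_{L^2}$ requires, as in \eqref{grad-sigma0-tv-4} but now at zeroth horizontal order, the relation $\mathcal{Q}=q-\mathcal{H}(\xi^1)+2\nu\nabla_h\cdot v^h-\nu\mathcal{H}(\mathcal{B}_{9,i}^\alpha)\partial_\alpha v^i$ together with \eqref{prt1-q-vh-rela-2}, \eqref{est-g-2} and Lemma \ref{est-harmonic-ext-1}, which produces a term $\|\dot{\Lambda}_h^{2(1-\sigma_0)}\xi^1\|_{H^{1/2}(\Sigma_0)}$ (this is where the somewhat unusual exponent $2(1-\sigma_0)$ in \eqref{grad-0-tv-1} enters, coming from trading $L^\infty_{x_1}L^2_h\hookleftarrow H^1$ and the interpolation between the $\xi^1$-norms) plus $E_3^{1/2}\dot{\mathcal D}_3^{1/2}$ and $\|\partial_tv^1\|_{L^2}+\|\partial_h\nabla v\|_{L^2}\lesssim\dot{\mathcal D}_3^{1/2}$, so after Young $|\mathfrak{J}_2|\lesssim E_3^{1/2}\dot{\mathcal D}_3^{1/2}\|\dot{\Lambda}_h^{2(1-\sigma_0)}\xi^1\|_{H^{1/2}(\Sigma_0)}+E_3^{1/2}\dot{\mathcal D}_3$. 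The terms $\mathfrak{J}_3,\mathfrak{J}_4$ are $\nabla(\widetilde{a_{\alpha1}}\partial_1 v^\alpha)$-type paired with $\partial_t v$, estimated by Lemma \ref{lem-est-B-Bv-1} and Lemma \ref{lem-est-aij-1} exactly as in \eqref{grad-sigma0-tv-6}--\eqref{grad-sigma0-tv-7}, giving $\lesssim E_3^{1/2}\dot{\mathcal D}_3$; $\mathfrak{J}_5,\mathfrak{J}_6$ are $\mathcal{Q}$ (vanishing on $\Sigma_0$, hence controlled by $\|\partial_1\mathcal{Q}\|_{L^2}$) against $\partial_t$-coefficients, again $\lesssim E_3^{1/2}\dot{\mathcal D}_3$ up to the same $\xi^1$-boundary contribution; $\mathfrak{J}_7$ splits into the three harmonic-extension pieces and is bounded as in \eqref{grad-sigma0-tv-15} using $\dot{\mathcal D}_3^{1/2}\dot{\mathcal E}_3^{1/2}\lesssim E_3^{1/2}\dot{\mathcal D}_3^{1/2}$; and $\mathfrak{J}_8=\int_\Omega\partial_t v\cdot g\,dx$ is handled by the fourth inequality of \eqref{est-g-2}, $\|g\|_{L^2}\lesssim E_3^{1/2}\|\dot{\Lambda}_h^{2-\sigma_0}\xi^1\|_{L^2(\Sigma_0)}+E_3^{1/2}\dot{\mathcal D}_3^{1/2}$, and after Young this feeds the $E_3\|\dot{\Lambda}_h^{\sigma_0}\xi^1\|_{H^{3/2}(\Sigma_0)}^2$ term (using the interpolation $\|\dot{\Lambda}_h^{2-\sigma_0}\xi^1\|_{L^2}\lesssim\|\dot{\Lambda}_h^{\sigma_0}\xi^1\|_{H^{3/2}(\Sigma_0)}$ valid since $2-\sigma_0\le\sigma_0+3/2$, i.e. $\sigma_0\ge1/4$, which holds under $1-\lambda<\sigma_0$).

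Summing, $\sum_{j=1}^8|\mathfrak{J}_j|\lesssim\|\nabla v\|_{L^2}^2+E_3^{1/2}\dot{\mathcal D}_3^{1/2}\|\dot{\Lambda}_h^{2(1-\sigma_0)}\xi^1\|_{H^{1/2}(\Sigma_0)}+E_3^{1/2}\dot{\mathcal D}_3+E_3\|\dot{\Lambda}_h^{\sigma_0}\xi^1\|_{H^{3/2}(\Sigma_0)}^2$, which plugged into \eqref{pseudo-energy-tv-1} yields \eqref{grad-0-tv-1} after renaming the generic constant. The main obstacle I anticipate is the careful choice of fractional horizontal exponents in the estimates of $\mathfrak{J}_2$, $\mathfrak{J}_5$, $\mathfrak{J}_6$, $\mathfrak{J}_7$: with $\mathcal{P}=\mathrm{Id}$ there is no positive horizontal derivative to distribute, so each product must be split so that the "rough" factor absorbs a $\dot{\Lambda}_h^{\sigma_0-1}$ (or $\dot{\Lambda}_h^{\sigma_0}$) — controlled by Lemma \ref{lem-est-aij-1} — while the velocity factor keeps enough horizontal smoothness to land in $\dot{\mathcal D}_3$ or $\mathcal{E}_3$; the constraint $1-\lambda<\sigma_0\le1-\tfrac\lambda2$ is what makes all these exponent inequalities ($2-\sigma_0-\lambda\ge\sigma_0$, $2-2\sigma_0-2\lambda\ge-\lambda$, $2(1-\sigma_0)\ge0$, etc.) consistent, exactly as in Lemma \ref{lem-est-g-1} and Lemma \ref{lem-grad-bdd-lambda-1}. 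Beyond this bookkeeping the proof is entirely parallel to Lemmas \ref{lem-grad-sigma0-1} and \ref{lem-grad-bdd-lambda-1}, so I would present it compactly, pointing to those proofs for the repeated arguments.
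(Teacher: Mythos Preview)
Your proposal is correct and takes exactly the approach the paper intends: the paper actually omits the proof of this lemma entirely, leaving it implicit as the case $\mathcal{P}(\partial_h)=\mathrm{Id}$ of the master identity \eqref{pseudo-energy-tv-1}, to be handled in parallel with Lemmas \ref{lem-grad-sigma0-1} and \ref{lem-grad-bdd-lambda-1}. Your term-by-term treatment of $\mathfrak{J}_1,\dots,\mathfrak{J}_8$, including the appearance of $\|\dot{\Lambda}_h^{2(1-\sigma_0)}\xi^1\|_{H^{1/2}(\Sigma_0)}$ via the $\mathcal{H}(\xi^1)$ part of $\partial_1\mathcal{Q}$ and of $E_3\|\dot{\Lambda}_h^{\sigma_0}\xi^1\|_{H^{3/2}(\Sigma_0)}^2$ via the fourth inequality of \eqref{est-g-2} in $\mathfrak{J}_8$, matches the structure of the surrounding proofs and yields \eqref{grad-0-tv-1} after absorbing the $\|\partial_t v\|_{L^2}$ pieces by Young's inequality. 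One small caveat: your justification ``$\sigma_0\ge 1/4$ holds under $1-\lambda<\sigma_0$'' is not literally implied by $1-\lambda<\sigma_0$ alone for all $\lambda\in(0,1)$, but the interpolation $\|\dot\Lambda_h^{2-\sigma_0}\xi^1\|_{L^2(\Sigma_0)}\lesssim\|\dot\Lambda_h^{\sigma_0}\xi^1\|_{H^{3/2}(\Sigma_0)}$ is still what the stated right-hand side of \eqref{grad-0-tv-1} requires, so this is a cosmetic point about the hypothesis rather than a gap in your argument.
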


With Lemmas \ref{lem-tan-grad-total-1}, \ref{lem-grad-bdd-lambda-1} and \ref{lem-grad0-bdd-0-1} in hand, we have
\begin{lem}\label{lem-tan-grad-N+1-1}
Let $N\geq 3$, under the assumption of Lemma \ref{lem-tan-pseudo-energy-1}, if$(\lambda,\,\sigma_0) \in (0, 1)$ satisfies $1-\lambda< \sigma_0\leq 1-\frac{1}{2}\lambda$, and $E_3(t) \leq 1$ for all the existence times $t$, then there holds
\begin{equation}\label{tan-grad-N+1-1}
\begin{split}
&\frac{d}{dt}\bigg(\mathring{\mathcal{E}}(\dot{\Lambda}_h^{-\lambda}\nabla\, v)+\mathring{\mathcal{E}}(\dot{\Lambda}_h^{\sigma_0}\nabla\, v) +\sum_{k=1}^{N}\mathring{\mathcal{E}}(\partial_h^{k}\nabla\, v)\bigg)\\
&\quad+ c_2\bigg(\|\dot{\Lambda}_h^{-\lambda}\partial_t v\|_{L^2}^2+\|\dot{\Lambda}_h^{\sigma_0}\partial_t v\|_{L^2}^2 +\sum_{k=0}^{N}\|\partial_h^{k}\partial_t v\|_{L^2}^2 \bigg)\\
&\leq C_1 \bigg(\|\dot{\Lambda}_h^{-\lambda}\nabla\,v\|_{L^2}^2+\|\dot{\Lambda}_h^{\sigma_0}\nabla\,v\|_{L^2}^2 +\sum_{k=0}^{N}\|\partial_h^{k}\nabla\,v\|_{L^2}^2 + E_{N+1}^{\frac{1}{2}} \dot{\mathcal{D}}_3^{\frac{1}{2}}\dot{\mathcal{D}}_{N+1}^{\frac{1}{2}}+  E_3^{\frac{1}{2}} \dot{\mathcal{D}}_{N+1}\\
&\qquad\qquad+E_{N+1}\,\dot{\mathcal{D}}_3+ E_3^{\frac{1}{2}} \dot{\mathcal{D}}_3^{\frac{1}{2}} \|\dot{\Lambda}_h^{-\lambda}\xi^1 \|_{H^{\frac{1}{2}}(\Sigma_0)}
+E_3 \|\dot{\Lambda}_h^{\sigma_0}\xi^1 \|_{H^{\frac{3}{2}}(\Sigma_0)}^2\bigg).
\end{split}
\end{equation}
\end{lem}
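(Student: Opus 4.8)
The plan is to derive Lemma \ref{lem-tan-grad-N+1-1} by simply summing the three differential inequalities already at hand: Lemma \ref{lem-tan-grad-total-1} applied with $N$ replaced by $N+1$ (legitimate since $N+1\geq 4\geq 3$), Lemma \ref{lem-grad-bdd-lambda-1}, and Lemma \ref{lem-grad0-bdd-0-1}. The first controls $\frac{d}{dt}\big(\mathring{\mathcal{E}}(\dot{\Lambda}_h^{\sigma_0}\nabla v)+\sum_{k=1}^{N}\mathring{\mathcal{E}}(\partial_h^{k}\nabla v)\big)$ together with the dissipation $\|\dot{\Lambda}_h^{\sigma_0}\partial_t v\|_{L^2}^2+\sum_{k=1}^{N}\|\partial_h^{k}\partial_t v\|_{L^2}^2$; Lemma \ref{lem-grad-bdd-lambda-1} supplies the $\dot{\Lambda}_h^{-\lambda}$-weighted piece $\mathring{\mathcal{E}}(\dot{\Lambda}_h^{-\lambda}\nabla v)$ and $\|\dot{\Lambda}_h^{-\lambda}\partial_t v\|_{L^2}^2$; Lemma \ref{lem-grad0-bdd-0-1} supplies the undifferentiated ($k=0$) contribution $\mathring{\mathcal{E}}(\nabla v)$ and $\|\partial_t v\|_{L^2}^2$. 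Adding the three and renaming the generic constants $c_1,c_2,C_1$ (taking the new $c_2$ to be the minimum of the two that appear) produces the left-hand side of \eqref{tan-grad-N+1-1}, with $\partial_h^0$ read as the identity.

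It then remains to verify that every error term generated on the right-hand sides belongs to one of the four families listed in \eqref{tan-grad-N+1-1}. The cubic dissipation products coming from Lemma \ref{lem-tan-grad-total-1} (with $N$ replaced by $N+1$) are $E_{N+1}^{1/2}\dot{\mathcal{D}}_3^{1/2}\dot{\mathcal{D}}_{N+1}^{1/2}$, $E_3^{1/2}\dot{\mathcal{D}}_{N+1}$ and $E_{N+1}\dot{\mathcal{D}}_3$, which already appear; the lower-order terms $E_3^{1/2}\dot{\mathcal{D}}_3$ produced by Lemmas \ref{lem-grad-bdd-lambda-1} and \ref{lem-grad0-bdd-0-1} are dominated by $E_{N+1}^{1/2}\dot{\mathcal{D}}_3^{1/2}\dot{\mathcal{D}}_{N+1}^{1/2}$ since $E_3\leq E_{N+1}$ and $\dot{\mathcal{D}}_3\leq\dot{\mathcal{D}}_{N+1}$; the term $E_3^{1/2}\dot{\mathcal{D}}_3^{1/2}\,\|\dot{\Lambda}_h^{-\lambda}\xi^1\|_{H^{1/2}(\Sigma_0)}$ is listed verbatim; and $E_3\|\dot{\Lambda}_h^{\sigma_0}\xi^1\|_{L^2(\Sigma_0)}^2\leq E_3\|\dot{\Lambda}_h^{\sigma_0}\xi^1\|_{H^{3/2}(\Sigma_0)}^2$ is absorbed into the term already present. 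The various intermediate norms such as $\|\dot{\Lambda}_h^{-\lambda}\partial_h\nabla v\|_{L^2}$ that occur inside the proofs of Lemmas \ref{lem-grad-bdd-lambda-1} and \ref{lem-grad0-bdd-0-1} are, since $1-\lambda<\sigma_0$, interpolated between $\|\dot{\Lambda}_h^{-\lambda}\nabla v\|_{L^2}$ and $\|\dot{\Lambda}_h^{\sigma_0}\nabla v\|_{L^2}$ and hence reduced to the $\dot{\mathcal{D}}_3^{1/2}$-type factors already built into those statements.

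The one point that is not purely mechanical is the reconciliation of the boundary error $E_3^{1/2}\dot{\mathcal{D}}_3^{1/2}\,\|\dot{\Lambda}_h^{2(1-\sigma_0)}\xi^1\|_{H^{1/2}(\Sigma_0)}$ from Lemma \ref{lem-grad0-bdd-0-1} with the cleaner form in the statement. Here I would use the interpolation $\|\dot{\Lambda}_h^{2(1-\sigma_0)}\xi^1\|_{H^{1/2}(\Sigma_0)}\lesssim\|\dot{\Lambda}_h^{-\lambda}\xi^1\|_{H^{1/2}(\Sigma_0)}+\|\dot{\Lambda}_h^{2}\xi^1\|_{H^{1/2}(\Sigma_0)}$, valid precisely because the structural hypothesis $1-\lambda<\sigma_0\leq 1-\frac{1}{2}\lambda$ forces $2(1-\sigma_0)\in(0,2)\subset[-\lambda,2]$, together with the fact that $\ddot{\mathcal{D}}_{\ell,3}$ (hence $\dot{\mathcal{D}}_3$) controls $\|\partial_h^{2}\xi^1\|_{H^{1/2}(\Sigma_0)}^2$ and therefore $\|\dot{\Lambda}_h^{2}\xi^1\|_{H^{1/2}(\Sigma_0)}^2$. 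Consequently $E_3^{1/2}\dot{\mathcal{D}}_3^{1/2}\,\|\dot{\Lambda}_h^{2(1-\sigma_0)}\xi^1\|_{H^{1/2}(\Sigma_0)}\lesssim E_3^{1/2}\dot{\mathcal{D}}_3^{1/2}\,\|\dot{\Lambda}_h^{-\lambda}\xi^1\|_{H^{1/2}(\Sigma_0)}+E_3^{1/2}\dot{\mathcal{D}}_3$, both admissible. With these consolidations \eqref{tan-grad-N+1-1} follows; since the argument merely assembles previously proved estimates, no genuine obstacle arises, and the only thing requiring attention is that each interpolation exponent stays within the frequency band on which the energy and dissipation functionals provide control, which is exactly what the conditions on $(\lambda,\sigma_0)$ guarantee.
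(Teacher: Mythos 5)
Your proposal is correct and follows exactly the route the paper takes: the lemma is stated without a separate proof, introduced only by "With Lemmas \ref{lem-tan-grad-total-1}, \ref{lem-grad-bdd-lambda-1} and \ref{lem-grad0-bdd-0-1} in hand, we have," i.e.\ it is obtained by applying Lemma \ref{lem-tan-grad-total-1} at order $N+1$ and adding the $\dot{\Lambda}_h^{-\lambda}$ and $k=0$ estimates. Your spelled-out reconciliation of the boundary term $E_3^{1/2}\dot{\mathcal{D}}_3^{1/2}\|\dot{\Lambda}_h^{2(1-\sigma_0)}\xi^1\|_{H^{1/2}(\Sigma_0)}$ via interpolation between $\dot{\Lambda}_h^{-\lambda}$ and $\dot{\Lambda}_h^{2}$ and the observation that $\ddot{\mathcal{D}}_{\ell,3}$ (hence $\dot{\mathcal{D}}_3$) dominates $\|\partial_h^2\xi^1\|_{H^{1/2}(\Sigma_0)}^2$ is precisely the bookkeeping the paper leaves implicit, and it is sound.
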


Thanks to Lemmas \ref{lem-tan-bdd-N+1-1} and \ref{lem-tan-grad-N+1-1}, for any small positive constant $\delta \leq \min\{1, \frac{c_1}{2C_1}\}$ (which will be determined later on), we have
\begin{lem}\label{lem-bdd-total-N+1-1}
Let $N\geq 3$, under the assumption of Lemma \ref{lem-tan-pseudo-energy-1}, if$(\lambda,\,\sigma_0) \in (0, 1)$ satisfies $1-\lambda< \sigma_0\leq 1-\frac{1}{2}\lambda$, and $E_3(t) \leq 1$ for all the existence times $t$, then there holds
\begin{equation}\label{bdd-total-N+1-1}
\begin{split}
&\frac{d}{dt}\widehat{\mathcal{E}}_{N+1, tan, \delta} +\widehat{\mathcal{D}}_{N+1, tan, \delta}\leq C_3 \bigg( E_{N+1}^{\frac{1}{2}} \dot{\mathcal{D}}_3^{\frac{1}{2}}\dot{\mathcal{D}}_{N+1}^{\frac{1}{2}}+  E_3^{\frac{1}{2}} \dot{\mathcal{D}}_{N+1}+E_{N+1}\,\dot{\mathcal{D}}_3\\
&\qquad\qquad\qquad+ E_3^{\frac{1}{2}} \dot{\mathcal{D}}_3^{\frac{1}{2}} \|\dot{\Lambda}_h^{-\lambda}\xi^1 \|_{H^{\frac{1}{2}}(\Sigma_0)}
+E_3 \|\dot{\Lambda}_h^{\sigma_0}\xi^1 \|_{H^{\frac{3}{2}}(\Sigma_0)}^2\bigg)+C_3\delta^{-1}E_3 \dot{\mathcal{D}}_3.
\end{split}
\end{equation}
with
\begin{equation}\label{def-bdd-total-N+1-1}
\begin{split}
&\widehat{\mathcal{E}}_{N+1, tan, \delta}:=\|\dot{\Lambda}_h^{-\lambda}v\|_{L^2(\Omega)}^2
+\|\dot{\Lambda}_h^{-\lambda}\xi^1\|_{L^2(\Sigma_0)}^2+\|\dot{\Lambda}_h^{\sigma_0}v\|_{L^2(\Omega)}^2
+\|\dot{\Lambda}_h^{\sigma_0}\xi^1\|_{L^2(\Sigma_0)}^2\\
&\,+\sum_{i=0}^{N+1}(\|\partial_h^{i}v\|_{L^2(\Omega)}^2
+\|\partial_h^{i}\xi^1\|_{L^2(\Sigma_0)}^2+\delta\bigg(\mathring{\mathcal{E}}(\dot{\Lambda}_h^{-\lambda}\nabla\, v)+\mathring{\mathcal{E}}(\dot{\Lambda}_h^{\sigma_0}\nabla\, v) +\sum_{k=0}^{N}\mathring{\mathcal{E}}(\partial_h^{k}\nabla\, v)\bigg)\\
\end{split}
\end{equation}
and
\begin{equation}\label{def-bdd-total-N+1-1}
\begin{split}
\widehat{\mathcal{D}}_{N+1, tan, \delta}:=&\frac{c_1}{2} (\|\dot{\Lambda}_h^{-\lambda}\nabla\,v\|_{L^2(\Omega)}^2+\|\dot{\Lambda}_h^{\sigma_0}\nabla\,v\|_{L^2(\Omega)}^2 +\sum_{i=0}^{N+1}\|\partial_h^{i}\nabla\,v\|_{L^2(\Omega)}^2 )\\
&+\delta\,c_2\bigg(\|\dot{\Lambda}_h^{-\lambda}\partial_t v\|_{L^2}^2+\|\dot{\Lambda}_h^{\sigma_0}\partial_t v\|_{L^2}^2 +\sum_{k=0}^{N}\|\partial_h^{k}\partial_t v\|_{L^2}^2 \bigg).
\end{split}
\end{equation}

\end{lem}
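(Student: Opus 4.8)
The plan is to take $\delta$ times the differential inequality of Lemma~\ref{lem-tan-grad-N+1-1} and add it to the differential inequality of Lemma~\ref{lem-tan-bdd-N+1-1}, then choose $\delta\le\min\{1,\frac{c_1}{2C_1}\}$ small enough to absorb the coupling terms. First, I observe that $\widehat{\mathcal{E}}_{N+1, tan, \delta}$ as defined in \eqref{def-bdd-total-N+1-1} is exactly the sum of the energy functional differentiated in Lemma~\ref{lem-tan-bdd-N+1-1} and $\delta$ times the one differentiated in Lemma~\ref{lem-tan-grad-N+1-1}, so that $\frac{d}{dt}\widehat{\mathcal{E}}_{N+1, tan, \delta}$ is the sum of the two left-hand side time-derivatives. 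Adding, the good terms on the left are $c_1\bigl(\|\dot{\Lambda}_h^{-\lambda}\nabla v\|_{L^2(\Omega)}^2+\|\nabla v\|_{L^2(\Omega)}^2+\|\dot{\Lambda}_h^{\sigma_0}\nabla v\|_{L^2(\Omega)}^2+\sum_{i=0}^{N+1}\|\partial_h^i\nabla v\|_{L^2(\Omega)}^2\bigr)$ coming from Lemma~\ref{lem-tan-bdd-N+1-1}, together with $\delta c_2\bigl(\|\dot{\Lambda}_h^{-\lambda}\partial_t v\|_{L^2}^2+\|\dot{\Lambda}_h^{\sigma_0}\partial_t v\|_{L^2}^2+\sum_{k=0}^N\|\partial_h^k\partial_t v\|_{L^2}^2\bigr)$ coming from $\delta\times$Lemma~\ref{lem-tan-grad-N+1-1}.

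Next I match the bad terms. The right-hand side of $\delta\times$Lemma~\ref{lem-tan-grad-N+1-1} contains $\delta C_1$ times the gradient quantities $\|\dot{\Lambda}_h^{-\lambda}\nabla v\|_{L^2}^2+\|\dot{\Lambda}_h^{\sigma_0}\nabla v\|_{L^2}^2+\sum_{k=0}^N\|\partial_h^k\nabla v\|_{L^2}^2$; since $\delta C_1\le \frac{c_1}{2}$, these are absorbed into the $c_1(\cdots)$ block above, leaving precisely $\frac{c_1}{2}(\cdots)$, which is the first half of $\widehat{\mathcal{D}}_{N+1, tan, \delta}$. The remaining right-hand side terms of $\delta\times$Lemma~\ref{lem-tan-grad-N+1-1}, namely the nonlinear products $E_{N+1}^{1/2}\dot{\mathcal{D}}_3^{1/2}\dot{\mathcal{D}}_{N+1}^{1/2}$, $E_3^{1/2}\dot{\mathcal{D}}_{N+1}$, $E_{N+1}\dot{\mathcal{D}}_3$, $E_3^{1/2}\dot{\mathcal{D}}_3^{1/2}\|\dot{\Lambda}_h^{-\lambda}\xi^1\|_{H^{1/2}(\Sigma_0)}$ and $E_3\|\dot{\Lambda}_h^{\sigma_0}\xi^1\|_{H^{3/2}(\Sigma_0)}^2$, are simply bounded by the same quantities without the factor $\delta\le 1$, and will be collected into the final constant $C_3$.

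It then remains to deal with the genuinely new coupling terms on the right-hand side of Lemma~\ref{lem-tan-bdd-N+1-1}. The term $E_3\|\dot{\Lambda}_h^{\sigma_0}\xi^1\|_{L^2(\Sigma_0)}^2$ is dominated by $E_3\|\dot{\Lambda}_h^{\sigma_0}\xi^1\|_{H^{3/2}(\Sigma_0)}^2$, already present, and $\|\dot{\Lambda}_h^{-\lambda}\xi^1\|_{L^2(\Sigma_0)}E_3^{1/2}\dot{\mathcal{D}}_3^{1/2}$ is bounded, using $\|\cdot\|_{L^2(\Sigma_0)}\le\|\cdot\|_{H^{1/2}(\Sigma_0)}$, by $E_3^{1/2}\dot{\mathcal{D}}_3^{1/2}\|\dot{\Lambda}_h^{-\lambda}\xi^1\|_{H^{1/2}(\Sigma_0)}$, also already present. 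The only term requiring care is $\|\dot{\Lambda}_h^{-\lambda}\partial_t v^1\|_{L^2}E_3^{1/2}\dot{\mathcal{D}}_3^{1/2}$, which is \emph{not} controlled by $E_3^{1/2}\dot{\mathcal{D}}_3$ alone; here I apply Young's inequality to write it as $\le\frac{\delta c_2}{2}\|\dot{\Lambda}_h^{-\lambda}\partial_t v\|_{L^2}^2+\frac{1}{2\delta c_2}E_3\dot{\mathcal{D}}_3$, absorbing the first piece into the $\delta c_2\|\dot{\Lambda}_h^{-\lambda}\partial_t v\|_{L^2}^2$ term of $\widehat{\mathcal{D}}_{N+1, tan, \delta}$ and keeping the second, which produces exactly the term $C_3\delta^{-1}E_3\dot{\mathcal{D}}_3$ in \eqref{bdd-total-N+1-1}. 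Collecting everything yields the stated inequality with a constant $C_3$ depending only on $c_1,c_2,C_1$ (and the absolute constants in the two source lemmas and in Korn's/Poincaré's inequalities), hence independent of $\delta$ in the admissible range. The main obstacle is precisely this bookkeeping step: one must verify that every dissipative quantity on the right-hand side of Lemma~\ref{lem-tan-bdd-N+1-1}—in particular $\|\dot{\Lambda}_h^{-\lambda}\partial_t v^1\|_{L^2}$, which only becomes absorbable once paired with the full $\partial_t v$-dissipation generated by the gradient estimate—is either absorbed into $\widehat{\mathcal{D}}_{N+1, tan, \delta}$ for $\delta$ small or already appears on the right-hand side of Lemma~\ref{lem-tan-grad-N+1-1}, with the $\delta^{-1}$ loss confined to the single term $C_3\delta^{-1}E_3\dot{\mathcal{D}}_3$ and no circular dependence among the absorbed quantities.
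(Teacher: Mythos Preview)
Your proposal is correct and follows exactly the approach the paper takes: the paper simply states that Lemma~\ref{lem-bdd-total-N+1-1} follows from Lemmas~\ref{lem-tan-bdd-N+1-1} and~\ref{lem-tan-grad-N+1-1} by choosing $\delta\le\min\{1,\frac{c_1}{2C_1}\}$, and you have filled in the bookkeeping details---in particular the Young's inequality splitting of $\|\dot{\Lambda}_h^{-\lambda}\partial_t v^1\|_{L^2}E_3^{1/2}\dot{\mathcal{D}}_3^{1/2}$ that produces the $C_3\delta^{-1}E_3\dot{\mathcal{D}}_3$ term---correctly.
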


\renewcommand{\theequation}{\thesection.\arabic{equation}}
\setcounter{equation}{0}
\section{Stokes estimates}\label{sect-stokes}

In this section, we will investigate the dissipative estimates in terms of $\nabla^2v$ and $\nabla q$ as well as their horizontal derivatives in the $L^2$ framework, which is based on the Stokes estimates.

We first recall the classical regularity theory for the
Stokes problem with mixed boundary conditions on the boundary stated in \cite{Agmon-D-N-1964}.

\begin{lem}[\cite{Agmon-D-N-1964}]\label{lem-Stokes-1}
Suppose $v$, $q$ solve
\begin{equation*}\label{eqns-Stokes-1}
 \begin{cases}
  & -\nabla\,\cdot \mathbb{D}(v )+\grad\, q=\phi \in H^{r-2}(\Omega),\\
   &\dive\,v=\psi \in H^{r-1}(\Omega),\\
   &(q\,\mathbb{I}-\mathbb{D}(v))e_1= k \in H^{r-\frac{3}{2}}(\Sigma_0),\\
   &v|_{\Sigma_b}=0.
 \end{cases}
\end{equation*}
Then, for $r\geq 2$,
\begin{equation*}\label{est-Stokes-1}
 \begin{split}
  & \|v\|_{H^r(\Omega)}^2+ \|q\|_{H^{r-1}(\Omega)}^2\lesssim \|\phi\|_{H^{r-2}(\Omega)}^2+\|\psi\|_{H^{r-1}(\Omega)}^2
  +\| k\|_{H^{r-\frac{3}{2}}(\Sigma_0)}^2.
 \end{split}
\end{equation*}
\end{lem}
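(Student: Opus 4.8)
The plan is to treat the Stokes system together with the mixed boundary conditions $(q\mathbb{I}-\mathbb{D}(v))e_1=k$ on $\Sigma_0$ and $v|_{\Sigma_b}=0$ as an elliptic boundary value problem in the sense of Agmon--Douglis--Nirenberg and to invoke the corresponding $L^2$-based a priori estimates from \cite{Agmon-D-N-1964}. First I would record that, writing the incompressibility constraint as an extra equation and $q$ as an extra unknown, the map $(v,q)\mapsto\bigl(-\nabla\cdot\mathbb{D}(v)+\nabla q,\ \operatorname{div}v\bigr)$ is a Douglis--Nirenberg elliptic system with the standard weights (order-$1$ unknowns $v^i$, order-$0$ unknown $q$). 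The key structural check is that the two boundary operators satisfy the complementing (Lopatinski--Shapiro) condition: at the bottom this is the Dirichlet operator, which is classical, and at $\Sigma_0$ the traction operator $(q\mathbb{I}-\mathbb{D}(v))e_1$ is the familiar free-stress boundary operator for the Stokes system, whose complementing property is verified by the usual frozen-coefficient half-space computation (for each nonzero tangential frequency the associated ODE system in the normal variable has a unique exponentially decaying solution matching arbitrary boundary data). This places the problem squarely within the scope of \cite{Agmon-D-N-1964}.

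Next I would reduce the global estimate on the slab $\Omega$ in \eqref{def-domain-1} to the model problems. Since $\Omega$ is already flat, no boundary flattening is needed, but the horizontal directions are unbounded; I would handle this either by a translation-invariant partition of unity in $x_h$ with uniformly finite overlaps—reducing matters to a uniform family of ADN interior and boundary estimates on bounded pieces—or, more efficiently for a slab, by taking the horizontal Fourier transform $x_h\mapsto\varsigma_h$, which turns the system into a two-point boundary value problem for an ODE system on $x_1\in(-\underline b,0)$ depending on the parameter $\varsigma_h$; the ADN estimates then yield bounds uniform in $\varsigma_h$, and Plancherel plus integration in $\varsigma_h$ recovers the Sobolev estimate. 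To anchor the estimate at $r=2$ I would use the weak formulation: testing the momentum equation against $v$, integrating by parts, using $\operatorname{div}v=\psi$ for the pressure term and the boundary condition for the boundary term, and then applying Korn's inequality (Lemma~\ref{lem-korn-2}) controls $\|v\|_{H^1(\Omega)}$ by the data, after which a standard pressure bound (surjectivity of the divergence / a Bogovskii-type operator) controls $\|q\|_{L^2(\Omega)}$; feeding this into the ADN estimate upgrades it to $\|v\|_{H^2(\Omega)}^2+\|q\|_{H^1(\Omega)}^2\lesssim\|\phi\|_{L^2(\Omega)}^2+\|\psi\|_{H^1(\Omega)}^2+\|k\|_{H^{1/2}(\Sigma_0)}^2$.

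For general $r\ge 2$ I would bootstrap by induction on the number of horizontal derivatives. Horizontal (tangential) derivatives commute with the equations and with both boundary operators, so $\bigl(\partial_h v,\partial_h q\bigr)$ solves a Stokes system of exactly the same type with data $\bigl(\partial_h\phi,\partial_h\psi,\partial_h k\bigr)$; applying the estimate already obtained and then recovering the missing normal derivatives $\partial_1^k v$, $\partial_1^k q$ algebraically from the momentum equations and the divergence relation—precisely as in the manipulations leading to \eqref{prt1-q-vh-1}—closes the induction. When $r$ is non-integer the same scheme applies with fractional horizontal operators $\dot\Lambda_h^s$ and interpolation between consecutive integer levels.

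The main obstacle is the combination of the first two steps: one must genuinely verify that the stress-type boundary operator at $\Sigma_0$ satisfies the complementing condition (so that the ADN machinery applies to this specific mixed problem rather than to the pure Dirichlet Stokes problem), and one must ensure that the resulting constants are uniform across the infinitely many localization pieces—equivalently, across all horizontal frequencies $\varsigma_h$—so that no loss occurs in passing from the local model estimates to the global estimate on the unbounded slab. The remaining elliptic bootstrapping, pressure estimates, and use of Korn's inequality are routine.
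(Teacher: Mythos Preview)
Your proposal is a reasonable outline of how the ADN machinery would establish this estimate, but note that the paper does not give its own proof of this lemma: it is stated with the citation \cite{Agmon-D-N-1964} in the heading and introduced as ``the classical regularity theory for the Stokes problem with mixed boundary conditions on the boundary stated in \cite{Agmon-D-N-1964}''. In other words, the paper simply quotes the result from the literature, so there is no in-paper proof to compare against; your sketch is consistent with the cited source.
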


Denote the cut-off function near the bottom by the smooth function
\begin{equation*}
\chi_{in}(x)=
\begin{cases}
&1, \quad x \in [-\underline{b}, -\frac{2}{3}\underline{b});\\
& \mbox{smooth} \quad \in [0, 1], \quad r \in (-\frac{2}{3}\underline{b}, -\frac{1}{3}\underline{b}];\\
&0, \quad x \in (-\frac{1}{3}\underline{b}, 0],
\end{cases}
\end{equation*}
with $|\frac{d^n}{d r^n}\chi_{in}(r)| \lesssim \underline{b}^{-n}$ for any $n \in \mathbb{N}$, and the cut-off function near the free surface by the smooth function
\begin{equation*}
\chi_{f}(x)=
\begin{cases}
&1, \quad x \in (-\frac{2}{3}\underline{b}, 0];\\
& \mbox{smooth} \quad \in [0, 1], \quad r \in (-\frac{5}{6}\underline{b}, -\frac{2}{3}\underline{b}];\\
&0, \quad x \in [-\underline{b}, -\frac{5}{6}\underline{b}),
\end{cases}
\end{equation*}
with $|\frac{d^n}{d r^n}\chi_{f}(r)| \lesssim \underline{b}^{-n}$ for any $n \in \mathbb{N}$.
Notice that $\chi_{f}(\text{Supp}{\chi'_{in}})\equiv 1$. We also denote $\Omega_{in}:=\Omega\,\chi_{in}$ with the measure $\chi_{in}\,dx$ and $\Omega_{f}:=\Omega\,\chi_{f}$ with the measure $\chi_{f}\,dx$.

Since there are different boundary conditions on the top boundary $\Sigma_0$ and the bottom boundary $\Sigma_b$ in \eqref{eqns-linear-1}, we need to deal with these two different situations separately.

\subsection{Estimates near the bottom}

Define $v_{in}\eqdefa v\,\chi_{in}$, $q_{in}\eqdefa q\,\chi_{in}$, then, according to \eqref{eqns-linear-1}, $(v_{in}, q_{in})$ solves
\begin{equation*}\label{eqns-stokes-bottom-1}
 \begin{cases}
 &  - \nu \nabla\,\cdot \mathbb{D}(v_{in} ) +\nabla\,q_{in}=-\partial_t v\,\chi_{in}+g_{in},\\
   &\dive\,v_{in}=\psi_{in},\\
    &(q_{in}\,\mathbb{I}-\mathbb{D}(v_{in}))e_1|_{\Sigma_b}=0,\\
   &v_{in}|_{\Sigma_0}=0,
 \end{cases}
\end{equation*}
where
\begin{equation*}\label{stokes-bottom-2}
 \begin{split}
 &g_{in} \thicksim g\chi_{in}+\chi_{in}'\grad v+\chi_{in}''\,v+\chi_{in}'\,q,\,  \psi_{in} \thicksim \psi\chi_{in}+\chi_{in}' v^1, \,\psi:= -\widetilde{a_{\alpha\,1}}\partial_1v^\alpha+\mathcal{B}_{5, i}^{\alpha}\partial_{\alpha}v^{i}.
 \end{split}
\end{equation*}
Thanks to Lemma \ref{lem-Stokes-1}, we obtain
\begin{equation*}\label{pseudo-stokes-bottom-1}
 \begin{split}
  & \|\mathcal{P}(\partial_h)v_{in}\|_{H^2(\Omega)}^2+ \|\mathcal{P}(\partial_h)q_{in}\|_{H^{1}(\Omega)}^2\\
  &\lesssim   \|\mathcal{P}(\partial_h)\partial_t v\,\chi_{in}\|_{L^2(\Omega)}^2+ \|\mathcal{P}(\partial_h)g_{in}\|_{L^2(\Omega)}^2
  +\|\mathcal{P}(\partial_h)\psi_{in}\|_{H^{1}(\Omega)}^2,
 \end{split}
\end{equation*}
which implies
\begin{equation}\label{pseudo-stokes-bottom-2}
 \begin{split}
  & \|\mathcal{P}(\partial_h)v\|_{H^2(\Omega_{in})}^2+ \|\mathcal{P}(\partial_h)q\|_{H^{1}(\Omega_{in})}^2\lesssim  \|\mathcal{P}(\partial_h)v \|_{H^1(\Omega_{f})}^2+ \|\mathcal{P}(\partial_h)q\|_{L^2(\Omega_{f})}^2\\
  &\qquad\qquad\qquad\qquad+ \|\mathcal{P}(\partial_h)\partial_t v\|_{L^2(\Omega_{in})}^2+ \|\mathcal{P}(\partial_h)g\|_{L^2(\Omega)}^2
  +\|\mathcal{P}(\partial_h)\psi\|_{H^{1}(\Omega)}^2,
 \end{split}
\end{equation}
where we used the fact $\chi_{f}(\text{Supp}{\chi'_{in}})\equiv 1$.

\subsection{Estimates near the free boundary}
Consider the equation of $\partial_1^2\,v^{\beta}$  (with $\beta=2, 3$)
\begin{equation}\label{eqns-vh-sec-1}
\begin{split}
   &-\nu\partial_1^2\,v^{\beta}+\partial_\beta\,q=-\partial_t v^{\beta}+\nu\Delta_h\,v^{\beta}+g_{\beta}+\nu\widetilde{g}_{\beta\beta}.
     \end{split}
\end{equation}
\begin{lem}\label{lem-pseudo-near-interface-1}
Let $(v, \xi)$ be smooth solution to the system \eqref{eqns-pert-1}, then there holds that
  \begin{equation}\label{pseudo-near-interface-2}
\begin{split}
&\nu\|\mathcal{P}(\partial_h)\partial_1^2v^{h}\|_{L^2(\Omega_f)}^2
+\frac{1}{2}\frac{d}{dt}\|\mathcal{P}(\partial_h)\partial_{h}\xi^1\|_{L^2(\Sigma_0)}^2
=\sum_{j=1}^7\mathfrak{I}_j,\\
\end{split}
\end{equation}
where
\begin{equation*}
\begin{split}
&\mathfrak{I}_1:=2\nu\int_{\Sigma_0} \mathcal{P}(\partial_h)\partial_{\beta} v^1\, \mathcal{P}(\partial_h)\partial_{\beta}\nabla_h\cdot v^h\,dS_0,\,\mathfrak{I}_2:=-\int_{\Omega} \mathcal{P}(\partial_h)\partial_1v^{\beta}\, \mathcal{P}(\partial_h)\partial_{\beta}\partial_1q\,\chi_f\,dx,\\
&\quad \mathfrak{I}_3:=-\int_{\Omega} \mathcal{P}(\partial_h)\partial_1v^{\beta}\, \mathcal{P}(\partial_h)\partial_{\beta}q\,\chi_f'\,dx,\,\mathfrak{I}_4:=\int_{\Omega}\mathcal{P}(\partial_h)\partial_1^2v^{\beta}\,\mathcal{P}(\partial_h)(- \partial_t v^{\beta}+\nu\Delta_h\,v^{\beta})\,\chi_f\,dx,\\
&\mathfrak{I}_5:=-\nu\,\int_{\Sigma_0} \mathcal{P}(\partial_h)\partial_{\beta} v^1\, \mathcal{P}(\partial_h)\partial_{\beta}(\mathcal{B}_{9, i}^{\alpha}\partial_{\alpha}v^{i}  )\,dS_0,\\
&\mathfrak{I}_6:=\int_{\Sigma_0} \mathcal{P}(\partial_h)(\mathcal{B}_{\beta+5, i}^{\alpha}\partial_{\alpha}v^{i})\, \mathcal{P}(\partial_h)\partial_{\beta}q\,dS_0,\,\mathfrak{I}_7:=\int_{\Omega}\mathcal{P}(\partial_h)\partial_1^2v^{\beta}\,
\mathcal{P}(\partial_h)(g_{\beta}+\nu\widetilde{g}_{\beta\beta})\,\chi_f\,dx.
\end{split}
\end{equation*}
\end{lem}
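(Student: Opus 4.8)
The identity \eqref{pseudo-near-interface-2} is an energy identity obtained by testing the equation \eqref{eqns-vh-sec-1} for $\partial_1^2 v^\beta$ against $\mathcal{P}(\partial_h)\partial_1^2 v^\beta$, weighted by the cut-off $\chi_f$, and summing over $\beta = 2, 3$. The plan is to apply $\mathcal{P}(\partial_h)$ to \eqref{eqns-vh-sec-1}, multiply by $\mathcal{P}(\partial_h)\partial_1^2 v^\beta\,\chi_f$, integrate over $\Omega$, and sum over $\beta$. The left side produces $\nu\|\mathcal{P}(\partial_h)\partial_1^2 v^h\|_{L^2(\Omega_f)}^2$ from the $-\nu\partial_1^2 v^\beta$ term; the remaining terms on the right, $-\partial_t v^\beta + \nu\Delta_h v^\beta + g_\beta + \nu\widetilde g_{\beta\beta}$, will give $\mathfrak{I}_4$ and $\mathfrak{I}_7$ directly, while the pressure term $\partial_\beta q$ must be integrated by parts in the $x_1$ variable to reveal the boundary contribution on $\Sigma_0$ that, after using the kinematic relation $\partial_t\xi^1 = v^1$ on $\Sigma_0$, yields the time-derivative term $\frac12\frac{d}{dt}\|\mathcal{P}(\partial_h)\partial_h\xi^1\|_{L^2(\Sigma_0)}^2$.

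The key computation is the treatment of $\int_\Omega \mathcal{P}(\partial_h)\partial_1 v^\beta\,\mathcal{P}(\partial_h)\partial_\beta\partial_1 q\,\chi_f\,dx$ (this is $-\mathfrak{I}_2$ up to sign) and more precisely the term with $\partial_\beta q$ itself. First I would write $\mathcal{P}(\partial_h)\partial_1^2 v^\beta \cdot \mathcal{P}(\partial_h)\partial_\beta q\,\chi_f$ and integrate by parts in $x_1$: one factor of $\partial_1$ moves off $v^\beta$, producing a boundary term on $\Sigma_0$ (the $\Sigma_b$ boundary term vanishes since $\chi_f$ is supported away from the bottom), an interior term $-\int \mathcal{P}(\partial_h)\partial_1 v^\beta\,\mathcal{P}(\partial_h)\partial_\beta\partial_1 q\,\chi_f$ which is $\mathfrak{I}_2$, and a term involving $\chi_f'$ which is $\mathfrak{I}_3$. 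On the boundary $\Sigma_0$ I would then invoke the linearized boundary condition \eqref{linearf-bdry-1} / \eqref{eqns-linear-1}, namely that on $\Sigma_0$ one has $q = \xi^1 - 2\nu\nabla_h\cdot v^h + \nu\mathcal{B}_{9,i}^\alpha\partial_\alpha v^i$ and $-\nu\partial_1 v^\beta + \partial_\beta q = \nu\mathcal{B}_{\beta+5,i}^\alpha\partial_\alpha v^i$-type relations; substituting $\partial_1 v^\beta$ on $\Sigma_0$ in terms of $-\partial_\beta v^1$ plus the $\mathcal{B}$-corrections (from \eqref{prtv-interface-1}) and $\partial_\beta q = \partial_\beta\xi^1 - 2\nu\partial_\beta\nabla_h\cdot v^h + \nu\partial_\beta(\mathcal{B}_{9,i}^\alpha\partial_\alpha v^i)$, the leading boundary interaction becomes $\int_{\Sigma_0}\mathcal{P}(\partial_h)\partial_\beta v^1\,\mathcal{P}(\partial_h)\partial_\beta\xi^1\,dS_0$, which is $\frac12\frac{d}{dt}\|\mathcal{P}(\partial_h)\partial_h\xi^1\|_{L^2(\Sigma_0)}^2$ after using $\partial_t\xi^1 = v^1$ on $\Sigma_0$; the cross terms involving $\nabla_h\cdot v^h$ and the $\mathcal{B}$-factors furnish $\mathfrak{I}_1$, $\mathfrak{I}_5$, and $\mathfrak{I}_6$.

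The remaining steps are bookkeeping: collecting the interior $\chi_f$-weighted terms (the $\mathfrak{I}_4$ and $\mathfrak{I}_7$ pieces) directly from the right side of \eqref{eqns-vh-sec-1}, and assembling the $\chi_f'$ term into $\mathfrak{I}_3$. One should be careful that $\mathcal{P}(\partial_h)$ commutes with all the vertical and horizontal derivatives and with $\chi_f = \chi_f(x_1)$ (since $\mathcal{P}(\partial_h)$ acts only in the horizontal variables), so no commutator terms arise; this is the reason the identity is clean. \textbf{The main obstacle} is tracking the boundary term on $\Sigma_0$ correctly through the integration by parts in $x_1$: one must use the correct form of the linearized boundary conditions from \eqref{eqns-linear-1} to identify exactly which combination reproduces $\partial_t\|\mathcal{P}(\partial_h)\partial_h\xi^1\|_{L^2(\Sigma_0)}^2$ and which combinations get shunted into the error terms $\mathfrak{I}_1,\mathfrak{I}_5,\mathfrak{I}_6$; a sign error or a misattributed term there would invalidate the identity. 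A secondary care point is that only $v^h$ (i.e. $\beta = 2, 3$), not $v^1$, appears in the dissipation on the left, consistent with the fact that $\partial_1^2 v^1$ is controlled separately via the incompressibility relation \eqref{incomp-cond-v1-1}; so the test function must be $\mathcal{P}(\partial_h)\partial_1^2 v^\beta$ summed only over $\beta$, matching the structure of \eqref{eqns-vh-sec-1}.
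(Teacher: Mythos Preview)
Your proposal is correct and follows essentially the same route as the paper: apply $\mathcal{P}(\partial_h)$ to \eqref{eqns-vh-sec-1}, multiply by $-\mathcal{P}(\partial_h)\partial_1^2 v^\beta\,\chi_f$ (the minus sign is what makes the dissipation term come out positive), integrate over $\Omega$, then integrate the pressure term by parts in $x_1$ and substitute the boundary relations \eqref{prtv-interface-1} and \eqref{q-interface-1} on $\Sigma_0$ to extract $\tfrac12\frac{d}{dt}\|\mathcal{P}(\partial_h)\partial_h\xi^1\|_{L^2(\Sigma_0)}^2$. The identification of each $\mathfrak{I}_j$ you describe matches the paper's computation exactly.
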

\begin{proof}
Applying the operator $\mathcal{P}(\partial_h)$ to \eqref{eqns-vh-sec-1} yields
\begin{equation}\label{eqns-vh-sec-2}
\begin{split}
   &-\nu\mathcal{P}(\partial_h)\partial_1^2\,v^{\beta}+\partial_\beta\,\mathcal{P}(\partial_h)q=-\partial_t \mathcal{P}(\partial_h)v^{\beta}+\nu\Delta_h\,\mathcal{P}(\partial_h)v^{\beta}
   +\mathcal{P}(\partial_h)(g_{\beta}+\nu\widetilde{g}_{\beta\beta}).
     \end{split}
\end{equation}
Multiplying \eqref{eqns-vh-sec-2} by $-\mathcal{P}(\partial_h)\partial_1^2\,v^{\beta}\chi_f$ and integrating it in $\Omega$, we get
\begin{equation}\label{eqns-vh-sec-3}
\begin{split}
\nu\int_{\Omega} |\mathcal{P}(\partial_h)\partial_1^2v^{\beta}|^2\chi_f\,dx+I=&\int_{\Omega}\mathcal{P}(\partial_h)\partial_1^2v^{\beta}\,\mathcal{P}(\partial_h)(- \partial_t v^{\beta}+\nu\Delta_h\,v^{\beta})\,\chi_f\,dx\\
 &+\int_{\Omega}\mathcal{P}(\partial_h)\partial_1^2v^{\beta}\,
\mathcal{P}(\partial_h)(g_{\beta}+\nu\widetilde{g}_{\beta\beta})\,\chi_f\,dx
\end{split}
\end{equation}
with $I=-\int_{\Omega} \mathcal{P}(\partial_h)\partial_1^2v^{\beta}\,\mathcal{P}(\partial_h)\partial_{\beta}q\,\chi_f\,dx$.

Integrating by parts in $I$ gives rise to
\begin{equation*}\label{eqns-vh-sec-4}
\begin{split}
&I=-\int_{\Sigma_0} \mathcal{P}(\partial_h)\partial_1v^{\beta}\, \mathcal{P}(\partial_h)\partial_{\beta}q\,dS_0\\
&\qquad\qquad+\int_{\Omega} \mathcal{P}(\partial_h)\partial_1v^{\beta}\, \mathcal{P}(\partial_h)\partial_{\beta}\partial_1q\,\chi_f\,dx+\int_{\Omega} \mathcal{P}(\partial_h)\partial_1v^{\beta}\, \mathcal{P}(\partial_h)\partial_{\beta}q\,\chi_f'\,dx.
\end{split}
\end{equation*}
Thanks to the boundary conditions of $\partial_1v^{\beta}$  and $q$ on $\Sigma_0$, we have
\begin{equation*}\label{eqns-vh-sec-5}
\begin{split}
&-\int_{\Sigma_0} \mathcal{P}(\partial_h)\partial_1v^{\beta}\, \mathcal{P}(\partial_h)\partial_{\beta}q\,dS_0=-\int_{\Sigma_0} \mathcal{P}(\partial_h)\bigg(-\partial_{\beta} v^1+\mathcal{B}_{\beta+5, i}^{\alpha}\partial_{\alpha}v^{i}\bigg)\, \mathcal{P}(\partial_h)\partial_{\beta}q\,dS_0\\
&=\int_{\Sigma_0} \mathcal{P}(\partial_h)\partial_{\beta} v^1\,\mathcal{P}(\partial_h) \partial_{\beta}q\,dS_0-\int_{\Sigma_0} \mathcal{P}(\partial_h)(\mathcal{B}_{\beta+5, i}^{\alpha}\partial_{\alpha}v^{i})\, \mathcal{P}(\partial_h)\partial_{\beta}q\,dS_0\\
&= \int_{\Sigma_0} \mathcal{P}(\partial_h)\partial_{\beta} v^1\, \mathcal{P}(\partial_h)\bigg(\partial_{\beta}\xi^1-2\nu\partial_{\beta}\nabla_h\cdot v^h  +\nu\partial_{\beta}(\mathcal{B}_{9, i}^{\alpha}\partial_{\alpha}v^{i}  )\bigg)\,dS_0\\
&\qquad-\int_{\Sigma_0} \mathcal{P}(\partial_h)(\mathcal{B}_{\beta+5, i}^{\alpha}\partial_{\alpha}v^{i})\, \mathcal{P}(\partial_h)\partial_{\beta}q\,dS_0,
\end{split}
\end{equation*}
which follows that
\begin{equation*}\label{eqns-vh-sec-6}
\begin{split}
&-\int_{\Sigma_0} \mathcal{P}(\partial_h)\partial_1v^{\beta}\, \mathcal{P}(\partial_h)\partial_{\beta}q\,dS_0\\
&=\frac{1}{2}\frac{d}{dt}\int_{\Sigma_0}|\mathcal{P}(\partial_h)\partial_{\beta}\xi^1|^2\,dS_0-2\nu\int_{\Sigma_0} \mathcal{P}(\partial_h)\partial_{\beta} v^1\, \mathcal{P}(\partial_h)\partial_{\beta}\nabla_h\cdot v^h\,dS_0\\
&\,+\nu\,\int_{\Sigma_0} \mathcal{P}(\partial_h)\partial_{\beta} v^1\, \mathcal{P}(\partial_h)\partial_{\beta}(\mathcal{B}_{9, i}^{\alpha}\partial_{\alpha}v^{i}  )\,dS_0-\int_{\Sigma_0} \mathcal{P}(\partial_h)(\mathcal{B}_{\beta+5, i}^{\alpha}\partial_{\alpha}v^{i})\, \mathcal{P}(\partial_h)\partial_{\beta}q\,dS_0.
\end{split}
\end{equation*}
Therefore, we obtain
\begin{equation}\label{eqns-vh-sec-7}
\begin{split}
&I=\frac{1}{2}\frac{d}{dt}\int_{\Sigma_0}|\mathcal{P}(\partial_h)\partial_{\beta}\xi^1|^2\,dS_0-2\nu\int_{\Sigma_0} \mathcal{P}(\partial_h)\partial_{\beta} v^1\, \mathcal{P}(\partial_h)\partial_{\beta}\nabla_h\cdot v^h\,dS_0\\
&\,+\nu\,\int_{\Sigma_0} \mathcal{P}(\partial_h)\partial_{\beta} v^1\, \mathcal{P}(\partial_h)\partial_{\beta}(\mathcal{B}_{9, i}^{\alpha}\partial_{\alpha}v^{i}  )\,dS_0-\int_{\Sigma_0} \mathcal{P}(\partial_h)(\mathcal{B}_{0, \beta+5, \alpha, i}\partial_{\alpha}v^{i})\, \mathcal{P}(\partial_h)\partial_{\beta}q\,dS_0\\
&\qquad\qquad+\int_{\Omega} \mathcal{P}(\partial_h)\partial_1v^{\beta}\, \mathcal{P}(\partial_h)\partial_{\beta}\partial_1q\,\chi_f\,dx+\int_{\Omega} \mathcal{P}(\partial_h)\partial_1v^{\beta}\, \mathcal{P}(\partial_h)\partial_{\beta}q\,\chi_f'\,dx.
\end{split}
\end{equation}
Plugging \eqref{eqns-vh-sec-7} into \eqref{eqns-vh-sec-3}, we get \eqref{pseudo-near-interface-2}.
\end{proof}

\subsection{Estimate of $\|(\dot{\Lambda}_h^{\sigma_0}\nabla^2v, \,\dot{\Lambda}_h^{\sigma_0}\nabla\,q)\|_{L^2_t(L^2)}$}

\begin{lem}\label{lem-laplace-sigma0-1}
Under the assumption of Lemma \ref{lem-pseudo-energy-tv-1}, if $E_3(t) \leq 1$ for all the existence times $t$, then there holds
  \begin{equation}\label{laplace-sigma0-0}
\begin{split}
&\frac{d}{dt}\|\dot{\Lambda}_h^{\sigma_0}\partial_{h}\xi^1\|_{L^2(\Sigma_0)}^2
+c_3(\|\dot{\Lambda}_h^{\sigma_0}\nabla^2v\|_{L^2(\Omega)}^2 +\|\dot{\Lambda}_h^{\sigma_0}\nabla\,q\|_{L^2(\Omega)}^2)\\
  &\lesssim \|\dot{\Lambda}_h^{\sigma_0}\nabla\, v\|_{L^2}^2+\|\partial_{h}^2\nabla\, v\|_{L^2}^2+(\|\dot{\Lambda}_h^{\sigma_0}\partial_h\partial_t v\|_{L^2}^2+\|\dot{\Lambda}_h^{\sigma_0}\partial_t v\|_{L^2}^2)+ E_3^{\frac{1}{2}} \dot{\mathcal{D}}_3.
\end{split}
\end{equation}
\end{lem}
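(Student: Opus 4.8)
\textbf{Proof strategy for Lemma \ref{lem-laplace-sigma0-1}.}
The plan is to combine the Stokes estimate near the bottom \eqref{pseudo-stokes-bottom-2} (taking $\mathcal{P}(\partial_h)=\dot{\Lambda}_h^{\sigma_0}$) with the interface identity of Lemma \ref{lem-pseudo-near-interface-1} (again with $\mathcal{P}(\partial_h)=\dot{\Lambda}_h^{\sigma_0}$), and then patch the two regions using the cut-off functions $\chi_{in},\,\chi_f$. First I would apply \eqref{pseudo-stokes-bottom-2} to control $\|\dot{\Lambda}_h^{\sigma_0}v\|_{H^2(\Omega_{in})}^2+\|\dot{\Lambda}_h^{\sigma_0}q\|_{H^1(\Omega_{in})}^2$ by $\|\dot{\Lambda}_h^{\sigma_0}v\|_{H^1(\Omega_f)}^2+\|\dot{\Lambda}_h^{\sigma_0}q\|_{L^2(\Omega_f)}^2+\|\dot{\Lambda}_h^{\sigma_0}\partial_tv\|_{L^2(\Omega_{in})}^2+\|\dot{\Lambda}_h^{\sigma_0}g\|_{L^2(\Omega)}^2+\|\dot{\Lambda}_h^{\sigma_0}\psi\|_{H^1(\Omega)}^2$. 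The $g$-term is handled by the first inequality of \eqref{est-g-2}, i.e. $\|\dot{\Lambda}_h^{\sigma_0}g\|_{L^2}\lesssim E_3^{1/2}\dot{\mathcal{D}}_3^{1/2}$; the divergence term $\psi=-\widetilde{a_{\alpha1}}\partial_1v^\alpha+\mathcal{B}_{5,i}^\alpha\partial_\alpha v^i$ is handled by \eqref{est-B-1} and \eqref{low-incom-fluid-1} of Lemma \ref{lem-est-B-Bv-1}, giving $\|\dot{\Lambda}_h^{\sigma_0}\psi\|_{H^1}\lesssim E_3^{1/2}\dot{\mathcal{D}}_3^{1/2}$. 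The low-order pieces $\|\dot{\Lambda}_h^{\sigma_0}v\|_{H^1(\Omega_f)}^2$ and $\|\dot{\Lambda}_h^{\sigma_0}q\|_{L^2(\Omega_f)}^2$ are absorbed into the right-hand side of \eqref{laplace-sigma0-0}: the first is $\lesssim\|\dot{\Lambda}_h^{\sigma_0}\nabla v\|_{L^2}^2+\|\dot{\Lambda}_h^{\sigma_0}v\|_{L^2}^2$ (and the zero-order term controlled via Poincaré and Korn), while the pressure is controlled on $\Sigma_0$ by the boundary relation $q=\xi^1-2\nu\nabla_h\cdot v^h+\nu\mathcal{B}_{9,i}^\alpha\partial_\alpha v^i$ together with the $\partial_1 q$ equation \eqref{prt1-q-vh-1} and \eqref{prt1-q-vh-rela-2}, producing the $\|\dot{\Lambda}_h^{\sigma_0}\partial_tv\|_{L^2}^2$ and $E_3^{1/2}\dot{\mathcal{D}}_3$ contributions.

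Next I would treat the region near the free surface. The vertical second derivative $\dot{\Lambda}_h^{\sigma_0}\partial_1^2 v^h$ on $\Omega_f$ comes directly from \eqref{pseudo-near-interface-2}, which also produces the favorable time-derivative term $\tfrac12\tfrac{d}{dt}\|\dot{\Lambda}_h^{\sigma_0}\partial_h\xi^1\|_{L^2(\Sigma_0)}^2$; I then need to bound the seven remainder terms $\mathfrak{I}_1,\ldots,\mathfrak{I}_7$. The boundary terms $\mathfrak{I}_1,\mathfrak{I}_5$ are estimated by the trace theorem and the harmonic-extension bounds (Lemma \ref{est-harmonic-ext-1}), with $\mathfrak{I}_1$ partly absorbed as $\|\dot{\Lambda}_h^{\sigma_0}\nabla v\|_{L^2}^2$ and $\|\partial_h^2\nabla v\|_{L^2}^2$ via interpolation on the $2$-dimensional interface, and $\mathfrak{I}_5$ giving $E_3^{1/2}\dot{\mathcal{D}}_3$ through \eqref{est-B-1}. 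The term $\mathfrak{I}_2$ involves $\dot{\Lambda}_h^{\sigma_0}\partial_\beta\partial_1 q$; this is the delicate one — I replace $\partial_1 q$ by its expression \eqref{prt1-q-vh-1} (which trades $\partial_1 q$ for $\partial_t v^1$, horizontal derivatives of $v$, and $g_1+\widetilde g_{11}$), integrate the $\partial_\beta$ onto $\partial_1 v^\beta$, and so bound $\mathfrak{I}_2$ by $\|\dot{\Lambda}_h^{\sigma_0}\partial_h\partial_1 v^h\|_{L^2}\big(\|\dot{\Lambda}_h^{\sigma_0}\partial_h\partial_t v\|_{L^2}+\|\dot{\Lambda}_h^{\sigma_0}\partial_h\nabla v\|_{L^2}+\|\dot{\Lambda}_h^{\sigma_0}(g_1,\widetilde g_{11})\|_{H^1}\big)$, then use \eqref{est-g-2-1}. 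The term $\mathfrak{I}_3$ is lower order because $\chi_f'$ is supported where $\chi_{in}\equiv1$, so it reduces to the interior estimate; $\mathfrak{I}_4$ is bounded by $\|\dot{\Lambda}_h^{\sigma_0}\partial_1^2 v^h\|_{L^2(\Omega_f)}(\|\dot{\Lambda}_h^{\sigma_0}\partial_t v\|_{L^2}+\|\dot{\Lambda}_h^{\sigma_0}\partial_h^2 v\|_{L^2})$ with the first factor absorbed by Young; $\mathfrak{I}_6$ uses the trace theorem and \eqref{est-B-1}; $\mathfrak{I}_7$ uses \eqref{est-g-2}.

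Finally I would combine the near-bottom and near-surface estimates: the full-domain norms $\|\dot{\Lambda}_h^{\sigma_0}\nabla^2 v\|_{L^2(\Omega)}^2+\|\dot{\Lambda}_h^{\sigma_0}\nabla q\|_{L^2(\Omega)}^2$ split as $\lesssim\|\cdot\|_{L^2(\Omega_{in})}^2+\|\cdot\|_{L^2(\Omega_f)}^2$ since $\chi_{in}+\chi_f\gtrsim1$; on $\Omega_f$ the horizontal and mixed second derivatives and $\nabla q$ are already controlled (the pure vertical $\partial_1^2 v^h$ by Lemma \ref{lem-pseudo-near-interface-1}, $\partial_1^2 v^1$ by the incompressibility relation \eqref{incomp-cond-v1-1} together with \eqref{low-incom-fluid-1}, $\partial_1 q$ by \eqref{prt1-q-vh-rela-2}), and on $\Omega_{in}$ everything follows from the Stokes bound above. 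Choosing constants and absorbing the small-factor terms $E_3^{1/2}\dot{\mathcal{D}}_3$ yields \eqref{laplace-sigma0-0}. The main obstacle will be term $\mathfrak{I}_2$, where one must carefully use the momentum equation \eqref{prt1-q-vh-1} to eliminate the normal derivative of $\partial_1 q$ without losing regularity — all other contributions are routine applications of the product laws (Lemma \ref{lem-product-law-1}), the embedding \eqref{embedding-ineq-1}, the trace theorem, and Lemmas \ref{lem-est-aij-1}, \ref{lem-est-B-Bv-1}, and \ref{lem-est-g-1}.
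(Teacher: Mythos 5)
The overall architecture you propose (Stokes near the bottom plus the interface identity of Lemma~\ref{lem-pseudo-near-interface-1}, patched with $\chi_{in},\chi_f$, and using $\eqref{prt1-q-vh-rela-2}$ together with \eqref{est-g-2}, \eqref{est-B-1}) matches the paper, and your treatment of the remainder terms $\mathfrak{I}_1,\ldots,\mathfrak{I}_7$ is essentially right. But there is a genuine gap in the near-bottom step: you apply \eqref{pseudo-stokes-bottom-2} with $\mathcal{P}(\partial_h)=\dot{\Lambda}_h^{\sigma_0}$, while the paper applies it with $\mathcal{P}(\partial_h)=\dot{\Lambda}_h^{\sigma_0}\partial_h$, and the extra $\partial_h$ is not cosmetic.

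With your choice the right-hand side of the Stokes estimate contains the pressure trace term $\|\dot{\Lambda}_h^{\sigma_0}q\|_{L^2(\Omega_f)}^2$ (coming from $\chi_{in}'\,q$ in $g_{in}$). You propose to bound this via Poincar\'e from $\Sigma_0$ together with the boundary relation $q|_{\Sigma_0}=\xi^1-2\nu\nabla_h\cdot v^h+\nu\mathcal{B}_{9,i}^\alpha\partial_\alpha v^i$, but this necessarily produces $\|\dot{\Lambda}_h^{\sigma_0}\xi^1\|_{L^2(\Sigma_0)}$ with a nonsmall constant. That quantity is part of the \emph{energy} $\dot{\mathcal E}_{\ell,\sigma_0}$, not the dissipation: it is not $\lesssim\|\dot{\Lambda}_h^{\sigma_0+1}\xi^1\|_{L^2(\Sigma_0)}$ (low horizontal frequencies go the wrong way), it is not on the right-hand side of \eqref{laplace-sigma0-0}, and it does not carry the small factor $E_3^{1/2}$. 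Replacing $q$ by the good unknown $\mathcal Q$ does not help either, since $q=\mathcal Q+\mathcal H(\xi^1)+\cdots$ reintroduces the same low-order trace of $\xi^1$. This is precisely what the extra $\partial_h$ in the paper's choice buys: the resulting pressure term is $\|\dot{\Lambda}_h^{\sigma_0}\partial_h q\|_{L^2(\Omega_f)}$, which the paper bounds in \eqref{laplace-sigma0-15} purely from the horizontal momentum equation $-\nu\partial_1^2v^\beta+\partial_\beta q=-\partial_tv^\beta+\nu\Delta_h v^\beta+\cdots$, i.e.\ by $\|\dot{\Lambda}_h^{\sigma_0}\partial_1^2v^h\|_{L^2(\Omega_f)}+\|\dot{\Lambda}_h^{\sigma_0}\partial_tv\|_{L^2}+\|\dot{\Lambda}_h^{\sigma_0}\partial_hv\|_{H^1}+E_3^{1/2}\dot{\mathcal D}_3^{1/2}$, with \emph{no} boundary trace of $q$ or $\xi^1$ entering. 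The full $\|\dot{\Lambda}_h^{\sigma_0}\nabla q\|_{L^2(\Omega)}$ is then reassembled from $\partial_1 q$ (via the first line of \eqref{laplace-sigma0-2}, i.e.\ \eqref{prt1-q-vh-rela-2}), $\partial_h q$ on $\Omega_f$ (via \eqref{laplace-sigma0-15}), and $\partial_h q$ on $\Omega_{in}$ (from the $\dot{\Lambda}_h^{\sigma_0}\partial_h$ Stokes estimate) — exactly the roles you had in mind, but the operator must carry the extra $\partial_h$ for the bookkeeping to close.

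Two minor remarks. First, your treatment of $\mathfrak{I}_2$ is correct in spirit but more elaborate than needed: after integrating $\partial_\beta$ off, the paper simply leaves $\|\dot{\Lambda}_h^{\sigma_0}\partial_1 q\|_{L^2}$ as a factor and invokes the first line of \eqref{laplace-sigma0-2} at the end, rather than substituting \eqref{prt1-q-vh-1} in place. Second, for $\mathfrak{I}_1$ the paper uses $|\mathfrak{I}_1|\lesssim\|\dot{\Lambda}_h^{\sigma_0}\partial_\beta v^1\|_{H^{1/2}(\Sigma_0)}\|\dot{\Lambda}_h^{\sigma_0}\nabla_h\cdot v^h\|_{H^{1/2}(\Sigma_0)}\lesssim\|\dot{\Lambda}_h^{\sigma_0}\partial_h v\|_{H^1(\Omega)}^2$ by the trace theorem, which is $\lesssim\|\dot{\Lambda}_h^{\sigma_0}\nabla v\|_{L^2}^2+\|\partial_h^2\nabla v\|_{L^2}^2$ by interpolation in $\sigma_0\in(0,1)$ — the same conclusion you state, so that part is fine.
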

\begin{proof}
First, taking $\mathcal{P}(\partial_h)=\dot{\Lambda}_h^{\sigma_0}$ in \eqref{prt1-q-vh-rela-2}, we get
  \begin{equation}\label{laplace-sigma0-2}
\begin{split}
  &\|\dot{\Lambda}_h^{\sigma_0}\partial_1q\|_{L^2}\lesssim \|\dot{\Lambda}_h^{\sigma_0}\partial_t v^{1}\|_{L^2}+\|\dot{\Lambda}_h^{\sigma_0}\partial_h\,v \|_{H^1}
  + E_3^{\frac{1}{2}} \dot{\mathcal{D}}_{3}^{\frac{1}{2}},\\
 &\|\dot{\Lambda}_h^{\sigma_0}\partial_1^2v^1\|_{L^2}\lesssim \|\dot{\Lambda}_h^{\sigma_0}\partial_h\,v \|_{H^1} +E_3^{\frac{1}{2}} \dot{\mathcal{D}}_{3}^{\frac{1}{2}},\quad \|\dot{\Lambda}_h^{\sigma_0-1}\partial_1^2v^1\|_{L^2}\lesssim \|\dot{\Lambda}_h^{\sigma_0}\,v \|_{H^1} +E_3^{\frac{1}{2}} \dot{\mathcal{D}}_{3}^{\frac{1}{2}},\\
   &\|-\nu\dot{\Lambda}_h^{\sigma_0}\partial_1^2\,v^{h}+\dot{\Lambda}_h^{\sigma_0}\partial_hq\|_{L^2}
  \lesssim \|\dot{\Lambda}_h^{\sigma_0}\partial_t v^{h}\|_{L^2}+\|\dot{\Lambda}_h^{\sigma_0}\partial_h\,v \|_{H^1} +E_3^{\frac{1}{2}} \dot{\mathcal{D}}_{3}^{\frac{1}{2}},\\
  &\|-\nu\dot{\Lambda}_h^{\sigma_0}\partial_1^2\,v^{h}+\dot{\Lambda}_h^{\sigma_0}\partial_hq\|_{L^2(\Omega_f)}
  \lesssim \|\dot{\Lambda}_h^{\sigma_0}\partial_t v^{h}\|_{L^2}+\|\dot{\Lambda}_h^{\sigma_0}\partial_h\,v \|_{H^1} +E_3^{\frac{1}{2}} \dot{\mathcal{D}}_{3}^{\frac{1}{2}},
     \end{split}
\end{equation}
and in particular,
\begin{equation}\label{laplace-sigma0-15}
\begin{split}
  &\|\dot{\Lambda}_h^{\sigma_0}\partial_hq\|_{L^2(\Omega_f)}
  \lesssim\|\dot{\Lambda}_h^{\sigma_0}\partial_1^2\,v^{h}\|_{L^2(\Omega_f)}+ \|\dot{\Lambda}_h^{\sigma_0}\partial_t v \|_{L^2}+\|\dot{\Lambda}_h^{\sigma_0}\partial_h\,v \|_{H^1} +E_3^{\frac{1}{2}} \dot{\mathcal{D}}_{3}^{\frac{1}{2}}.
     \end{split}
\end{equation}
While taking $\mathcal{P}(\partial_h)=\dot{\Lambda}_h^{\sigma_0}\partial_h$ in \eqref{pseudo-stokes-bottom-2}, we have
\begin{equation*}\label{laplace-sigma0-18}
 \begin{split}
  & \|\dot{\Lambda}_h^{\sigma_0}\partial_hv\|_{H^2(\Omega_{in})}^2+ \|\dot{\Lambda}_h^{\sigma_0}\partial_hq\|_{H^{1}(\Omega_{in})}^2\lesssim  \|\dot{\Lambda}_h^{\sigma_0}\partial_hv \|_{H^1(\Omega_{f})}^2+ \|\dot{\Lambda}_h^{\sigma_0}\partial_hq\|_{L^2(\Omega_{f})}^2\\
  &\qquad\qquad\qquad\qquad+ \|\dot{\Lambda}_h^{\sigma_0}\partial_h\partial_t v\|_{L^2(\Omega_{in})}^2+ \|\dot{\Lambda}_h^{\sigma_0}\partial_hg\|_{L^2(\Omega)}^2
  +\|\mathcal{P}(\partial_h)\psi\|_{H^{1}(\Omega)}^2
 \end{split}
\end{equation*}
with
$\psi= -\widetilde{a_{\alpha\,1}}\partial_1v^\alpha+\mathcal{B}_{5, i}^{\alpha}\partial_{\alpha}v^{i}$.

Thanks to \eqref{est-g-2}, we find
\begin{equation*}\label{laplace-sigma0-19}
 \begin{split}
  & \|\dot{\Lambda}_h^{\sigma_0}\partial_hg\|_{L^2(\Omega)}^2
  +\|\dot{\Lambda}_h^{\sigma_0}\partial_h\psi\|_{H^{1}(\Omega)}^2 \lesssim E_3\dot{\mathcal{D}}_3,
 \end{split}
\end{equation*}
so we get from \eqref{laplace-sigma0-15} that
\begin{equation*}\label{laplace-sigma0-20}
 \begin{split}
  & \|\dot{\Lambda}_h^{\sigma_0}\partial_hv\|_{H^2(\Omega_{in})}^2+ \|\dot{\Lambda}_h^{\sigma_0}\partial_hq\|_{H^{1}(\Omega_{in})}^2\lesssim  \|\dot{\Lambda}_h^{\sigma_0}\partial_hv \|_{H^1}^2+ \|\dot{\Lambda}_h^{\sigma_0}\partial_h\partial_t v\|_{L^2}^2+ E_3\dot{\mathcal{D}}_3\\
  &\qquad\qquad\qquad+ \bigg(\|\dot{\Lambda}_h^{\sigma_0}\partial_1^2\,v^{h}\|_{L^2(\Omega_f)}+ \|\dot{\Lambda}_h^{\sigma_0}\partial_t v^{h}\|_{L^2}+\|\dot{\Lambda}_h^{\sigma_0}\partial_h\,v \|_{H^1} +E_3^{\frac{1}{2}} \dot{\mathcal{D}}_{3}^{\frac{1}{2}}\bigg)^2.
 \end{split}
\end{equation*}
Hence, we get
\begin{equation*}\label{laplace-sigma0-21a}
 \begin{split}
   \|\dot{\Lambda}_h^{\sigma_0}\partial_hq\|_{L^2(\Omega_{in})}\lesssim  &\|\dot{\Lambda}_h^{\sigma_0}\partial_hv \|_{H^1}+ \|\dot{\Lambda}_h^{\sigma_0}\partial_h\partial_t v\|_{L^2}+ \|\dot{\Lambda}_h^{\sigma_0}\partial_t v^{h}\|_{L^2} \\
   &+E_3^{\frac{1}{2}} \dot{\mathcal{D}}_{3}^{\frac{1}{2}}+\|\dot{\Lambda}_h^{\sigma_0}\partial_1^2\,v^{h}\|_{L^2(\Omega_f)},
 \end{split}
\end{equation*}
which along with \eqref{laplace-sigma0-15} and the first inequality in \eqref{laplace-sigma0-2} ensures
\begin{equation}\label{laplace-sigma0-21}
\begin{split}
  &\|\dot{\Lambda}_h^{\sigma_0}\nabla\,q\|_{L^2(\Omega)}\lesssim \|\dot{\Lambda}_h^{\sigma_0}\partial_1q\|_{L^2(\Omega)}+ \|\dot{\Lambda}_h^{\sigma_0}\partial_hq\|_{L^2(\Omega_f)}+\|\dot{\Lambda}_h^{\sigma_0}\partial_hq\|_{L^2(\Omega_{in})}\\
  &
  \lesssim\|\dot{\Lambda}_h^{\sigma_0}\partial_1^2\,v^{h}\|_{L^2(\Omega_f)}+ \|\dot{\Lambda}_h^{\sigma_0}\partial_h\partial_t v\|_{L^2}+ \|\dot{\Lambda}_h^{\sigma_0}\partial_t v^{h}\|_{L^2} +\|\dot{\Lambda}_h^{\sigma_0}\partial_h\,v \|_{H^1} +E_3^{\frac{1}{2}} \dot{\mathcal{D}}_{3}^{\frac{1}{2}},
     \end{split}
\end{equation}
and then, due to \eqref{laplace-sigma0-2}, we have
  \begin{equation}\label{laplace-sigma0-21a}
\begin{split}
   &\|\dot{\Lambda}_h^{\sigma_0}\partial_1^2\,v^{h}\|_{L^2}\lesssim\|\dot{\Lambda}_h^{\sigma_0}\partial_hq\|_{L^2}
  +\|\dot{\Lambda}_h^{\sigma_0}\partial_t v^{h}\|_{L^2}+\|\dot{\Lambda}_h^{\sigma_0}\partial_h\,v \|_{H^1} +E_3^{\frac{1}{2}} \dot{\mathcal{D}}_{3}^{\frac{1}{2}}\\
  &\lesssim\|\dot{\Lambda}_h^{\sigma_0}\partial_1^2\,v^{h}\|_{L^2(\Omega_f)}+ \|\dot{\Lambda}_h^{\sigma_0}\partial_h\partial_t v\|_{L^2}+ \|\dot{\Lambda}_h^{\sigma_0}\partial_t v^{h}\|_{L^2} +\|\dot{\Lambda}_h^{\sigma_0}\partial_h\,v \|_{H^1} +E_3^{\frac{1}{2}} \dot{\mathcal{D}}_{3}^{\frac{1}{2}}.
     \end{split}
\end{equation}
On the other hand, taking $\mathcal{P}(\partial_h)=\dot{\Lambda}_h^{\sigma_0}$ in \eqref{pseudo-near-interface-2} gives
 \begin{equation*}\label{laplace-sigma0-3}
\begin{split}
&\nu\|\dot{\Lambda}_h^{\sigma_0}\partial_1^2v^{h}\|_{L^2(\Omega_f)}^2
+\frac{1}{2}\frac{d}{dt}\|\dot{\Lambda}_h^{\sigma_0}\partial_{h}\xi^1\|_{L^2(\Sigma_0)}^2
=\sum_{j=1}^7\mathfrak{I}_j.
\end{split}
\end{equation*}
For the remainder terms $\mathfrak{I}_j$ with $j=1, 2, 3, 4$, from the linear terms, we have
 \begin{equation*}\label{laplace-sigma0-4}
\begin{split}
&|\mathfrak{I}_1|\lesssim \|\dot{\Lambda}_h^{\sigma_0}\partial_{\beta} v^1\|_{H^{\frac{1}{2}}(\Sigma_0)}\, \|\dot{\Lambda}_h^{\sigma_0}\nabla_h\cdot v^h\|_{H^{\frac{1}{2}}(\Sigma_0)}\lesssim \|\dot{\Lambda}_h^{\sigma_0}\partial_{h} v\|_{H^1(\Omega)}^2,\\
\end{split}
\end{equation*}
 \begin{equation*}\label{laplace-sigma0-5}
\begin{split}
&|\mathfrak{I}_2|\lesssim \|\dot{\Lambda}_h^{\sigma_0+1}\partial_1v^{\beta}\|_{L^2}\, \|\dot{\Lambda}_h^{\sigma_0}\partial_1q\|_{L^2(\Omega_f)}\lesssim \|\dot{\Lambda}_h^{\sigma_0+1}\nabla\,v\|_{L^2}\, \|\dot{\Lambda}_h^{\sigma_0}\partial_1q\|_{L^2},
\end{split}
\end{equation*}
 \begin{equation*}\label{laplace-sigma0-6}
\begin{split}
|\mathfrak{I}_3|\lesssim\|\dot{\Lambda}_h^{\sigma_0}\partial_1v^{\beta}\|_{L^2} \|\dot{\Lambda}_h^{\sigma_0+1}q\|_{L^2},\\
\end{split}
\end{equation*}
and
 \begin{equation*}\label{laplace-sigma0-7}
\begin{split}
&|\mathfrak{I}_4|\lesssim \|\dot{\Lambda}_h^{\sigma_0}\partial_1^2v^{h}\|_{L^2(\Omega_f)}(\|\dot{\Lambda}_h^{\sigma_0}\partial_t v^{h}\|_{L^2}+\|\dot{\Lambda}_h^{\sigma_0}\partial_h^2\,v^{h}\|_{L^2}).
\end{split}
\end{equation*}
For the nonlinear remainder terms $\mathfrak{I}_5$, $\mathfrak{I}_6$, and $\mathfrak{I}_7$, there hold
 \begin{equation*}\label{laplace-sigma0-8}
\begin{split}
&|\mathfrak{I}_5|\lesssim\|\dot{\Lambda}_h^{\sigma_0}\partial_{\beta} v^1\|_{L^2(\Sigma_0)} \|\dot{\Lambda}_h^{\sigma_0}\partial_{\beta}(\mathcal{B}_{9, i}^{\alpha}\partial_{\alpha}v^{i}  )\|_{L^2(\Sigma_0)}\\
&\lesssim \|\dot{\Lambda}_h^{\sigma_0}\partial_{\beta} v^1\|_{H^1(\Omega)} \|\dot{\Lambda}_h^{\sigma_0}\partial_{\beta}(\mathcal{B}_{9, i}^{\alpha}\partial_{\alpha}v^{i}  )\|_{H^1(\Omega)}\lesssim E_3^{\frac{1}{2}}\dot{\mathcal{D}}_3.
\end{split}
\end{equation*}
 \begin{equation*}\label{laplace-sigma0-9}
\begin{split}
&|\mathfrak{I}_6|\lesssim \|\dot{\Lambda}_h^{\sigma_0}(\mathcal{B}_{\beta+5, i}^{\alpha}\partial_{\alpha}v^{i})\|_{L^2(\Sigma_0)} \|\dot{\Lambda}_h^{\sigma_0}\partial_{\beta}q\|_{L^2(\Sigma_0)}\lesssim E_3^{\frac{1}{2}}\dot{\mathcal{D}}_3,
\end{split}
\end{equation*}
and
 \begin{equation*}\label{laplace-sigma0-10}
\begin{split}
&|\mathfrak{I}_7|\lesssim \|\dot{\Lambda}_h^{\sigma_0}\partial_1^2v^{\beta}\|_{L^2(\Omega_f)}(
\|\dot{\Lambda}_h^{\sigma_0}g_{\beta}\|_{L^2}+\|\dot{\Lambda}_h^{\sigma_0}\widetilde{g}_{\beta\beta}\|_{L^2})
\lesssim E_3^{\frac{1}{2}}\dot{\mathcal{D}}_3,
\end{split}
\end{equation*}
Hence, we get
 \begin{equation*}\label{laplace-sigma0-11}
\begin{split}
&\nu\|\dot{\Lambda}_h^{\sigma_0}\partial_1^2v^{h}\|_{L^2(\Omega_f)}^2
+\frac{1}{2}\frac{d}{dt}\|\dot{\Lambda}_h^{\sigma_0}\partial_{h}\xi^1\|_{L^2(\Sigma_0)}^2\\
&\lesssim\|\dot{\Lambda}_h^{\sigma_0}\partial_{h} v\|_{H^1(\Omega)}^2+
\|\dot{\Lambda}_h^{\sigma_0+1}\nabla\,v\|_{L^2}\, \|\dot{\Lambda}_h^{\sigma_0}\partial_1q\|_{L^2}+\|\dot{\Lambda}_h^{\sigma_0}\partial_1v^{h}\|_{L^2} \|\dot{\Lambda}_h^{\sigma_0}\partial_hq\|_{L^2}\\
&\qquad+\|\dot{\Lambda}_h^{\sigma_0}\partial_1^2v^{h}\|_{L^2(\Omega_f)}(\|\dot{\Lambda}_h^{\sigma_0}\partial_t v\|_{L^2}+\|\dot{\Lambda}_h^{\sigma_0}\partial_h^2\,v\|_{L^2})+E_3^{\frac{1}{2}}\dot{\mathcal{D}}_3,
\end{split}
\end{equation*}
which follows from \eqref{laplace-sigma0-21} that
 \begin{equation*}\label{laplace-sigma0-12}
\begin{split}
&\nu\|\dot{\Lambda}_h^{\sigma_0}\partial_1^2v^{h}\|_{L^2(\Omega_f)}^2
+\frac{d}{dt}\|\dot{\Lambda}_h^{\sigma_0}\partial_{h}\xi^1\|_{L^2(\Sigma_0)}^2\\
&\lesssim\|\dot{\Lambda}_h^{\sigma_0}\partial_{h} \nabla\,v\|_{L^2}^2+\|\dot{\Lambda}_h^{\sigma_0}\partial_t v\|_{L^2}^2+E_3^{\frac{1}{2}}\dot{\mathcal{D}}_3+
\|\dot{\Lambda}_h^{\sigma_0}(\nabla\,v,\,\partial_h\nabla\,v)\|_{L^2}
\|\dot{\Lambda}_h^{\sigma_0}\partial_1^2\,v^{h}\|_{L^2(\Omega_f)}\\
&+\|\dot{\Lambda}_h^{\sigma_0}(\nabla\,v,\,\partial_h\nabla\,v)\|_{L^2}
( \|\dot{\Lambda}_h^{\sigma_0}\partial_h\partial_t v\|_{L^2}+ \|\dot{\Lambda}_h^{\sigma_0}\partial_t v^{h}\|_{L^2} +\|\dot{\Lambda}_h^{\sigma_0}\partial_h\,v \|_{H^1} +E_3^{\frac{1}{2}} \dot{\mathcal{D}}_{3}^{\frac{1}{2}}).
\end{split}
\end{equation*}
Hence, one has
 \begin{equation}\label{laplace-sigma0-14}
\begin{split}
&c_1\|\dot{\Lambda}_h^{\sigma_0}\partial_1^2v^{h}\|_{L^2(\Omega_f)}^2
+\frac{d}{dt}\|\dot{\Lambda}_h^{\sigma_0}\partial_{h}\xi^1\|_{L^2(\Sigma_0)}^2\\
&\lesssim\,E_3^{\frac{1}{2}}\dot{\mathcal{D}}_3+
\|\dot{\Lambda}_h^{\sigma_0}(\nabla\,v,\,\partial_h\nabla\,v)\|_{L^2}^2+
\|\dot{\Lambda}_h^{\sigma_0}(\partial_h\partial_t v,\,\partial_t v^{h})\|_{L^2}^2.
\end{split}
\end{equation}
Combining \eqref{laplace-sigma0-14},\eqref{laplace-sigma0-21},\eqref{laplace-sigma0-21a}, we obtain
 \begin{equation*}\label{laplace-sigma0-16}
\begin{split}
&\frac{d}{dt}\|\dot{\Lambda}_h^{\sigma_0}\partial_{h}\xi^1\|_{L^2(\Sigma_0)}^2
+c_2(\|\dot{\Lambda}_h^{\sigma_0}\partial_1^2v^{h}\|_{L^2(\Omega)}^2 +\|\dot{\Lambda}_h^{\sigma_0}\nabla\,q\|_{L^2(\Omega)}^2)\\
&\lesssim \,E_3^{\frac{1}{2}}\dot{\mathcal{D}}_3+
\|\dot{\Lambda}_h^{\sigma_0}(\nabla\,v,\,\partial_h\nabla\,v)\|_{L^2}^2+
\|\dot{\Lambda}_h^{\sigma_0}(\partial_h\partial_t v,\,\partial_t v^{h})\|_{L^2}^2,
     \end{split}
\end{equation*}
which along with the second inequality in \eqref{laplace-sigma0-2} follows \eqref{laplace-sigma0-0}.
The proof of the lemma is accomplished.
\end{proof}

\subsection{Estimate of $\|\partial_h^{N-1}\partial_1^2v^h\|_{L^2_tL^2}$}

\begin{lem}\label{lem-laplace-N-1}
Under the assumption of Lemma \ref{lem-pseudo-energy-tv-1}, if $E_3(t) \leq 1$ for all the existence times $t$, then there holds
\begin{equation}\label{laplace-N-0}
\begin{split}
&\frac{d}{dt}\|\partial_h^{N}\xi^1\|_{L^2(\Sigma_0)}^2
+c_3(\|\partial_h^{N-1}(\nabla^2\,v,\,\nabla\,q)\|_{L^2})
\\
&\lesssim \|\partial_h^{N-2}(\partial_t v,\,\partial_h\partial_t v)\|_{L^2}^2+\|\partial_h^{N-1}(\partial_h\,v,\,v) \|_{H^1}^2  +\|\dot{\Lambda}_h^{\sigma_0}\partial_1^2\,v^{h}\|_{L^2(\Omega)}^2\\
&\qquad+\dot{\mathcal{D}}_{N}^{\frac{1}{2}} (E_3^{\frac{1}{2}} \dot{\mathcal{D}}_{N}^{\frac{1}{2}}  + E_N^{\frac{1}{2}} \dot{\mathcal{D}}_{3}^{\frac{1}{2}})  + E_N \dot{\mathcal{D}}_{3}
 \end{split}
\end{equation}
\end{lem}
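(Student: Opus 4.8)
\textbf{Proof plan for Lemma \ref{lem-laplace-N-1}.}
The plan is to mimic the structure of the proof of Lemma \ref{lem-laplace-sigma0-1}, replacing the operator $\mathcal{P}(\partial_h)=\dot\Lambda_h^{\sigma_0}$ by $\mathcal{P}(\partial_h)=\partial_h^{N-1}$, but being careful that now every nonlinear product must be estimated via the higher-order product estimates \eqref{est-Bv-1}, \eqref{est-Bv-1a} and \eqref{k-incom-fluid-1} rather than the low-horizontal-regularity ones. First I would record the $\partial_h^{N-1}$-version of the pointwise relations \eqref{prt1-q-vh-rela-2} for a sub-channel domain $\Omega_1$: applying $\partial_h^{N-1}$ to \eqref{incomp-cond-v1-1} and to \eqref{prt1-q-vh-1} and using Lemma \ref{lem-est-B-Bv-1} together with Lemma \ref{lem-est-g-1} (the last inequality in \eqref{est-g-2}, giving $\|\partial_h^{N-1}g\|_{L^2}\lesssim E_N^{1/2}\dot{\mathcal D}_3^{1/2}+E_3^{1/2}\dot{\mathcal D}_N^{1/2}$ and the analogous bound for $\widetilde g_{jj}$) yields
\begin{equation*}
\begin{split}
&\|\partial_h^{N-1}\partial_1^2v^1\|_{L^2(\Omega_1)}\lesssim \|\partial_h^{N-1}\partial_h\nabla v\|_{L^2}+E_3^{1/2}\dot{\mathcal D}_N^{1/2}+E_N^{1/2}\dot{\mathcal D}_3^{1/2},\\
&\|\partial_h^{N-1}\partial_1q\|_{L^2(\Omega_1)}\lesssim \|\partial_h^{N-1}\partial_tv^1\|_{L^2}+\|\partial_h^{N-1}\partial_h\nabla v\|_{L^2}+E_3^{1/2}\dot{\mathcal D}_N^{1/2}+E_N^{1/2}\dot{\mathcal D}_3^{1/2},\\
&\|-\nu\partial_h^{N-1}\partial_1^2v^\beta+\partial_h^{N-1}\partial_\beta q\|_{L^2(\Omega_1)}\lesssim \|\partial_h^{N-1}\partial_tv^\beta\|_{L^2}+\|\partial_h^{N-1}\partial_h\nabla v\|_{L^2}+E_3^{1/2}\dot{\mathcal D}_N^{1/2}+E_N^{1/2}\dot{\mathcal D}_3^{1/2}.
\end{split}
\end{equation*}

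Next I would invoke the interior Stokes estimate \eqref{pseudo-stokes-bottom-2} with $\mathcal{P}(\partial_h)=\partial_h^{N-1}$, bounding $\|\partial_h^{N-1}g\|_{L^2}$ and $\|\partial_h^{N-1}\psi\|_{H^1}$ by $E_N^{1/2}\dot{\mathcal D}_3^{1/2}+E_3^{1/2}\dot{\mathcal D}_N^{1/2}$ via Lemmas \ref{lem-est-g-1} and \ref{lem-est-B-Bv-1}; combined with the relation above for $\partial_\beta q$ this controls $\|\partial_h^{N-1}\partial_h q\|_{L^2(\Omega_{in})}$ in terms of $\|\dot\Lambda_h^{\sigma_0}\partial_1^2v^h\|_{L^2(\Omega_f)}$ (or rather its $\partial_h^{N-1}$ analogue localized to $\Omega_f$), $\|\partial_h^{N-1}\partial_tv\|_{L^2}$, $\|\partial_h^{N-1}\partial_h\partial_tv\|_{L^2}$, $\|\partial_h^{N-1}\partial_h v\|_{H^1}$, and the nonlinear remainders. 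Then I would run the boundary identity \eqref{pseudo-near-interface-2} with $\mathcal{P}(\partial_h)=\partial_h^{N-1}$, estimating $\mathfrak I_1,\dots,\mathfrak I_4$ by the trace theorem and the pointwise relations just derived (these produce the linear terms $\|\partial_h^{N-2}(\partial_tv,\partial_h\partial_tv)\|_{L^2}^2$, $\|\partial_h^{N-1}(\partial_h v,v)\|_{H^1}^2$ and absorbable multiples of $\|\partial_h^{N-1}\partial_1^2v^h\|_{L^2(\Omega_f)}$), and $\mathfrak I_5,\mathfrak I_6,\mathfrak I_7$ by Lemma \ref{lem-est-B-Bv-1} and \eqref{est-g-2}, which give $\dot{\mathcal D}_N^{1/2}(E_3^{1/2}\dot{\mathcal D}_N^{1/2}+E_N^{1/2}\dot{\mathcal D}_3^{1/2})+E_N\dot{\mathcal D}_3$. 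Finally I would use Young's inequality to absorb the $\|\partial_h^{N-1}\partial_1^2v^h\|_{L^2(\Omega_f)}^2$ term arising on the right into the left, add back the bottom contribution from $\eqref{pseudo-stokes-bottom-2}$, and reconstruct the full $\|\partial_h^{N-1}(\nabla^2v,\nabla q)\|_{L^2}$ from its $\partial_1^2$-, $\partial_1\partial_h$- and $\partial_h^2$-pieces using the first displayed relation above together with the already-proven estimate $\|\dot\Lambda_h^{\sigma_0}\partial_1^2v^h\|_{L^2}$ from Lemma \ref{lem-laplace-sigma0-1}.

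The main obstacle I expect is bookkeeping the mixed-order product terms so that the two genuinely distinct types of nonlinear remainder — the "borderline" term $\dot{\mathcal D}_N^{1/2}(E_3^{1/2}\dot{\mathcal D}_N^{1/2}+E_N^{1/2}\dot{\mathcal D}_3^{1/2})$ and the "energy-times-dissipation" term $E_N\dot{\mathcal D}_3$ — both stay on the right with the stated structure; in particular one must check that when $\partial_h^{N-1}$ hits a product like $\mathcal{B}_{9,i}^\alpha\partial_\alpha v^i$ or $\widetilde a_{\alpha 1}\partial_1 v^\alpha$ on the boundary $\Sigma_0$, the trace loss of half a derivative is compensated by the $H^1(\Omega)$-estimates in \eqref{est-Bv-1a} and \eqref{k-incom-fluid-1} and does not force a norm stronger than $\dot{\mathcal D}_N$ or $E_N$. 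A secondary delicate point is that, unlike the $\sigma_0$-case where the vertical second derivative of $v^1$ gains negative horizontal regularity via \eqref{incomp-cond-fluid-3}, here we only need the plain $\partial_h^{N-1}$-estimate of $\partial_1^2 v^1$, so the term $\|\dot\Lambda_h^{\sigma_0}\partial_1^2v^h\|_{L^2(\Omega)}^2$ must be explicitly retained on the right-hand side of \eqref{laplace-N-0} (it will later be controlled by $\ddot{\mathcal D}_{\ell,1+\sigma_0}+\dot{\mathcal D}_{\ell,\sigma_0}+\ddot{\mathcal D}_{\ell,2}$ through Lemma \ref{lem-EN-bdd-1}), rather than being absorbed.
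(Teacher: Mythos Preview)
Your proposal is correct and follows essentially the same approach as the paper's proof: take $\mathcal{P}(\partial_h)=\partial_h^{N-1}$ in \eqref{prt1-q-vh-rela-2}, \eqref{pseudo-stokes-bottom-2}, and \eqref{pseudo-near-interface-2}, estimate $\mathfrak I_1$--$\mathfrak I_4$ linearly and $\mathfrak I_5$--$\mathfrak I_7$ via Lemmas \ref{lem-est-B-Bv-1} and \ref{lem-est-g-1}, then absorb and reconstruct. The one point worth sharpening is the provenance of the lower-order term $\|\dot\Lambda_h^{\sigma_0}\partial_1^2 v^h\|_{L^2(\Omega)}^2$: in the paper it enters because the interior Stokes bound \eqref{pseudo-stokes-bottom-2} contains $\|\partial_h^{N-1}q\|_{L^2(\Omega_f)}$, which via \eqref{laplace-N-2} at level $k=N-1$ produces $\|\partial_h^{N-2}\partial_1^2 v^h\|_{L^2(\Omega_f)}$, and this is then interpolated between $\dot\Lambda_h^{\sigma_0}$ and $\partial_h^{N-1}$; your phrasing ``or rather its $\partial_h^{N-1}$ analogue'' slightly obscures this descent-by-one-and-interpolate mechanism, but the argument goes through exactly as you outline.
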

\begin{proof}
First, taking $\mathcal{P}(\partial_h)=\partial_h^{N-1}$ in \eqref{prt1-q-vh-rela-2}, we get
  \begin{equation}\label{laplace-N-1}
\begin{split}
  &\|\partial_h^{N-1}\partial_1q\|_{L^2}\lesssim \|\partial_h^{N-1}\partial_t v^{1}\|_{L^2}+\|\partial_h^{N-1}\partial_h\,v \|_{H^1}
  + (E_3^{\frac{1}{2}} \dot{\mathcal{D}}_{N}^{\frac{1}{2}}  + E_N^{\frac{1}{2}} \dot{\mathcal{D}}_{3}^{\frac{1}{2}}),\\
  &\|\partial_h^{N-1}\partial_1^2v^1\|_{L^2}\lesssim \|\partial_h^{N-1}\partial_h\,v \|_{H^1} + (E_3^{\frac{1}{2}} \dot{\mathcal{D}}_{N}^{\frac{1}{2}}  + E_N^{\frac{1}{2}} \dot{\mathcal{D}}_{3}^{\frac{1}{2}}),\\
  &\|\partial_h^{N-1}(\partial_hq-\nu\partial_1^2\,v^{h})\|_{L^2}\lesssim \|\partial_h^{N-1}\partial_t v\|_{L^2}+\|\partial_h^{N-1}\partial_h\,v \|_{H^1}  +(E_3^{\frac{1}{2}} \dot{\mathcal{D}}_{N}^{\frac{1}{2}}  + E_N^{\frac{1}{2}} \dot{\mathcal{D}}_{3}^{\frac{1}{2}}),\\
  &\|\partial_h^{N-1}(\partial_hq-\nu\partial_1^2\,v^{h})\|_{L^2(\Omega_f)}\lesssim \|\partial_h^{N-1}\partial_t v\|_{L^2}+\|\partial_h^{N-1}\partial_h\,v \|_{H^1}
  +(E_3^{\frac{1}{2}} \dot{\mathcal{D}}_{N}^{\frac{1}{2}}  + E_N^{\frac{1}{2}} \dot{\mathcal{D}}_{3}^{\frac{1}{2}}),\\
     \end{split}
\end{equation}
and in particular,
  \begin{equation}\label{laplace-N-2}
\begin{split}
 \|\partial_h^{k-1}\partial_hq\|_{L^2(\Omega_f)}\lesssim &\|\partial_h^{k-1}\partial_1^2\,v^{h}\|_{L^2(\Omega_f)}\\
 &+\|\partial_h^{k-1}\partial_t v\|_{L^2}+\|\partial_h^{k-1}\partial_h\,v \|_{H^1}
  +(E_3^{\frac{1}{2}} \dot{\mathcal{D}}_{k}^{\frac{1}{2}}  + E_k^{\frac{1}{2}} \dot{\mathcal{D}}_{3}^{\frac{1}{2}})
     \end{split}
\end{equation}
for any $k \geq 3$.
While taking $\mathcal{P}(\partial_h)=\partial_h^{N-1}$ in \eqref{pseudo-stokes-bottom-2}, we have
  \begin{equation*}\label{laplace-N-3}
 \begin{split}
  & \|\partial_h^{N-1}v\|_{H^2(\Omega_{in})}^2+ \|\partial_h^{N-1}q\|_{H^{1}(\Omega_{in})}^2\lesssim  \|\partial_h^{N-1}v \|_{H^1(\Omega_{f})}^2+ \|\partial_h^{N-1}q\|_{L^2(\Omega_{f})}^2\\
  &\qquad\qquad\qquad\qquad+ \|\partial_h^{N-1}\partial_t v\|_{L^2(\Omega_{in})}^2+ \|\partial_h^{N-1}g\|_{L^2(\Omega)}^2
  +\|\partial_h^{N-1}\psi\|_{H^{1}(\Omega)}^2
 \end{split}
\end{equation*}
with
$\psi= -(\widetilde{a_{21}}\partial_1v^2+\widetilde{a_{31}}\partial_1v^3)+\mathcal{B}_{5, i}^{\alpha}\partial_{\alpha}v^{i}$.

Thanks to \eqref{est-g-2}, we find
\begin{equation*}\label{laplace-N-4}
 \begin{split}
&  \|\partial_h^{N-1}g\|_{L^2(\Omega)}^2
  +\|\partial_h^{N-1}\psi\|_{H^{1}(\Omega_{in})}^2\lesssim E_3 \dot{\mathcal{D}}_{N} + E_{N}\,\dot{\mathcal{D}}_{3}.
 \end{split}
\end{equation*}
so we get from \eqref{laplace-N-2} that
\begin{equation*}\label{laplace-N-5}
 \begin{split}
 \|\partial_h^{N-1}v\|_{H^2(\Omega_{in})}^2+ \|\partial_h^{N-1}q\|_{H^{1}(\Omega_{in})}^2\lesssim  & \|\partial_h^{N-1}v \|_{H^1}^2+ \|\partial_h^{N-1}\partial_t v\|_{L^2}^2+\|\partial_h^{N-2}\partial_t v\|_{L^2}^2 \\
&+ E_3 \dot{\mathcal{D}}_{N}+ E_{N}\,\dot{\mathcal{D}}_{3}+ \|\partial_h^{N-2}\partial_1^2\,v^{h}\|_{L^2(\Omega_f)}^2.
 \end{split}
\end{equation*}
In particular, we have
\begin{equation*}\label{laplace-N-6}
 \begin{split}
\|\partial_h^{N-1}\nabla\,q\|_{L^2(\Omega_{in})}^2\lesssim  & \|\partial_h^{N-1}v \|_{H^1}+ \|\partial_h^{N-1}\partial_t v\|_{L^2}+\|\partial_h^{N-2}\partial_t v\|_{L^2} \\
&+ E_3^{\frac{1}{2}} \dot{\mathcal{D}}_{N}^{\frac{1}{2}}+ E_{N}^{\frac{1}{2}}\,\dot{\mathcal{D}}_{3}^{\frac{1}{2}}+ \|\partial_h^{N-2}\partial_1^2\,v^{h}\|_{L^2(\Omega_f)}.
 \end{split}
\end{equation*}
Thanks to \eqref{laplace-N-1}, we get
    \begin{equation*}\label{laplace-N-7}
 \begin{split}
  &\|\partial_h^{N-1}\nabla\,q\|_{L^2(\Omega_f)}\lesssim \|\partial_h^{N-1}\partial_1\,q\|_{L^2(\Omega_f)}+  \|\partial_h^{N}\,q\|_{L^2(\Omega_f)}\\
    &\lesssim \|\partial_h^{N-1}\partial_t v\|_{L^2}+\|\partial_h^{N-1}\partial_h\,v \|_{H^1}
  + (E_3^{\frac{1}{2}} \dot{\mathcal{D}}_{N}^{\frac{1}{2}}  + E_N^{\frac{1}{2}} \dot{\mathcal{D}}_{3}^{\frac{1}{2}})+ \|\partial_h^{N-1}\partial_1^2\,v^{h}\|_{L^2(\Omega_f)}.
 \end{split}
\end{equation*}
Hence, one has
   \begin{equation*}\label{laplace-N-7}
 \begin{split}
  \|\partial_h^{N-1}\nabla\,q\|_{L^2(\Omega)}&\lesssim\|\partial_h^{N-1}\nabla\,q\|_{L^2(\Omega_f)}
  +\|\partial_h^{N-1}\nabla\,q\|_{L^2(\Omega_{in})}\\
    &\lesssim \|\partial_h^{N-2}(\partial_t v,\,\partial_h\partial_t v)\|_{L^2}+\|\partial_h^{N-1}(\partial_h\,v,\,v) \|_{H^1}
  + (E_3^{\frac{1}{2}} \dot{\mathcal{D}}_{N}^{\frac{1}{2}}  + E_N^{\frac{1}{2}} \dot{\mathcal{D}}_{3}^{\frac{1}{2}})\\
  &\quad+\|\dot{\Lambda}_h^{\sigma_0}\partial_1^2\,v^{h}\|_{L^2(\Omega_f)}+ \|\partial_h^{N-1}\partial_1^2\,v^{h}\|_{L^2(\Omega_f)},
 \end{split}
\end{equation*}
and also there holds by using \eqref{laplace-N-1} that
   \begin{equation}\label{laplace-N-8}
 \begin{split}
\|\partial_h^{N-1}(\partial_1^2\,v^{h},\,&\nabla\,q)\|_{L^2(\Omega)}\lesssim \|\partial_h^{N-2}(\partial_t v,\,\partial_h\partial_t v)\|_{L^2}+\|\partial_h^{N-1}(\partial_h\,v,\,v) \|_{H^1}\\
&  + (E_3^{\frac{1}{2}} \dot{\mathcal{D}}_{N}^{\frac{1}{2}}  + E_N^{\frac{1}{2}} \dot{\mathcal{D}}_{3}^{\frac{1}{2}})+\|\dot{\Lambda}_h^{\sigma_0}\partial_1^2\,v^{h}\|_{L^2(\Omega_f)}+ \|\partial_h^{N-1}\partial_1^2\,v^{h}\|_{L^2(\Omega_f)}.
 \end{split}
\end{equation}
On the other hand, taking $\mathcal{P}(\partial_h)=\partial_h^{N-1}$ in \eqref{pseudo-near-interface-2} gives
 \begin{equation*}\label{laplace-N-9}
\begin{split}
&\nu\|\partial_h^{N-1}\partial_1^2v^{h}\|_{L^2(\Omega_f)}^2
+\frac{1}{2}\frac{d}{dt}\|\partial_h^{N-1}\partial_{h}\xi^1\|_{L^2(\Sigma_0)}^2
=\sum_{j=1}^7\mathfrak{I}_j.
\end{split}
\end{equation*}
For the remainder terms $\mathfrak{I}_j$ with $j=1, 2, 3, 4$, from the linear terms, we have
 \begin{equation*}\label{laplace-N-10}
\begin{split}
|\mathfrak{I}_1|&\lesssim \|\partial_h^{N-1}\partial_{\beta} v^1\|_{H^{\frac{1}{2}}(\Sigma_0)}\, \|\partial_h^{N-1}\nabla_h\cdot v^h\|_{H^{\frac{1}{2}}(\Sigma_0)}\\
&\lesssim \|\partial_h^{N-1}\partial_{\beta} v^1\|_{H^1}\, \|\partial_h^{N-1}\nabla_h\cdot v^h\|_{H^1}\lesssim \|\partial_h^{N}\nabla\,v\|_{L^2}^2,\\
\end{split}
\end{equation*}
 \begin{equation*}\label{laplace-N-11}
\begin{split}
&|\mathfrak{I}_2|+|\mathfrak{I}_3|\lesssim (\|\partial_h^{N}\partial_1v^{h}\|_{L^2}+ \|\partial_h^{N-1}\partial_1v^{h}\|_{L^2} )\|\partial_h^{N-1}\nabla\,q\|_{L^2},
\end{split}
\end{equation*}
and
 \begin{equation*}\label{laplace-N-13}
\begin{split}
&|\mathfrak{I}_4|\lesssim\|\partial_h^{N-1}\partial_1^2v^{h}\|_{L^2(\Omega_f)}(\|\partial_h^{N-1}\partial_t v^{h}\|_{L^2}+\|\partial_h^{N+1}v^{h}\|_{L^2}).
\end{split}
\end{equation*}
For the nonlinear remainder terms $\mathfrak{I}_5$, $\mathfrak{I}_6$, and $\mathfrak{I}_7$, there hold
 \begin{equation*}\label{laplace-N-14}
\begin{split}
&|\mathfrak{I}_5|\lesssim \|\partial_h^{N-1}\partial_{\beta} v^1\|_{H^{\frac{1}{2}}(\Sigma_0)} \|\partial_h^{N-1}(\mathcal{B}_{9, i}^{\alpha}\partial_{\alpha}v^{i}  )\|_{H^{\frac{1}{2}}(\Sigma_0)}\\
&\lesssim \|\partial_h^{N-1}\partial_{\beta} v^1\|_{H^1} \|\partial_h^{N-1}(\mathcal{B}_{9, i}^{\alpha}\partial_{\alpha}v^{i}  )\|_{H^1}\lesssim \dot{\mathcal{D}}_{N}^{\frac{1}{2}} (E_3^{\frac{1}{2}} \dot{\mathcal{D}}_{N}^{\frac{1}{2}}  + E_N^{\frac{1}{2}} \dot{\mathcal{D}}_{3}^{\frac{1}{2}}),
\end{split}
\end{equation*}
 \begin{equation*}\label{laplace-N-16}
\begin{split}
&|\mathfrak{I}_6|\lesssim\|\partial_h^{N-1}(\mathcal{B}_{\beta+5, i}^{\alpha}\partial_{\alpha}v^{i})\|_{H^{\frac{1}{2}}(\Sigma_0)}\| \partial_h^{N-1}q\|_{H^{\frac{1}{2}}(\Sigma_0)}\\
&\lesssim\|\partial_h^{N-1}(\mathcal{B}_{\beta+5, i}^{\alpha}\partial_{\alpha}v^{i})\|_{H^1}\| \partial_h^{N-1}q\|_{H^1}\lesssim \dot{\mathcal{D}}_{N}^{\frac{1}{2}} (E_3^{\frac{1}{2}} \dot{\mathcal{D}}_{N}^{\frac{1}{2}}  + E_N^{\frac{1}{2}} \dot{\mathcal{D}}_{3}^{\frac{1}{2}}),
\end{split}
\end{equation*}
and
 \begin{equation*}\label{laplace-N-16}
\begin{split}
&|\mathfrak{I}_7|\lesssim \|\partial_h^{N-1}\partial_1^2v^{h}\|_{L^2(\Omega_f)}(\|\partial_h^{N-1}g_{\beta}\|_{L^2}
+\|\partial_h^{N-1}\widetilde{g}_{\beta\beta}\|_{L^2})\\
&\lesssim \|\partial_h^{N-1}\partial_1^2v^{h}\|_{L^2(\Omega_f)} (E_3^{\frac{1}{2}} \dot{\mathcal{D}}_{N}^{\frac{1}{2}}  + E_N^{\frac{1}{2}} \dot{\mathcal{D}}_{3}^{\frac{1}{2}}),
\end{split}
\end{equation*}
where we used the inequality in \eqref{est-g-2} in the last inequality.

Hence, we have
 \begin{equation*}\label{laplace-N-17}
\begin{split}
&\nu\|\partial_h^{N-1}\partial_1^2v^{\beta}\|_{L^2(\Omega_f)}^2
+\frac{1}{2}\frac{d}{dt}\|\partial_h^{N-1}\partial_{\beta}\xi^1\|_{L^2(\Sigma_0)}^2\\
&\lesssim \|\partial_h^{N}\nabla\,v\|_{L^2}^2+\dot{\mathcal{D}}_{N}^{\frac{1}{2}} (E_3^{\frac{1}{2}} \dot{\mathcal{D}}_{N}^{\frac{1}{2}}  + E_N^{\frac{1}{2}} \dot{\mathcal{D}}_{3}^{\frac{1}{2}})+(\|\partial_h^{N}\partial_1v^{h}\|_{L^2}+ \|\partial_h^{N-1}\partial_1v^{h}\|_{L^2} )\|\partial_h^{N-1}\nabla\,q\|_{L^2}\\
&+\|\partial_h^{N-1}\partial_1^2v^{h}\|_{L^2(\Omega_f)}(\|\partial_h^{N-1}\partial_t v^{h}\|_{L^2}+\|\partial_h^{N+1}\,v^{h}\|_{L^2}+E_3^{\frac{1}{2}} \dot{\mathcal{D}}_{N}^{\frac{1}{2}}  + E_N^{\frac{1}{2}} \dot{\mathcal{D}}_{3}^{\frac{1}{2}}),
\end{split}
\end{equation*}
which follows that
 \begin{equation}\label{laplace-N-18}
\begin{split}
&\nu\|\partial_h^{N-1}\partial_1^2v^{\beta}\|_{L^2(\Omega_f)}^2
+\frac{d}{dt}\|\partial_h^{N-1}\partial_{\beta}\xi^1\|_{L^2(\Sigma_0)}^2\\
&\lesssim \|\partial_h^{N}\nabla\,v\|_{L^2}^2+\|\partial_h^{N-1}\partial_t v^{h}\|_{L^2}^2+(\|\partial_h^{N}\partial_1v^{h}\|_{L^2}+ \|\partial_h^{N-1}\partial_1v^{h}\|_{L^2} )\|\partial_h^{N-1}\nabla\,q\|_{L^2}\\
&+\dot{\mathcal{D}}_{N}^{\frac{1}{2}} (E_3^{\frac{1}{2}} \dot{\mathcal{D}}_{N}^{\frac{1}{2}}  + E_N^{\frac{1}{2}} \dot{\mathcal{D}}_{3}^{\frac{1}{2}})  + E_N \dot{\mathcal{D}}_{3}.
\end{split}
\end{equation}
 Combining \eqref{laplace-N-18} with \eqref{laplace-N-8} yields
   \begin{equation*}\label{laplace-N-19}
 \begin{split}
&\frac{d}{dt}\|\partial_h^{N-1}\partial_{\beta}\xi^1\|_{L^2(\Sigma_0)}^2
+c_2\|\partial_h^{N-1}(\partial_1^2\,v^{h},\,\nabla\,q)\|_{L^2(\Omega)}^2\\
&\lesssim \|\partial_h^{N-2}(\partial_t v,\,\partial_h\partial_t v)\|_{L^2}^2+\|\partial_h^{N-1}(\partial_h\,v,\,v) \|_{H^1}^2  +\|\dot{\Lambda}_h^{\sigma_0}\partial_1^2\,v^{h}\|_{L^2(\Omega)}^2\\
&\qquad+\dot{\mathcal{D}}_{N}^{\frac{1}{2}} (E_3^{\frac{1}{2}} \dot{\mathcal{D}}_{N}^{\frac{1}{2}}  + E_N^{\frac{1}{2}} \dot{\mathcal{D}}_{3}^{\frac{1}{2}})  + E_N \dot{\mathcal{D}}_{3},
 \end{split}
\end{equation*}
which proves \eqref{laplace-N-0}, and completes the proof of Lemma \ref{lem-laplace-N-1}.
\end{proof}

With Lemmas \ref{lem-laplace-sigma0-1}, \ref{lem-laplace-N-1} in hand, we get
\begin{lem}\label{lem-laplace-xi-decay-1}
Under the assumption of Lemma \ref{lem-pseudo-energy-tv-1}, if $E_3(t) \leq 1$ for all the existence times $t$, then there holds
\begin{equation}\label{laplace-xi-decay-0}
\begin{split}
&\frac{d}{dt}(\|\dot{\Lambda}_h^{\sigma_0}\partial_{h}\xi^1\|_{L^2(\Sigma_0)}^2
+\sum_{k=1}^{N-1}\|\partial_h^{k+1}\xi^1\|_{L^2(\Sigma_0)}^2)
+c_4\bigg(\|\dot{\Lambda}_h^{\sigma_0}\,(\nabla^2\,v,\,\nabla\,q)\|_{L^2}^2\\
&\qquad\qquad
+\|\dot{\Lambda}_h^{\sigma_0+1}\xi^1\|_{H^\frac{1}{2}(\Sigma_0)}
+\sum_{k=1}^{N-1}\|\partial_h^{k}(\nabla^2\,v,\,\nabla\,q)\|_{L^2}^2
+\sum_{k=2}^{N-1}\|\partial_h^{k}\xi^1\|_{H^\frac{1}{2}(\Sigma_0)}\bigg)
\\
&\leq  C_4(\|\dot{\Lambda}_h^{\sigma_0}\nabla\,v\|_{L^2(\Omega)}^2 +\sum_{i=1}^N\|\partial_h^{i}\nabla\,v\|_{L^2(\Omega)}^2+\|\dot{\Lambda}_h^{\sigma_0}\partial_t v\|_{L^2}^2 +\sum_{k=1}^{N-1}\|\partial_h^{k}\partial_t v\|_{L^2}^2 )\\
&\qquad+C_4(\dot{\mathcal{D}}_{N}^{\frac{1}{2}} (E_3^{\frac{1}{2}} \dot{\mathcal{D}}_{N}^{\frac{1}{2}}  + E_N^{\frac{1}{2}} \dot{\mathcal{D}}_{3}^{\frac{1}{2}})+ E_N  \dot{\mathcal{D}}_{3}).
 \end{split}
\end{equation}
\end{lem}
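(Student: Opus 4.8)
\textbf{Proof plan for Lemma \ref{lem-laplace-xi-decay-1}.}
The plan is to assemble the estimate as a linear combination of the building blocks already proved in Lemmas \ref{lem-laplace-sigma0-1} and \ref{lem-laplace-N-1}, together with the tangential energy estimates from Section \ref{sect-energy}, and to absorb the various $\partial_1^2 v^h$ terms that appear on the right-hand sides back into the dissipation by a bootstrap over the order of the horizontal derivatives. First I would record the two output inequalities of the preceding lemmas in the two relevant forms: for $\mathcal{P}(\partial_h)=\dot{\Lambda}_h^{\sigma_0}$ we have \eqref{laplace-sigma0-0}, controlling $\frac{d}{dt}\|\dot{\Lambda}_h^{\sigma_0}\partial_h\xi^1\|_{L^2(\Sigma_0)}^2$ plus $\|\dot{\Lambda}_h^{\sigma_0}(\nabla^2v,\nabla q)\|_{L^2}^2$ by terms of the form $\|\dot{\Lambda}_h^{\sigma_0}\nabla v\|_{L^2}^2$, $\|\partial_h^2\nabla v\|_{L^2}^2$, $\|\dot{\Lambda}_h^{\sigma_0}\partial_h\partial_t v\|_{L^2}^2$, $\|\dot{\Lambda}_h^{\sigma_0}\partial_t v\|_{L^2}^2$ and $E_3^{1/2}\dot{\mathcal{D}}_3$; and for $\mathcal{P}(\partial_h)=\partial_h^{k-1}$ with $1\le k\le N$ we have \eqref{laplace-N-0}, controlling $\frac{d}{dt}\|\partial_h^{k}\xi^1\|_{L^2(\Sigma_0)}^2$ plus $\|\partial_h^{k-1}(\nabla^2v,\nabla q)\|_{L^2}^2$ by lower tangential-derivative dissipation norms, $\|\dot{\Lambda}_h^{\sigma_0}\partial_1^2 v^h\|_{L^2(\Omega)}^2$, and the cubic terms $\dot{\mathcal{D}}_N^{1/2}(E_3^{1/2}\dot{\mathcal{D}}_N^{1/2}+E_N^{1/2}\dot{\mathcal{D}}_3^{1/2})+E_N\dot{\mathcal{D}}_3$. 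Summing the $\partial_h^{k-1}$ version over $k=1,\dots,N$ (actually $k=2,\dots,N$ for the genuinely higher orders, with the low part already covered by $\dot{\mathcal{D}}_2$-type quantities) and adding a suitable multiple of the $\dot{\Lambda}_h^{\sigma_0}$ version yields an inequality whose left side is essentially the target, with the understanding that $\|\partial_h^{k-1}(\nabla^2 v,\nabla q)\|_{L^2}^2$ bounds $\|\partial_h^k\xi^1\|_{H^{1/2}(\Sigma_0)}^2$ via the trace theorem applied to the boundary relation $q=\xi^1-2\nu\nabla_h\cdot v^h+\nu\mathcal{B}_{9,i}^\alpha\partial_\alpha v^i$ on $\Sigma_0$ (exactly as in the derivation of \eqref{tan-grad-N-tv-4}), which converts control of $q$ near the boundary into control of $\xi^1$ in $H^{1/2}(\Sigma_0)$.

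The main structural point, and where I expect the only real work, is handling the term $\|\dot{\Lambda}_h^{\sigma_0}\partial_1^2 v^h\|_{L^2(\Omega)}^2$ that appears on the right-hand side of \eqref{laplace-N-0}: it is a full second vertical derivative, i.e. part of the dissipation $\ddot{\mathcal{D}}_{\ell,1+\sigma_0}$, not a small quantity, so it cannot be absorbed naively. The resolution is that this same quantity is \emph{produced} on the left-hand side of \eqref{laplace-sigma0-0} (through $\|\dot{\Lambda}_h^{\sigma_0}\nabla^2 v\|_{L^2}^2\gtrsim \|\dot{\Lambda}_h^{\sigma_0}\partial_1^2 v^h\|_{L^2}^2$, using $c_3>0$); hence by choosing the coefficient of the $\dot{\Lambda}_h^{\sigma_0}$ inequality large enough relative to the sum of the $\partial_h^{k-1}$ inequalities, the $\|\dot{\Lambda}_h^{\sigma_0}\partial_1^2 v^h\|_{L^2(\Omega)}^2$ terms on the right are dominated by the corresponding positive term on the left. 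This is a fixed, $N$-independent linear-algebra choice of weights, so there is no loss. I would also need to check that $\|\partial_h^2\nabla v\|_{L^2}^2$ appearing in \eqref{laplace-sigma0-0} is already present (for $N\ge3$) among the $\|\partial_h^i\nabla v\|_{L^2}^2$ on the right side of the claimed inequality \eqref{laplace-xi-decay-0}, which it is.

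Concretely the steps in order are: (i) invoke \eqref{laplace-sigma0-0} and, for each $k$ with $2\le k\le N$, invoke \eqref{laplace-N-0} with $N$ replaced by $k$; (ii) use the trace estimate coming from the $\Sigma_0$ boundary condition for $q$ (as in \eqref{tan-grad-N-tv-4}--\eqref{tan-grad-N-tv-6}) to replace $\|\partial_h^{k-1}(\nabla^2 v,\nabla q)\|_{L^2}^2$ on the left by that plus $\|\partial_h^k\xi^1\|_{H^{1/2}(\Sigma_0)}^2$, and similarly $\|\dot{\Lambda}_h^{\sigma_0+1}\xi^1\|_{H^{1/2}(\Sigma_0)}$ from the $\dot{\Lambda}_h^{\sigma_0}$ level; (iii) form $\mathfrak{C}_1\,\eqref{laplace-sigma0-0}+\sum_{k=2}^N\,\eqref{laplace-N-0}|_{N\to k}$ with $\mathfrak{C}_1$ chosen so that the positive $\|\dot{\Lambda}_h^{\sigma_0}\partial_1^2 v^h\|_{L^2(\Omega)}^2$ on the left beats all copies of it on the right; (iv) bound $\|\dot{\Lambda}_h^{\sigma_0}\partial_h\partial_t v\|_{L^2}^2$ and $\|\partial_h^{k-2}\partial_h\partial_t v\|_{L^2}^2$ by the $\partial_t v$ dissipation norms already listed on the right of \eqref{laplace-xi-decay-0} (these are $\|\dot{\Lambda}_h^{\sigma_0}\partial_t v\|_{L^2}^2$ for the $\dot\Lambda_h^{\sigma_0}\partial_h$ piece up to interpolation, and $\sum_{k=1}^{N-1}\|\partial_h^k\partial_t v\|_{L^2}^2$ for the integer-order pieces); (v) collect the cubic remainders, noting $\dot{\mathcal{D}}_k\le \dot{\mathcal{D}}_N$ and $E_k\le E_N$ for $k\le N$, into the single term $C_4(\dot{\mathcal{D}}_N^{1/2}(E_3^{1/2}\dot{\mathcal{D}}_N^{1/2}+E_N^{1/2}\dot{\mathcal{D}}_3^{1/2})+E_N\dot{\mathcal{D}}_3)$ using $E_3^{1/2}\dot{\mathcal{D}}_3\lesssim E_N^{1/2}\dot{\mathcal{D}}_3$; this gives \eqref{laplace-xi-decay-0}. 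The one subtlety to watch in step (iii) is that the weights must be chosen \emph{before} the bootstrap is closed, i.e. purely from the structural constants $c_3,c_4$ and the implicit constants in Lemmas \ref{lem-laplace-sigma0-1}--\ref{lem-laplace-N-1}, none of which depend on the solution; once that is done the argument is entirely mechanical.
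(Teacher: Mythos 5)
Your proposal is correct and follows essentially the same route as the paper: combine Lemma \ref{lem-laplace-sigma0-1} with Lemma \ref{lem-laplace-N-1} applied at each derivative level, use the $\Sigma_0$ boundary relation \eqref{q-interface-1} together with the trace theorem to convert control of $(\nabla^2 v,\nabla q)$ into control of $\xi^1$ in $H^{1/2}(\Sigma_0)$, and absorb the $\|\dot{\Lambda}_h^{\sigma_0}\partial_1^2 v^h\|_{L^2}^2$ terms appearing on the right of the $\partial_h^k$-level estimates into the left side of the $\dot{\Lambda}_h^{\sigma_0}$-level estimate by a fixed choice of weights. The only difference is that the paper states the combination of Lemmas \ref{lem-laplace-sigma0-1}--\ref{lem-laplace-N-1} tersely without spelling out the weighting, whereas you make the absorption of the $\partial_1^2 v^h$ terms explicit, which is the right point to emphasize.
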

\begin{proof}
Thanks to Lemmas \ref{lem-laplace-sigma0-1} and \ref{lem-laplace-N-1}, we get
\begin{equation}\label{laplace-xi-decay-1}
\begin{split}
&\frac{d}{dt}(\|\dot{\Lambda}_h^{\sigma_0}\partial_{h}\xi^1\|_{L^2(\Sigma_0)}^2
+\sum_{k=1}^{N-1}\|\partial_h^{k+1}\xi^1\|_{L^2(\Sigma_0)}^2)\\
&\qquad\qquad
+c_5\bigg(\|\dot{\Lambda}_h^{\sigma_0}\,(\nabla^2\,v,\,\nabla\,q)\|_{L^2}^2+\sum_{k=1}^{N-1}\|\partial_h^{k}(\nabla^2\,v,\,\nabla\,q)\|_{L^2}^2\bigg)
\\
&\leq  C_5(\|\dot{\Lambda}_h^{\sigma_0}\nabla\,v\|_{L^2(\Omega)}^2 +\sum_{i=1}^N\|\partial_h^{i}\nabla\,v\|_{L^2(\Omega)}^2+\|\dot{\Lambda}_h^{\sigma_0}\partial_t v\|_{L^2}^2 +\sum_{k=1}^{N-1}\|\partial_h^{k}\partial_t v\|_{L^2}^2 )\\
&\qquad+C_5(\dot{\mathcal{D}}_{N}^{\frac{1}{2}} (E_3^{\frac{1}{2}} \dot{\mathcal{D}}_{N}^{\frac{1}{2}}  + E_N^{\frac{1}{2}} \dot{\mathcal{D}}_{3}^{\frac{1}{2}})+ E_N  \dot{\mathcal{D}}_{3}).
 \end{split}
\end{equation}
On the other hand, due to \eqref{q-interface-1}, one has
\begin{equation*}
\begin{split}
& \|\dot{\Lambda}_h^{\sigma_0+1}\xi^1\|_{H^\frac{1}{2}(\Sigma_0)}
\lesssim\|\dot{\Lambda}_h^{\sigma_0+1}q\|_{H^\frac{1}{2}(\Sigma_0)}
+\|\|\dot{\Lambda}_h^{\sigma_0+1}\nabla_h\cdot v^h\|_{H^\frac{1}{2}(\Sigma_0)}+\|\dot{\Lambda}_h^{\sigma_0+1}(\mathcal{B}_{9, i}^{\alpha}\partial_{\alpha}v^{i}) \|_{H^\frac{1}{2}(\Sigma_0)}\\
&\lesssim\|\dot{\Lambda}_h^{\sigma_0}\partial_hq\|_{H^1(\Omega)}
+\|\dot{\Lambda}_h^{\sigma_0}\partial_h^2v\|_{H^1(\Omega)}+\|\dot{\Lambda}_h^{\sigma_0}\partial_h(\mathcal{B}_{9, i}^{\alpha}\partial_{\alpha}v^{i}) \|_{H^1(\Omega)},\\
& \|\partial_h^{k}\xi^1\|_{H^\frac{1}{2}(\Sigma_0)}
\lesssim\|\partial_h^{k}q\|_{H^\frac{1}{2}(\Sigma_0)}+\|\partial_h^{k}\nabla_h\cdot v^h\|_{H^\frac{1}{2}(\Sigma_0)}+\|\partial_h^{k}(\mathcal{B}_{9, i}^{\alpha}\partial_{\alpha}v^{i}) \|_{H^\frac{1}{2}(\Sigma_0)}\\
&\lesssim\|\partial_h^{k}q\|_{H^1(\Omega)}
+\|\partial_h^{k+1}v\|_{H^1(\Omega)}+\|\partial_h^{k}(\mathcal{B}_{9, i}^{\alpha}\partial_{\alpha}v^{i}) \|_{H^1(\Omega)} \quad (\text{for} \quad k=2, ..., N-1),\\
\end{split}
\end{equation*}
which follows that
\begin{equation*}\label{laplace-xi-decay-2}
\begin{split}
& \|\dot{\Lambda}_h^{\sigma_0+1}\xi^1\|_{H^\frac{1}{2}(\Sigma_0)}
\lesssim\|\dot{\Lambda}_h^{\sigma_0}\partial_hq\|_{H^1(\Omega)}
+\|\dot{\Lambda}_h^{\sigma_0}\partial_hv\|_{H^1(\Omega)}+\|\partial_h^3v\|_{H^1(\Omega)},\\
& \|\partial_h^{k}\xi^1\|_{H^\frac{1}{2}(\Sigma_0)}
\lesssim \|\partial_h^{k}q\|_{H^1(\Omega)}
+\|\partial_h^{k}\partial_hv\|_{H^1(\Omega)}+E_3^{\frac{1}{2}} \dot{\mathcal{D}}_{k+1}^{\frac{1}{2}}  + E_{k+1}^{\frac{1}{2}} \dot{\mathcal{D}}_{3}^{\frac{1}{2}}
\end{split}
\end{equation*}
with $k=2, ..., N-1$.

Hence, we have
\begin{equation*}\label{laplace-xi-decay-3}
\begin{split}
&\|\dot{\Lambda}_h^{\sigma_0+1}\xi^1\|_{H^\frac{1}{2}(\Sigma_0)}^2
+\sum_{k=2}^{N-1}\|\partial_h^{k}\xi^1\|_{H^\frac{1}{2}(\Sigma_0)}^2
\\
&\leq  C_6\bigg(\|\dot{\Lambda}_h^{\sigma_0}\partial_hq\|_{H^1(\Omega)}^2
+\|\dot{\Lambda}_h^{\sigma_0}\partial_hv\|_{H^1(\Omega)}+\|\partial_h^3v\|_{H^1(\Omega)}^2\\
&\qquad\qquad\qquad
+\sum_{k=2}^{N-1}(\|\partial_h^{k}q\|_{H^1(\Omega)}^2
+\|\partial_h^{k}\partial_hv\|_{H^1(\Omega)}^2)+E_3^{\frac{1}{2}} \dot{\mathcal{D}}_{N} + E_{N}  \dot{\mathcal{D}}_{3}\bigg),
 \end{split}
\end{equation*}
which along with \eqref{laplace-xi-decay-1} ensures \eqref{laplace-xi-decay-0}.
\end{proof}

\renewcommand{\theequation}{\thesection.\arabic{equation}}
\setcounter{equation}{0}
\section{Total energy estimates}\label{sect-total}

Combining Lemma \ref{lem-laplace-xi-decay-1} with Lemma \ref{lem-decay-total-1}, and taking the positive $\delta$ in Lemma \ref{lem-decay-total-1} so small that $\delta \leq \min\{1, \frac{c_1}{2C_1},\,\sqrt{\frac{c_1}{4C_4}},\,\frac{c_2}{4C_4}\}$, we immediately get the following energy estimate.

\begin{lem}\label{lem-tangrad-decay-total-1}
Let $N\geq 3$, under the assumption of Lemma \ref{lem-tan-pseudo-energy-1}, if $E_3(t) \leq 1$ for all the existence times $t$, then there holds
\begin{equation*}\label{decay-total-1}
\begin{split}
&\frac{d}{dt}\widehat{\dot{\mathcal{E}}}_{N, \delta}+\widehat{\dot{\mathcal{D}}}_{N, \delta}\leq C_5 ( E_N^{\frac{1}{2}} \dot{\mathcal{D}}_3^{\frac{1}{2}}\dot{\mathcal{D}}_N^{\frac{1}{2}}+  E_3^{\frac{1}{2}} \dot{\mathcal{D}}_N+E_N\,\dot{\mathcal{D}}_3)
\end{split}
\end{equation*}
with
\begin{equation*}\label{def-decay-total-1}
\begin{split}
&\widehat{\dot{\mathcal{E}}}_{N, \delta}:=\|\dot{\Lambda}_h^{\sigma_0}v\|_{L^2(\Omega)}^2
+\|\dot{\Lambda}_h^{\sigma}\xi^1\|_{L^2(\Sigma_0)}^2+\sum_{i=1}^N(\|\partial_h^{i}v\|_{L^2(\Omega)}^2
+\|\partial_h^{i}\xi^1\|_{L^2(\Sigma_0)}^2)\\
&\quad+\delta\bigg(\mathring{\mathcal{E}}(\dot{\Lambda}_h^{\sigma_0}\nabla\, v) +\sum_{k=1}^{N-1}\mathring{\mathcal{E}}(\partial_h^{k}\nabla\, v)\bigg)+\delta^2(\|\dot{\Lambda}_h^{\sigma_0}\partial_{h}\xi^1\|_{L^2(\Sigma_0)}^2
+\sum_{k=1}^{N-1}\|\partial_h^{k+1}\xi^1\|_{L^2(\Sigma_0)}^2)
\end{split}
\end{equation*}
and
\begin{equation*}\label{def-decay-total-2}
\begin{split}
&\widehat{\dot{\mathcal{D}}}_{N, \delta}:=\frac{c_1}{4}(\|\dot{\Lambda}_h^{\sigma_0}\nabla\,v\|_{L^2(\Omega)}^2 +\sum_{i=1}^N\|\partial_h^{i}\nabla\,v\|_{L^2(\Omega)}^2 )+\frac{c_2}{2}\delta(\|\dot{\Lambda}_h^{\sigma_0}\partial_t v\|_{L^2}^2 +\sum_{k=1}^{N-1}\|\partial_h^{k}\partial_t v\|_{L^2}^2 )\\
&\quad+\delta^2c_4\bigg(\|\dot{\Lambda}_h^{\sigma_0}\,(\nabla^2\,v,\,\nabla\,q)\|_{L^2}^2
+\sum_{k=1}^{N-1}\|\partial_h^{k}(\nabla^2\,v,\,\nabla\,q)\|_{L^2}^2\\
&\qquad\qquad\qquad\qquad\qquad\qquad
+\|\dot{\Lambda}_h^{\sigma_0+1}\xi^1\|_{H^\frac{1}{2}(\Sigma_0)}
+\sum_{k=2}^{N-1}\|\partial_h^{k}\xi^1\|_{H^\frac{1}{2}(\Sigma_0)}\bigg).
\end{split}
\end{equation*}
\end{lem}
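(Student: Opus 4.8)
\textbf{Proof plan for Lemma \ref{lem-tangrad-decay-total-1}.}
The statement is an assembly lemma: it combines the tangential-derivative decay estimate of Lemma \ref{lem-decay-total-1} with the Stokes (second normal derivative) estimate of Lemma \ref{lem-laplace-xi-decay-1}, after a judicious choice of the small constant $\delta$. The plan is therefore to form the linear combination
\[
\widehat{\dot{\mathcal{E}}}_{N, \delta}= \Big(\text{the energy in Lemma \ref{lem-decay-total-1}}\Big)+\delta^2\Big(\|\dot{\Lambda}_h^{\sigma_0}\partial_{h}\xi^1\|_{L^2(\Sigma_0)}^2+\sum_{k=1}^{N-1}\|\partial_h^{k+1}\xi^1\|_{L^2(\Sigma_0)}^2\Big),
\]
add the differential inequality of Lemma \ref{lem-decay-total-1} to $\delta^2$ times the differential inequality \eqref{laplace-xi-decay-0}, and then check that the newly created good term $\delta^2 c_4(\|\dot{\Lambda}_h^{\sigma_0}(\nabla^2 v,\nabla q)\|_{L^2}^2+\cdots)$ on the left absorbs the $\delta^2 C_4$ multiple of the dissipative quantities $\|\dot{\Lambda}_h^{\sigma_0}\nabla v\|_{L^2}^2+\sum_i\|\partial_h^i\nabla v\|_{L^2}^2+\|\dot{\Lambda}_h^{\sigma_0}\partial_t v\|_{L^2}^2+\sum_k\|\partial_h^k\partial_t v\|_{L^2}^2$ appearing on the right of \eqref{laplace-xi-decay-0}.

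First I would record the two component inequalities with their explicit constants $c_1,c_2$ (from Lemma \ref{lem-decay-total-1}) and $c_4,C_4$ (from Lemma \ref{lem-laplace-xi-decay-1}). Multiplying \eqref{laplace-xi-decay-0} by $\delta^2$ and summing with the inequality of Lemma \ref{lem-decay-total-1} produces, on the left, the dissipation $\tfrac{c_1}{2}(\cdots)+\delta c_2(\cdots)+\delta^2 c_4(\cdots)$ and, on the right, besides the cubic terms $E_N^{1/2}\dot{\mathcal D}_3^{1/2}\dot{\mathcal D}_N^{1/2}+E_3^{1/2}\dot{\mathcal D}_N+E_N\dot{\mathcal D}_3$, the extra linear contribution $\delta^2 C_4\big(\|\dot{\Lambda}_h^{\sigma_0}\nabla v\|_{L^2}^2+\sum\|\partial_h^i\nabla v\|_{L^2}^2+\|\dot{\Lambda}_h^{\sigma_0}\partial_t v\|_{L^2}^2+\sum\|\partial_h^k\partial_t v\|_{L^2}^2\big)$. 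The point is that each of these terms is already present with a positive coefficient on the left-hand side: the $\nabla v$-terms with coefficient $c_1/2$ and the $\partial_t v$-terms with coefficient $\delta c_2$. Hence, choosing $\delta$ so small that $\delta^2 C_4\le \tfrac{c_1}{4}$ and $\delta^2 C_4\le \tfrac{\delta c_2}{2}$, i.e. $\delta\le\min\{1,\sqrt{c_1/(4C_4)},c_2/(4C_4)\}$ (together with the earlier restriction $\delta\le\min\{1,c_1/(2C_1)\}$ needed to run Lemma \ref{lem-decay-total-1}), the surplus is absorbed and the remaining left-hand dissipation is exactly the stated $\widehat{\dot{\mathcal D}}_{N,\delta}$ with coefficients $c_1/4$, $(c_2/2)\delta$, $\delta^2 c_4$. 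The constant $C_5$ is then any constant dominating the coefficients produced for the cubic terms after this bookkeeping.

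The only genuinely delicate point — and the one I would be most careful about — is the ordering of the smallness conditions on $\delta$: $\delta$ must be chosen \emph{after} $c_1,C_1,c_2,c_4,C_4$ have been fixed, and one must verify that the Stokes estimate \eqref{laplace-xi-decay-0}, whose right-hand side still contains full dissipative norms, does not reintroduce a term on the right that is not controlled by the left after the $\delta^2$ rescaling. Since every dissipative norm on the right of \eqref{laplace-xi-decay-0} is one of $\|\dot{\Lambda}_h^{\sigma_0}\nabla v\|^2$, $\|\partial_h^i\nabla v\|^2$, $\|\dot{\Lambda}_h^{\sigma_0}\partial_t v\|^2$, $\|\partial_h^k\partial_t v\|^2$ — all present on the left of Lemma \ref{lem-decay-total-1} with $\delta$-independent-or-$\delta$-linear coefficients while they enter \eqref{laplace-xi-decay-0} with coefficient $\delta^2$ — this absorption is automatic once $\delta$ is small. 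The remaining steps (collecting the cubic error terms into $C_5$, and noting that $\widehat{\dot{\mathcal E}}_{N,\delta}$ so defined is equivalent to a sum of squared norms and hence nonnegative) are routine. No new estimate beyond Lemmas \ref{lem-decay-total-1} and \ref{lem-laplace-xi-decay-1} is required.
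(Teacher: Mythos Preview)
Your proposal is correct and matches the paper's own argument essentially line by line: the paper also states the lemma as an immediate consequence of combining Lemma~\ref{lem-decay-total-1} with $\delta^2$ times Lemma~\ref{lem-laplace-xi-decay-1}, under the smallness condition $\delta\le\min\{1,\,c_1/(2C_1),\,\sqrt{c_1/(4C_4)},\,c_2/(4C_4)\}$, so that the $\delta^2 C_4$ contributions on the right are absorbed by the $c_1/2$ and $\delta c_2$ dissipation on the left. (Minor arithmetic: $\delta^2 C_4\le\tfrac{\delta c_2}{2}$ actually gives $\delta\le c_2/(2C_4)$, not $c_2/(4C_4)$, but this over-restriction is harmless.)
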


From this, together with Lemma \ref{lem-bdd-total-N+1-1}, It is convenient to derive

\begin{lem}\label{lem-bdddecay-total-N+1-1}
Let $N\geq 3$, under the assumption of Lemma \ref{lem-tan-pseudo-energy-1}, if $(\lambda,\,\sigma_0) \in (0, 1)$ satisfies $1-\lambda< \sigma_0\leq 1-\frac{1}{2}\lambda$, and $E_3(t) \leq 1$ for all the existence times $t$, then there holds
\begin{equation*}\label{bdddecay-total-N+1-1}
\begin{split}
&\frac{d}{dt}(\widehat{\mathcal{E}}_{N+1, tan, \delta}+\widehat{\dot{\mathcal{E}}}_{N+1, \delta}) +(\widehat{\mathcal{D}}_{N+1, tan, \delta}+\widehat{\dot{\mathcal{D}}}_{N+1, \delta})\\
&\leq C_6 \bigg( E_{N+1}^{\frac{1}{2}} \dot{\mathcal{D}}_3^{\frac{1}{2}}\dot{\mathcal{D}}_{N+1}^{\frac{1}{2}}+  E_3^{\frac{1}{2}} \dot{\mathcal{D}}_{N+1}+E_{N+1}\,\dot{\mathcal{D}}_3\\
&\qquad\qquad\qquad+ E_3^{\frac{1}{2}} \dot{\mathcal{D}}_3^{\frac{1}{2}} \|\dot{\Lambda}_h^{-\lambda}\xi^1 \|_{H^{\frac{1}{2}}(\Sigma_0)}
+E_3 \|\dot{\Lambda}_h^{\sigma_0}\xi^1 \|_{H^{\frac{3}{2}}(\Sigma_0)}^2\bigg)+C_6\delta^{-1}E_3 \dot{\mathcal{D}}_3.
\end{split}
\end{equation*}
\end{lem}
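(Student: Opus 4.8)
The plan is to derive Lemma~\ref{lem-bdddecay-total-N+1-1} simply by adding the estimates already obtained, so the role of this lemma is purely bookkeeping: it packages the decay-type bounds (from Lemma~\ref{lem-tangrad-decay-total-1}, applied at level $N+1$ rather than $N$) together with the bounded-type tangential-and-gradient bounds (from Lemma~\ref{lem-bdd-total-N+1-1}) into a single differential inequality for the combined energy $\widehat{\mathcal{E}}_{N+1, tan, \delta}+\widehat{\dot{\mathcal{E}}}_{N+1, \delta}$ with dissipation $\widehat{\mathcal{D}}_{N+1, tan, \delta}+\widehat{\dot{\mathcal{D}}}_{N+1, \delta}$. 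First I would write Lemma~\ref{lem-tangrad-decay-total-1} with $N$ replaced by $N+1$; this is legitimate since all of Sections~\ref{sect-energy}--\ref{sect-stokes} hold for every integer $\geq 3$, and it yields
\[
\frac{d}{dt}\widehat{\dot{\mathcal{E}}}_{N+1, \delta}+\widehat{\dot{\mathcal{D}}}_{N+1, \delta}\leq C_5 \bigl( E_{N+1}^{\frac12} \dot{\mathcal{D}}_3^{\frac12}\dot{\mathcal{D}}_{N+1}^{\frac12}+  E_3^{\frac12} \dot{\mathcal{D}}_{N+1}+E_{N+1}\,\dot{\mathcal{D}}_3\bigr).
\]
Then I would add this to the inequality in Lemma~\ref{lem-bdd-total-N+1-1}, whose right-hand side already contains all the remaining terms $E_3^{\frac12}\dot{\mathcal{D}}_3^{\frac12}\|\dot{\Lambda}_h^{-\lambda}\xi^1\|_{H^{1/2}(\Sigma_0)}$, $E_3\|\dot{\Lambda}_h^{\sigma_0}\xi^1\|_{H^{3/2}(\Sigma_0)}^2$ and $\delta^{-1}E_3\dot{\mathcal{D}}_3$.

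The one genuine check is on the left-hand side: one must verify that adding $\widehat{\dot{\mathcal{D}}}_{N+1,\delta}$ and $\widehat{\mathcal{D}}_{N+1, tan,\delta}$ really controls the full dissipation appearing implicitly on the right, i.e.\ that no uncontrolled term is produced. The point is that $\widehat{\dot{\mathcal{D}}}_{N+1,\delta}$ supplies the Stokes part $\|\dot{\Lambda}_h^{\sigma_0}(\nabla^2 v,\nabla q)\|_{L^2}^2$, $\|\partial_h^k(\nabla^2 v,\nabla q)\|_{L^2}^2$ and the extra boundary terms $\|\dot{\Lambda}_h^{\sigma_0+1}\xi^1\|_{H^{1/2}(\Sigma_0)}$, $\|\partial_h^k\xi^1\|_{H^{1/2}(\Sigma_0)}$ for $k=2,\dots,N$, while $\widehat{\mathcal{D}}_{N+1, tan,\delta}$ supplies the first-order quantities $\|\dot{\Lambda}_h^{-\lambda}\nabla v\|_{L^2}^2$, $\|\dot{\Lambda}_h^{\sigma_0}\nabla v\|_{L^2}^2$, $\sum_{i\le N+1}\|\partial_h^i\nabla v\|_{L^2}^2$ and the time-derivative pieces $\|\dot{\Lambda}_h^{-\lambda}\partial_t v\|_{L^2}^2$, $\|\dot{\Lambda}_h^{\sigma_0}\partial_t v\|_{L^2}^2$, $\sum_{k\le N}\|\partial_h^k\partial_t v\|_{L^2}^2$. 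These two collections together dominate $\dot{\mathcal D}_{N+1}$ (up to the fixed coefficients $c_1,c_2,c_4$ and the small parameter $\delta$, which have been chosen once and for all in Lemma~\ref{lem-tangrad-decay-total-1} so that $\delta\le\min\{1,\frac{c_1}{2C_1},\sqrt{c_1/4C_4},c_2/4C_4\}$). In particular the $\|\dot{\Lambda}_h^{\sigma_0-1}\partial_1^2 v^1\|_{L^2}^2$ contribution hidden inside $\dot{\mathcal D}_{N+1}$ is recovered via Lemma~\ref{lem-EN-bdd-1}. Hence the summed left-hand side genuinely has the stated form.

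The relabelling of constants is routine: set $C_6:=\max\{C_3,C_5\}$ (with a harmless factor absorbed), and observe that the mixed term $E_{N+1}^{\frac12}\dot{\mathcal D}_3^{\frac12}\dot{\mathcal D}_{N+1}^{\frac12}$ appears identically on both right-hand sides, so the sum stays of the same type. I would also note that, on the energy side, $\widehat{\mathcal{E}}_{N+1, tan, \delta}+\widehat{\dot{\mathcal{E}}}_{N+1, \delta}$ is comparable to $\mathcal{E}_{N+1}$ plus the $L^2$-norm $\|v\|_{L^2}^2+\int_{\Sigma_0}a_{11}|\xi^1|^2$ controlled by Proposition~\ref{prop-linear-v-L2-1}, which is why incorporating \eqref{linear-v-L2-1-0} causes no trouble; strictly speaking this comparison is only needed in the next section and not for the statement itself. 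I do not anticipate a real obstacle here: the only care required is the arithmetic of matching every dissipation term in $\dot{\mathcal D}_{N+1}$ against a term present in $\widehat{\mathcal{D}}_{N+1, tan, \delta}+\widehat{\dot{\mathcal{D}}}_{N+1, \delta}$, which is exactly the purpose for which those two dissipation functionals were defined in \eqref{def-bdd-total-N+1-1} and in Lemma~\ref{lem-tangrad-decay-total-1}.
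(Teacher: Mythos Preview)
Your proposal is correct and matches the paper's own argument essentially verbatim: the paper introduces this lemma with the single phrase ``From this, together with Lemma~\ref{lem-bdd-total-N+1-1}, it is convenient to derive'', where ``this'' is Lemma~\ref{lem-tangrad-decay-total-1} (at level $N+1$), so the proof really is the direct sum of those two inequalities with $C_6$ absorbing $\max\{C_3,C_5\}$.
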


The following comparison lemma discovers the equivalence of two types of the energy.
\begin{lem}[Comparison lemma]\label{lem-equiv-decay-total-N+1-1}
Let $N\geq 4$, under the assumption of Lemma \ref{lem-tan-pseudo-energy-1}, if$(\lambda,\,\sigma_0) \in (0, 1)$ satisfies $1-\lambda< \sigma_0\leq 1-\frac{1}{2}\lambda$, and $E_N(t) \leq 1$ for all the existence times $t$, then there exists a genius positive constant $\delta_0$  such that
\begin{equation}\label{equiv-decay-total-N+1-1}
\begin{split}
& \dot{\mathcal{E}}_N\thicksim \widehat{\dot{\mathcal{E}}}_{N, \delta_0},\quad   \dot{\mathcal{D}}_{N} \thicksim \widehat{\dot{\mathcal{D}}}_{N, \delta_0},\\
&\mathcal{E}_{N+1}\thicksim\widehat{\mathcal{E}}_{N+1, tan, \delta_0}+\widehat{\dot{\mathcal{E}}}_{N+1, \delta_0},\quad  \mathcal{D}_{N+1}\thicksim \widehat{\mathcal{D}}_{N+1, tan, \delta_0}+\widehat{\dot{\mathcal{D}}}_{N+1, \delta_0}.
\end{split}
\end{equation}
\end{lem}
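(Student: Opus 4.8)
The plan is to prove the four equivalences in \eqref{equiv-decay-total-N+1-1} by unpacking the definitions of the hatted quantities $\widehat{\dot{\mathcal{E}}}_{N,\delta_0}$, $\widehat{\dot{\mathcal{D}}}_{N,\delta_0}$, $\widehat{\mathcal{E}}_{N+1,\mathrm{tan},\delta_0}+\widehat{\dot{\mathcal{E}}}_{N+1,\delta_0}$, $\widehat{\mathcal{D}}_{N+1,\mathrm{tan},\delta_0}+\widehat{\dot{\mathcal{D}}}_{N+1,\delta_0}$ assembled in Lemmas \ref{lem-decay-total-1}, \ref{lem-bdd-total-N+1-1}, \ref{lem-tangrad-decay-total-1}, \ref{lem-bdddecay-total-N+1-1}, and comparing them term by term with the definitions of $\dot{\mathcal{E}}_N$, $\dot{\mathcal{D}}_N$, $\mathcal{E}_{N+1}$, $\mathcal{D}_{N+1}$ given in Section \ref{sect-main}. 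The ``$\gtrsim$'' direction is immediate in each case: the hatted energy/dissipation is, after choosing $\delta_0$ small but fixed, a sum of nonnegative terms that literally contains (up to harmless constants depending on $\delta_0$) each summand appearing in $\dot{\mathcal{E}}_N$, $\dot{\mathcal{D}}_N$, etc.; conversely, each extra auxiliary term in the hatted quantities — the cross terms hidden inside $\mathring{\mathcal{E}}(\mathcal{P}(\partial_h)\nabla v)$, the $\|\mathcal{P}(\partial_h)\nabla_h\cdot v^h\|_{L^2}^2$ pieces, the $\|\mathcal{P}(\partial_h)\partial_1 v^j\,\mathcal{P}(\partial_h)(\mathcal{H}(\mathcal{B}_{j+5,i}^\alpha)\partial_\alpha v^i)\|$-type integrals, and the $\|\mathcal{P}(\partial_h)v\cdot\mathcal{P}(\partial_h)\nabla\mathcal{H}(\xi^1)\|$ term — must be shown to be controlled by the corresponding ``true'' energy.

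The core of the argument is therefore the positivity/coercivity of the quadratic form $\mathring{\mathcal{E}}(\mathcal{P}(\partial_h)\nabla v)$ and the smallness of its non-diagonal contributions. First I would observe that
\[
\|\mathcal{P}(\partial_h)\nabla v\|_{L^2}^2 + 4\|\mathcal{P}(\partial_h)\nabla_h\cdot v^h\|_{L^2}^2 + 2\int_\Omega\big(\mathcal{P}(\partial_h)\partial_1 v^h\cdot\mathcal{P}(\partial_h)\nabla_h v^1 + \mathcal{P}(\partial_h)\partial_1 v^1\,\mathcal{P}(\partial_h)\nabla_h\cdot v^h\big)\,dx
\]
is comparable to $\|\mathcal{P}(\partial_h)\nabla v\|_{L^2}^2$: the cross terms are handled by Cauchy--Schwarz and Young's inequality, absorbing them into a fraction of $\|\mathcal{P}(\partial_h)\nabla v\|_{L^2}^2 + \|\mathcal{P}(\partial_h)\nabla_h\cdot v^h\|_{L^2}^2$, and $\|\mathcal{P}(\partial_h)\nabla_h\cdot v^h\|_{L^2}\le\|\mathcal{P}(\partial_h)\nabla v\|_{L^2}$ trivially. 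For the remaining two integrals in $\mathring{\mathcal{E}}(\mathcal{P}(\partial_h)\nabla v)$ I would use the trace/harmonic-extension estimates of Lemma \ref{est-harmonic-ext-1} together with Lemma \ref{lem-est-B-Bv-1} to bound $\|\mathcal{P}(\partial_h)\mathcal{H}(\mathcal{B}_{j+5,i}^\alpha)\partial_\alpha v^i\|_{L^2}$ and $\|\mathcal{P}(\partial_h)\nabla\mathcal{H}(\xi^1)\|_{L^2}$ by $E_N^{1/2}$ times a $\nabla v$-norm (respectively by $\|\mathcal{P}(\partial_h)\xi^1\|_{H^{1/2}(\Sigma_0)}$), so that under $E_N(t)\le 1$ these terms are a $o(1)$-fraction of the main energy plus a controlled multiple of the true energy; choosing $\delta_0$ small then makes them absorbable. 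The same mechanism handles $\widehat{\mathcal{E}}_{N+1,\mathrm{tan},\delta_0}$ and $\widehat{\mathcal{D}}_{N+1,\mathrm{tan},\delta_0}$, where one additionally invokes Lemma \ref{lem-EN-bdd-1} to trade $\|\dot\Lambda_h^{\sigma_0-1}\partial_1^2 v^1\|_{L^2}^2$ against $\ddot{\mathcal{D}}_{\ell,1+\sigma_0}+\dot{\mathcal{D}}_{\ell,\sigma_0}+\ddot{\mathcal{D}}_{\ell,2}$, and Remarks \ref{rmk-dissip-1} and \ref{rmk-bdd-energy-1} to reconcile the recursive ($N$ versus $N-1$) bookkeeping in $\dot{\mathcal{D}}_N$ and $\mathcal{E}_{N+1}$.

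The main obstacle I anticipate is purely organizational rather than analytic: one must match the hatted dissipations, which are built from $\|\mathcal{P}(\partial_h)\partial_1^2 v^h\|$, $\|\mathcal{P}(\partial_h)\nabla q\|$, $\|\mathcal{P}(\partial_h)\xi^1\|_{H^{1/2}(\Sigma_0)}$ and $\|\mathcal{P}(\partial_h)\partial_t v\|$ with $\mathcal{P}(\partial_h)\in\{\dot\Lambda_h^{-\lambda},\dot\Lambda_h^{\sigma_0},\partial_h^i\}$, against $\dot{\mathcal{D}}_N=\dot{\mathcal{D}}_2+\dot{\mathcal{D}}_{\ell,N}+\ddot{\mathcal{D}}_{\ell,1+\sigma_0}+\ddot{\mathcal{D}}_{\ell,N}$ and $\mathcal{D}_{N+1}$, which are written in terms of full $\nabla^2 v$ and $\nabla p$. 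Here one must use the linearized momentum identities \eqref{incomp-cond-v1-1}, \eqref{prt1-q-vh-1} and the consequences \eqref{prt1-q-vh-rela-2} to convert $\partial_1^2 v^1$, $\partial_1 q$, $-\nu\partial_1^2 v^\beta+\partial_\beta q$ into combinations of $\partial_h\nabla v$, $\partial_t v$, $g$ and the nonlinear remainders $\widetilde g_{kk}$, and then Lemma \ref{lem-est-g-1} to absorb the $g$-contributions into $E_N^{1/2}\dot{\mathcal{D}}_N^{1/2}$-type error terms which are negligible under $E_N(t)\le 1$. Once all the identifications are in place, picking $\delta_0$ small enough to close every ``absorb into a fraction'' step simultaneously yields the four two-sided bounds in \eqref{equiv-decay-total-N+1-1}; the only subtlety worth flagging is that $\delta_0$ must be chosen uniformly in $N$, which is possible since the number of distinct term-types being compared is bounded independently of $N$ (each level $\partial_h^k$ contributes structurally identical pieces).
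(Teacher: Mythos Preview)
Your approach is correct and essentially identical to the paper's: the paper likewise reduces the whole lemma to showing two-sided bounds on $\mathring{\mathcal{E}}(\mathcal{P}(\partial_h)\nabla v)$, obtained exactly by Cauchy--Schwarz/Young on the cross terms together with Lemmas \ref{est-harmonic-ext-1} and \ref{lem-est-B-Bv-1} for the $\mathcal{H}(\xi^1)$ and $\mathcal{H}(\mathcal{B}_{j+5,i}^\alpha)\partial_\alpha v^i$ pieces, and then chooses $\delta_0$ small. The one place you over-engineer is the dissipation equivalence: the summands of $\widehat{\dot{\mathcal{D}}}_{N,\delta_0}$ and $\widehat{\mathcal{D}}_{N+1,\mathrm{tan},\delta_0}+\widehat{\dot{\mathcal{D}}}_{N+1,\delta_0}$ already contain $\|\mathcal{P}(\partial_h)(\nabla^2 v,\nabla q)\|_{L^2}^2$ and $\|\mathcal{P}(\partial_h)\xi^1\|_{H^{1/2}(\Sigma_0)}^2$ explicitly (cf.\ the definitions in Lemmas \ref{lem-tangrad-decay-total-1} and \ref{lem-bdd-total-N+1-1}), so the match with $\dot{\mathcal{D}}_N$ and $\mathcal{D}_{N+1}$ is literal term-by-term and does not require invoking \eqref{prt1-q-vh-1}--\eqref{prt1-q-vh-rela-2} or Lemma \ref{lem-est-g-1}.
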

\begin{proof}
In order to prove \eqref{equiv-decay-total-N+1-1}, we need only to estimate the nonlinear energy $\mathring{\mathcal{E}}(\mathcal{P}(\partial_h)\nabla\, v)$ from upper and low ones.
In fact, due to the definition of $\mathring{\mathcal{E}}(\mathcal{P}(\partial_h)\nabla\, v)$, we obtain
\begin{equation*}
\begin{split}
 & \mathring{\mathcal{E}}(\dot{\Lambda}_h^{\sigma_0}\nabla\, v)\leq \|\dot{\Lambda}_h^{\sigma_0}\nabla\, v\|_{L^2}^2+4\|\dot{\Lambda}_h^{\sigma_0}\nabla_h\cdot v^h\|_{L^2}^2+\|\dot{\Lambda}_h^{\sigma_0}\partial_1 v\|_{L^2}^2+\|\dot{\Lambda}_h^{\sigma_0}\nabla_h v\|_{L^2}^2 \\
   &\,+\frac{2}{\nu}\|\dot{\Lambda}_h^{\sigma_0} v\|_{L^2}\|\dot{\Lambda}_h^{\sigma_0}\nabla\,\mathcal{H}(\xi^1)\|_{L^2} +4\sum_{j=1}^3\|\dot{\Lambda}_h^{\sigma_0}\partial_1 v\|_{L^2}\|\dot{\Lambda}_h^{\sigma_0} (\mathcal{H}(\mathcal{B}_{j+5, i}^{\alpha})\partial_{\alpha}v^{i})\|_{L^2}\\
  &\leq 3\|\dot{\Lambda}_h^{\sigma_0}\nabla\, v\|_{L^2}^2+C_0(\|\dot{\Lambda}_h^{\sigma_0}\partial_hv\|_{L^2}^2+\|\dot{\Lambda}_h^{\sigma_0} v\|_{L^2}^2+\|\dot{\Lambda}_h^{\sigma_0}\xi^1\|_{H^{\frac{1}{2}}(\Sigma_0)}^2 ) +C_0E_3^{\frac{1}{2}}\|\dot{\Lambda}_h^{\sigma_0}\Lambda_h^2\nabla\,v\|_{L^2}^2,
        \end{split}
\end{equation*}
\begin{equation*}
\begin{split}
 & \sum_{k=1}^{N-1}\mathring{\mathcal{E}}(\partial_h^{k}\nabla\, v)\leq \sum_{k=1}^{N-1}\bigg(\|\partial_h^{k}\nabla\, v\|_{L^2}^2+4\|\partial_h^{k}\nabla_h\cdot v^h\|_{L^2}^2+2\|\partial_h^{k}\partial_1 v\|_{L^2}\,\|\partial_h^{k}\nabla_h v\|_{L^2} \\
   &\,+C_0\|\partial_h^{k}v\|_{L^2}\|\partial_h^k\xi^1 \|_{H^{\frac{1}{2}}(\Sigma_0)} +2\|\partial_h^{k}\partial_1 v\|_{L^2}\,\sum_{j=1}^3\|\partial_h^{k} (\mathcal{H}(\mathcal{B}_{j+5, i}^{\alpha})\partial_{\alpha}v^{i})\|_{L^2}\\
   &\leq \sum_{k=1}^{N-1}\bigg(2\|\partial_h^{k}\nabla\, v\|_{L^2}^2+C_0(\|\partial_h^{k}v\|_{L^2}^2+\|\partial_h^k\xi^1 \|_{H^{\frac{1}{2}}(\Sigma_0)}^2 +E_N^{\frac{1}{2}}\|\partial_h^{k}\nabla\, v\|_{L^2}^2)\bigg),
        \end{split}
\end{equation*}
and
\begin{equation*}\label{def-pseudo-energy-tv-0}
\begin{split}
 & \mathring{\mathcal{E}}(\dot{\Lambda}_h^{\sigma_0}\nabla\, v)\geq \frac{1}{2}\|\dot{\Lambda}_h^{\sigma_0}\nabla\, v\|_{L^2}^2\\
 &\qquad-C_0(\|\dot{\Lambda}_h^{\sigma_0}\partial_hv\|_{L^2}^2+\|\dot{\Lambda}_h^{\sigma_0} v\|_{L^2}^2+\|\dot{\Lambda}_h^{\sigma_0}\xi^1\|_{H^{\frac{1}{2}}(\Sigma_0)}^2+
 E_3^{\frac{1}{2}}\|\dot{\Lambda}_h^{\sigma_0}\Lambda_h^2\nabla\,v\|_{L^2}^2),
        \end{split}
\end{equation*}
\begin{equation*}\label{def-pseudo-energy-tv-0}
\begin{split}
\sum_{k=1}^{N-1}\mathring{\mathcal{E}}(\partial_h^{k}\nabla\, v)\geq \sum_{k=1}^N\bigg(&\frac{1}{2}\|\partial_h^{k}\nabla\, v\|_{L^2}^2-C_0(\|\partial_h^{k}\Lambda_hv\|_{L^2}^2+\|\partial_h^k\xi^1 \|_{H^{\frac{1}{2}}(\Sigma_0)}^2)\\
 &-C_0(E_{k+1}\|\partial_h\Lambda_h^2\,v\|_{L^2}^2+E_{3} \|\partial_h\Lambda_h^{k}\,v\|_{L^2}^2)\bigg).
        \end{split}
\end{equation*}
From which, we may immediately obtain \eqref{equiv-decay-total-N+1-1} if we take a suitable positive constant $\delta_0$. This finishes the proof of Lemma \ref{lem-equiv-decay-total-N+1-1}.
\end{proof}

With Lemma \ref{lem-equiv-decay-total-N+1-1} in hand, for simplicity, we denote $\widehat{\dot{\mathcal{E}}}_{N, \delta_0}$, $\widehat{\dot{\mathcal{D}}}_{N, \delta_0}$,
$\widehat{\mathcal{E}}_{N+1, tan, \delta_0}+\widehat{\dot{\mathcal{E}}}_{N+1, \delta_0}$, $\widehat{\mathcal{D}}_{N+1, tan, \delta_0}+\widehat{\dot{\mathcal{D}}}_{N+1, \delta_0}$ by $\widehat{\dot{\mathcal{E}}}_{N}$, $\widehat{\dot{\mathcal{D}}}_{N}$,
$\widehat{\mathcal{E}}_{N+1, tan}+\widehat{\dot{\mathcal{E}}}_{N+1}$, $\widehat{\mathcal{D}}_{N+1, tan}+\widehat{\dot{\mathcal{D}}}_{N+1}$ respectively, and $\widehat{\mathcal{E}}_{N+1}:=\widehat{\mathcal{E}}_{N+1, tan}+\widehat{\dot{\mathcal{E}}}_{N+1}$, $\widehat{\mathcal{D}}_{N+1}:=\widehat{\mathcal{D}}_{N+1, tan}+\widehat{\dot{\mathcal{D}}}_{N+1}$. And we also denote the positive constant $\mathfrak{C}_0\geq 1$ such that
\begin{equation}\label{equiv-total-cont-1}
\begin{split}
&\mathfrak{C}_0^{-1}\widehat{\dot{\mathcal{E}}}_{N}\leq  \dot{\mathcal{E}}_N\leq \mathfrak{C}_0 \widehat{\dot{\mathcal{E}}}_{N},\quad   \mathfrak{C}_0^{-1} \widehat{\dot{\mathcal{D}}}_{N}\leq\dot{\mathcal{D}}_{N} \leq \mathfrak{C}_0 \widehat{\dot{\mathcal{D}}}_{N},\\
&\mathfrak{C}_0^{-1}\widehat{\mathcal{E}}_{N+1}\leq \mathcal{E}_{N+1}\leq \mathfrak{C}_0\widehat{\mathcal{E}}_{N+1},\quad  \mathfrak{C}_0^{-1} \widehat{\mathcal{D}}_{N+1}\leq \mathcal{D}_{N+1}\leq \mathfrak{C}_0 \widehat{\mathcal{D}}_{N+1}.
\end{split}
\end{equation}

Therefore, we restate Lemmas \ref{lem-tangrad-decay-total-1} and \ref{lem-bdddecay-total-N+1-1} to get the total energy estimates.
\begin{lem}\label{lem-restate-decay-total-N+1-1}
Let $N\geq 4$, under the assumption of Lemma \ref{lem-tan-pseudo-energy-1}, if $(\lambda,\,\sigma_0) \in (0, 1)$ satisfies $1-\lambda< \sigma_0\leq 1-\frac{1}{2}\lambda$, and $E_N(t) \leq 1$ for all the existence times $t$, then there hold
\begin{equation}\label{restate-decay-total-N+1-1}
\begin{split}
&\frac{d}{dt}\widehat{\dot{\mathcal{E}}}_{N}+2\mathfrak{c}_1\dot{\mathcal{D}}_{N}\leq \mathfrak{C}_1 E_N^{\frac{1}{2}} \,\dot{\mathcal{D}}_N,
\end{split}
\end{equation}
and
\begin{equation}\label{restate-bdddecay-total-N+1-1}
\begin{split}
&\frac{d}{dt}\widehat{\mathcal{E}}_{N+1} +2\mathfrak{c}_1\mathcal{D}_{N+1}\leq \mathfrak{C}_1 \bigg( E_3^{\frac{1}{2}} \dot{\mathcal{D}}_{N+1}+E_{N+1}\,\dot{\mathcal{D}}_3+ E_3 \dot{\mathcal{D}}_3^{\frac{1}{2}} +E_3 \dot{\mathcal{E}}_3\bigg).
\end{split}
\end{equation}
\end{lem}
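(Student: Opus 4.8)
The plan is to combine the two family-level differential inequalities already established---Lemma~\ref{lem-tangrad-decay-total-1} for the decay hierarchy $\widehat{\dot{\mathcal{E}}}_{N}$, $\widehat{\dot{\mathcal{D}}}_{N}$ and Lemma~\ref{lem-bdddecay-total-N+1-1} for the bounded hierarchy $\widehat{\mathcal{E}}_{N+1}+\widehat{\dot{\mathcal{E}}}_{N+1}$---and to clean up their right-hand sides using the equivalences recorded in Lemma~\ref{lem-equiv-decay-total-N+1-1} (equivalently \eqref{equiv-total-cont-1}). The first step is purely a restatement: in Lemma~\ref{lem-tangrad-decay-total-1} the right side is $C_5(E_N^{1/2}\dot{\mathcal D}_3^{1/2}\dot{\mathcal D}_N^{1/2}+E_3^{1/2}\dot{\mathcal D}_N+E_N\dot{\mathcal D}_3)$; since $E_3\le E_N\le 1$ and $\dot{\mathcal D}_3\le\dot{\mathcal D}_N$, Young's inequality absorbs all three terms into a single bound of the form $\mathfrak{C}_1 E_N^{1/2}\dot{\mathcal D}_N$, and \eqref{equiv-total-cont-1} lets me replace the $\widehat{\dot{\mathcal D}}$ on the left by $2\mathfrak{c}_1\dot{\mathcal D}_N$ with $\mathfrak{c}_1\eqdefa (2\mathfrak{C}_0)^{-1}$ times the implicit coercivity constant. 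This yields \eqref{restate-decay-total-N+1-1}.

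For \eqref{restate-bdddecay-total-N+1-1} I would start from Lemma~\ref{lem-bdddecay-total-N+1-1}, whose right-hand side carries the extra boundary terms $E_3^{1/2}\dot{\mathcal D}_3^{1/2}\|\dot\Lambda_h^{-\lambda}\xi^1\|_{H^{1/2}(\Sigma_0)}$, $E_3\|\dot\Lambda_h^{\sigma_0}\xi^1\|_{H^{3/2}(\Sigma_0)}^2$ and the $\delta^{-1}$-weighted piece $C_6\delta^{-1}E_3\dot{\mathcal D}_3$. Here $\delta=\delta_0$ is now \emph{fixed} by Lemma~\ref{lem-equiv-decay-total-N+1-1}, so $C_6\delta_0^{-1}$ is just another generic constant and the term is of the form $E_3\dot{\mathcal D}_3$. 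The term $E_3^{1/2}\dot{\mathcal D}_3^{1/2}\|\dot\Lambda_h^{-\lambda}\xi^1\|_{H^{1/2}(\Sigma_0)}$ I bound by $E_3(\dot{\mathcal D}_3^{1/2}+\|\dot\Lambda_h^{-\lambda}\xi^1\|_{H^{1/2}(\Sigma_0)}^2)$ via $ab\le\tfrac12(a^2+b^2)$ with $a=E_3^{1/4}\dot{\mathcal D}_3^{1/2}$, $b=E_3^{1/4}\|\dot\Lambda_h^{-\lambda}\xi^1\|_{H^{1/2}(\Sigma_0)}$ --- actually it is cleaner to keep it as $E_3\dot{\mathcal D}_3^{1/2}$ plus $E_3\|\dot\Lambda_h^{-\lambda}\xi^1\|_{H^{1/2}(\Sigma_0)}$, and then observe that $\|\dot\Lambda_h^{-\lambda}\xi^1\|_{H^{1/2}(\Sigma_0)}^2$ and $\|\dot\Lambda_h^{\sigma_0}\xi^1\|_{H^{3/2}(\Sigma_0)}^2$ are both controlled (up to the constant $\mathfrak{C}_0$) by $\dot{\mathcal E}_3$, because $\dot{\mathcal E}_{\ell,\sigma}$ and $\ddot{\mathcal D}_{\ell,1+\sigma_0}$ contain precisely these surface norms and they sit inside $\dot{\mathcal D}_3\lesssim\dot{\mathcal E}_3$ is \emph{not} true, so one must instead route $\|\dot\Lambda_h^{\sigma_0}\xi^1\|_{H^{3/2}}^2$ into $\dot{\mathcal D}_3$ (it appears there through $\ddot{\mathcal D}_{\ell,1+\sigma_0}$ together with the tangential-derivative pieces) and $\|\dot\Lambda_h^{-\lambda}\xi^1\|_{H^{1/2}}^2\lesssim \|\dot\Lambda_h^{-\lambda}\xi^1\|_{L^2(\Sigma_0)}^2+\|\dot\Lambda_h^{\sigma_0}\xi^1\|_{H^{3/2}}^2$ into $\mathcal E_3+\dot{\mathcal D}_3$, so that the surviving forms are $E_3\dot{\mathcal D}_3^{1/2}$ and $E_3\dot{\mathcal E}_3$. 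After that collapse, the polynomial terms $E_{N+1}^{1/2}\dot{\mathcal D}_3^{1/2}\dot{\mathcal D}_{N+1}^{1/2}$, $E_3^{1/2}\dot{\mathcal D}_{N+1}$, $E_{N+1}\dot{\mathcal D}_3$ are handled by Young as before, and \eqref{equiv-total-cont-1} converts the left-side dissipation into $2\mathfrak{c}_1\mathcal D_{N+1}$.

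The main obstacle I anticipate is not any single estimate but the bookkeeping of which surface norms of $\xi^1$ live in which functional ($\dot{\mathcal E}_3$, $\dot{\mathcal D}_3$, or $\mathcal E_3$): the statement of \eqref{restate-bdddecay-total-N+1-1} deliberately retains a term $\mathfrak{C}_1 E_3\dot{\mathcal E}_3$ on the right, which signals that not all of the boundary contributions can be absorbed into dissipation and some must be carried as an energy-times-energy remainder to be closed later by a Gronwall/bootstrap argument in Section~\ref{sect-proof-mainthm}. So the delicate point is to split $E_3\|\dot\Lambda_h^{\sigma_0}\xi^1\|_{H^{3/2}(\Sigma_0)}^2$ (and the $H^{1/2}$ analogue) into exactly the portion that is dissipative (absorbed, up to the $E_3\le\epsilon_0$ smallness, into the coercive $2\mathfrak{c}_1\mathcal D_{N+1}$ with room to spare) versus the portion that is genuinely energetic and lands in $E_3\dot{\mathcal E}_3$; getting this split consistent with the definitions of $\ddot{\mathcal D}_{\ell,\cdot}$ and $\dot{\mathcal E}_{\ell,\cdot}$ given in Section~\ref{sect-main}, and keeping track of where the weight $\delta_0$ versus $\delta_0^2$ appears in $\widehat{\mathcal D}_{N+1}$, is the part that requires care. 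Once the inequalities are in the stated form, the derivation is complete and the constants $\mathfrak{c}_1,\mathfrak{C}_1$ are fixed as the appropriate products of $\mathfrak{C}_0$, $c_1,c_2,c_4$, $C_5,C_6$ and $\delta_0$.
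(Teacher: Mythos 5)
Your proposal is correct and follows the same route the paper intends: this lemma is, as the paper says, a ``restatement'' of Lemmas~\ref{lem-tangrad-decay-total-1} and~\ref{lem-bdddecay-total-N+1-1} after fixing $\delta=\delta_0$ and invoking the equivalences \eqref{equiv-total-cont-1}. One small clarification worth making explicit in your writeup: for \eqref{restate-decay-total-N+1-1} the hypothesis $E_N\le 1$ together with $E_3\le E_N$ and $\dot{\mathcal D}_3\le\dot{\mathcal D}_N$ makes all three source terms collapse into $E_N^{1/2}\dot{\mathcal D}_N$ without any absorption; whereas for \eqref{restate-bdddecay-total-N+1-1} there is no $E_{N+1}\le 1$ assumption, so the cross term $E_{N+1}^{1/2}\dot{\mathcal D}_3^{1/2}\dot{\mathcal D}_{N+1}^{1/2}$ genuinely requires a \emph{weighted} Young inequality, with the $\dot{\mathcal D}_{N+1}$ piece absorbed into the left-hand dissipation via $\widehat{\mathcal D}_{N+1}\ge \mathfrak{C}_0^{-1}\mathcal D_{N+1}\ge \mathfrak{C}_0^{-1}\dot{\mathcal D}_{N+1}$ (this is what leaves $2\mathfrak{c}_1$ rather than $4\mathfrak{c}_1$ on the left). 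Your mid-sentence self-correction about the surface norms reaches the right conclusion: $E_3\|\dot{\Lambda}_h^{\sigma_0}\xi^1\|_{H^{3/2}(\Sigma_0)}^2$ splits, by interpolation against $\|\dot{\Lambda}_h^{\sigma_0+1}\xi^1\|_{L^2(\Sigma_0)}$ and $\|\partial_h^2\xi^1\|_{H^{1/2}(\Sigma_0)}$ (both in $\dot{\mathcal D}_3$) and $\|\dot{\Lambda}_h^{\sigma_0}\xi^1\|_{L^2(\Sigma_0)}$ (in $\dot{\mathcal E}_3$), into $E_3\dot{\mathcal E}_3$ plus a piece dominated by $E_{N+1}\dot{\mathcal D}_3$, while $\|\dot{\Lambda}_h^{-\lambda}\xi^1\|_{H^{1/2}(\Sigma_0)}\lesssim E_3^{1/2}$ gives the $E_3\dot{\mathcal D}_3^{1/2}$ term, all exactly as stated.
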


\renewcommand{\theequation}{\thesection.\arabic{equation}}
\setcounter{equation}{0}

\section{Proof of Theorem \ref{thm-main}}\label{sect-proof-mainthm}

Thanks to \eqref{restate-decay-total-N+1-1} and \eqref{restate-bdddecay-total-N+1-1} in Lemma \ref{lem-restate-decay-total-N+1-1}, we will show that the low order energy $E_{N}$ is uniformly bounded, while the low order energy $E_{N+1}$ grows algebraicly as the time $t$ goes to infinity by an inductive argument. Hence, inspired by the two-tiered energy method \cite{Guo-Tice-2}, we may find the decay estimate of the low order energy $\dot{\mathcal{E}}_N$. For this, we first prove the following decay estimate.

\begin{lem}[Decay estimate]\label{lem-diff-ineq-1}
If the non-negative function $f$ satisfies the differential inequality
\begin{equation}\label{diff-ineq-1}
\begin{cases}
&\frac{d}{dt}f+c_1\,f^{1+s}
 \leq 0, \quad \forall\,\, t>0, \\
 &f|_{t=0}=f_0
\end{cases}
\end{equation}
for two positive constants $s>0$ and $c_1>0$, then there holds for any $t>0$
\begin{equation}\label{diff-ineq-2}
\begin{split}
&f(t)\leq (c_1s)^{-\frac{1}{s}}((c_1s)^{-1}f^{-s}_0+t)^{-\frac{1}{s}}.
\end{split}
\end{equation}
\end{lem}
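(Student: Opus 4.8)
The plan is to treat \eqref{diff-ineq-1} as a scalar ODE differential inequality and integrate it by the standard separation-of-variables trick, being careful about the case $f_0 = 0$. First I would dispose of the trivial case: if $f_0 = 0$, then since $f$ is non-negative and $\frac{d}{dt}f \leq -c_1 f^{1+s} \leq 0$, the function $f$ stays at $0$ for all $t \geq 0$ (or more carefully, $f \equiv 0$ is the relevant solution and the bound \eqref{diff-ineq-2} holds vacuously as an equality in the limit, or one simply notes $0 \leq$ anything non-negative). So assume $f_0 > 0$. I would also first argue that $f(t) > 0$ for all $t > 0$: if $f$ vanished at some first time $t_1$, then on $[0,t_1)$ the computation below applies and shows $f$ cannot reach $0$ in finite time, a contradiction; alternatively one can just run the estimate on the maximal interval where $f > 0$ and observe the resulting bound extends continuously.

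On the interval where $f > 0$, I would divide \eqref{diff-ineq-1} by $f^{1+s}$ to get $f^{-1-s}\frac{d}{dt}f \leq -c_1$, i.e. $\frac{d}{dt}\left( -\frac{1}{s} f^{-s} \right) \leq -c_1$, hence $\frac{d}{dt}\left( f^{-s} \right) \geq c_1 s$. Integrating from $0$ to $t$ yields
\begin{equation*}
f(t)^{-s} \geq f_0^{-s} + c_1 s\, t.
\end{equation*}
Since both sides are positive, I can invert the function $x \mapsto x^{-1/s}$ (which is decreasing on $(0,\infty)$) to obtain
\begin{equation*}
f(t) \leq \left( f_0^{-s} + c_1 s\, t \right)^{-\frac{1}{s}} = (c_1 s)^{-\frac{1}{s}} \left( (c_1 s)^{-1} f_0^{-s} + t \right)^{-\frac{1}{s}},
\end{equation*}
which is exactly \eqref{diff-ineq-2}. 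The last equality is just factoring $c_1 s$ out of the bracket.

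The only genuine subtlety — and it is mild — is the regularity/justification of the ``divide and integrate'' step: strictly one should phrase it as a comparison argument, noting $g := f^{-s}$ is absolutely continuous wherever $f$ is bounded below away from $0$ and $\frac{d}{dt}g = -s f^{-s-1}\frac{d}{dt}f \geq s c_1$ a.e. there, so $g(t) \geq g(0) + sc_1 t$ by the fundamental theorem of calculus. In the applications of this lemma (to $f = \dot{\mathcal{E}}_N$) the function is as smooth as one needs, so I would not belabor this point. There is no real obstacle here; the lemma is elementary, and the main value of writing it out carefully is to have the precise constant $(c_1 s)^{-1/s}$ and the explicit dependence on $f_0$ available for the two-tiered energy argument in the proof of Theorem \ref{thm-main}.
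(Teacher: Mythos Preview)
Your proof is correct and follows essentially the same approach as the paper: divide by $f^{1+s}$, rewrite as $\frac{d}{dt}(f^{-s}) \geq c_1 s$, integrate, and invert. You are simply more careful about the edge case $f_0 = 0$ and the positivity/regularity justification, which the paper omits.
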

\begin{proof}
Due to the differential inequality in \eqref{diff-ineq-1}, we have
\begin{equation*}\label{diff-ineq-3}
\begin{split}
 &f^{-(1+s)}\frac{d}{dt}f+c_1 \leq 0.
\end{split}
\end{equation*}
Hence, one has
\begin{equation}\label{diff-ineq-4}
\begin{split}
\frac{d}{dt}f^{-s}\geq c_1\,s.
\end{split}
\end{equation}
Integrating \eqref{diff-ineq-4} on $[0, t]$ implies
\begin{equation*}\label{diff-ineq-5}
\begin{split}
f^{-s}(t)\geq f^{-s}_0+c_1s\,t,
\end{split}
\end{equation*}
which follows \eqref{diff-ineq-2}.
\end{proof}

We are now in a position to complete the proof of Theorem \ref{thm-main}.

\begin{proof}[Proof of Theorem \ref{thm-main}]
Thanks to the local well-posedness theorem (Theorem \ref{thm-local}), there
exists a positive time $T^{\ast}$ such that the system \eqref{eqns-pert-1} with initial data $(\xi_0,\,v_0)$  has a
unique solution  $(\xi, \, v)$ with
\begin{equation*}\label{linear-v-sigma0-6}
\begin{split}
&(\xi,\,v) \in \mathcal{C}([0, T^{\ast}); \mathfrak{F}_{N+1}).
 \end{split}
\end{equation*}

Without loss of generality, we may assume that $T^{\ast}$
maximal time of the existence to this solution.
The aim of what follows is to prove that $T^{\ast}=+\infty$.

Suppose, by way of contradiction, that  $T^{\ast}<+\infty$. We will show that solutions can actually be
extended past $T^{\ast}$ and satisfy $E_{N+1}(T^{\ast\ast})<+\infty$ for some $T^{\ast\ast}>T^{\ast}$, which contradicts the definition of $T^{\ast}$. It suffices to show that there is a positive constant $C_0$ such that
\begin{equation}\label{bdd-soln-extend-1}
\begin{split}
&\lim_{t\nearrow T^{\ast}}E_{N}(t) \leq C_0\,E_{N}(0),\quad \lim_{t\nearrow T^{\ast}}(\langle\,t\,\rangle^{-1}E_{N+1}(t)) \leq C_0\,E_{N+1}(0).
 \end{split}
\end{equation}
Indeed, if \eqref{bdd-soln-extend-1} holds, then $\|J-1\|_{L^\infty} \leq C_1 E_{3}(t)\leq C_0C_1\,E_{3}(0)\leq  C_0C_1\, \epsilon_0$, so we take $\epsilon_0 \leq\frac{1}{2C_1C_0}$ so that $\|J-1\|_{L^\infty} \leq \frac{1}{2}$, which ensures that $J(t) \geq \frac{1}{2}$ for any existence time $t$. While the second inequality implies $\lim_{t\nearrow T^{\ast}}E_{N+1}(t)\leq C_0\langle\,T^{\ast}\,\rangle\,E_{N+1}(0)$, which contradicts the fact that $T^{\ast}<+\infty$ is the maximal existence time by applying Theorem \ref{thm-local}, and we can
extend the solution past $T^{\ast}$ to some $T^{\ast\ast}>T^{\ast}$.

Let's now focus on the proof of \eqref{bdd-soln-extend-1}, which is reduced to claim that
\begin{equation*}\label{equal-time-1a}
\begin{split}
T^{\ast}=\sup\bigg\{&T\in [0, T^{\ast}]|(\xi,\,v) \in \mathcal{C}([0, T); \mathfrak{F}_{N+1}), \\
&E_{N}(t) \leq C_0(E_{N}(0)+\mathcal{E}_{N+1}(0)), \,E_{N+1}(t) \leq C_0(1+t)E_{N+1}(0)\bigg\},
 \end{split}
\end{equation*}
where the constant $C_0$ will be determined later.

Indeed, setting
\begin{equation}\label{equal-time-1}
\begin{split}
T_1\eqdefa&\sup\bigg\{T\in [0, T^{\ast}]|(\xi,\,v) \in \mathcal{C}([0, T); \mathfrak{F}_{N+1}), \quad\,\mathcal{E}_{N+1}(t) \leq 2\mathfrak{C}_0^2\mathcal{E}_{N+1}(0),\\
&E_{N}(t) \leq C_0(E_{N}(0)+\mathcal{E}_{N+1}(0)), \,E_{N+1}(t) \leq C_0(1+t)E_{N+1}(0)\quad \forall \,t \in [0, T)\bigg\},
 \end{split}
\end{equation}
we first get that $T_1\leq T^{\ast}$. On the other hand, we will verify that $T_1\geq T^{\ast}$. Otherwise, we suppose that $T_1< T^{\ast}$ by a contradiction argument.

From this, take $\epsilon_0 \in (0, \frac{\mathfrak{c}_1^2}{4\mathfrak{C}_1^2C_0}]$ small enough, then
\begin{equation*}\label{c-0-1}
\mathfrak{C}_1 E_N^{\frac{1}{2}} \leq  \mathfrak{C}_1C_0^{\frac{1}{2}}E_{N}(0)^{\frac{1}{2}}\leq  \mathfrak{C}_1C_0^{\frac{1}{2}}\epsilon_0^{\frac{1}{2}}\leq \frac{1}{2}\mathfrak{c}_1,
 \end{equation*}
 we get from \eqref{restate-decay-total-N+1-1} that
\begin{equation}\label{decay-est-N-1}
\begin{split}
&\frac{d}{dt}\widehat{\dot{\mathcal{E}}}_{N}+\mathfrak{c}_1\dot{\mathcal{D}}_{N} \leq 0.\\
\end{split}
\end{equation}
Hence, thanks to the definitions of $\dot{\mathcal{E}}_{N}$, ${\mathcal{E}}_{N}$, and $\dot{\mathcal{D}}_{N}$, we get from the interpolation inequality
\begin{equation*}
\begin{split}
& \|\dot{\Lambda}_h^{\sigma_0}\xi^1\|_{L^2(\Sigma_0)}
\lesssim\|\dot{\Lambda}_h^{1+\sigma_0}\xi^1\|_{L^2(\Sigma_0)}^{\frac{\lambda+\sigma_0}{1+\lambda+\sigma_0}}
\|\dot{\Lambda}_h^{-\lambda}\xi^1\|_{L^2(\Sigma_0)}^{\frac{1}{1+\lambda+\sigma_0}},\\
& \|\dot{\Lambda}_h^{N}\xi^1\|_{L^2(\Sigma_0)}\lesssim\|\dot{\Lambda}_h^{N-\frac{1}{2}}\xi^1\|_{L^2(\Sigma_0)}^{\frac{2}{3}}
\|\dot{\Lambda}_h^{N+1}\xi^1\|_{L^2(\Sigma_0)}^{\frac{1}{3}}
\end{split}
\end{equation*}
that
\begin{equation*}
\begin{split}
&\dot{\mathcal{E}}_{N} \leq \mathfrak{C}_2 (\dot{\mathcal{D}}_{N})^{\frac{\lambda+\sigma_0}{1+\lambda+\sigma_0}} (\mathcal{E}_{N+1})^{\frac{1}{1+\lambda+\sigma_0}},
\end{split}
\end{equation*}
where $\mathfrak{C}_2$ is the interpolation constant independent of $C_0$.

Hence, one has
\begin{equation*}
\begin{split}
&\widehat{\dot{\mathcal{E}}}_{N} \leq \mathfrak{C}_0 \dot{\mathcal{E}}_{N} \leq  \mathfrak{C}_0 \mathfrak{C}_2 (\dot{\mathcal{D}}_{N})^{\frac{\lambda+\sigma_0}{1+\lambda+\sigma_0}} (\mathcal{E}_{N+1})^{\frac{1}{1+\lambda+\sigma_0}}\leq  \mathfrak{C}_0 \mathfrak{C}_2  (2\mathfrak{C}_0^2)^{\frac{1}{1+\lambda+\sigma_0}} (\dot{\mathcal{D}}_{N})^{\frac{\lambda+\sigma_0}{1+\lambda+\sigma_0}} \mathcal{E}_{N+1}(0) ^{\frac{1}{1+\lambda+\sigma_0}}.
\end{split}
\end{equation*}
Denote $\widetilde{\mathfrak{C}}_2:=\mathfrak{C}_0 \mathfrak{C}_2  (2\mathfrak{C}_0^2)^{\frac{1}{1+\lambda+\sigma_0}}$, then there holds
\begin{equation*}
\begin{split}
&\widehat{\dot{\mathcal{E}}}_{N} \leq \widetilde{\mathfrak{C}}_2 (\dot{\mathcal{D}}_{N})^{\frac{\lambda+\sigma_0}{1+\lambda+\sigma_0}} \mathcal{E}_{N+1}(0)^{\frac{1}{1+\lambda+\sigma_0}},
\end{split}
\end{equation*}
which implies
\begin{equation*}
\begin{split}
&[\widetilde{\mathfrak{C}}_2^{-1}\widehat{\dot{\mathcal{E}}}_{N}]^{\frac{1+\lambda+\sigma_0}{\lambda+\sigma_0}}
\mathcal{E}_{N+1}(0)^{-\frac{1}{\lambda+\sigma_0}} \leq  \dot{\mathcal{D}}_{N}.
\end{split}
\end{equation*}
Therefore, from \eqref{decay-est-N-1}, we get
\begin{equation*}
\begin{split}
&\frac{d}{dt}\widehat{\dot{\mathcal{E}}}_{N}+\mathfrak{c}_1[\widetilde{\mathfrak{C}}_2^{-1}
\widehat{\dot{\mathcal{E}}}_{N}]^{\frac{1+\lambda+\sigma_0}{\lambda+\sigma_0}}
\mathcal{E}_{N+1}(0)^{-\frac{1}{\lambda+\sigma_0}} \leq 0,
\end{split}
\end{equation*}
that is,
\begin{equation*}
\begin{split}
&\frac{d}{dt}\widehat{\dot{\mathcal{E}}}_{N}+\mathfrak{c}_2\,\mathcal{E}_{N+1}(0)^{-\frac{1}{\lambda+\sigma_0}}\,
\widehat{\dot{\mathcal{E}}}_{N}^{\frac{1+\lambda+\sigma_0}{\lambda+\sigma_0}}
 \leq 0
\end{split}
\end{equation*}
with $\mathfrak{c}_2:=\mathfrak{c}_1[\widetilde{\mathfrak{C}}_2^{-1}
]^{\frac{1+\lambda+\sigma_0}{\lambda+\sigma_0}}=\mathfrak{c}_1[\mathfrak{C}_0\mathfrak{C}_2
]^{-\frac{1+\lambda+\sigma_0}{\lambda+\sigma_0}}(2\mathfrak{C}_0^2)^{-\frac{1}{\lambda+\sigma_0}}$.

Applying Lemma \ref{lem-restate-decay-total-N+1-1}, where we take
\begin{equation*}
\begin{split}
&c_1:=\mathfrak{c}_2\,\mathcal{E}_{N+1}(0)^{-\frac{1}{\lambda+\sigma_0}},\,s:=\frac{1}{\lambda+\sigma_0},\quad f(t):=\widehat{\dot{\mathcal{E}}}_{N}(t),
\end{split}
\end{equation*}
we infer that
\begin{equation*}
\begin{split}
&f(t)\leq (\frac{\mathfrak{c}_2}{\lambda+\sigma_0})^{-(\lambda+\sigma_0)}\mathcal{E}_{N+1}(0)\bigg(\mathfrak{c}_2^{-1}(\lambda+\sigma_0)
(\mathcal{E}_{N+1}(0)[\widehat{\dot{\mathcal{E}}}_{N}(0)]^{-1})^{\frac{1}{\lambda+\sigma_0}}
+t\bigg)^{-(\lambda+\sigma_0)}.
\end{split}
\end{equation*}
According to the definitions of $\widehat{\dot{\mathcal{E}}}_{N}(0)$ and $\mathcal{E}_{N+1}(0)$,  we denote that $\widehat{\dot{\mathcal{E}}}_{N}(0) \leq C_1(N) \mathcal{E}_{N+1}(0)$ with the positive constant $C_1(N)$ depending only on $N$, so one can see that
\begin{equation*}
\begin{split}
 \mathfrak{c}_2^{-1}(\lambda+\sigma_0)
(\mathcal{E}_{N+1}(0)[\widehat{\dot{\mathcal{E}}}_{N}(0)]^{-1})^{\frac{1}{\lambda+\sigma_0}} \geq \mathfrak{c}_2^{-1}(\lambda+\sigma_0)
C_1(N)^{-\frac{1}{\lambda+\sigma_0}},
\end{split}
\end{equation*}
which leads to
\begin{equation*}
\begin{split}
&f(t)\leq (\frac{\mathfrak{c}_2}{\lambda+\sigma_0})^{-(\lambda+\sigma_0)}\mathcal{E}_{N+1}(0)\bigg(\mathfrak{c}_2^{-1}(\lambda+\sigma_0)
C_1(N)^{-\frac{1}{\lambda+\sigma_0}}+t\bigg)^{-(\lambda+\sigma_0)}.
\end{split}
\end{equation*}

Therefore, we obtain
\begin{equation*}
\begin{split}
&\widehat{\dot{\mathcal{E}}}_{N}(t)  \leq \mathfrak{C}_3 \mathcal{E}_{N+1}(0)\langle\,t\,\rangle^{-(\lambda+\sigma_0)}
\end{split}
\end{equation*}
with $\mathfrak{C}_3 =\mathfrak{C}_3(\lambda+\sigma_0, C_1(N), \mathfrak{c}_2)$,
and then
\begin{equation*}\label{decay-EN-1}
\begin{split}
&\dot{\mathcal{E}}_{N}(t) \leq \mathfrak{C}_0\mathfrak{C}_3 \mathcal{E}_{N+1}(0)\langle\,t\,\rangle^{-(\lambda+\sigma_0)},\quad \int_0^t\dot{\mathcal{E}}_{N}\,d\tau \leq \widetilde{\mathfrak{C}}_3\mathcal{E}_{N+1}(0)
\end{split}
\end{equation*}
with $\widetilde{\mathfrak{C}}_3=\mathfrak{C}_0\mathfrak{C}_3 \int_0^t\langle\,\tau\,\rangle^{-(\lambda+\sigma_0)}\,d\tau$.

Since
\begin{equation*}
\begin{split}
&\frac{d}{dt}(\langle\,t\,\rangle^{(1+\lambda+\sigma_0)/2}\widehat{\dot{\mathcal{E}}}_{N})
+\mathfrak{c}_1\langle\,t\,\rangle^{(1+\lambda+\sigma_0)/2}\dot{\mathcal{D}}_{N} \\
&=\langle\,t\,\rangle^{(1+\lambda+\sigma_0)/2}\frac{d}{dt}(\widehat{\dot{\mathcal{E}}}_{N})
+\mathfrak{c}_1\langle\,t\,\rangle^{(1+\lambda+\sigma_0)/2}\dot{\mathcal{D}}_{N}+ \widehat{\dot{\mathcal{E}}}_{N}\langle\,t\,\rangle^{(1+\lambda+\sigma_0)/2-1}(1+\lambda+\sigma_0)/2,\\
\end{split}
\end{equation*}
from \eqref{decay-est-N-1}, we get
\begin{equation*}
\begin{split}
&\frac{d}{dt}(\langle\,t\,\rangle^{(1+\lambda+\sigma_0)/2}\widehat{\dot{\mathcal{E}}}_{N})
+\mathfrak{c}_1\langle\,t\,\rangle^{(1+\lambda+\sigma_0)/2}\dot{\mathcal{D}}_{N} \leq \widehat{\dot{\mathcal{E}}}_{N}\langle\,t\,\rangle^{(1+\lambda+\sigma_0)/2-1}(1+\lambda+\sigma_0)/2,
\end{split}
\end{equation*}
and then
\begin{equation*}
\begin{split}
&\frac{d}{dt}(\langle\,t\,\rangle^{(1+\lambda+\sigma_0)/2}\widehat{\dot{\mathcal{E}}}_{N})
+\mathfrak{c}_1\langle\,t\,\rangle^{(1+\lambda+\sigma_0)/2}\dot{\mathcal{D}}_{N} \leq \mathfrak{C}_3\mathcal{E}_{N+1}(0)
\langle\,t\,\rangle^{-(1+\lambda+\sigma_0)/2}(1+\lambda+\sigma_0)/2.
\end{split}
\end{equation*}
It follows that
\begin{equation}\label{decay-DN-1}
\begin{split}
&\int_0^t\langle\,\tau\,\rangle^{(1+\lambda+\sigma_0)/2}\dot{\mathcal{D}}_{N}(\tau)\,d\tau \leq \mathfrak{C}_4 \mathcal{E}_{N+1}(0),\quad \int_0^t\dot{\mathcal{D}}_{N}(\tau)^{\frac{1}{2}}\,d\tau \leq \widetilde{\mathfrak{C}}_4 \mathcal{E}_{N+1}(0)^{\frac{1}{2}}.
\end{split}
\end{equation}

With this in hand, we are ready to estimate $E_N$.

Since
\begin{equation*}
\begin{split}
&\|\dot{\Lambda}_h^{\sigma_0-1}\partial_1\xi^1(t)\|_{H^1}
 +\|\dot{\Lambda}_h^{\sigma_0}\partial_1\xi^h(t)\|_{H^1} +\|\dot{\Lambda}_h^{\sigma_0}\xi(t)\|_{L^2}+ \sum_{i=1}^{N-1}\|\partial_h^i\xi(t)\|_{H^2}\\
 & \leq \|\dot{\Lambda}_h^{\sigma_0-1}\partial_1\xi^1(0)\|_{H^1}
 +\|\dot{\Lambda}_h^{\sigma_0}\partial_1\xi^h(0)\|_{H^1} +\|\dot{\Lambda}_h^{\sigma_0}\xi(0)\|_{L^2}+ \sum_{i=1}^{N-1}\|\partial_h^i\xi(0)\|_{H^2}\\
 &\quad+\int_0^t(\|\dot{\Lambda}_h^{\sigma_0-1}\partial_1v^1\|_{H^1}+
 \|\dot{\Lambda}_h^{\sigma_0}\partial_1v^h\|_{H^1}+\|\dot{\Lambda}_h^{\sigma_0}v\|_{L^2}+ \sum_{i=1}^{N-1}\|\partial_h^iv\|_{H^2})\,d\tau,
\end{split}
\end{equation*}
one can see that
\begin{equation*}\label{def-energy-2}
\begin{split}
&E_{\text{h}, N}(t)\leq C_2(N)E_{\text{h}, N}(0)+C_2(N)(\int_0^t\dot{\mathcal{D}}_{N}^{\frac{1}{2}}\,d\tau)^2.
\end{split}
\end{equation*}
Due to \eqref{decay-DN-1}, we find that
\begin{equation*}
\begin{split}
&E_N(t)=E_{\text{h}, N}(t)+\mathcal{E}_{N}(t)\leq  C_2(N)E_{\text{h}, N}(0)+C_2(N)(\int_0^t\dot{\mathcal{D}}_{N}^{\frac{1}{2}}\,d\tau)^2+\mathcal{E}_{N}(t)\\
 &\leq  C_2(N)E_{\text{h}, N}(0)+(C_2(N)\widetilde{\mathfrak{C}}_4^2+2\mathfrak{C}_0^2) \mathcal{E}_{N+1}(0),
\end{split}
\end{equation*}
and then
\begin{equation}\label{bdd-EN-decay-1}
\begin{split}
&E_N(t)\leq  \mathfrak{C}_5(E_N(0)+\mathcal{E}_{N+1}(0)) \quad \text{with}\quad \mathfrak{C}_5:=C_2(N)(\widetilde{\mathfrak{C}}_4^2+1)+2\mathfrak{C}_0^2.
\end{split}
\end{equation}

Notice that
\begin{equation*}
\begin{split}
&\|\partial_h^{N}\nabla^2\xi\|_{L^2}(t)\leq \|\partial_h^{N}\nabla^2\xi(0)\|_{L^2}+\int_0^t \|\partial_h^{N}\nabla^2v\|_{L^2}(\tau)\,d\tau \leq \|\partial_h^{N}\nabla^2\xi\|_{L^2}(0)+\int_0^t \mathcal{D}^{\frac{1}{2}}_{N+1}\,d\tau,\\
\end{split}
\end{equation*}
which along with \eqref{bdd-EN-decay-1} and the definition of $T_1$ leads to
\begin{equation}\label{est-E-N+1-bdd-1}
\begin{split}
&{E}_{N+1}(t) \leq\|\partial_h^{N}\nabla^2\xi\|_{L^2}^2+2(\mathcal{E}_{N+1}+E_{N})\\
&\leq 2\|\partial_h^{N}\nabla^2\xi(0)\|_{L^2}^2+2\,t\,\int_0^t \mathcal{D}_{N+1}\,d\tau+4\mathfrak{C}_0^2 \mathcal{E}_{N+1}(0)+2\mathfrak{C}_5(E_N(0)+\mathcal{E}_{N+1}(0))\\
&\leq 2\,t\,\int_0^t \mathcal{D}_{N+1}\,d\tau+\widetilde{\mathfrak{C}}_5 E_{N+1}(0)
\end{split}
\end{equation}
with $\widetilde{\mathfrak{C}}_5:=2+4\mathfrak{C}_0^2 +2\mathfrak{C}_5$.

On the other hand, thanks to \eqref{restate-bdddecay-total-N+1-1}, we get
\begin{equation*}\label{diff-ineq-EN+1-1}
\begin{split}
\frac{d}{dt}{\widehat{\mathcal{E}}}_{N+1}+\mathfrak{c}_1{\mathcal{D}}_{N+1} \leq \mathfrak{C}_1 \bigg(E_{N+1}\,\dot{\mathcal{D}}_3+ E_3 (\dot{\mathcal{D}}_3^{\frac{1}{2}} +\dot{\mathcal{E}}_3)\bigg).
\end{split}
\end{equation*}
Hence, from \eqref{est-E-N+1-bdd-1}, one has
\begin{equation*}
\begin{split}
\frac{d}{dt}({\widehat{\mathcal{E}}}_{N+1}+\mathfrak{c}_1\int_0^t \mathcal{D}_{N+1}\,d\tau) &\leq  2\mathfrak{C}_1\,t\,\dot{\mathcal{D}}_3\,\int_0^t \mathcal{D}_{N+1}\,d\tau\\
&+ 2\mathfrak{C}_1 \mathfrak{C}_0^2\mathcal{E}_{N+1}(0)(\dot{\mathcal{D}}_3^{\frac{1}{2}} +\dot{\mathcal{E}}_3)+ \mathfrak{C}_1  \widetilde{\mathfrak{C}}_5 E_{N+1}(0)\,\dot{\mathcal{D}}_3.
\end{split}
\end{equation*}
Thanks to the Gronwall inequality, we get
\begin{equation*}
\begin{split}
& {\widehat{\mathcal{E}}}_{N+1}(t)+\mathfrak{c}_1\int_0^t \mathcal{D}_{N+1}\,d\tau  \leq \exp\{2\mathfrak{C}_1\mathfrak{c}_1^{-1}\,\int_0^t\tau\,\dot{\mathcal{D}}_3\,d\tau\} \\
&\times \bigg[{\widehat{\mathcal{E}}}_{N+1}(0)+  2\mathfrak{C}_1 \mathfrak{C}_0^2\mathcal{E}_{N+1}(0)\int_0^t (\dot{\mathcal{D}}_3^{\frac{1}{2}} +\dot{\mathcal{E}}_3)\,d\tau+ \mathfrak{C}_1\widetilde{\mathfrak{C}}_5 E_{N+1}(0)\,\int_0^t \dot{\mathcal{D}}_3\,d\tau\bigg],
\end{split}
\end{equation*}
which follows that
\begin{equation*}
\begin{split}
& {\widehat{\mathcal{E}}}_{N+1}(t)+\mathfrak{c}_1\int_0^t \mathcal{D}_{N+1}\,d\tau  \leq \exp\{2\mathfrak{C}_1\mathfrak{c}_1^{-1}\,\mathfrak{C}_4 \mathcal{E}_{N+1}(0)\} \\
&\times \bigg[{\widehat{\mathcal{E}}}_{N+1}(0)+  2\mathfrak{C}_1 \mathfrak{C}_0^2\mathcal{E}_{N+1}(0)\, (\widetilde{\mathfrak{C}}_4 \mathcal{E}_{N+1}(0)^{\frac{1}{2}}+\widetilde{\mathfrak{C}}_3 \mathcal{E}_{N+1}(0))+ \mathfrak{C}_1  \widetilde{\mathfrak{C}}_5 E_{N+1}(0)\,\mathfrak{C}_4\mathcal{E}_{N+1}(0)\bigg].
\end{split}
\end{equation*}
Therefore, we get
\begin{equation*}\label{E-N+1-est-bdd-1}
\begin{split}
& {\widehat{\mathcal{E}}}_{N+1}(t)+\mathfrak{c}_1\int_0^t \mathcal{D}_{N+1}\,d\tau\leq \exp\{2\mathfrak{C}_1\mathfrak{c}_1^{-1}\,\mathfrak{C}_4 \epsilon_0\}  \widehat{\mathcal{E}}_{N+1}(0)(1+ \mathfrak{C}_6\epsilon_0^{\frac{1}{2}})
\end{split}
\end{equation*}
with $\mathfrak{C}_6=2\mathfrak{C}_1 \mathfrak{C}_0^3 (\widetilde{\mathfrak{C}}_4 +\widetilde{\mathfrak{C}}_3)+ \mathfrak{C}_1  \widetilde{\mathfrak{C}}_5 \mathfrak{C}_4\mathfrak{C}_0 $.

Taking $\epsilon_0 \leq \min\{\frac{\mathfrak{c}_1}{2\mathfrak{C}_1\,\mathfrak{C}_4 }\log(\frac{5}{4}),\,\frac{1}{25\mathfrak{C}_6^2}\}$, we obtain
\begin{equation*}\label{E-N+1-est-bdd-2}
\begin{split}
\sup_{\tau\in [0, t]}\widehat{\mathcal{E}}_{N+1}(\tau) +\mathfrak{c}_1\int_0^t\mathcal{D}_{N+1}(\tau)\,d\tau \leq \frac{3}{2}\widehat{\mathcal{E}}_{N+1}(0),
\end{split}
\end{equation*}
and then
\begin{equation*}\label{E-N+1-est-bdd-3}
\begin{split}
&\mathcal{E}_{N+1}(t) \leq \frac{3}{2}\mathfrak{C}_0^2 \mathcal{E}_{N+1}(0).
\end{split}
\end{equation*}
Therefore, we get
\begin{equation*}\label{E-N+1-est-bdd-4}
\begin{split}
{E}_{N+1}(t) &\leq 2\,t\,\int_0^t \mathcal{D}_{N+1}\,d\tau+\mathfrak{C}_5 E_{N+1}(0)\\
&\leq  t\,\frac{3}{\mathfrak{c}_1}\mathfrak{C}_0^2 \mathcal{E}_{N+1}(0)+\mathfrak{C}_5 E_{N+1}(0)\leq  \widetilde{\mathfrak{C}}_6\,(t+1)\,E_{N+1}(0))
\end{split}
\end{equation*}
with $\widetilde{\mathfrak{C}}_6=\frac{3}{\mathfrak{c}_1}\mathfrak{C}_0^2 +\mathfrak{C}_5$.

Notice that all the constants $\mathfrak{C}_i,\, \widetilde{\mathfrak{C}}_j$ (with $i=0, 1, ..., 6$, $j=1,...,6$) above are independent of $C_0$.

We take $\epsilon_0$ small enough (satisfying above conditions) and the constant $C_0=2(\mathfrak{C}_5+\widetilde{\mathfrak{C}}_6)$ in \eqref{equal-time-1}, then, according to the above estimates, for any $t \in [0, T_1)$, there hold that
\begin{equation*}\label{contradict-1-1}
\begin{split}
&\mathcal{E}_{N+1}(t) \leq \frac{3}{2}\mathfrak{C}_0^2\mathcal{E}_{N+1}(0),\,
E_{N}(t) \leq \frac{1}{2}C_0(E_{N}(0)+\mathcal{E}_{N+1}(0)), \\
&E_{N+1}(t) \leq \frac{1}{2}C_0(1+t)E_{N+1}(0),
 \end{split}
\end{equation*}
which contradicts the fact that $T_1$ is maximal by the definition. This ends the proof of Theorem \ref{thm-main}.
\end{proof}

\renewcommand{\theequation}{\thesection.\arabic{equation}}
\setcounter{equation}{0}

\appendix

\renewcommand{\theequation}{\thesection.\arabic{equation}}
\setcounter{equation}{0}

\section{Proof of Theorem \ref{thm-nondecay}}\label{appendix-1}

\begin{proof}[Proof of Theorem \ref{thm-nondecay}]

Motivated by the method in \cite{Beale-1981}, we shall suppose that $v^{(1)}$ is known and show that a contradiction arises in solving for $v^{(2)}$.

We first assume that $\nabla\,\cdot\,\theta=0$. Because of the assumption
about $(v(\varepsilon),\,q(\varepsilon))$, the flow map $\xi(\varepsilon)$ satisfies
\begin{equation*}\label{expansion-xi-1a}
  \begin{split}
   \xi(t)= \xi(0)+\int_0^t v\,d\tau=\varepsilon\,(\theta+\int_0^t v^{(1)}\,d\tau)+\varepsilon^2\,\int_0^t v^{(2)}\,d\tau+\varepsilon^3\,\int_0^t v^{(3)}\,d\tau,
  \end{split}
\end{equation*}
and then $\xi(\varepsilon)$ has
 an expansion
 \begin{equation}\label{expansion-xi-1}
  \begin{split}
   \xi(\varepsilon)=\varepsilon\,\xi^{(1)}+\varepsilon^2\,\xi^{(2)}+ \varepsilon^3\,\xi^{(3)},
  \end{split}
\end{equation}
and also
\begin{equation}\label{determine-eta-1}
  \begin{split}
  &J(\varepsilon)=1+\varepsilon \nabla\cdot\xi^{(1)}+O(\varepsilon^2),\quad  \mathcal{A}_i^j(\varepsilon)=\delta_{i}^j-\varepsilon\,\partial_i\xi^{(1)}_j
+O(\varepsilon^2).
  \end{split}
\end{equation}
Since $\nabla_{J\mathcal{A}}\cdot v=0$, vanishing the order $\varepsilon$ of it, we get from \eqref{expansion-v-1} and \eqref{determine-eta-1} that
 \begin{equation}\label{div-cond-v-1}
  \begin{split}
   \nabla\,\cdot\, v^{(1)}=0.
  \end{split}
\end{equation}
While from \eqref{expansion-xi-1} and the fact $\nabla\cdot\theta=0$, it follows
\begin{equation}\label{div-cond-v-2}
  \begin{split}
&\varepsilon\,\nabla\cdot\xi^{(1)}+\varepsilon^2\,(\nabla\cdot\xi^{(2)}+ \varepsilon\,\nabla\cdot\xi^{(3)})\\
&=   \varepsilon(\int_0^t \nabla\cdot v^{(1)}\,d\tau)+\varepsilon^2\int_0^t (   \nabla\cdot\,v^{(2)}+\varepsilon    \nabla\cdot\,v^{(3)})\,d\tau,
  \end{split}
\end{equation}
then from \eqref{div-cond-v-1}, vanishing the order $\varepsilon$ in \eqref{div-cond-v-2} implies
 \begin{equation}\label{div-cond-xi-1}
  \begin{split}
   \nabla\cdot\xi^{(1)}=0,
  \end{split}
\end{equation}

The next divergence term $v^{(2)}$ is determined by $v^{(1)}$ from $\nabla_{\mathcal{A}}\cdot v=0$. Indeed, thanks to \eqref{div-cond-v-1} and \eqref{determine-eta-1}, one has
\begin{equation}\label{div-exp-1}
  \begin{split}
  & 0=\nabla_{\mathcal{A}}\cdot v=\mathcal{A}_i^j\partial_jv_i=(\delta_{i}^j-\varepsilon\,\partial_i\xi^{(1)}_j
+O(\varepsilon^2))(\varepsilon\,\partial_jv^{(1)}_i+\varepsilon^2\,\partial_jv^{(2)}_i+O(\varepsilon^3))\\
&=\varepsilon^2(\nabla\,\cdot\,v^{(2)}-\partial_i\xi^{(1)}_j\partial_jv^{(1)}_i)+O(\varepsilon^3).
  \end{split}
\end{equation}
Vanishing the order $\varepsilon^2$ in \eqref{div-exp-1} leads to
\begin{equation}\label{div-exp-2}
  \begin{split}
\nabla\,\cdot\,v^{(2)}=\partial_i\xi^{(1)}_j\partial_jv^{(1)}_i,
  \end{split}
\end{equation}
and then
\begin{equation}\label{div-exp-2a}
  \begin{split}
\nabla\,\cdot\,v^{(2)}=\frac{1}{2}\partial_j(\partial_i\xi^{(1)}_jv^{(1)}_i+\xi^{(1)}_i\partial_iv^{(1)}_j)
  =\frac{1}{2}\partial_t\nabla\cdot(\xi^{(1)}\cdot\nabla\xi^{(1)}),
  \end{split}
\end{equation}
where we used \eqref{div-cond-v-1} and \eqref{div-cond-xi-1} in the second equality.

Set
\begin{equation*}\label{div-exp-3}
  \begin{split}
  &\hbar(t, x)\eqdefa\int_{0}^t\nabla\,\cdot\,v^{(2)}\,d\tau, \quad \mathfrak{l}(t)\eqdefa \int_{\Omega}\hbar(t, x)\,dx,\\
    \end{split}
\end{equation*}
we obtain from \eqref{div-exp-2a} that
\begin{equation*}\label{div-exp-3a}
  \begin{split}
  &\hbar(t, x)=\frac{1}{2}\nabla\cdot(\xi^{(1)}_1\partial_1\xi^{(1)}+\xi^{(1)}_h\cdot\nabla_h\xi^{(1)})(t)
  -\frac{1}{2}\nabla\cdot(\theta_1\partial_1\theta+\theta_h\cdot\nabla_h\theta),\\
    \end{split}
\end{equation*}
and
\begin{equation*}\label{div-exp-4}
  \begin{split}
  &\mathfrak{l}(t)=\frac{1}{2}\int_{\Sigma_0}(\xi^{(1)}_1\partial_1\xi^{(1)}_1+\xi^{(1)}_h\cdot\nabla_h\xi^{(1)}_1)(t)\,dS
  -\frac{1}{2}\int_{\Sigma_0}(\theta_1\partial_1\theta_1+\theta_h\cdot\nabla_h\theta_1)\,dS\\
  &=\frac{1}{2}\int_{\Sigma_0}(-\xi^{(1)}_1\nabla_h\cdot\xi^{(1)}_h+\xi^{(1)}_h\cdot\nabla_h\xi^{(1)}_1)(t)\,dS
  -\frac{1}{2}\int_{\Sigma_0}(\theta_1\partial_1\theta_1-\nabla_h\cdot\theta_h\theta_1)\,dS\\
   &=\int_{\Sigma_0}(\xi^{(1)}_h\cdot\nabla_h\xi^{(1)}_1)(t)\,dS
   -\int_{\Sigma_0}\theta_1\partial_1\theta_1 \,dS.
  \end{split}
\end{equation*}
Due to \eqref{cond-infty-xi-1},
\begin{equation*}\label{div-exp-5}
  \begin{split}
  &|\int_{\Sigma_0}(\xi^{(1)}_h\cdot\nabla_h\xi^{(1)}_1)(t)\,dS|\lesssim \|\xi^{(1)}_h(t)\|_{L^2(\Sigma_0)} \|\nabla_h\xi^{(1)}_1(t)\|_{L^2(\Sigma_0)}\rightarrow 0 \quad (\text{as} \quad t\rightarrow+\infty),
  \end{split}
\end{equation*}
applying Lebesgue Dominated Convergence Theorem yields
\begin{equation*}\label{div-exp-6}
  \begin{split}
  \int_{0}^\infty\int_{\Omega}\nabla\,\cdot\,v^{(2)}\,dxd\tau&=\mathfrak{l}(\infty)=\int_{\Sigma_0}(\xi^{(1)}_h\cdot\nabla_h\xi^{(1)}_1)(\infty)\,dS
   -\int_{\Sigma_0}\theta_1\partial_1\theta_1 \,dS\\
  &
  = -\int_{\Sigma_0}\theta_1\partial_1\theta_1 \,dS.
  \end{split}
\end{equation*}
On the other hand, set $\mathfrak{a}(t, x_h)\eqdefa \int_0^t\int_{-\underline{b}}^0\nabla\,\cdot\,v^{(2)}\,dx_1\,d\tau $, one can see
\begin{equation*}\label{div-exp-7}
  \begin{split}
  \mathfrak{a}(t, x_h)&=\int_0^t v^{(2)}_1|_{\Sigma_0}\,d\tau
      +\int_0^t\int_{-\underline{b}}^0\nabla_h\,\cdot\,v^{(2)}_h\,dx_1\,d\tau\\
           &  =\eta^{(2)}_1|_{\Sigma_0}(t)-\eta^{(2)}_1|_{\Sigma_0}(0)
      +\int_0^t\int_{-\underline{b}}^0\nabla_h\,\cdot\,v^{(2)}_h\,dx_1\,d\tau,\\
        \end{split}
\end{equation*}
then
\begin{equation}\label{div-exp-8}
  \begin{split}
\mathfrak{a}(\infty, x_h)&=\eta^{(2)}_1|_{\Sigma_0}(\infty)-\eta^{(2)}_1|_{\Sigma_0}(0)
      +\nabla_h\,\cdot\int_0^\infty\int_{-\underline{b}}^0\,v^{(2)}_h\,dx_1\,d\tau\\
      &=\int_0^\infty\int_{-\underline{b}}^0\nabla_h\cdot v^{(2)}_h\,dx_1\,d\tau.
  \end{split}
\end{equation}

While from \eqref{div-exp-2}, a direct computation implies
\begin{equation*}\label{div-exp-10}
  \begin{split}
  \mathfrak{a}(t, x_h) =&\int_0^t\int_{-\underline{b}}^0\partial_i\xi^{(1)}_j\partial_jv^{(1)}_i\,dx_1\,d\tau
  =\int_0^t\int_{-\underline{b}}^0\partial_\gamma\xi^{(1)}_1\partial_1v^{(1)}_\gamma\,dx_1\,d\tau\\
  &\quad+\int_0^t\int_{-\underline{b}}^0(\partial_1\xi^{(1)}_1\partial_1v^{(1)}_1
  +\partial_1\xi^{(1)}_\alpha\partial_\alpha v^{(1)}_1+\partial_\gamma\xi^{(1)}_\alpha\partial_\alpha v^{(1)}_\gamma)\,dx_1\,d\tau,
        \end{split}
\end{equation*}
where
\begin{equation*}\label{div-exp-12}
  \begin{split}
  &\int_0^t\int_{-\underline{b}}^0\partial_\gamma\xi^{(1)}_1\partial_1v^{(1)}_\gamma\,dx_1\,d\tau=
  \int_0^t\int_{-\underline{b}}^0\partial_\gamma\xi^{(1)}_1\partial_1\partial_t\xi^{(1)}_\gamma\,dx_1\,d\tau\\
  &=
\int_{-\underline{b}}^0(\partial_\gamma\xi^{(1)}_1\partial_1 \xi^{(1)}_\gamma)|_{\tau=0}^t\,dx_1-
  \int_0^t\int_{-\underline{b}}^0\partial_\gamma v^{(1)}_1\partial_1\xi^{(1)}_\gamma\,dx_1\,d\tau\\
    &=
\int_{-\underline{b}}^0\partial_\gamma\xi^{(1)}_1(t)\partial_1\xi^{(1)}_\gamma(t))\,dx_1
-\int_{-\underline{b}}^0(\partial_\gamma\theta^{(1)}_1\partial_1\theta^{(1)}_\gamma)\,dx_1-
  \int_0^t\int_{-\underline{b}}^0\partial_\gamma v^{(1)}_1\partial_1\xi^{(1)}_\gamma\,dx_1\,d\tau,
        \end{split}
\end{equation*}
so it can be deduced that
\begin{equation*}\label{div-exp-13}
  \begin{split}
  &\mathfrak{a}(t, x_h)=\int_{-\underline{b}}^0\partial_\gamma\xi^{(1)}_1(t)\partial_1\xi^{(1)}_\gamma(t))\,dx_1
-\int_{-\underline{b}}^0(\partial_\gamma\theta^{(1)}_1\partial_1\theta^{(1)}_\gamma)\,dx_1\\
&+\int_0^t\int_{-\underline{b}}^0(\nabla_h\cdot\xi^{(1)}_h\nabla_h\cdot v^{(1)}_h
  +\partial_1\xi^{(1)}_\alpha\partial_\alpha v^{(1)}_1+\partial_\gamma\xi^{(1)}_\alpha\partial_\alpha v^{(1)}_\gamma-\partial_\gamma v^{(1)}_1\partial_1\xi^{(1)}_\gamma)\,dx_1\,d\tau.
        \end{split}
\end{equation*}
We thus prove that
\begin{equation*}\label{div-exp-14}
  \begin{split}
  \|\mathfrak{a}(t, x_h)\|_{L^1(\mathbb{R}^2_h)}\lesssim &\|\partial_\gamma\xi^{(1)}_1(t)\|_{L^2(\Omega)}
  \|\partial_1\xi^{(1)}_\gamma(t)\|_{L^2(\Omega)}+\|\partial_\gamma\theta^{(1)}_1\|_{L^2(\Omega)}
  \|\partial_1\theta^{(1)}_\gamma\|_{L^2(\Omega)}\\
  &+\int_0^t\|\nabla\xi^{(1)}\|_{L^2(\Omega)}\|\nabla_h\cdot v^{(1)}\|_{L^2(\Omega)}\,d\tau,
        \end{split}
\end{equation*}
which follows
\begin{equation*}\label{div-exp-15}
  \begin{split}
  &\|\mathfrak{a}(\infty, x_h)\|_{L^1(\mathbb{R}^2_h)}\lesssim \|\partial_h\xi^{(1)}_1\|_{L^\infty([0, {\infty}); L^2(\Omega))} \|\partial_1\xi^{(1)}_h\|_{L^\infty([0, {\infty}); L^2(\Omega))}\\
  &
+\|\partial_h\theta^{(1)}_1\|_{L^2(\Omega)}\|\partial_1\theta^{(1)}_h\|_{L^2(\Omega)}
+\|\nabla\xi^{(1)}\|_{L^\infty([0, {\infty}); L^2(\Omega))} \int_0^\infty\|\partial_h v^{(1)}\|_{L^2(\Omega)}\,d\tau<+\infty,
        \end{split}
\end{equation*}
namely, $\mathfrak{a}(\infty, x_h)\in L^1(\mathbb{R}^2_h)$.
From this, integrating $\mathfrak{a}(\infty, x_h)$ of \eqref{div-exp-8} with respect to $x_h\in\mathbb{R}^2_h$ yields
\begin{equation*}\label{div-exp-16}
  \begin{split}
\int_{\mathbb{R}^2_h}\mathfrak{a}(\infty, x_h)\,dx_h=\int_0^\infty\int_{-\underline{b}}^0\int_{\mathbb{R}^2_h}\nabla_h\cdot v^{(2)}_h\,dx_hdx_1\,d\tau=0,
  \end{split}
\end{equation*}
that is
\begin{equation}\label{div-exp-21}
  \begin{split}
&0=\int_{\Sigma_0}\mathfrak{a}(\infty, x_h)\,dS=\mathfrak{l}(\infty).
  \end{split}
\end{equation}

Aiming for a contradiction by choosing $\theta$ so that $\mathfrak{l}(\infty)$ is nonzero, suppose that
$a \in C_{c}^{\infty}(\Sigma_0; \mathbb{R})$ is nonzero. Let $w \in \mathcal{H}_{0, \text{tan}, 3}^2$ be some vector field such that $w_3=a$,
$\partial_1w_3=a$ on $\Sigma_0$, $w_2=\partial_1w_2=0$ on $\Sigma_0$. If $\theta=\nabla\times w$, we have $\nabla\cdot \theta=0$ and $\theta_1=\partial_2w_3-\partial_3w_2=\partial_2w_3=\partial_2a$,  $\partial_1\theta_1=\partial_1\partial_2w_3-\partial_1\partial_3w_2=\partial_2\partial_1w_3=\partial_2a$.
Hence, one has
\begin{equation*}
  \begin{split}
&\mathfrak{l}(\infty)=-\int_{\Sigma_0}\theta_1\partial_1\theta_1 \,dS=-\int_{\Sigma_0}(\partial_2a)^2\,dS<0,
  \end{split}
\end{equation*}
which contradicts \eqref{div-exp-21}. This ends the proof of Theorem \ref{thm-nondecay}.
\end{proof}

\bigbreak \noindent {\bf Acknowledgments.}
The author would like to thank Professor J. Thomas Beale for his valuable suggestions and constructive comments.
This work is supported in part by the National Natural Science Foundation of China under the Grant 11571279.

\end{document}